\theoremstyle{definition}
\newtheorem{definition}{Definition}[section]
\newtheorem{example}[definition]{Example}
\newtheorem{openproblem}[definition]{Open problem}
\newtheorem{remark}[definition]{Remark}
\theoremstyle{plain}
\newtheorem{theorem}[definition]{Theorem}
\newtheorem{lemma}[definition]{Lemma}
\newtheorem{proposition}[definition]{Proposition}
\newtheorem{corollary}[definition]{Corollary}
\newcommand{\dotvee}{\stackrel{\bullet }{\vee }}
\numberwithin{equation}{section}
\def\N{{\mathbb N}}
\def\Z{{\mathbb Z}}
\begin{document}
\title{The Reticulation of a Universal Algebra}
\author{George GEORGESCU and Claudia MURE\c SAN\thanks{Corresponding author.}\\ \footnotesize University of Bucharest\\ \footnotesize Faculty of Mathematics and Computer Science\\ \footnotesize Academiei 14, RO 010014, Bucharest, Romania\\ \footnotesize Emails: georgescu.capreni@yahoo.com; cmuresan@fmi.unibuc.ro, c.muresan@yahoo.com}
\date{\today }
\maketitle

\begin{abstract} The {\em reticulation} of an algebra $A$ is a bounded distributive lattice ${\cal L}(A)$ whose prime spectrum of filters or ideals is homeomorphic to the prime spectrum of congruences of $A$, endowed with the Stone topologies. We have obtained a construction for the reticulation of any algebra $A$ from a semi--degenerate congruence--modular variety ${\cal C}$ in the case when the commutator of $A$, applied to compact congruences of $A$, produces compact congruences, in particular when ${\cal C}$ has principal commutators; furthermore, it turns out that weaker conditions than the fact that $A$ belongs to a congruence--modular variety are sufficient for $A$ to have a reticulation. This construction generalizes the reticulation of a commutative unitary ring, as well as that of a residuated lattice, which in turn generalizes the reticulation of a BL--algebra and that of an MV--algebra. The purpose of constructing the reticulation for the algebras from ${\cal C}$ is that of transferring algebraic and topological properties between the variety of bounded distributive lattices and ${\cal C}$, and a {\em reticulation functor} is particularily useful for this transfer. We have defined and studied a reticulation functor for our construction of the reticulation in this context of universal algebra.\\ {\em 2010 Mathematics Subject Classification:} {\bf primary}: 08B10; {\bf secondary}: 08A30, 06B10, 06F35, 03G25.\\ {\em Keywords:} (congruence--modular, congruence--distributive) variety, commutator, (prime, compact) congruence, reticulation.\end{abstract}

\section{Introduction}
\label{introduction}

The reticulation of a commutative unitary ring $R$ is a bounded distributive lattice ${\cal L}(R)$ whose prime spectrum of ideals is homeomorphic to the prime spectrum of ideals of $R$. Its construction has appeared in \cite{joy}, but it has been extensively studied in \cite{sim}, where it has received the name {\em reticulation}. The mapping $R\mapsto {\cal L}(R)$ sets a covariant functor from the category of commutative unitary rings to that of bounded distributive lattices, through which properties can be transferred between these categories. In \cite{bell}, the reticulation has been defined and studied for non--commutative unitary rings and it has been proven that such a ring has a reticulation (with the topological definition above) iff it is quasi--commutative.

Over the past two decades, reticulations have been constructed for orderred algebras related to logic: MV--algebras \cite{bellsspl,belldinlett}, BL--algebras \cite{leo1,dinggll,leo}, residuated lattices \cite{eu1,eu,eu2,eu4,eu5,eu7}, $0$--distributive lattices \cite{paw}, almost distributive lattices \cite{pash}, Hilbert algebras \cite{bus}, hoops \cite{dche}. All these algebras posess a ``prime spectrum`` which is homeomorphic to the prime spectrum of filters or ideals of a bounded distributive lattice; their reticulations consist of such bounded distributive lattices, whose study involves obtaining a construction for them and using that construction to transfer properties between these classes of algebras and bounded distributive lattices.

The purpose of the present paper is to set the problem of constructing a reticulation in a universal algebra framework and providing a solution to this problem in a case as general as possible, that includes the cases of the varieties above and generalizes the constructions which have been obtained in those particular cases. Apart from the novelty of using commutator theory \cite{cze,mcks} for the study of the reticulation, essentially, the tools needed for obtaining reticulations in this very general setting are quite similar to those which have been put to work for the classes of algebras above, and it turns out that many types of results that hold for their reticulations can be generalized to our setting. In order to obtain strong generalizations, we have worked with hypotheses as weak as possible; all our results in this paper hold for semi--degenerate congruence--modular varieties whose members have the sets of compact congruences closed with respect to the commutator, with just a few exceptions that necessitate, moreover, principal commutators.

The present paper is structured as follows: Section \ref{preliminaries} presents the notations and basic results we use in what follows; Section \ref{commutator} collects a set of results from commutator theory which we use in the sequel; in Section \ref{stonetops}, we present the standard construction of the Stone topologies on prime spectra, specifically the prime spectrum of ideals of a bounded distributive lattice and the prime spectrum of congruences of a universal algebra whose commutator fulfills certain conditions. The results in the following sections that are not cited from other papers, or mentioned as being either known or quite simple to obtain, are new and original.

In Section \ref{reticulatia}, we construct the reticulation for universal algebras whose commutators fulfill certain conditions, prove that this construction has the desired topological property and obtain some related results.

In Section \ref{examples}, we provide some examples of reticulations, study particular cases, such as the congruence--distributive case, show that our construction generalizes constructions for the reticulation which have been obtained for particular varieties, and prove that our construction preserves finite direct products of algebras without skew congruences.

In Section \ref{further}, we obtain some arithmetical properties on commutators that we need in what follows, as well as algebraic properties regarding the behaviour of surjections with respect to commutators and to certain types of congruences.

In Section \ref{boolean} we study the behaviour of Boolean congruences with respect to the reticulation, in the general case, but also in particular ones, such as the case of associative commutators or that of semiprime algebras.

In Section \ref{functor}, we define a reticulation functor; our definition is not ideal, as it only acts on surjections; extending it to all morphisms remains an open problem. In this final section, we also show that the reticulation preserves quotients, and that it is a Boolean lattice exactly in the case of hyperarchimedean algebras, which we also characterize by several other conditions on their reticulation. These characterizations serve as an example for the transfer of properties to and from the category of bounded distributive lattices which the reticulation makes possible.

We intend to further pursue the study of the reticulation in this universal algebra setting and use it to transfer more properties between the variety of bounded distributive lattices and the kinds of varieties that allow a construction for the reticulation. A theme for a potentially extensive future study is characterizing those varieties with the property that the reticulations of their members cover the entire class of bounded distributive lattices.

\section{Preliminaries}
\label{preliminaries}

In this section, we recall some properties on lattices and congruences in universal algebras. For a further study of the following results on universal algebras, we refer the reader to \cite{agl}, \cite{bur}, \cite{gralgu}, \cite{koll}. For those on lattices, we recommend \cite{bal}, \cite{blyth}, \cite{cwdw}, \cite{gratzer}, \cite{schmidt}.

We shall denote by $\N $ the set of the natural numbers and by $\N ^*=\N \setminus \{0\}$. For any set $M$, ${\cal P}(M)$ shall be the set of the subsets of $M$, $id_M:M\rightarrow M$ shall be the identity map, and we shall denote by $\Delta _M=\{(x,x)\ |\ x\in M\}$ and $\nabla _M=M^2$. For any family $(M_i)_{i\in I}$ of sets and any $\displaystyle M\subseteq \prod _{i\in I}M_i$, whenever there is no danger of confusion, by $a=(a_i)_{i\in I}\in M$ we mean $a_i\in M_i$ for all $i\in I$, such that $a\in M$. For any sets $M$, $N$ and any function $f:M\rightarrow N$, we shall denote by ${\rm Ker}(f)=\{(x,y)\in M^2\ |\ f(x)=f(y)\}$, and the direct and inverse image of $f$ in the usual way; we shall denote, simply, $f=f^2:{\cal P}(M^2)\rightarrow {\cal P}(N^2)$ and $f^*=(f^2)^{-1}:{\cal P}(N^2)\rightarrow {\cal P}(M^2)$; so, for any $X\subseteq M^2$ and any $Y\subseteq N^2$, $f(X)=\{(f(a),f(b))\ |\ (a,b)\in X\}$ and $f^*(Y)=\{(a,b)\in M^2\ |\ (f(a),f(b))\in Y\}$, thus ${\rm Ker}(f)=f^*(\Delta _N)$. Also, if $X_i\subseteq M_i^2$ for all $i\in I$, then the direct product of $(X_i)_{i\in I}$ as a family of binary relations shall be denoted just as the one for sets, because there will be no danger of confusion when using this notation: $\displaystyle \prod _{i\in I}X_i=\{((a_i)_{i\in I},(b_i)_{i\in I})\ |\ (\forall \, i\in I)\, ((a_i,b_i)\in X_i)\}\subseteq M^2$. Unless mentioned otherwise, the operations and order relation of a (bounded) lattice shall be denoted in the usual way, and the complementation of a Boolean algebra shall be denoted by $\neg \, $.

Throughout this paper, whenever there is no danger of confusion, any algebra shall be designated by its support set. All algebras shall be considerred non--empty; by {\em trivial algebra} we shall mean one--element algebra, and by {\em non--trivial algebra} we shall mean algebra with at least two distinct elements. Any direct product of algebras and any quotient algebra shall be considerred with the operations defined canonically. For brevity, we shall denote by $A\cong B$ the fact that two algebras $A$ and $B$ of the same type are isomorphic.

Let $L$ be a bounded lattice. By ${\rm Id}(L)$ we shall denote the set of the ideals of $L$, that is the non--empty subsets of $L$ which are closed with respect to the join and to lower bounds. By ${\rm Filt}(L)$ we shall denote the set of the filters of $L$, that is the ideals of the dual of $L$: the non--empty subsets of $L$ which are closed with respect to the meet and to upper bounds. For any $M\subseteq L$ and any $a\in L$, $(M]$, respectively $[M)$, shall denote the ideal, respectively the filter of $L$ generated by $M$, and the principal ideal, $(\{a\}]=\{x\in L\ |\ a\geq x\}$, respectively the principal filter, $[\{a\})=\{x\in L\ |\ a\leq x\}$, generated by $a$ shall also be denoted by $(a]$, respectively $[a)$; whenever we need to specify the lattice $L$, we shall denote $[M)_L$, $(M]_L$, $[a)_L$ and $(a]_L$ instead of $[M)$, $(M]$, $[a)$ and $(a]$, respectively. It is well known that $({\rm Id}(L),\vee ,\cap ,\{0\},L)$ and $({\rm Filt}(L),\vee ,\cap ,\{1\},L)$ are bounded lattices, with $J\vee K=(J\cup K]$ and $F\vee G=[F\cup G)$ for all $J,K\in {\rm Id}(L)$ and all $F,G\in {\rm Filt}(L)$, and they are distributive iff $L$ is distributive; moreover, they are complete lattices, with $\displaystyle \bigvee _{i\in I}J_i=[\bigcup _{i\in I}J_i)$ and $\displaystyle \bigvee _{i\in I}F_i=(\bigcup _{i\in I}F_i]$ for any families $(J_i)_{i\in I}\subseteq {\rm Id}(L)$ and $(F_i)_{i\in I}\subseteq {\rm Filt}(L)$. Obviously, for any $a,b\in L$, $(a]\vee (b]=(a\vee b]$, $(a]\cap (b]=(a\wedge b]$, $[a)\vee [b)=[a\wedge b)$ and $[a)\cap [b)=[a\vee b)$. If $L$ is a complete lattice, then, for any family $(a_i)_{i\in I}\subseteq L$, $\displaystyle \bigvee _{i\in I}(a_i]=(\bigvee _{i\in I}a_i]$, $\displaystyle \bigcap _{i\in I}(a_i]=(\bigwedge _{i\in I}a_i]$, $\displaystyle \bigvee _{i\in I}[a_i)=[\bigwedge _{i\in I}a_i)$ and $\displaystyle \bigcap _{i\in I}[a_i)=[\bigvee _{i\in I}a_i)$. By ${\rm PId}(L)$, respectively ${\rm PFilt}(L)$, we shall denote the set of the principal ideals, respectively the principal filters of $L$. We shall denote by ${\rm Max}_{\rm Id}(L)$, respectively ${\rm Max}_{\rm Filt}(L)$, the set of the maximal ideals, respectively the maximal filters of $L$, that is the maximal elements of the set of proper ideals of $L$, ${\rm Id}(L)\setminus \{L\}$, respectively that of proper filters of $L$, ${\rm Filt}(L)\setminus \{L\}$. By ${\rm Spec}_{\rm Id}(L)$ we shall denote the set of the prime ideals of $L$, that is the proper ideals $P$ of $L$ such that, for any $x,y\in L$, $x\wedge y\in P$ implies $x\in P$ or $y\in P$. Dually, ${\rm Spec}_{\rm Filt}(L)$ shall denote the set of the prime filters of $L$, that is the proper filters $P$ of $L$ such that, for any $x,y\in L$, $x\vee y\in P$ implies $x\in P$ or $y\in P$.

For any algebra $A$, ${\rm Con}(A)$ shall denote the set of the congruences of $A$, and ${\rm Max}(A)$ shall denote the set of the maximal congruences of $A$, that is the maximal elements of the set of proper congruences of $A$: ${\rm Con}(A)\setminus \{\nabla _A\}$. Let $\theta \in {\rm Con}(A)$, $a\in A$, $M\subseteq A$ and $X\subseteq A^2$, arbitrary. Then $a/\theta $ shall denote the congruence class of $a$ with respect to $\theta $, $M/\theta =\{x/\theta \ |\ x\in M\}$, $p_{\theta }:A\rightarrow A/\theta $ shall be the canonical surjective morphism: $p_{\theta }(a)=a/\theta $ for all $a\in A$, $X/\theta =\{(x/\theta ,y/\theta )\ |\ (x,y)\in X\}$ and $Cg_A(X)$ shall be the congruence of $A$ generated by $X$. It is well known that $({\rm Con}(A),\vee ,\cap ,\Delta _A,\nabla _A)$ is a bounded lattice, orderred by set inclusion, where $\phi \vee \psi =Cg_A(\phi \cup \psi )$ for all $\phi ,\psi \in {\rm Con}(A)$; moreover, this is a complete lattice, in which $\displaystyle \bigvee _{i\in I}\phi _i=Cg_A(\bigcup _{i\in I}\phi _i)$ for any family $(\phi _i)_{i\in I}\subseteq {\rm Con}(A)$. For any $a,b\in A$, the principal congruence $Cg_A(\{(a,b)\})$ shall also be denoted by $Cg_A(a,b)$. The set of the principal congruences of $A$ shall be denoted by ${\rm PCon}(A)$. ${\cal K}(A)$ shall denote the set of the finitely generated congruences of $A$, which coincide to the compact elements of the lattice ${\rm Con}(A)$. Clearly, ${\rm PCon}(A)\subseteq {\cal K}(A)$ and $\Delta _A\in {\rm PCon}(A)$, because $\Delta _A=Cg_A(x,x)$ for any $x\in A$.

Throughout the rest of this paper, $\tau $ shall be a universal algebras signature, ${\cal C}$ shall be an equational class of $\tau $--algebras $A$ and $B$ shall be algebras from ${\cal C}$ and $f:A\rightarrow B$ shall be a morphism in ${\cal C}$. Unless mentioned otherwise, by {\em morphism} we shall mean $\tau $--morphism. We recall that $A$ is said to be {\em congruence--modular}, respectively {\em congruence--distributive}, iff the lattice ${\rm Con}(A)$ is modular, respectively distributive, and that ${\cal C}$ is said to be {\em congruence--modular}, respectively {\em congruence--distributive}, iff every algebra in ${\cal C}$ is congruence--modular, respectively congruence--distributive.

\begin{remark} If $\beta \in {\rm Con}(B)$, then $f^*(\beta )\in {\rm Con}(A)$; thus ${\rm Ker}(f)=f^*(\Delta _B)\in {\rm Con}(A)$. Also, $f^*(\beta )\supseteq f^*(\Delta _B)={\rm Ker}(f)$ and $f(f^*(\beta ))=\beta \cap f(A^2)$, thus, if $f$ is surjective, then $f(f^*(\beta ))=\beta $.

If $\alpha \in {\rm Con}(A)$ such that $\alpha \supseteq {\rm Ker}(f)$, then $f(\alpha )\in {\rm Con}(f(A))$, so, if $f$ is surjective, then $f(\alpha )\in {\rm Con}(B)$. Thus, for any $\alpha \in {\rm Con}(A)$, we have $f(\alpha \vee {\rm Ker}(f))\in {\rm Con}(f(A))$, so, if $f$ is surjective, then $f(\alpha \vee {\rm Ker}(f))\in {\rm Con}(B)$. Moreover, $\alpha \mapsto f(\alpha )$ is an order isomorphism from $[{\rm Ker}(f))\in {\rm PFilt}({\rm Con}(A))$ to ${\rm Con}(f(A))$, thus to ${\rm Con}(B)$ if $f$ is surjective, having the corresponding restriction of $f^*$ as inverse.

For any $\theta \in {\rm Con}(A)$, clearly, ${\rm Ker}(p_{\theta })=\theta $. By the above, for all $\alpha \in {\rm Con}(A)$ such that $\alpha \supseteq \theta $, $\alpha /\theta =p_{\theta }(\alpha )=\{(a/\theta ,b/\theta )\ |\ (a,b)\in \alpha \}\in {\rm Con}(A/\theta )$, and $\alpha \mapsto \alpha /\theta $ is a bijection from $[\theta )$ to ${\rm Con}(A/\theta )$.\label{bijlatcongr}\end{remark}

\section{The Commutator}
\label{commutator}

This section is composed of results on the commutator in arbitrary and in congruence--modular varieties, which are either previously known of very easy to derive from previously known results. For a further study of these results, see \cite{agl}, \cite{fremck}, \cite{koll}, \cite{ouwe}.

Out of the various definitions for commutator operations on congruence lattices, we have chosen to work with the {\em term condition commutator}, from the following definition. Recall that, in algebras from congruence--modular varieties, all definitions for the commutator give the same commutator operation. For any term $t$ over $\tau $, we shall denote by $t^A$ the derivative operation of $A$ associated to $t$.

\begin{definition}{\rm \cite{mcks}} Let $\alpha ,\beta \in {\rm Con}(A)$. For any $\mu \in {\rm Con}(A)$, by $C(\alpha ,\beta ;\mu )$ we denote the fact that the following condition holds: for all $n,k\in \N $ and any term $t$ over $\tau $ of arity $n+k$, if $(a_i,b_i)\in \alpha $ for all $i\in \overline{1,n}$ and $(c_j,d_j)\in \beta $ for all $j\in \overline{1,k}$, then $(t^A(a_1,\ldots ,a_n,c_1,\ldots ,c_k),t^A(a_1,\ldots ,a_n,d_1,\ldots ,d_k))\in \mu $ iff $(t^A(b_1,\ldots ,b_n,c_1,\ldots ,c_k),t^A(b_1,\ldots ,b_n,d_1,\ldots ,d_k))\in \mu $. We denote by $[\alpha ,\beta ]_A=\bigcap \{\mu \in {\rm Con}(A)\ |\ C(\alpha ,\beta ;\mu )\}$; we call $[\alpha ,\beta ]_A$ the {\em commutator of $\alpha $ and $\beta $} in $A$.\end{definition}

\begin{remark} Let $\alpha ,\beta \in {\rm Con}(A)$. Clearly, $C(\alpha ,\beta ;\nabla _A)$. Since ${\rm Con}(A)$ is a complete lattice, it follows that $[\alpha ,\beta ]_A\in {\rm Con}(A)$. Furthermore, according to \cite[Lemma 4.4,(2)]{mcks}, for any family $(\mu _i)_{i\in I}\subseteq {\rm Con}(A)$, if $C(\alpha ,\beta ;\mu _i)$ for all $i\in I$, then $\displaystyle C(\alpha ,\beta ;\bigcap _{i\in I}\mu _i)$. Hence $C(\alpha ,\beta ;[\alpha ,\beta ]_A)$, and thus $[\alpha ,\beta ]_A=\min _\{\mu \in {\rm Con}(A)\ |\ C(\alpha ,\beta ;\mu )\}$, which is exactly the definition of the commutator from \cite{meo}.\end{remark}

\begin{definition} The operation $[\cdot ,\cdot ]_A:{\rm Con}(A)\times {\rm Con}(A)\rightarrow {\rm Con}(A)$ is called the {\em commutator of $A$}.\label{1.1}\end{definition}

\begin{theorem}{\rm \cite{fremck}} If ${\cal C}$ is congruence--modular, then, for each member $M$ of ${\cal C}$, $[\cdot ,\cdot ]_M$ is the unique binary operation on ${\rm Con}(M)$ such that, for all $\alpha ,\beta \in {\rm Con}(M)$, $[\alpha ,\beta ]_M=\min \{\mu \in {\rm Con}(M)\ |\ \mu \subseteq \alpha \cap \beta $ and, for any member $N$ of ${\cal C}$ and any surjective morphism $h:M\rightarrow N$ in ${\cal C}$, $\mu \vee {\rm Ker}(h)=h^*([h(\alpha \vee {\rm Ker}(h)),h(\beta \vee {\rm Ker}(h))]_N)\}$.\label{wow}\end{theorem}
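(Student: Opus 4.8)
The plan is to verify that $[\alpha ,\beta ]_M$ lies in the set $S=\{\mu \in {\rm Con}(M)\ |\ \mu \subseteq \alpha \cap \beta$ and $\mu \vee {\rm Ker}(h)=h^*([h(\alpha \vee {\rm Ker}(h)),h(\beta \vee {\rm Ker}(h))]_N)$ for every surjective morphism $h:M\rightarrow N$ in ${\cal C}\}$, that it is the least element of $S$, and that $\min S$ is determined by $M,\alpha ,\beta$ alone, which yields uniqueness. Everything rests on the standard arithmetic of the commutator in congruence--modular varieties: monotonicity, symmetry, the inclusion $[\gamma ,\delta ]_A\subseteq \gamma \cap \delta$, additivity in each argument, and the quotient rule, which for $\theta \subseteq \gamma$ and $\theta \subseteq \delta$ in ${\rm Con}(A)$ reads $[\gamma /\theta ,\delta /\theta ]_{A/\theta }=([\gamma ,\delta ]_A\vee \theta )/\theta$, equivalently, for a surjective $g:A\rightarrow C$ and $\gamma ,\delta \supseteq {\rm Ker}(g)$, $[g(\gamma ),g(\delta )]_C=g([\gamma ,\delta ]_A\vee {\rm Ker}(g))$; and on the congruence correspondences recalled in Remark \ref{bijlatcongr}.

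First I would show $[\alpha ,\beta ]_M\in S$. The inclusion $[\alpha ,\beta ]_M\subseteq \alpha \cap \beta$ follows from $[\gamma ,\delta ]_M\subseteq \delta$ together with symmetry. For the homomorphism identity, fix a surjective $h:M\rightarrow N$ and put $\theta ={\rm Ker}(h)$. By additivity, $[\alpha \vee \theta ,\beta \vee \theta ]_M=[\alpha ,\beta ]_M\vee [\alpha ,\theta ]_M\vee [\theta ,\beta ]_M\vee [\theta ,\theta ]_M$, with $[\alpha ,\theta ]_M,[\theta ,\beta ]_M,[\theta ,\theta ]_M$ all contained in $\theta$, so $[\alpha \vee \theta ,\beta \vee \theta ]_M\vee \theta =[\alpha ,\beta ]_M\vee \theta$. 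Since $\alpha \vee \theta ,\beta \vee \theta \supseteq \theta ={\rm Ker}(h)$, the surjective form of the quotient rule gives $h([\alpha ,\beta ]_M\vee \theta )=h([\alpha \vee \theta ,\beta \vee \theta ]_M\vee \theta )=[h(\alpha \vee \theta ),h(\beta \vee \theta )]_N$; and since $[\alpha ,\beta ]_M\vee \theta \supseteq {\rm Ker}(h)$ with $h$ surjective, applying $h^*$ and cancelling via Remark \ref{bijlatcongr} gives exactly $[\alpha ,\beta ]_M\vee {\rm Ker}(h)=h^*([h(\alpha \vee {\rm Ker}(h)),h(\beta \vee {\rm Ker}(h))]_N)$.

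Next I would prove $[\alpha ,\beta ]_M=\min S$, and then uniqueness. For minimality, take an arbitrary $\mu \in S$ and apply the defining identity to the canonical surjection $p_\mu :M\rightarrow M/\mu$, whose kernel is $\mu$: this gives $\mu =\mu \vee \mu =p_\mu ^*([p_\mu (\alpha \vee \mu ),p_\mu (\beta \vee \mu )]_{M/\mu })$. As the restriction of $p_\mu ^*$ to ${\rm Con}(M/\mu )$ is injective and sends $\Delta _{M/\mu }$ to $\mu$, this forces $[(\alpha \vee \mu )/\mu ,(\beta \vee \mu )/\mu ]_{M/\mu }=\Delta _{M/\mu }$, which by the quotient rule reads $([\alpha \vee \mu ,\beta \vee \mu ]_M\vee \mu )/\mu =\Delta _{M/\mu }$, i.e.\ $[\alpha \vee \mu ,\beta \vee \mu ]_M\subseteq \mu$; monotonicity then gives $[\alpha ,\beta ]_M\subseteq [\alpha \vee \mu ,\beta \vee \mu ]_M\subseteq \mu$. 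Combined with the previous paragraph, $[\alpha ,\beta ]_M$ is the least element of $S$. For uniqueness, note that $S$ is defined purely in terms of the (already fixed) commutators $[\cdot ,\cdot ]_N$ of the algebras $N$, so $S$ and its minimum do not involve the operation being characterized; hence $\min S=[\alpha ,\beta ]_M$ is the only possible value at $(\alpha ,\beta )$ of any binary operation satisfying the stated identity for all algebras and all pairs of congruences. In fact the instance $h={\rm id}_M$ of the identity already forces any $\mu \in S$ to equal $[\alpha ,\beta ]_M$, so $S=\{[\alpha ,\beta ]_M\}$, which makes both $\min S=[\alpha ,\beta ]_M$ and uniqueness transparent.

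The step I expect to be the main obstacle is the homomorphism identity of the second paragraph: it is the only point where the specific arithmetic of the modular commutator (additivity together with the quotient rule) is genuinely used, and some care is needed to keep the three congruence correspondences apart --- between ${\rm Con}(M/{\rm Ker}(h))$, the principal filter $[{\rm Ker}(h))$ of ${\rm Con}(M)$, and ${\rm Con}(N)$ --- so that $h$, $h^*$, and passage to the quotient are each applied on the correct side. Once that identity is in hand, both minimality (via $h=p_\mu$) and uniqueness are short.
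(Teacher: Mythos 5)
The paper itself gives no proof of this theorem --- it is quoted from \cite{fremck} --- so there is nothing internal to compare with; judged on its own terms, your outline has a genuine circularity problem. All the nontrivial content of the statement is concentrated in the membership claim $[\alpha ,\beta ]_M\in S$: as you yourself observe, taking $h=id_M$ forces every $\mu \in S$ to equal $[\alpha ,\beta ]_M$, so the minimality and uniqueness parts are immediate, and the theorem amounts precisely to the assertion that, for every surjective $h:M\rightarrow N$ in ${\cal C}$, $[\alpha ,\beta ]_M\vee {\rm Ker}(h)=h^*([h(\alpha \vee {\rm Ker}(h)),h(\beta \vee {\rm Ker}(h))]_N)$. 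But this identity is exactly the ``quotient rule`` that you place among your standing assumptions (via the correspondence of Remark \ref{bijlatcongr} it is equivalent to $[h(\alpha \vee {\rm Ker}(h)),h(\beta \vee {\rm Ker}(h))]_N=h([\alpha ,\beta ]_M\vee {\rm Ker}(h))$), so your first paragraph derives the only substantive part of the theorem from itself. Within this paper the logical dependence even runs the other way: the quotient rule appears only in Remark \ref{fsurjcomut}, where it is \emph{deduced from} Theorem \ref{wow}, so it cannot be invoked as an ingredient of its proof.

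What an actual proof requires is an independent verification of the homomorphism property for the term--condition commutator in a congruence--modular variety; in \cite{fremck} this is where the real work lies (it uses the modular machinery: the difference term, the description of $[\alpha ,\beta ]$ as the least $\mu $ with $C(\alpha ,\beta ;\mu )$, etc.), and your outline does not engage with any of it. The purely lattice--theoretic steps you do carry out are fine --- the computation $[\alpha \vee \theta ,\beta \vee \theta ]_M\vee \theta =[\alpha ,\beta ]_M\vee \theta $ from additivity and $[\gamma ,\delta ]_M\subseteq \gamma \cap \delta $, the cancellation of $h^*\circ h$ on the filter $[{\rm Ker}(h))$, the minimality argument via $p_{\mu }$ (which your $h=id_M$ remark in any case renders superfluous), and the trivial uniqueness under the literal reading of the statement --- but they only reduce the theorem to the quotient rule rather than prove it.
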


\begin{theorem}{\rm \cite{bj}} If ${\cal C}$ is congruence--distributive, then, in each member of ${\cal C}$, the commutator coincides to the intersection of congruences.\label{distrib}\end{theorem}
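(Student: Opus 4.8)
The goal is to show that in a congruence--distributive variety, the term condition commutator $[\alpha,\beta]_A$ coincides with $\alpha\cap\beta$. The plan is to prove the two inclusions separately, the easy one being $[\alpha,\beta]_A\subseteq\alpha\cap\beta$, which is a general fact about the commutator (valid in any variety), and the hard one being $\alpha\cap\beta\subseteq[\alpha,\beta]_A$, which is where congruence--distributivity will be used.

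First I would recall (or quickly verify) the general inclusion $[\alpha,\beta]_A\subseteq\alpha\cap\beta$: one checks that $C(\alpha,\beta;\alpha\cap\beta)$ holds. Indeed, if $(a_i,b_i)\in\alpha$ for all $i$, then applying $t^A$ to tuples differing only in the first $n$ coordinates (the $a$'s versus the $b$'s) gives $\alpha$--related values, so the two pairs in the term condition are $\alpha$--related simultaneously; symmetrically, swapping $c$'s and $d$'s and using $(c_j,d_j)\in\beta$ shows the two pairs are $\beta$--related simultaneously; hence membership in $\alpha\cap\beta$ of one forces it for the other. Thus $\alpha\cap\beta$ is among the $\mu$ with $C(\alpha,\beta;\mu)$, so $[\alpha,\beta]_A$, being the intersection of all such $\mu$, is contained in $\alpha\cap\beta$.

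For the reverse inclusion I would invoke the characterization of the commutator in congruence--modular varieties from Theorem \ref{wow} (noting congruence--distributive implies congruence--modular): $[\alpha,\beta]_A=\min\{\mu\in{\rm Con}(A)\mid\mu\subseteq\alpha\cap\beta$ and $\mu\vee{\rm Ker}(h)=h^*([h(\alpha\vee{\rm Ker}(h)),h(\beta\vee{\rm Ker}(h))]_A$-image$)$ for all surjective $h\}$. To conclude $\alpha\cap\beta\subseteq[\alpha,\beta]_A$ it suffices to check that $\mu:=\alpha\cap\beta$ itself satisfies the defining conditions, i.e. that $\mu\subseteq\alpha\cap\beta$ (trivial) and that for every surjective morphism $h:A\rightarrow N$ in ${\cal C}$ one has $(\alpha\cap\beta)\vee{\rm Ker}(h)=h^*([h(\alpha\vee{\rm Ker}(h)),h(\beta\vee{\rm Ker}(h))]_N)$. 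Here distributivity of ${\rm Con}(A)$ gives $(\alpha\cap\beta)\vee{\rm Ker}(h)=(\alpha\vee{\rm Ker}(h))\cap(\beta\vee{\rm Ker}(h))$, and via the order isomorphism $\alpha'\mapsto h(\alpha')$ between $[{\rm Ker}(h))$ and ${\rm Con}(N)$ (Remark \ref{bijlatcongr}), which preserves meets since it is a lattice isomorphism, this transports to $h(\alpha\vee{\rm Ker}(h))\cap h(\beta\vee{\rm Ker}(h))$ in ${\rm Con}(N)$; it then remains to know that in $N$ the commutator of two congruences equals their intersection — but that is exactly the statement we are trying to prove, so I would instead run an induction-free argument directly, or better, appeal to the cleanest route below.

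The cleanest route, and the one I expect is intended, is: since ${\cal C}$ is congruence--distributive it is congruence--modular, so by Theorem \ref{wow} the commutator $[\cdot,\cdot]_M$ on each $M\in{\cal C}$ is the \emph{unique} binary operation satisfying the displayed property; hence it suffices to show that the operation $\wedge$ (intersection of congruences) satisfies that same defining property on every member of ${\cal C}$ simultaneously. That is, for each $M\in{\cal C}$ and $\alpha,\beta\in{\rm Con}(M)$, I must verify $\alpha\cap\beta=\min\{\mu\mid\mu\subseteq\alpha\cap\beta$ and $\mu\vee{\rm Ker}(h)=h^*(h(\alpha\vee{\rm Ker}(h))\cap h(\beta\vee{\rm Ker}(h)))$ for all surjective $h:M\rightarrow N\}$. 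The element $\mu=\alpha\cap\beta$ clearly attains the minimum once it is shown to belong to the set, and membership is exactly the computation $(\alpha\cap\beta)\vee{\rm Ker}(h)=(\alpha\vee{\rm Ker}(h))\cap(\beta\vee{\rm Ker}(h))=h^*(h(\alpha\vee{\rm Ker}(h))\cap h(\beta\vee{\rm Ker}(h)))$, the first equality by distributivity of ${\rm Con}(M)$, the second because $h^*$ restricted to ${\rm Con}(N)$ is the inverse of the meet-preserving isomorphism $\alpha'\mapsto h(\alpha')$ on $[{\rm Ker}(h))$, using that $\alpha\vee{\rm Ker}(h),\beta\vee{\rm Ker}(h)\supseteq{\rm Ker}(h)$. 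By uniqueness in Theorem \ref{wow}, $[\cdot,\cdot]_M=\cap$ on ${\rm Con}(M)$, which is the claim. The only mild subtlety — the ``main obstacle'' — is making sure the distributivity step is applied in the correct lattice and that the isomorphism $[{\rm Ker}(h))\cong{\rm Con}(N)$ is genuinely a \emph{lattice} isomorphism (so that it commutes with $\cap$), both of which are furnished by Remark \ref{bijlatcongr}; everything else is formal.
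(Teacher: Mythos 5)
Your easy inclusion $[\alpha ,\beta ]_A\subseteq \alpha \cap \beta $ is fine (it is already Proposition \ref{1.3}), but the hard inclusion is where the proof fails, and the appeal to the uniqueness clause of Theorem \ref{wow} does not close the circle you yourself identified. In Theorem \ref{wow} the displayed condition contains $[\,\cdot ,\cdot \,]_N$ for the codomain algebra $N$. If that occurrence means the genuine (term--condition) commutator of $N$, then what you verify --- the condition with $\cap $ substituted for $[\,\cdot ,\cdot \,]_N$ --- is a \emph{different} condition, and uniqueness of the operation satisfying the original one says nothing about operations satisfying the modified one; this is exactly the circularity of your first attempt, repackaged. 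If instead you read Theorem \ref{wow} as a simultaneous characterization of the whole family $([\,\cdot ,\cdot \,]_N)_{N\in {\cal C}}$ (your ``on every member of ${\cal C}$ simultaneously''), then the uniqueness you need is simply not available: the family assigning $\Delta _N$ to every pair of congruences passes the very same test. Indeed, for that family the set in the displayed formula becomes $\{\mu \subseteq \alpha \cap \beta \ |\ \mu \vee {\rm Ker}(h)={\rm Ker}(h)$ for every surjective $h\}$; taking $h=p_{\theta }$ for all $\theta \in {\rm Con}(M)$ (or just $h=id_M$) forces $\mu =\Delta _M$, which does belong to the set, so its minimum is $\Delta _M$. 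Thus your argument--shape would equally ``prove'' $[\alpha ,\beta ]_M=\Delta _M$ in every congruence--modular variety --- false already for groups. The structural reason is that admitting $h=id_M$ collapses the minimality clause (any $\mu $ in the set must equal the candidate value), so the displayed property gives no leverage to exclude smaller candidate operations and hence no usable uniqueness--of--families statement; congruence--distributivity never actually gets used against the commutator.

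Note also that the paper offers no proof to compare against: Theorem \ref{distrib} is quoted from J\'onsson \cite{bj}, so your argument must stand alone, and the standard argument stays with the term condition and uses J\'onsson terms. Congruence--distributivity of ${\cal C}$ yields ternary terms $t_0,\ldots ,t_n$ with $t_0(x,y,z)\approx x$, $t_n(x,y,z)\approx z$, $t_i(x,y,x)\approx x$ for all $i$, and the alternating identities $t_i(x,x,z)\approx t_{i+1}(x,x,z)$ ($i$ even), $t_i(x,z,z)\approx t_{i+1}(x,z,z)$ ($i$ odd). Let $\mu \in {\rm Con}(A)$ with $C(\alpha ,\beta ;\mu )$ and $(a,b)\in \alpha \cap \beta $. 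Apply the term condition to $t_i(u_1,v,u_2)$ with the $\alpha $--pairs $(a,a),(a,b)$ in the slots $u_1,u_2$ and the $\beta $--pair $(a,b)$ in $v$: since $t_i(a,a,a)=t_i(a,b,a)=a$, the condition gives $(t_i(a,a,b),t_i(a,b,b))\in \mu $ for every $i$, and the J\'onsson identities chain these $\mu $--relations from $a=t_0(a,a,b)$ to $b=t_n(a,b,b)$, so $(a,b)\in \mu $. Hence $\alpha \cap \beta \subseteq \mu $ for every $\mu $ with $C(\alpha ,\beta ;\mu )$, in particular $\alpha \cap \beta \subseteq [\alpha ,\beta ]_A$, which together with your easy inclusion gives the theorem.
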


For brevity, most of the times, we shall use the remarks in this paper without referencing them, and the same goes for the lemmas and propositions that state basic results.

\begin{proposition}{\rm \cite[Lemma 4.6,Lemma 4.7,Theorem 8.3]{mcks}} The commutator is:\begin{itemize}
\item increasing in both arguments, that is, for all $\alpha ,\beta ,\phi ,\psi \in {\rm Con}(A)$, if $\alpha \subseteq \beta $ and $\phi \subseteq \psi $, then $[\alpha ,\phi ]_A\subseteq [\beta ,\psi ]_A$;
\item smaller than its arguments, so, for any $\alpha ,\beta \in {\rm Con}(A)$, $[\alpha ,\beta ]_A\subseteq \alpha \cap \beta $.\end{itemize}

If ${\cal C}$ is congruence--modular, then the commutator is also:\begin{itemize}

\item commutative, that is $[\alpha ,\beta ]_A=[\beta ,\alpha ]_A$ for all $\alpha ,\beta \in {\rm Con}(A)$;
\item distributive in both arguments with respect to arbitrary joins, that is, for any families $(\alpha _i)_{i\in I}$ and $(\beta _j)_{j\in J}$ of congruences of $A$, $\displaystyle [\bigvee _{i\in I}\alpha _i,\bigvee _{j\in J}\beta _j]_A=\bigvee _{i\in I}\bigvee _{j\in J}[\alpha _i,\beta _j]_A$.\end{itemize}\label{1.3}\end{proposition}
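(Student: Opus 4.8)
\noindent The first two items hold in any variety and should come straight out of the term condition; the last two are the genuinely modular facts. I would organize the argument as follows.

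\emph{Monotonicity.} I would first note that enlarging an argument only restricts the class of witnessing congruences: if $\alpha \subseteq \alpha'$, then the term condition $C(\alpha',\beta;\mu)$ quantifies over a superset of the pairs quantified over in $C(\alpha,\beta;\mu)$, so $C(\alpha',\beta;\mu)$ implies $C(\alpha,\beta;\mu)$; hence $\{\mu \in {\rm Con}(A) \mid C(\alpha',\beta;\mu)\} \subseteq \{\mu \in {\rm Con}(A) \mid C(\alpha,\beta;\mu)\}$, and intersecting over the smaller family yields the larger congruence, i.e. $[\alpha,\beta]_A \subseteq [\alpha',\beta]_A$. The same argument in the second coordinate gives monotonicity in $\beta$, and composing the two inclusions $[\alpha,\phi]_A \subseteq [\beta,\phi]_A \subseteq [\beta,\psi]_A$ proves the claim.

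\emph{The bound $[\alpha,\beta]_A \subseteq \alpha \cap \beta$.} It suffices to check that $C(\alpha,\beta;\alpha)$ and $C(\alpha,\beta;\beta)$ both hold, since then $[\alpha,\beta]_A$, being the intersection of all $\mu$ with $C(\alpha,\beta;\mu)$, lies inside $\alpha \cap \beta$. For $C(\alpha,\beta;\alpha)$: given a term $t$ of arity $n+k$ and pairs $(a_i,b_i) \in \alpha$, $(c_j,d_j) \in \beta$, compatibility of $\alpha$ gives $(t^A(a_1,\ldots,a_n,c_1,\ldots,c_k),t^A(b_1,\ldots,b_n,c_1,\ldots,c_k)) \in \alpha$ and likewise with the $d_j$'s in place of the $c_j$'s, so by transitivity $(t^A(a_1,\ldots,a_n,c_1,\ldots,c_k),t^A(a_1,\ldots,a_n,d_1,\ldots,d_k)) \in \alpha$ iff $(t^A(b_1,\ldots,b_n,c_1,\ldots,c_k),t^A(b_1,\ldots,b_n,d_1,\ldots,d_k)) \in \alpha$. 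For $C(\alpha,\beta;\beta)$: compatibility of $\beta$ shows that \emph{both} sides of the biconditional hold unconditionally, since $(c_j,d_j) \in \beta$ for all $j$. Hence $[\alpha,\beta]_A \subseteq \alpha$ and $[\alpha,\beta]_A \subseteq \beta$.

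\emph{The modular facts.} Here congruence modularity is used essentially. For distributivity over joins, the inclusion $\bigvee_{i \in I} \bigvee_{j \in J} [\alpha_i,\beta_j]_A \subseteq [\bigvee_{i \in I} \alpha_i, \bigvee_{j \in J} \beta_j]_A$ is immediate from monotonicity. For the reverse inclusion, and for commutativity, I would pass through Day's characterization of congruence-modular varieties by Day terms (and the difference term they yield): using these terms one verifies finite additivity $[\alpha,\beta \vee \gamma]_A = [\alpha,\beta]_A \vee [\alpha,\gamma]_A$ together with symmetry of the term condition, and then promotes finite additivity to distributivity over arbitrary joins by writing $\bigvee_{j \in J} \beta_j$ as the directed join of its finite subjoins and invoking continuity of the commutator with respect to directed joins (the failure of a term condition being witnessed by finitely many pairs at a time). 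Alternatively, one could take the Freese--McKenzie description of Theorem \ref{wow} as the starting point. The main obstacle is commutativity: the term-condition commutator is genuinely non-symmetric in arbitrary varieties, so there is no shortcut from the definition alone, and this is the one place where I would lean directly on the Day-term machinery of \cite{mcks} rather than reprove it.
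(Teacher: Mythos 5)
The paper offers no proof of this proposition at all: it is quoted as a known result from \cite{mcks} (Lemma 4.6, Lemma 4.7, Theorem 8.3), so there is no internal argument for you to match. What you do is strictly more: your verifications of the two variety-free items are correct and standard — monotonicity follows because enlarging $\alpha$ or $\beta$ strengthens the term condition, shrinking the family $\{\mu \mid C(\alpha ,\beta ;\mu )\}$ and hence enlarging its intersection, and the bound $[\alpha ,\beta ]_A\subseteq \alpha \cap \beta $ follows from $C(\alpha ,\beta ;\alpha )$ and $C(\alpha ,\beta ;\beta )$ exactly as you check them (compatibility plus transitivity for $\alpha $, compatibility alone for $\beta $). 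Deferring commutativity and complete additivity to the Day-term/difference-term machinery of \cite{mcks} (or to Freese--McKenzie, as in Theorem \ref{wow}) is exactly the level of rigor the paper itself adopts, so no gap is created there.

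One point in your sketch is looser than you suggest: the promotion from finite additivity to distributivity over arbitrary joins is not just "failures are witnessed by finitely many pairs," because the term condition $C(\alpha ,\beta ;\mu )$ is \emph{not} monotone in the third argument $\mu $ — knowing $C(\alpha ,\gamma _F;[\alpha ,\gamma _F]_A)$ and $[\alpha ,\gamma _F]_A\subseteq \delta $ does not by itself yield the biconditional modulo $\delta $. In the congruence-modular setting one first needs the fact (again from the cited machinery) that $C(\alpha ,\beta ;\mu )$ is equivalent to $[\alpha ,\beta ]_A\subseteq \mu $; with that, your directed-join argument over finite subjoins goes through. Since you explicitly lean on \cite{mcks} for precisely this stretch, this is a presentational imprecision rather than a genuine gap, but it is the step where an unqualified "continuity of the commutator" claim would fail outside the modular context.
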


\begin{remark} Assume that $[\cdot ,\cdot ]_A$ is commutative. Then the distributivity of $[\cdot ,\cdot ]_A$ in both arguments w.r.t. arbitrary joins is equivalent to its distributivity in one argument w.r.t. arbitrary joins, which in turn is equivalent to its distributivity w.r.t. the join in the case when ${\rm Con}(A)$ is finite, in particular when $A$ is finite.

Obviously, if $[\cdot ,\cdot ]_A$ equals the intersection and it is distributive w.r.t. the join (by Proposition \ref{1.3}, the latter holds if ${\cal C}$ is congruence--modular), then $A$ is congruence--distributive.\end{remark}

\begin{lemma}{\rm \cite{fremck}} If ${\cal C}$ is congruence--modular and $S$ is a subalgebra of $A$, then, for any $\alpha ,\beta \in {\rm Con}(A)$, $[\alpha \cap S^2,\beta \cap S^2]_S\subseteq [\alpha ,\beta ]_A\cap S^2$.\label{1.8}\end{lemma}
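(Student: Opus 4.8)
The plan is to reduce the statement about the subalgebra $S$ to the definition of the commutator via the term condition $C(\cdot,\cdot;\cdot)$. Concretely, I would set $\mu = [\alpha,\beta]_A \cap S^2$, observe that $\mu \in {\rm Con}(S)$ (it is the restriction to $S$ of a congruence of $A$, intersected with $S^2$, hence a congruence of $S$), and then show that $C(\alpha \cap S^2, \beta \cap S^2; \mu)$ holds in $S$. Since $[\alpha \cap S^2, \beta \cap S^2]_S$ is by definition the least congruence $\nu$ of $S$ with $C(\alpha \cap S^2, \beta \cap S^2; \nu)$, this immediately yields $[\alpha \cap S^2, \beta \cap S^2]_S \subseteq \mu = [\alpha,\beta]_A \cap S^2$, which is the claim. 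Note that the congruence--modularity hypothesis is really only there to guarantee that the various definitions of the commutator agree; the term--condition argument itself works in any variety, so in fact I would expect the proof to use little beyond the definition.

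The key step is verifying $C(\alpha \cap S^2, \beta \cap S^2; \mu)$ in $S$. Take $n,k \in \N$, a $\tau$--term $t$ of arity $n+k$, pairs $(a_i,b_i) \in \alpha \cap S^2$ for $i \in \overline{1,n}$ and $(c_j,d_j) \in \beta \cap S^2$ for $j \in \overline{1,k}$. Because $S$ is a subalgebra, $t^S$ is the restriction of $t^A$ to tuples from $S$, so the elements $t^S(a_1,\ldots,a_n,c_1,\ldots,c_k)$ etc.\ coincide with the corresponding $t^A$--values and all lie in $S$. Now $(a_i,b_i) \in \alpha$ and $(c_j,d_j) \in \beta$ in $A$, and $C(\alpha,\beta;[\alpha,\beta]_A)$ holds in $A$ (this is recorded in the Remark following the definition of the commutator). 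Hence the biconditional
\[
(t^A(\bar a,\bar c),t^A(\bar a,\bar d)) \in [\alpha,\beta]_A \iff (t^A(\bar b,\bar c),t^A(\bar b,\bar d)) \in [\alpha,\beta]_A
\]
holds, where I abbreviate $\bar a = (a_1,\ldots,a_n)$ and so on. Since all four elements involved lie in $S^2$, membership in $[\alpha,\beta]_A$ is equivalent to membership in $[\alpha,\beta]_A \cap S^2 = \mu$, and membership in $\mu$ as a congruence of $S$ is the same thing. This gives exactly the term condition $C(\alpha \cap S^2, \beta \cap S^2; \mu)$ in $S$, as required.

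The only mild subtlety, and the step I would be most careful about, is the bookkeeping that lets me pass freely between ``$(x,y) \in [\alpha,\beta]_A$'' and ``$(x,y) \in \mu$'' for $x,y \in S$: this works precisely because $\mu = [\alpha,\beta]_A \cap S^2$, so for a pair already known to be in $S^2$ the two conditions are literally equivalent, and one direction of the term condition transfers to the other without needing any closure property of $S^2$ beyond being a subalgebra square. There is no genuine obstacle here; the lemma is, as the excerpt itself hints, ``very easy to derive from previously known results,'' and the argument is essentially a direct unwinding of the term--condition definition restricted along the inclusion $S \hookrightarrow A$. One could alternatively cite \cite{fremck} directly, but the self--contained argument above is short enough to include.
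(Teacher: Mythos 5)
Your argument is correct, and it is worth noting that the paper itself gives no proof of this lemma at all: it is simply cited from Freese--McKenzie, so your self-contained derivation is a genuine addition rather than a reproduction. Your route — take $\mu=[\alpha ,\beta ]_A\cap S^2$, note $\mu \in {\rm Con}(S)$, and verify the term condition $C(\alpha \cap S^2,\beta \cap S^2;\mu )$ in $S$ by restricting the condition $C(\alpha ,\beta ;[\alpha ,\beta ]_A)$ (which the paper records in the remark after the definition) along the inclusion $S\hookrightarrow A$ — is exactly the natural unwinding of the term--condition definition, and the bookkeeping you flag is handled correctly: since $t^S$ is the restriction of $t^A$ and all four values lie in $S$, membership in $[\alpha ,\beta ]_A$ and in $\mu $ coincide, so the biconditional transfers verbatim; minimality of the term--condition commutator then gives the inclusion. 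Your observation that congruence--modularity plays no role in this argument is also accurate: the hypothesis is present only because the source works in the modular setting, where all standard definitions of the commutator agree with the term--condition one; your proof establishes the inclusion for the term--condition commutator in an arbitrary variety, which is slightly more general than what the citation delivers, at the cost of being tied to this particular definition rather than to whichever commutator Freese--McKenzie manipulate in their proof.
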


\begin{proposition}{\rm \cite[Theorem 5.17, p. 48]{ouwe}} Assume that ${\cal C}$ is congruence--modular, and let $n\in \N ^*$, $M_1,\ldots ,M_n$ be algebras from ${\cal C}$, $\displaystyle M=\prod _{i=1}^nM_i$ and, for all $i\in \overline{1,n}$, $\alpha _i,\beta _i\in {\rm Con}(M_i)$. Then: $\displaystyle [\prod _{i=1}^n\alpha _i,\prod _{i=1}^n\beta _i]_M=\prod _{i=1}^n[\alpha _i,\beta _i]_{M_i}$.\label{comutprod}\end{proposition}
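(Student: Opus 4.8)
The plan is to prove the identity directly for general $n$, using the distributivity of the commutator over arbitrary (here, finite) joins and the fact that it is smaller than its arguments (Proposition \ref{1.3}), the equality in Theorem \ref{wow}, and the modularity of ${\rm Con}(M)$ (here $M\in {\cal C}$ since ${\cal C}$ is a variety, hence closed under finite products). For each $i\in \overline{1,n}$, let $\pi _i:M\rightarrow M_i$ be the canonical projection, which is a surjective morphism in ${\cal C}$ because the remaining factors are non--empty, and set $\eta _i={\rm Ker}(\pi _i)$ and $\displaystyle \eta ^i=\bigcap _{j\neq i}\eta _j$; concretely, $\eta _i$ is the product congruence having $\Delta _{M_i}$ on the $i$--th coordinate and $\nabla _{M_j}$ on the others, while $\eta ^i$ has $\nabla _{M_i}$ on the $i$--th coordinate and $\Delta _{M_j}$ on the others, so that $\eta _i\cap \eta ^i=\Delta _M$ and $\eta _i\vee \eta ^i=\nabla _M$. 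For any $\gamma _i\in {\rm Con}(M_i)$, write $\widehat{\gamma _i}$ for the product congruence on $M$ with $\gamma _i$ on the $i$--th coordinate and $\Delta _{M_j}$ on the others. A straightforward check (using that $n$ is finite, so that joins of product congruences are computed coordinatewise, and that each $\pi _i$ is surjective, so $\pi _i(\pi _i^*(\gamma _i))=\gamma _i$) gives $\widehat{\gamma _i}\subseteq \eta ^i$, $\widehat{\gamma _i}\vee \eta _i=\pi _i^*(\gamma _i)$, $\pi _i(\widehat{\gamma _i}\vee \eta _i)=\gamma _i$, and $\displaystyle \prod _{i=1}^n\gamma _i=\bigvee _{i=1}^n\widehat{\gamma _i}$ (the last equality by composing one--coordinate--at--a--time moves, an $n$--fold composition).

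The first main step is the single--coordinate identity $[\widehat{\alpha _i},\widehat{\beta _i}]_M=\widehat{[\alpha _i,\beta _i]_{M_i}}$. Applying Theorem \ref{wow} to the surjection $\pi _i$ and using $\pi _i(\widehat{\alpha _i}\vee \eta _i)=\alpha _i$, $\pi _i(\widehat{\beta _i}\vee \eta _i)=\beta _i$ yields $[\widehat{\alpha _i},\widehat{\beta _i}]_M\vee \eta _i=\pi _i^*([\alpha _i,\beta _i]_{M_i})$. On the other hand, since the commutator is smaller than its arguments, $[\widehat{\alpha _i},\widehat{\beta _i}]_M\subseteq \widehat{\alpha _i}\subseteq \eta ^i$. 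Then modularity and $\eta _i\cap \eta ^i=\Delta _M$ give
\[ [\widehat{\alpha _i},\widehat{\beta _i}]_M=[\widehat{\alpha _i},\widehat{\beta _i}]_M\vee (\eta _i\cap \eta ^i)=([\widehat{\alpha _i},\widehat{\beta _i}]_M\vee \eta _i)\cap \eta ^i=\pi _i^*([\alpha _i,\beta _i]_{M_i})\cap \eta ^i=\widehat{[\alpha _i,\beta _i]_{M_i}}, \]
the last equality being an immediate coordinatewise computation.

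The second step assembles the result. From $\displaystyle \prod _{i=1}^n\alpha _i=\bigvee _{i=1}^n\widehat{\alpha _i}$, $\displaystyle \prod _{j=1}^n\beta _j=\bigvee _{j=1}^n\widehat{\beta _j}$ and the distributivity of the commutator over finite joins we obtain $\displaystyle [\prod _{i=1}^n\alpha _i,\prod _{j=1}^n\beta _j]_M=\bigvee _{i,j}[\widehat{\alpha _i},\widehat{\beta _j}]_M$. For $i\neq j$, the commutator being below its arguments gives $[\widehat{\alpha _i},\widehat{\beta _j}]_M\subseteq \widehat{\alpha _i}\cap \widehat{\beta _j}=\Delta _M$, so only the diagonal terms survive; by the first step, $\displaystyle [\prod _{i=1}^n\alpha _i,\prod _{j=1}^n\beta _j]_M=\bigvee _{i=1}^n[\widehat{\alpha _i},\widehat{\beta _i}]_M=\bigvee _{i=1}^n\widehat{[\alpha _i,\beta _i]_{M_i}}=\prod _{i=1}^n[\alpha _i,\beta _i]_{M_i}$, as desired. (Alternatively, one could reduce to the case $n=2$ by an obvious induction and run the same argument there.)

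I expect the main obstacle to be the inclusion ``$\supseteq$'' hidden inside the single--coordinate identity: one cannot simply restrict along an embedding of $M_i$ into $M$ and invoke Lemma \ref{1.8}, since a one--element subalgebra on the remaining coordinates need not exist and, in any case, that lemma points in the opposite direction. Instead the argument must be routed through the \emph{equality} supplied by Theorem \ref{wow} and then closed off using the modular law applied to the complementary pair of factor congruences $\eta _i,\eta ^i$ --- which is precisely the point where congruence--modularity of ${\cal C}$ is indispensable. The remaining, minor points to be careful about are that all the projections are genuinely surjective (this uses the standing assumption that all algebras are non--empty) and that the finiteness of $n$ is really used when handling joins and $n$--fold compositions of product congruences.
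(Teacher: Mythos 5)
Your argument is correct. Note first that the paper does not prove this proposition at all: it is quoted with a citation to \cite[Theorem 5.17]{ouwe}, so there is no in-paper proof to compare against; what you have produced is a self-contained derivation from the ingredients the paper does make available, and it is essentially the standard argument one finds in the cited source. The key steps all check out: the factor congruences $\eta _i={\rm Ker}(\pi _i)$ and $\eta ^i=\bigcap _{j\neq i}\eta _j$ do satisfy $\eta _i\cap \eta ^i=\Delta _M$ and $\eta _i\vee \eta ^i=\nabla _M$; the identities $\widehat{\gamma _i}\vee \eta _i=\pi _i^*(\gamma _i)$ and $\prod _{i=1}^n\gamma _i=\bigvee _{i=1}^n\widehat{\gamma _i}$ are correct coordinatewise computations (the latter genuinely using finiteness of $n$); Theorem \ref{wow} (equivalently, Remark \ref{fsurjcomut} applied to the surjective projection $\pi _i$, surjectivity coming from the standing non--emptiness convention) gives $[\widehat{\alpha _i},\widehat{\beta _i}]_M\vee \eta _i=\pi _i^*([\alpha _i,\beta _i]_{M_i})$; and the modular law, applied with $[\widehat{\alpha _i},\widehat{\beta _i}]_M\subseteq \widehat{\alpha _i}\subseteq \eta ^i$, closes the single--coordinate identity $[\widehat{\alpha _i},\widehat{\beta _i}]_M=\widehat{[\alpha _i,\beta _i]_{M_i}}$. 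The assembly step via distributivity of the commutator over joins (Proposition \ref{1.3}), with the off--diagonal terms killed by $\widehat{\alpha _i}\cap \widehat{\beta _j}=\Delta _M$ for $i\neq j$, is also correct. Your closing remark is well taken: the inclusion one cannot get by restriction along a subalgebra is exactly where the equality of Theorem \ref{wow} together with congruence--modularity (via the modular law on the complementary pair $\eta _i,\eta ^i$) is indispensable, and indeed the proposition is known to fail outside the congruence--modular setting.
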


\begin{remark} By Theorem \ref{wow} and Remark \ref{bijlatcongr}, if ${\cal C}$ is congruence--modular, $\alpha ,\beta ,\theta \in {\rm Con}(A)$ and $f$ is surjective, then $[f(\alpha \vee {\rm Ker}(f)),f(\beta \vee {\rm Ker}(f))]_B=f([\alpha ,\beta ]_A\vee {\rm Ker}(f))$, thus $[(\alpha \vee \theta )/\theta ,(\beta \vee \theta )/\theta]_B=([\alpha ,\beta ]_A\vee \theta )/\theta $, hence, if $\theta \subseteq [\alpha ,\beta ]_A$, then $[\alpha /\theta ,\beta /\theta ]_{A/\theta }=[\alpha ,\beta ]_A/\theta $.\label{fsurjcomut}\end{remark}

\begin{definition}{\rm \cite{fremck}} Let $\phi $ be a proper congruence of $A$. Then $\phi $ is called a {\em prime congruence} of $A$ iff, for all $\alpha ,\beta \in {\rm Con}(A)$, $[\alpha ,\beta ]_A\subseteq \phi $ implies $\alpha \subseteq \phi $ or $\beta \subseteq \phi $. $\phi $ is called a {\em semiprime congruence} of $A$ iff, for all $\alpha \in {\rm Con}(A)$, $[\alpha ,\alpha ]_A\subseteq \phi $ implies $\alpha \subseteq \phi $.\label{1.4}\end{definition}

The set of the prime congruences of $A$ shall be denoted by ${\rm Spec}(A)$. ${\rm Spec}(A)$ is called the {\em (prime) spectrum} of $A$ and ${\rm Max}(A)$ is called the {\em maximal spectrum} of $A$.

Following \cite{koll}, we say that ${\cal C}$ is {\em semi--degenerate} iff no non--trivial algebra in ${\cal C}$ has one--element subalgebras. For instance, the class of unitary rings and any class of bounded orderred structures is semi--degenerate.

\begin{lemma}{\rm \cite[Theorem $5.3$]{agl}} If ${\cal C}$ is congruence--modular and semi--degenerate, then:\begin{itemize}
\item any proper congruence of $A$ is included in a maximal congruence of $A$;
\item any maximal congruence of $A$ is prime.\end{itemize}\label{folclor}\end{lemma}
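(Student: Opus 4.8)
The statement has two parts, and the standard strategy is to establish the first (every proper congruence lies below a maximal one) by a Zorn's Lemma argument, then deduce the second (maximal implies prime) using semi-degeneracy. For the first part, I would fix a proper congruence $\theta \in {\rm Con}(A) \setminus \{\nabla_A\}$ and consider the poset $\mathcal{P} = \{\psi \in {\rm Con}(A) \mid \theta \subseteq \psi \subsetneq \nabla_A\}$ ordered by inclusion. This is non-empty since $\theta \in \mathcal{P}$. The key point is that $\mathcal{P}$ is closed under unions of chains: if $(\psi_i)_{i \in I}$ is a chain in $\mathcal{P}$, then $\bigcup_{i\in I}\psi_i$ is again a congruence (a directed union of congruences is a congruence, since the operations are finitary), it clearly contains $\theta$, and — this is where semi-degeneracy enters — it must be proper. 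Here I would invoke the characterization of semi-degenerate congruence-modular varieties: in such a variety $\nabla_A$ is a compact congruence for every $A$ (equivalently, $\nabla_A$ is finitely generated, or $A$ has no skew congruences in the relevant sense), so $\nabla_A$ cannot be the union of a chain of strictly smaller congruences. Hence Zorn's Lemma applies and yields a maximal element of $\mathcal{P}$, which is exactly a maximal congruence of $A$ containing $\theta$.

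For the second part, let $\phi \in {\rm Max}(A)$ and suppose $[\alpha, \beta]_A \subseteq \phi$ with $\alpha \not\subseteq \phi$ and $\beta \not\subseteq \phi$; I aim for a contradiction. By maximality of $\phi$ we get $\alpha \vee \phi = \nabla_A = \beta \vee \phi$. Now I would pass to the quotient $A/\phi$: since ${\cal C}$ is congruence-modular, the commutator distributes over joins (Proposition \ref{1.3}), so $[\alpha \vee \phi, \beta \vee \phi]_A = [\alpha,\beta]_A \vee [\alpha,\phi]_A \vee [\phi,\beta]_A \vee [\phi,\phi]_A \subseteq \phi$, using that the commutator is smaller than its arguments for the last three terms and the hypothesis for the first. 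Thus $[\nabla_A, \nabla_A]_A = [\alpha \vee \phi, \beta \vee \phi]_A \subseteq \phi \subsetneq \nabla_A$. But in a semi-degenerate variety one has $[\nabla_A, \nabla_A]_A = \nabla_A$ whenever $A$ is non-trivial (this is one of the standard equivalent formulations of semi-degeneracy for congruence-modular varieties, and $A/\phi$ being a proper quotient is non-trivial), a contradiction.

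Alternatively, and perhaps cleaner, I can work directly in the quotient: by Remark \ref{fsurjcomut}, if $\phi \subseteq [\alpha,\beta]_A$ — which here need not hold — one descends the commutator; instead I use that $p_\phi$ is surjective and $[p_\phi(\alpha), p_\phi(\beta)]_{A/\phi} = p_\phi([\alpha,\beta]_A \vee \phi)$, and since $[\alpha,\beta]_A \subseteq \phi$ the right side is $\Delta_{A/\phi}$, while $p_\phi(\alpha) = p_\phi(\alpha \vee \phi) = \nabla_{A/\phi}$ and likewise for $\beta$; so $[\nabla_{A/\phi}, \nabla_{A/\phi}]_{A/\phi} = \Delta_{A/\phi}$, forcing $A/\phi$ trivial, contradicting $\phi \neq \nabla_A$.

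The main obstacle is not really a difficulty of ideas but of bookkeeping: I must make precise exactly which consequence of "semi-degenerate" I am using and cite it correctly — specifically the fact that in a semi-degenerate congruence-modular variety every non-trivial algebra satisfies $[\nabla_A,\nabla_A]_A = \nabla_A$ (equivalently $\nabla_A \in {\cal K}(A)$, i.e. $\nabla_A$ is a compact, indeed principal, congruence). This is precisely the content of \cite[Theorem 5.3]{agl} or the surrounding results in \cite{koll}, so the honest write-up is short; the temptation to be resisted is re-deriving these structural facts from the definition of semi-degeneracy rather than quoting them.
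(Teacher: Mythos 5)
The paper does not actually prove this lemma: it is quoted verbatim from \cite[Theorem 5.3]{agl}, so there is no internal proof to compare against. Your argument supplies the standard proof and it is essentially correct: for the first bullet, Zorn's Lemma applies because $\nabla_A\in {\cal K}(A)$ (Proposition \ref{2.6}, which needs only semi--degeneracy, not modularity), so the union of a chain of proper congruences cannot be $\nabla_A$; for the second bullet, maximality gives $\alpha \vee \phi =\beta \vee \phi =\nabla_A$, and join--distributivity of the commutator (Proposition \ref{1.3}, where congruence--modularity is genuinely used) together with $[\alpha ,\beta ]_A\subseteq \phi$ yields $\nabla_A=[\nabla_A,\nabla_A]_A\subseteq \phi$, contradicting properness, where $[\nabla_A,\nabla_A]_A=\nabla_A$ is exactly Lemma \ref{distribsemid},(\ref{distribsemid1}) combined with Proposition \ref{prodcongr}. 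Two small inaccuracies in your side remarks are worth fixing, though neither is a gap in the argument: the conditions you list as ``equivalent'' to semi--degeneracy are not mutually equivalent --- $\nabla_A\in{\cal K}(A)$ characterizes semi--degeneracy (Proposition \ref{2.6}), whereas $[\nabla_M,\nabla_M]_M=\nabla_M$ and the absence of skew congruences are consequences of it under congruence--modularity, and compactness of $\nabla_A$ does not mean it is principal; and in your ``alternative'' quotient argument, the step from $[\nabla_{A/\phi},\nabla_{A/\phi}]_{A/\phi}=\Delta_{A/\phi}$ to ``$A/\phi$ trivial'' is not automatic (it would only say $A/\phi$ is Abelian) but again follows from $[\nabla_M,\nabla_M]_M=\nabla_M$ applied to the member $A/\phi$ of ${\cal C}$, so you should cite that fact there explicitly rather than implicitly.
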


\begin{remark} By Lemma \ref{folclor}, if $A$ is non--trivial and ${\cal C}$ is congruence--modular and semi--degenerate, then $A$ has maximal congruences, thus it has prime congruences.\end{remark}

\begin{proposition}{\rm \cite{koll}} ${\cal C}$ is semi--degenerate iff, for all members $M$ of ${\cal C}$, $\nabla _M\in {\cal K}(M)$.\label{2.6}\end{proposition}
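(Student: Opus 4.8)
The plan is to prove the two implications separately, each by contraposition.

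\emph{First, assuming $\nabla _M\in {\cal K}(M)$ for every $M\in {\cal C}$, I would show ${\cal C}$ is semi--degenerate.} Suppose not: some non--trivial $N\in {\cal C}$ has a one--element subalgebra $\{a\}$, and fix $b\in N\setminus \{a\}$. Since $\{a\}$ is a subalgebra, $f^N(a,\ldots ,a)=a$ for every fundamental operation $f$ of $N$ and $c^N=a$ for every constant symbol $c$, so the set $M$ of those sequences in $N^{\N }$ that equal $a$ on all but finitely many coordinates is a non--empty subalgebra of $N^{\N }$; thus $M\in {\cal C}$, and $M$ is non--trivial, since it contains the constant sequence $\overline{a}$ and, for each $k\in \N $, the sequence $e_k$ with $(e_k)_k=b$ and $(e_k)_i=a$ for $i\neq k$. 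Then $\nabla _M$ is not compact: if $\nabla _M=Cg_M(S)$ for some finite $S\subseteq M^2$, choose $K\in \N $ such that both components of every pair in $S$ equal $a$ on all coordinates $>K$; then $S\subseteq \theta :=\{(s,s')\in M^2\, :\, s\text{ and }s'\text{ agree on all coordinates }>K\}\in {\rm Con}(M)$ (the kernel of the projection $M\rightarrow N^{\{i\in \N \, :\, i>K\}}$), hence $\nabla _M=Cg_M(S)\subseteq \theta $; but $(e_{K+1},\overline{a})\in \nabla _M\setminus \theta $ --- a contradiction. So ${\cal C}$ is semi--degenerate.

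\emph{Second, assuming ${\cal C}$ semi--degenerate, I would show $\nabla _M\in {\cal K}(M)$ for every $M\in {\cal C}$.} If $M$ is trivial this is clear ($\nabla _M=\Delta _M=Cg_M(x,x)$), so assume $\nabla _M\notin {\cal K}(M)$ and aim for a contradiction. Then $Cg_M(S)\subsetneq \nabla _M$ for every finite $S\subseteq M^2$, so every quotient $M/Cg_M(S)$ is non--trivial. Let $D$ be the set of finite subsets of $M^2$, directed by $\subseteq $; the ``tails'' $T_S:=\{S'\in D\, :\, S'\supseteq S\}$, $S\in D$, have the finite intersection property, so there is an ultrafilter $U$ on $D$ with $T_S\in U$ for all $S$. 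Put $\displaystyle N=\Big(\prod _{S\in D}M/Cg_M(S)\Big)\big/\, U$, which is a quotient of a product of members of ${\cal C}$, hence $N\in {\cal C}$, and which is non--trivial (choose distinct elements in each factor). Fix $a\in M$ and let $\alpha \in N$ be the class of $\bigl(a/Cg_M(S)\bigr)_{S\in D}$. Then $\{\alpha \}$ is a subalgebra of $N$: for any fundamental operation symbol $f$ of $\tau $ (nullary ones included), $f^N(\alpha ,\ldots ,\alpha )$ is the class of $\bigl(f^M(a,\ldots ,a)/Cg_M(S)\bigr)_{S\in D}$, and this equals $\alpha $ because $\{S\in D\, :\, (f^M(a,\ldots ,a),a)\in Cg_M(S)\}$ contains $T_{\{(f^M(a,\ldots ,a),a)\}}$ --- since $S\supseteq \{(f^M(a,\ldots ,a),a)\}$ gives $(f^M(a,\ldots ,a),a)\in S\subseteq Cg_M(S)$ --- and hence lies in $U$. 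So $N$ is a non--trivial member of ${\cal C}$ with a one--element subalgebra, contradicting semi--degeneracy; therefore $\nabla _M\in {\cal K}(M)$.

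The second implication is the substantive one; it is the direction for which the statement invokes \cite{koll}. The first merely records that a one--element subalgebra turns the eventually--constant sequences over $N$ into an algebra whose top congruence fails to be finitely generated, while the second must manufacture, out of the non--compactness of $\nabla _M$, a genuine non--trivial algebra with a one--element subalgebra (the ultraproduct of the quotients $M/Cg_M(S)$ along the filter of tails), and the crux is verifying that its ``diagonal'' element $\alpha $ really is a one--element subalgebra, which is exactly what contradicts semi--degeneracy. A careful writeup should also note that every algebra constructed above lies in ${\cal C}$ (immediate, as ${\cal C}$ is an equational class) and handle the degenerate cases (trivial $M$, or a signature $\tau $ with no nullary operation symbols) explicitly.
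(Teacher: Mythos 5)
The paper gives no proof of this proposition; it is quoted verbatim from Koll\'ar's note \cite{koll}, so there is no argument of the authors to compare yours against, and your blind proof must be judged on its own. It is correct and self-contained. In the first implication, the algebra $M$ of eventually-$a$ sequences over a non-trivial $N$ with one-element subalgebra $\{a\}$ does lie in ${\cal C}$ (subalgebra of a power), and the kernel of the restriction map to the coordinates beyond $K$ is a congruence containing any prospective finite generating set $S$ while missing $(e_{K+1},\overline{a})$, so $\nabla _M\notin {\cal K}(M)$, as needed. In the second implication, the reduced product of the non-trivial quotients $M/Cg_M(S)$ over the filter of tails of the directed set of finite $S\subseteq M^2$ is a quotient of a product of members of ${\cal C}$, hence in ${\cal C}$; it is non-trivial because tuples chosen distinct in every coordinate have empty agreement set; and the class $\alpha $ of $(a/Cg_M(S))_{S}$ is fixed by every fundamental operation (nullary ones included) because the relevant pair lies in $Cg_M(S)$ for all $S$ in a tail, so $\{\alpha \}$ is a one-element subalgebra of a non-trivial member, contradicting semi-degeneracy. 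Two minor remarks: the ultrafilter is not actually needed, since the proper filter generated by the tails already suffices (non-triviality uses only that $\emptyset $ is not in the filter, and the absorption argument uses only upward closure), and the routine verifications you flag at the end (trivial $M$, nullary symbols) are indeed the only loose ends; with them written out this is a complete proof of Koll\'ar's result.
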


\begin{proposition}{\rm \cite[Theorem 8.5, p. 85]{fremck}} If ${\cal C}$ is congruence--modular, then the following are equivalent:\begin{enumerate}
\item for any algebra $M$ from ${\cal C}$, $[\nabla _M,\nabla _M]_M=\nabla _M$;
\item for any algebra $M$ from ${\cal C}$ and any $\theta \in {\rm Con}(M)$, $[\theta ,\nabla _M]_M=\theta $;
\item ${\cal C}$ has no skew congruences, that is, for any algebras $M$ and $N$ from ${\cal C}$, ${\rm Con}(M\times N)=\{\theta \times \zeta \ |\ \theta \in {\rm Con}(M),\zeta \in {\rm Con}(N)\}$.\end{enumerate}\label{prodcongr}\end{proposition}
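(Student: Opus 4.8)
My plan is to prove the three conditions equivalent via the cycle of implications (ii)$\Rightarrow$(iii)$\Rightarrow$(i)$\Rightarrow$(ii); note that (ii)$\Rightarrow$(i) is in any case immediate, upon taking $\theta =\nabla _M$.

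First I would prove (ii)$\Rightarrow$(iii). Let $M,N$ be members of ${\cal C}$ and $\psi \in {\rm Con}(M\times N)$; denote by $\eta _1=\Delta _M\times \nabla _N$ and $\eta _2=\nabla _M\times \Delta _N$ the kernels of the two canonical projections, so that $\eta _1\cap \eta _2=\Delta _{M\times N}$. By Remark \ref{bijlatcongr} applied to these surjections, $\psi \vee \eta _1=\theta _1\times \nabla _N$ and $\psi \vee \eta _2=\nabla _M\times \zeta _1$ for some $\theta _1\in {\rm Con}(M)$ and $\zeta _1\in {\rm Con}(N)$. Since ${\rm Con}(M\times N)$ is modular and $\psi \subseteq \psi \vee \eta _2$, the modular law gives $(\psi \vee \eta _1)\cap (\psi \vee \eta _2)=\psi \vee (\eta _1\cap (\psi \vee \eta _2))=\psi \vee (\Delta _M\times \zeta _1)$, while, computing directly, $(\psi \vee \eta _1)\cap (\psi \vee \eta _2)=(\theta _1\times \nabla _N)\cap (\nabla _M\times \zeta _1)=\theta _1\times \zeta _1$; hence $\psi $ equals the product congruence $\theta _1\times \zeta _1$ as soon as $\Delta _M\times \zeta _1\subseteq \psi $. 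For the latter I would use Proposition \ref{1.3}: since the commutator of $M\times N$ is distributive with respect to joins and $[\eta _1,\eta _2]_{M\times N}\subseteq \eta _1\cap \eta _2=\Delta _{M\times N}$, we get $[\psi ,\eta _1]_{M\times N}=[\psi \vee \eta _2,\eta _1]_{M\times N}=[\nabla _M\times \zeta _1,\Delta _M\times \nabla _N]_{M\times N}$, which by Proposition \ref{comutprod} and condition (ii) applied to $N$ equals $[\nabla _M,\Delta _M]_M\times [\zeta _1,\nabla _N]_N=\Delta _M\times \zeta _1$; and $[\psi ,\eta _1]_{M\times N}\subseteq \psi $, again by Proposition \ref{1.3}. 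So $\Delta _M\times \zeta _1\subseteq \psi $, thus $\psi =\theta _1\times \zeta _1$, and ${\cal C}$ has no skew congruences.

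The remaining two implications both rest on the structure theory of abelian congruences in congruence--modular varieties (the difference term, and the resulting affine description of the blocks of an abelian congruence; see \cite{fremck}), which I would invoke in two forms. First: if $B$ is a member of ${\cal C}$ that is a nontrivial abelian algebra, that is $[\nabla _B,\nabla _B]_B=\Delta _B\ne \nabla _B$, then the affine structure on $B$ supplies a Mal'cev term and hence a congruence of $B\times B$ whose blocks are the cosets of the diagonal subalgebra; this congruence is skew and $B\times B\in {\cal C}$, so ${\cal C}$ has skew congruences. Second: the existence of a nonzero abelian congruence in some member of ${\cal C}$ already forces ${\cal C}$ to contain a nontrivial abelian algebra, the blocks of such a congruence carrying a definable structure of nontrivial module over a ring. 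Granting these, (iii)$\Rightarrow$(i) is the contrapositive of the first form: if (i) fails, say $[\nabla _M,\nabla _M]_M\subsetneq \nabla _M$, then by Remark \ref{fsurjcomut} the algebra $B:=M/[\nabla _M,\nabla _M]_M\in {\cal C}$ satisfies $[\nabla _B,\nabla _B]_B=\Delta _B\ne \nabla _B$, so $B\times B$ carries a skew congruence and (iii) fails. And (i)$\Rightarrow$(ii): the inclusion $[\theta ,\nabla _M]_M\subseteq \theta $ always holds by Proposition \ref{1.3}; if it were proper for some $M$ and some $\theta \in {\rm Con}(M)$, then, putting $N:=M/[\theta ,\nabla _M]_M$ and $\overline{\theta }:=\theta /[\theta ,\nabla _M]_M$, Remark \ref{fsurjcomut} would give $[\overline{\theta },\nabla _N]_N=\Delta _N$ with $\overline{\theta }\ne \Delta _N$, so that $\overline{\theta }$ is a nonzero abelian congruence of the member $N$ of ${\cal C}$; by the second form ${\cal C}$ would then contain a nontrivial abelian algebra, contradicting (i) as just seen.

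The lattice computations and the commutator arithmetic above are routine. The step that I expect to be the genuine obstacle is the one used twice in the previous paragraph: manufacturing an actual member of ${\cal C}$ that is a nontrivial abelian algebra out of the purely local affine datum attached to an abelian congruence of some algebra of ${\cal C}$. This is precisely where the full strength of congruence--modularity, through the difference term, enters, and where a real argument (not the merely formal manipulations used everywhere else) is needed.
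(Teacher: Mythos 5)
You should first note that the paper does not prove this proposition at all: it is quoted from \cite{fremck} (Theorem 8.5 there), so your attempt has to stand on its own. Your (ii)$\Rightarrow$(iii) does: the modular-law computation together with Proposition \ref{1.3} and Proposition \ref{comutprod} gives $[\psi ,\eta _1]_{M\times N}=\Delta _M\times \zeta _1\subseteq \psi $ and hence $\psi =\theta _1\times \zeta _1$, and this is a complete argument from tools already stated in the paper. Your (iii)$\Rightarrow$(i) is also acceptable as a proof sketch, granted the fundamental theorem on abelian algebras from \cite{fremck}: a nontrivial abelian $B\in {\cal C}$ is affine, the relation ``$a-b=c-d$'' is a congruence of $B\times B$ containing all pairs $((a,a),(c,c))$ without being $\nabla _{B\times B}$, hence it is skew.

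The genuine gap is the ``second form'' on which your (i)$\Rightarrow$(ii) rests. The claim that a nonzero abelian congruence in some member of ${\cal C}$ forces ${\cal C}$ to contain a nontrivial abelian algebra is false. Take ${\cal C}$ to be the variety of commutative unitary rings (congruence--modular, commutator of ideals $=$ product of ideals): every nontrivial member $R$ satisfies $[\nabla _R,\nabla _R]_R=\nabla _R$ because $R\cdot R=R$, so (i) holds and ${\cal C}$ contains no nontrivial abelian algebra, and yet $\Z _4$ has the nonzero abelian congruence associated with the ideal $2\Z _4$, whose square is $0$. The blocks of that congruence do carry a module structure, but that module is not an algebra of ${\cal C}$ --- which is exactly why no nontrivial abelian member is produced, and why ``the blocks carry a definable module structure'' does not finish the job. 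What your reduction actually yields is stronger than abelianness, namely a nonzero \emph{central} congruence $\overline{\theta }$ with $[\overline{\theta },\nabla _N]_N=\Delta _N$; for central congruences the bridging claim is true, but, modulo the quotient manipulations you already perform, it is precisely the contrapositive of (i)$\Rightarrow$(ii), so invoking it as known background is circular. You even identify this step as ``the genuine obstacle'', but then treat it as available structure theory, which in the form you state it is not. To close the cycle you need an independent argument here --- for instance, prove not-(ii)$\Rightarrow$not-(iii) directly by extracting a skew congruence of $N\times N$ from the central congruence $\overline{\theta }$ (e.g.\ via the congruence generated by $\{((a,a),(b,b))\ |\ (a,b)\in \overline{\theta }\}$ and the difference-term/$\Delta _{\theta ,\nabla }$ machinery of \cite{fremck}), rather than passing through an abelian algebra of ${\cal C}$.
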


\begin{lemma}\begin{enumerate}
\item\label{distribsemid1} If ${\cal C}$ is congruence--modular and semi--degenerate, then ${\cal C}$ fulfills the equivalent conditions from Proposition \ref{prodcongr}.
\item\label{distribsemid2} If ${\cal C}$ is congruence--distributive, then ${\cal C}$ fulfills the equivalent conditions from Proposition \ref{prodcongr}.\end{enumerate}\label{distribsemid}\end{lemma}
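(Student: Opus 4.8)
The plan is to verify, in both cases, condition~1 of Proposition~\ref{prodcongr} --- that is, $[\nabla _M,\nabla _M]_M=\nabla _M$ for every member $M$ of ${\cal C}$ --- since the three conditions listed there are equivalent as soon as ${\cal C}$ is congruence--modular, so establishing any one of them yields the statement. Congruence--modularity is available in both cases: it is part of the hypothesis in (\ref{distribsemid1}), and in (\ref{distribsemid2}) it follows from the fact that every distributive lattice is modular, so that Proposition~\ref{prodcongr} does apply in that case too.

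Case (\ref{distribsemid2}) is then immediate: by Theorem~\ref{distrib}, the commutator of any member $M$ of a congruence--distributive variety coincides with the intersection of congruences, whence $[\nabla _M,\nabla _M]_M=\nabla _M\cap \nabla _M=\nabla _M$.

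For case (\ref{distribsemid1}), I would fix a member $M$ of ${\cal C}$ and argue as follows. If $M$ is trivial, then ${\rm Con}(M)=\{\nabla _M\}$ and the equality $[\nabla _M,\nabla _M]_M=\nabla _M$ is automatic. If $M$ is non--trivial, suppose for contradiction that $[\nabla _M,\nabla _M]_M\neq \nabla _M$. Since $\nabla _M$ is the unique congruence of $M$ that is not proper, $[\nabla _M,\nabla _M]_M$ is then a proper congruence of $M$; by Lemma~\ref{folclor} (applied to $M$, using that ${\cal C}$ is congruence--modular and semi--degenerate) it is contained in some maximal congruence $\phi $ of $M$, and the same lemma guarantees that $\phi $ is prime. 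But then $[\nabla _M,\nabla _M]_M\subseteq \phi $ forces $\nabla _M\subseteq \phi $ by the definition of a prime congruence (Definition~\ref{1.4}), i.e. $\phi =\nabla _M$, contradicting the properness of $\phi $. Hence $[\nabla _M,\nabla _M]_M=\nabla _M$, which is precisely condition~1 of Proposition~\ref{prodcongr}.

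I do not expect any genuine obstacle here: the substantive input is entirely borrowed --- Theorem~\ref{distrib} for part (\ref{distribsemid2}) and Lemma~\ref{folclor} for part (\ref{distribsemid1}) --- and the only points needing a little care are reducing to a single one of the three equivalent conditions of Proposition~\ref{prodcongr}, noticing that congruence--distributivity forces congruence--modularity so that Proposition~\ref{prodcongr} is applicable in (\ref{distribsemid2}), and treating the trivial algebra separately in (\ref{distribsemid1}) (where Lemma~\ref{folclor} offers no maximal congruence). One could alternatively bypass Lemma~\ref{folclor} and work directly from Proposition~\ref{2.6} (compactness of $\nabla _M$), but invoking Lemma~\ref{folclor} is the shortest route.
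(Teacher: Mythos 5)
Your proof is correct, and for part (\ref{distribsemid1}) it takes a genuinely different route from the paper. The paper disposes of (\ref{distribsemid1}) by a bare citation: it is exactly Lemma 5.2 of the Agliano reference \cite{agl}; for (\ref{distribsemid2}) it says only ``clear from Theorem \ref{distrib}'', which is what you do, spelled out (your remark that distributivity implies modularity, so that Proposition \ref{prodcongr} is even applicable, is a point the paper leaves tacit). Your argument for (\ref{distribsemid1}) instead derives condition (i) of Proposition \ref{prodcongr} from Lemma \ref{folclor}: if $[\nabla _M,\nabla _M]_M$ were proper it would lie below a maximal, hence prime, congruence $\phi $, and primeness applied to $\alpha =\beta =\nabla _M$ forces $\phi =\nabla _M$, a contradiction; the trivial algebra is handled separately, which is necessary since Lemma \ref{folclor} gives nothing there. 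Within the logical order of this paper that is a valid and self-contained deduction, and it has the merit of exhibiting the implication rather than outsourcing it. The one caveat worth being aware of: in the source \cite{agl}, the statement that maximal congruences are prime (the second half of Lemma \ref{folclor}) is itself typically proved \emph{using} the identity $[\nabla _M,\nabla _M]_M=\nabla _M$ (via $[\alpha \vee \phi ,\beta \vee \phi ]_M\subseteq \phi $ for $\phi $ maximal), i.e.\ using the very Lemma 5.2 that the paper cites here. So your derivation, while formally non-circular relative to the results as stated in this paper, reverses the usual order of proof and would be circular if one unfolded the citation; the paper's direct citation of \cite[Lemma 5.2]{agl} avoids that issue.
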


\begin{proof} (\ref{distribsemid1}) This is exactly \cite[Lemma 5.2]{agl}.

\noindent (\ref{distribsemid2}) Clear, from Theorem \ref{distrib}.\end{proof}

\begin{lemma}{\rm \cite[Lemma $1.11$]{bak}, \cite[Proposition $1.2$]{urs5}} If $f$ is surjective, then, for any $a,b\in A$, any $X\subseteq A^2$, any $\theta \in {\rm Con}(A)$ and any $\alpha ,\beta \in [{\rm Ker}(f))$:\begin{enumerate}
\item\label{fsurjcongr0} $f(\theta \vee {\rm Ker}(f))=Cg_B(f(\theta ))$; $f(\alpha \vee \beta )=f(\alpha )\vee f(\beta )$;
\item\label{fsurjcongr1} $f(Cg_A(a,b)\vee {\rm Ker}(f))=Cg_B(f(a),f(b))$; $f(Cg_A(X)\vee {\rm Ker}(f))=Cg_B(f(X))$;
\item\label{fsurjcongr2} $(Cg_A(a,b)\vee \theta )/\theta =Cg_{A/\theta }(a/\theta ,b/\theta )$; $(Cg_A(X)\vee \theta )/\theta =Cg_{A/\theta }(X/\theta )$.\end{enumerate}\label{fsurjcongr}\end{lemma}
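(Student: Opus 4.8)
The plan is to establish the three items in order, deriving each subsequent statement from earlier ones and from the order-isomorphism facts recorded in Remark \ref{bijlatcongr}. Throughout, $f:A\rightarrow B$ is a surjective morphism in $\mathcal{C}$, so that $\alpha\mapsto f(\alpha)$ is an order isomorphism from the principal filter $[{\rm Ker}(f))$ of ${\rm Con}(A)$ onto ${\rm Con}(B)$, with inverse the restriction of $f^*$; in particular this map preserves arbitrary joins and meets, since an order isomorphism between complete lattices is a complete lattice isomorphism.

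For (\ref{fsurjcongr0}), the first identity $f(\theta\vee{\rm Ker}(f))=Cg_B(f(\theta))$ is the key one. Since $\theta\vee{\rm Ker}(f)\in[{\rm Ker}(f))$, its image $f(\theta\vee{\rm Ker}(f))$ is a congruence of $B$, and it clearly contains $f(\theta)$, hence contains $Cg_B(f(\theta))$. For the reverse inclusion I would use that $f^*(Cg_B(f(\theta)))$ is a congruence of $A$ containing both $\theta$ (because $f(\theta)\subseteq Cg_B(f(\theta))$) and ${\rm Ker}(f)=f^*(\Delta_B)$, hence containing $\theta\vee{\rm Ker}(f)$; applying $f$ and using $f(f^*(\gamma))=\gamma$ for surjective $f$ and $\gamma\in{\rm Con}(B)$ gives $Cg_B(f(\theta))\supseteq f(\theta\vee{\rm Ker}(f))$. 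The second identity $f(\alpha\vee\beta)=f(\alpha)\vee f(\beta)$ for $\alpha,\beta\in[{\rm Ker}(f))$ is then immediate from the fact that $f$ restricted to $[{\rm Ker}(f))$ is a lattice isomorphism onto ${\rm Con}(B)$ (alternatively: $\alpha\vee\beta=\alpha\vee\beta\vee{\rm Ker}(f)=Cg_A(\alpha\cup\beta)\vee{\rm Ker}(f)$, so by the first identity $f(\alpha\vee\beta)=Cg_B(f(\alpha)\cup f(\beta))=f(\alpha)\vee f(\beta)$).

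For (\ref{fsurjcongr1}), I would specialize (\ref{fsurjcongr0}) with $\theta=Cg_A(a,b)$, respectively $\theta=Cg_A(X)$, and observe that $f(Cg_A(a,b))\supseteq\{(f(a),f(b))\}$ while conversely $f(Cg_A(a,b))$ is contained in any congruence of $B$ identifying $f(a)$ with $f(b)$ — concretely, $Cg_A(a,b)\subseteq f^*(Cg_B(f(a),f(b)))$, so $Cg_B(f(Cg_A(a,b)))\subseteq Cg_B(f(a),f(b))$, and the other inclusion is trivial; thus $Cg_B(f(Cg_A(a,b)))=Cg_B(f(a),f(b))$, and (\ref{fsurjcongr0}) finishes it. The argument for $Cg_A(X)$ is identical with $\{(a,b)\}$ replaced by $X$. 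Finally, for (\ref{fsurjcongr2}), I would apply (\ref{fsurjcongr1}) to the canonical surjection $f=p_\theta:A\rightarrow A/\theta$, for which ${\rm Ker}(p_\theta)=\theta$ and $p_\theta(\gamma)=\gamma/\theta$; then $(Cg_A(a,b)\vee\theta)/\theta=p_\theta(Cg_A(a,b)\vee{\rm Ker}(p_\theta))=Cg_{A/\theta}(p_\theta(a),p_\theta(b))=Cg_{A/\theta}(a/\theta,b/\theta)$, and likewise for $X$.

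I do not anticipate a serious obstacle here: the entire lemma is bookkeeping around the correspondence theorem for congruences (Remark \ref{bijlatcongr}) together with the universal property of generated congruences. The one point to be careful about is the direction of inclusions in the $f^*$-pullback arguments — that $Cg_A(Y)\subseteq f^*(\gamma)$ iff $Y\subseteq f^*(\gamma)$ iff $f(Y)\subseteq\gamma$ — and the repeated use of $f(f^*(\gamma))=\gamma$, valid precisely because $f$ is surjective. Since items (\ref{fsurjcongr1}) and (\ref{fsurjcongr2}) are consequences of (\ref{fsurjcongr0}) by direct substitution, the whole proof reduces to the single computation in (\ref{fsurjcongr0}).
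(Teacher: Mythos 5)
Your proof is correct. Note, however, that the paper itself gives no proof of this lemma: it is quoted from the literature (Baker's Lemma~1.11 and Ursini's Proposition~1.2), so there is no argument in the paper to compare yours against. Your derivation is the standard one and is complete: the inclusion $Cg_B(f(\theta))\subseteq f(\theta\vee{\rm Ker}(f))$ because the right side is a congruence of $B$ containing $f(\theta)$, the reverse inclusion via $\theta\vee{\rm Ker}(f)\subseteq f^*(Cg_B(f(\theta)))$ and $f(f^*(\gamma))=\gamma$ for surjective $f$, then (\ref{fsurjcongr1}) by substituting $\theta=Cg_A(a,b)$, respectively $Cg_A(X)$, together with $Cg_B(f(Cg_A(X)))=Cg_B(f(X))$, and (\ref{fsurjcongr2}) by specializing to $f=p_\theta$, exactly as Remark~\ref{bijlatcongr} permits. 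The only cosmetic blemish is the parenthetical alternative for $f(\alpha\vee\beta)=f(\alpha)\vee f(\beta)$, which implicitly invokes the $Cg_A(X)$ version of (\ref{fsurjcongr1}) before it is proved; since your main argument for that identity (the map $\alpha\mapsto f(\alpha)$ is an order isomorphism from $[{\rm Ker}(f))$ onto ${\rm Con}(B)$ and $[{\rm Ker}(f))$ is closed under the joins of ${\rm Con}(A)$) is self-contained, this causes no logical problem.
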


We say that $A$ has {\em principal commutators} iff, for all $\alpha ,\beta \in {\rm PCon}(A)$, we have $[\alpha ,\beta ]_A\in {\rm PCon}(A)$, that is iff ${\rm PCon}(A)$ is closed with respect to the commutator of $A$. Following \cite{agl}, we say that ${\cal C}$ has {\em principal commutators} iff each member of ${\cal C}$ has principal commutators. We say that ${\cal C}$ has {\em associative commutators} iff, for each member $M$ of ${\cal C}$, the commutator of $M$ is an associative binary operation on ${\rm Con}(M)$.

\begin{remark} $\displaystyle {\cal K}(A)=\{Cg_A(\emptyset )\}\cup \{Cg_A(\{(a_1,b_1),\ldots ,(a_n,b_n)\})\ |\ n\in \N ^*,a_1,b_1,\ldots ,a_n,b_n\in A\}=\{\Delta _A\}\cup \{\bigvee _{i=1}^nCg_A(a_i,b_i)\ |\ n\in \N ^*,a_1,b_1,\ldots ,a_n,b_n\in A\}=\{\bigvee _{i=1}^nCg_A(a_i,b_i)\ |\ n\in \N ^*,a_1,b_1,\ldots ,a_n,b_n\in A\}$, since $\Delta _A\in {\rm PCon}(A)$. From this, it is immediate that ${\cal K}(A)$ is closed with respect to finite joins, and, if $A$ has principal commutators and $[\cdot ,\cdot ]_A$ is commutative and distributive w.r.t. the join (for instance if ${\cal C}$ is congruence--modular), then ${\cal K}(A)$ is also closed with respect to the commutator of $A$.\label{ka}\end{remark}

\begin{remark} If ${\cal C}$ is congruence--distributive, then, as shown by Theorem \ref{distrib}:\begin{itemize}
\item ${\cal C}$ has principal commutators iff ${\cal C}$ has the principal intersection property (PIP);
\item ${\cal K}(M)$ is closed with respect to the commutator for each member $M$ of ${\cal C}$ iff ${\cal C}$ has the compact intersection property (CIP).\end{itemize}

As a particular case of Remark \ref{ka}, if ${\cal C}$ is congruence--distributive and has the PIP, then ${\cal C}$ has the CIP.\label{pipcip}\end{remark}

\begin{example}{\cite{agl}, \cite{blkpgz}, \cite{gulo}, \cite[Theorem 2.8]{bj}, \cite{kap}, \cite{lpt}} As shown by Theorem \ref{distrib}, any congruence--distributive variety has associative commutators. The variety of commutative unitary rings is semi--degenerate, congruence--modular, with principal commutators and associative commutators, and it is not congruence--distributive. Out of the semi--degenerate congruence--distributive varieties with the CIP, we mention semi--degenerate filtral varieties. Out of the semi--degenerate congruence--distributive varieties with the PIP, we mention: bounded distributive lattices, residuated lattices (a variety which includes ${\rm G\ddot{o}del}$ algebras, product algebras, MTL--algebras, BL--algebras, MV--algebras) and semi--degenerate discriminator varieties (out of which we mention Boolean algebras, $n$--valued Post algebras, $n$--valued \L ukasiewicz algebras, $n$--valued MV--algebras, $n$--dimensional cylindric algebras, ${\rm G\ddot{o}del}$ residuated lattices).\label{asoccom}\end{example}

\section{The Stone Topologies on Prime and Maximal Spectra}
\label{stonetops}

In what follows, we present the Stone topologies on the prime and maximal spectra of ideals and filters of a bounded distributive lattice and those of congruences of an algebra with the greatest congruence compact from a congruence--modular variety; in particular, the following hold for algebras from semi--degenerate congruence--modular varieties. The results in this section are either previously known or very easy to derive from previously known results; see, for instance, \cite{joh}.

Let $L$ be a bounded distributive lattice. For any $I\in {\rm Id}(L)$ and any $a\in L$, we shall denote by $V_{{\rm Id},L}(I)={\rm Spec}_{\rm Id}(L)\cap [I)=\{P\in {\rm Spec}_{\rm Id}(L)\ |\ I\subseteq P\}$, $D_{{\rm Id},L}(I)={\rm Spec}_{\rm Id}(L)\setminus V_{{\rm Id},L}(I)=\{Q\in {\rm Spec}_{\rm Id}(L)\ |\ I\nsubseteq Q\}$, $V_{{\rm Id},L}(a)=V_{{\rm Id},L}((a])=\{P\in {\rm Spec}_{\rm Id}(L)\ |\ a\in P\}$ and $D_{{\rm Id},L}(a)=D_{{\rm Id},L}((a])={\rm Spec}_{\rm Id}(L)\setminus V_{{\rm Id},L}(a)=\{Q\in {\rm Spec}_{\rm Id}(L)\ |\ a\notin Q\}$. By replacing ${\rm Spec}_{\rm Id}(L)$ with ${\rm Spec}_{\rm Filt}(L)$, in the same way we can define $V_{{\rm Filt},L}(F)$, $D_{{\rm Filt},L}(F)$, $V_{{\rm Filt},L}(a)$ and $D_{{\rm Filt},L}(a)$ for any $F\in {\rm Filt}(L)$ and any $a\in L$.

\begin{remark} The following hold, and their duals hold for filters:\begin{itemize}
\item for any $J,K\in {\rm Id}(L)$, $V_{{\rm Id},L}(J\cap K)=V_{{\rm Id},L}(J)\cup V_{{\rm Id},L}(K)$ and $D_{{\rm Id},L}(J\cap K)=D_{{\rm Id},L}(J)\cap D_{{\rm Id},L}(K)$;
\item for any family $(J_i)_{i\in I}\subseteq {\rm Id}(L)$, $\displaystyle V_{{\rm Id},L}(\bigvee _{i\in I}J_i)=\bigcap _{i\in I}V_{{\rm Id},L}(J_i)$ and $\displaystyle D_{{\rm Id},L}(\bigvee _{i\in I}J_i)=\bigcup _{i\in I}D_{{\rm Id},L}(J_i)$;
\item thus, for any $a,b\in L$, $V_{{\rm Id},L}(a\wedge b)=V_{{\rm Id},L}(a)\cup V_{{\rm Id},L}(b)$, $D_{{\rm Id},L}(a\wedge b)=D_{{\rm Id},L}(a)\cap D_{{\rm Id},L}(b)$, $V_{{\rm Id},L}(a\vee b)=V_{{\rm Id},L}(a)\cap V_{{\rm Id},L}(b)$ and $D_{{\rm Id},L}(a\vee b)=D_{{\rm Id},L}(a)\cup D_{{\rm Id},L}(b)$;
\item if $L$ is a complete lattice, then, for any family $(a_i)_{i\in I}\subseteq L$, $\displaystyle V_{{\rm Id},L}(\bigvee _{i\in I}a_i)=\bigcap _{i\in I}V_{{\rm Id},L}(a_i)$ and $\displaystyle D_{{\rm Id},L}(\bigvee _{i\in I}J_i)=\bigcup _{i\in I}D_{{\rm Id},L}(J_i)$;
\item if $I\in {\rm Id}(L)$, then: $D_{{\rm Id},L}(I)={\rm Spec}_{\rm Id}(L)$ iff $V_{{\rm Id},L}(I)=\emptyset $ iff $I=L$;
\item $D_{{\rm Id},L}(\{0\})=\emptyset $ and $V_{{\rm Id},L}(\{0\})={\rm Spec}_{\rm Id}(L)$;
\item if $L$ is distributive (so that the Prime Ideal Theorem holds in $L$ and,  hence, any ideal of $L$ equals the intersection of the prime ideals that include it) and $I\in {\rm Id}(L)$, then: $D_{{\rm Id},L}(I)=\emptyset $ iff $V_{{\rm Id},L}(I)={\rm Spec}_{\rm Id}(L)$ iff $I=\{0\}$.\end{itemize}\label{stoneidlat}\end{remark}

As shown by Remark \ref{stoneidlat}, $\{D_{{\rm Id},L}(I)\ |\ I\in {\rm Id}(L)\}$ is a topology on ${\rm Spec}_{\rm Id}(L)$, called the {\em Stone topology}, having $\{D_{{\rm Id},L}(a)\ |\ a\in L\}$ as a basis and, obviously, $\{V_{{\rm Id},L}(I)\ |\ I\in {\rm Id}(L)\}$ as the family of closed sets and $\{V_{{\rm Id},L}(a)\ |\ a\in L\}$ as a basis of closed sets. Since ${\rm Max}_{\rm Id}(L)\subseteq {\rm Spec}_{\rm Id}(L)$, $\{D_{{\rm Id},L}(I)\cap {\rm Max}_{\rm Id}(L)\ |\ I\in {\rm Id}(L)\}$ is a topology on ${\rm Max}_{\rm Id}(L)$, which is also called the {\em Stone topology}, and it has $\{D_{{\rm Id},L}(a)\cap {\rm Max}_{\rm Id}(L)\ |\ a\in L\}$ as a basis, $\{V_{{\rm Id},L}(I)\cap {\rm Max}_{\rm Id}(L)\ |\ I\in {\rm Id}(L)\}$ as the family of closed sets and $\{V_{{\rm Id},L}(a)\cap {\rm Max}_{\rm Id}(L)\ |\ a\in L\}$ as a basis of closed sets. Dually, we have the Stone topologies on ${\rm Spec}_{\rm Filt}(L)$ and ${\rm Max}_{\rm Filt}(L)$. ${\rm Spec}_{\rm Id}(L)$, ${\rm Max}_{\rm Id}(L)$, ${\rm Spec}_{\rm Filt}(L)$ and ${\rm Max}_{\rm Filt}(L)$ are called the {\em (prime) spectrum of ideals}, {\em maximal spectrum of ideals}, {\em (prime) spectrum of filters} and {\em maximal spectrum of filters} of $L$, respectively.

Throughout the rest of this section, we shall assume that $[\cdot ,\cdot ]_A$ is commutative and distributive w.r.t. arbitrary joins. For each $\theta \in {\rm Con}(A)$, we shall denote by $V_A(\theta )={\rm Spec}(A)\cap [\theta )=\{\phi \in {\rm Spec}(A)\ |\ \theta \subseteq \phi \}$ and by $D_A(\theta )={\rm Spec}(A)\setminus V_A(\theta )=\{\psi \in {\rm Spec}(A)\ |\ \theta \nsubseteq \psi \}$. We shall also denote, for any $a,b\in A$, by $V_A(a,b)=V_A(Cg_A(a,b))=\{\phi \in {\rm Spec}(A)\ |\ (a,b)\in \phi \}$ and by $D_A(a,b)=D_A(Cg_A(a,b))=\{\psi \in {\rm Spec}(A)\ |\ (a,b)\notin \psi \}$. The proof of the following result is straightforward.

\begin{proposition}{\rm \cite{agl}} $({\rm Spec}(A),\{D_A(\theta )\ |\ \theta \in {\rm Con}(A)\})$ is a topological space, having $\{D_A(a,b)\ |\ a,b\in A\}$ as a basis and in which, for all $\alpha ,\beta \in {\rm Con}(A)$ and any family $(\alpha _i)_{i\in I}\subseteq {\rm Con}(A)$, the following hold:\begin{enumerate}
\item\label{stonetop1} $D_A(\Delta _A)=\emptyset $ and $D_A(\nabla _A)={\rm Spec}(A)$; $V_A(\Delta _A)={\rm Spec}(A)$ and $V_A(\nabla _A)=\emptyset $;
\item\label{stonetop2} $D_A([\alpha ,\beta ]_A)=D_A(\alpha \cap \beta )=D_A(\alpha )\cap D_A(\beta )=$; $V_A([\alpha ,\beta ]_A)=V_A(\alpha \cap \beta )=V_A(\alpha )\cup V_A(\beta )$;
\item\label{stonetop3} $\displaystyle D_A(\bigvee _{i\in I}\alpha _i)=\bigcup _{i\in I}D_A(\alpha _i)$; $\displaystyle V_A(\bigvee _{i\in I}\alpha _i)=\bigcap _{i\in I}V_A(\alpha _i)$.\end{enumerate}\label{stonetop}\end{proposition}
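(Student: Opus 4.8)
The plan is to first establish the set-theoretic identities (\ref{stonetop1})--(\ref{stonetop3}) relating the sets $D_A(\cdot )$ and $V_A(\cdot )$ to the lattice and commutator operations on ${\rm Con}(A)$, and then to read off from them that $\{D_A(\theta )\ |\ \theta \in {\rm Con}(A)\}$ is a topology with the announced basis; the only point where the standing hypotheses on the commutator are genuinely used is item (\ref{stonetop2}). For item (\ref{stonetop1}): every congruence of $A$ contains $\Delta _A$, so $V_A(\Delta _A)={\rm Spec}(A)$ and $D_A(\Delta _A)=\emptyset $; and, since prime congruences are proper by Definition \ref{1.4}, none of them contains $\nabla _A$, whence $V_A(\nabla _A)=\emptyset $ and $D_A(\nabla _A)={\rm Spec}(A)$. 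For item (\ref{stonetop3}): for any $\phi \in {\rm Spec}(A)$ (indeed for any congruence), $\bigcup _{i\in I}\alpha _i\subseteq \phi $ iff $\bigvee _{i\in I}\alpha _i=Cg_A(\bigcup _{i\in I}\alpha _i)\subseteq \phi $, so $\phi \in V_A(\bigvee _{i\in I}\alpha _i)$ iff $\phi \in V_A(\alpha _i)$ for all $i\in I$; this gives $V_A(\bigvee _{i\in I}\alpha _i)=\bigcap _{i\in I}V_A(\alpha _i)$, and the corresponding equality for $D_A$ follows by complementation inside ${\rm Spec}(A)$.

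The heart of the proof is item (\ref{stonetop2}). From the chain $[\alpha ,\beta ]_A\subseteq \alpha \cap \beta \subseteq \alpha ,\beta $ (Proposition \ref{1.3}) one gets at once $V_A(\alpha )\cup V_A(\beta )\subseteq V_A(\alpha \cap \beta )\subseteq V_A([\alpha ,\beta ]_A)$. For the reverse inclusion, take $\phi \in V_A([\alpha ,\beta ]_A)$, i.e.\ a prime $\phi $ with $[\alpha ,\beta ]_A\subseteq \phi $; the defining property of prime congruences yields $\alpha \subseteq \phi $ or $\beta \subseteq \phi $, that is $\phi \in V_A(\alpha )\cup V_A(\beta )$. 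Hence the three sets $V_A([\alpha ,\beta ]_A)$, $V_A(\alpha \cap \beta )$ and $V_A(\alpha )\cup V_A(\beta )$ coincide, and taking complements in ${\rm Spec}(A)$ gives $D_A([\alpha ,\beta ]_A)=D_A(\alpha \cap \beta )=D_A(\alpha )\cap D_A(\beta )$.

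It then remains to assemble the topology and basis claims. By (\ref{stonetop1}), $\emptyset =D_A(\Delta _A)$ and ${\rm Spec}(A)=D_A(\nabla _A)$ belong to the family; by (\ref{stonetop3}), $\bigcup _{i\in I}D_A(\theta _i)=D_A(\bigvee _{i\in I}\theta _i)$, so the family is stable under arbitrary unions; by (\ref{stonetop2}), $D_A(\theta _1)\cap D_A(\theta _2)=D_A(\theta _1\cap \theta _2)$ with $\theta _1\cap \theta _2\in {\rm Con}(A)$, so it is stable under finite intersections, and we conclude it is a topology on ${\rm Spec}(A)$. For the basis, each $D_A(a,b)=D_A(Cg_A(a,b))$ lies in this topology, and, writing an arbitrary $\theta \in {\rm Con}(A)$ as $\theta =\bigvee \{Cg_A(a,b)\ |\ (a,b)\in \theta \}$, item (\ref{stonetop3}) gives $D_A(\theta )=\bigcup \{D_A(a,b)\ |\ (a,b)\in \theta \}$, so every open set is a union of sets $D_A(a,b)$; this is precisely the assertion that $\{D_A(a,b)\ |\ a,b\in A\}$ is a basis. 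I do not expect any real obstacle: the argument is a routine manipulation of closure operators and complements, and the single non-formal ingredient is the second inclusion in item (\ref{stonetop2}), where the definition of prime congruence (and, implicitly, the fact that under the standing hypotheses $[\alpha ,\beta ]_A$ is a well-behaved congruence) does all the work.
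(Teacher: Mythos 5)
Your proof is correct, and it is exactly the routine argument the paper has in mind: the paper gives no proof at all here (it declares the result straightforward and cites \cite{agl}), and your treatment --- $[\alpha ,\beta ]_A\subseteq \alpha \cap \beta $ together with the definition of a prime congruence for item (\ref{stonetop2}), the fact that a congruence contains $\bigcup _{i\in I}\alpha _i$ iff it contains $\bigvee _{i\in I}\alpha _i$ for item (\ref{stonetop3}), and the assembly into a topology with the sets $D_A(a,b)$ as a basis via $\theta =\bigvee \{Cg_A(a,b)\ |\ (a,b)\in \theta \}$ --- is the standard verification. One minor remark: your item (\ref{stonetop2}) in fact uses only Proposition \ref{1.3} (commutator below its arguments) and Definition \ref{1.4}, so even there the standing commutativity and join--distributivity hypotheses are not genuinely needed, contrary to your closing aside; this does not affect correctness.
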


$\{D_A(\theta )\ |\ \theta \in {\rm Con}(A)\}$ is called the {\em Stone topology} on ${\rm Spec}(A)$. Obviously, its family of closed sets is $\{V_A(\theta )\ |\ \theta \in {\rm Con}(A)\}$, and $\{V_A(a,b)\ |\ a,b\in A\}$ is a basis of closed sets for this topology. The Stone topology on ${\rm Spec}(A)$ induces the {\em Stone topology} on ${\rm Max}(A)$, namely $\{D_A(\theta )\cap {\rm Max}(A)\ |\ \theta \in {\rm Con}(A)\}$.

\begin{remark} Let $\alpha ,\beta \in {\rm Con}(A)$. Then, clearly:\begin{itemize}
\item $V_A(\alpha )\subseteq V_A(\beta )$ iff ${\rm Spec}(A)\setminus D_A(\alpha )\subseteq {\rm Spec}(A)\setminus D_A(\beta )$ iff $D_A(\beta )\subseteq D_A(\alpha )$;
\item if $\alpha \subseteq \beta $, then $V_A(\beta )\subseteq V_A(\alpha )$ and $D_A(\alpha )\subseteq D_A(\beta )$.\end{itemize}\label{dvinclus}\end{remark}

\begin{proposition} If ${\cal C}$ is congruence--modular and semi--degenerate, then, for any $\alpha \in {\rm Con}(A)$: $D_A(\alpha )={\rm Spec}(A)$ iff $V_A(\alpha )=\emptyset $ iff $\alpha =\nabla _A$.\label{sdtop}\end{proposition}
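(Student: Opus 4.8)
The statement to prove is Proposition \ref{sdtop}: if ${\cal C}$ is congruence--modular and semi--degenerate, then for any $\alpha \in {\rm Con}(A)$ we have $D_A(\alpha )={\rm Spec}(A)$ iff $V_A(\alpha )=\emptyset $ iff $\alpha =\nabla _A$. The first equivalence, $D_A(\alpha )={\rm Spec}(A)\iff V_A(\alpha )=\emptyset $, is immediate from the definitions $D_A(\alpha )={\rm Spec}(A)\setminus V_A(\alpha )$, requiring no hypothesis on ${\cal C}$. So the work is entirely in the equivalence $V_A(\alpha )=\emptyset \iff \alpha =\nabla _A$.

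For the direction $\alpha =\nabla _A\Rightarrow V_A(\alpha )=\emptyset $: by Proposition \ref{stonetop}(\ref{stonetop1}) we have $V_A(\nabla _A)=\emptyset $, so this is already recorded and needs nothing further. For the converse, I would argue by contraposition: suppose $\alpha \neq \nabla _A$, i.e.\ $\alpha $ is a proper congruence of $A$. Then by the first bullet of Lemma \ref{folclor} (which applies precisely because ${\cal C}$ is congruence--modular and semi--degenerate), $\alpha $ is contained in some maximal congruence $M$ of $A$. By the second bullet of Lemma \ref{folclor}, $M$ is prime, so $M\in {\rm Spec}(A)$. Since $\alpha \subseteq M$, we have $M\in V_A(\alpha )$, hence $V_A(\alpha )\neq \emptyset $. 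This completes the contrapositive, and thus $V_A(\alpha )=\emptyset \Rightarrow \alpha =\nabla _A$.

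There is essentially no obstacle here: the proposition is a direct packaging of Lemma \ref{folclor} together with the trivial set-theoretic identity relating $D_A$ and $V_A$ and the already-established $V_A(\nabla _A)=\emptyset $. The only point requiring a word of care is that the standing assumption of this section — that $[\cdot ,\cdot ]_A$ is commutative and distributive w.r.t.\ arbitrary joins — is what makes ${\rm Spec}(A)$ with the Stone topology meaningful and makes $V_A$, $D_A$ well-behaved, and one should note that this assumption is automatically satisfied when ${\cal C}$ is congruence--modular (by Proposition \ref{1.3}), so the hypotheses of the proposition are consistent with the ambient setup. The proof is then three or four lines: reduce to the $V_A$ statement, dispatch one direction by Proposition \ref{stonetop}(\ref{stonetop1}), and dispatch the other by the contrapositive via Lemma \ref{folclor}.
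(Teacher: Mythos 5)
Your proposal is correct and follows essentially the same route as the paper: the first equivalence is definitional, $V_A(\nabla _A)=\emptyset $ is already known, and the remaining implication is handled by Lemma \ref{folclor}, producing a maximal (hence prime) congruence above any proper $\alpha $, so that $V_A(\alpha )\neq \emptyset $. No gaps.
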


\begin{proof} $D_A(\alpha )={\rm Spec}(A)$ iff ${\rm Spec}(A)\setminus D_A(\alpha )=\emptyset $ iff $V_A(\alpha )=\emptyset $. Since ${\rm Spec}(A)\subseteq {\rm Con}(A)\setminus \{\nabla _A\}$, we have $V_A(\nabla _A)=\emptyset $, which was also part of Proposition \ref{stonetop}. If $\alpha \neq \nabla _A$, then, according to Lemma \ref{folclor}, there exists a $\phi \in {\rm Spec}(A)$ such that $\alpha \subseteq \phi $, that is $V_A(\alpha )\neq \emptyset $.\end{proof}

\begin{remark} Recall that, if $f$ is surjective, then the map $\alpha \mapsto f(\alpha )$ is a lattice isomorphism from $[{\rm Ker}(f))$ to ${\rm Con}(B)$. Now assume that ${\cal C}$ is congruence--modular.

Then this map is an order isomorphism from ${\rm Max}(A)\cap [{\rm Ker}(f))$ to ${\rm Max}(B)$. Furthermore, this map is an order isomorphism from ${\rm Spec}(A)\cap [{\rm Ker}(f))$ to ${\rm Spec}(B)$ (see also \cite{agl}, \cite{gulo}, \cite{euadm}). Hence, if ${\rm Ker}(f)\subseteq \alpha \in {\rm Con}(A)$, then $V_B(f(\alpha ))=f(V_A(\alpha ))$ and $[f(\alpha ))\cap {\rm Max}(B)=f([\alpha )\cap {\rm Max}(A))$.

Therefore, for all $\theta \in {\rm Con}(A)$, the map $\alpha \mapsto \alpha /\theta $ is a lattice isomorphism from $[\theta )$ to ${\rm Con}(A/\theta )$, an order isomorphism from ${\rm Max}(A)\cap [\theta )$ to ${\rm Max}(A/\theta )$ and an order isomorphism from ${\rm Spec}(A)\cap [\theta )$ to ${\rm Spec}(A/\theta )$; hence, if $\theta \subseteq \alpha \in {\rm Con}(A)$, then $V_{A/\theta }(\alpha /\theta )=\{\psi /\theta \ |\ \psi \in V_A(\alpha )\}$ and $[\alpha /\theta )\cap {\rm Max}(A/\theta )=\{\psi /\theta \ |\ \psi \in [\alpha )\cap {\rm Max}(A)\}$.\label{specquo}\end{remark}

\section{The Construction of the Reticulation of a Universal Algebra and Related Results}
\label{reticulatia}

Throughout this section, we shall assume that $[\cdot ,\cdot ]_A$ is commutative and distributive w.r.t. arbitrary joins, and that $\nabla _A\in {\cal K}(A)$. For every $\theta \in {\rm Con}(A)$, we shall denote by $\rho _A(\theta )$ the {\em radical} of $\theta $, that is the intersection of the prime congruences of $A$ which include $\theta $: $\displaystyle \rho _A(\theta )=\bigcap \{\phi \in {\rm Spec}(A)\ |\ \theta \subseteq \phi \}=\bigcap _{\phi \in V_A(\theta )}\phi $.

\begin{remark} Let $\alpha ,\beta \in {\rm Con}(A)$ and $\phi \in {\rm Spec}(A)$. Then, clearly:\begin{enumerate}
\item\label{clara0} $V_A(\nabla _A)=\emptyset $, and thus $\rho _A(\nabla _A)=\nabla _A$;
\item\label{clara1} $\rho _A(\phi )=\phi $; moreover, $\rho _A(\alpha )=\alpha $ iff $\alpha $ is the intersection of a family of prime congruences of $A$;
\item\label{clara2} if $\alpha \subseteq \beta $, then $V_A(\alpha )\supseteq V_A(\beta )$, hence $\rho _A(\alpha )\subseteq \rho _A(\beta )$;
\item\label{clara4} if $\alpha \subseteq \phi $, then $\rho _A(\alpha )\subseteq \phi $, since $\phi \in V_A(\alpha )$.\end{enumerate}\label{clara}\end{remark}

Following \cite{agl}, for any  $\alpha ,\beta \in {\rm Con}(A)$ and every $n\in \N ^*$, we denote by $[\alpha ,\beta ]_A^1=[\alpha ,\beta ]_A$ and $[\alpha ,\beta ]_A^{n+1}=[[\alpha ,\beta ]_A^n,[\alpha ,\beta ]_A^n]_A$, and by $(\alpha ,\beta ]_A^1=[\alpha ,\beta ]_A$ and $(\alpha ,\beta ]_A^{n+1}=(\alpha ,(\alpha ,\beta ]_A^n]_A$.

\begin{lemma} For all $n\in \N ^*$, any $\alpha ,\beta \in {\rm Con}(A)$ and any family $(\alpha _i)_{i\in I}\in {\rm Con}(A)$:\begin{enumerate}
\item\label{imediata1} $\alpha \subseteq \rho _A(\alpha )$;
\item\label{imediata2} $V_A(\alpha )=V_A(\rho _A(\alpha ))$;
\item\label{imediata3} $\displaystyle V_A(\bigvee _{i\in I}\alpha _i)=V_A(\bigvee _{i\in I}\rho _A(\alpha _i))$;
\item\label{imediata4} $V_A([\alpha ,\beta ]_A^n)=V_A([\alpha ,\beta ]_A)=V_A(\alpha \cap \beta )=V_A(\alpha )\cup V_A(\beta )$;
\item\label{imediata5} $V_A([\alpha ,\alpha ]_A^n)=V_A([\alpha ,\alpha ]_A)=V_A(\alpha )$.\end{enumerate}\label{imediata}\end{lemma}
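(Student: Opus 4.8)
The plan is to prove the five items in the order they are stated, since each relies on the previous ones together with the basic properties of the spectrum collected in Proposition \ref{stonetop} and Remark \ref{clara}. Item (\ref{imediata1}) is immediate: every $\phi \in V_A(\alpha )$ satisfies $\alpha \subseteq \phi $, so $\alpha $ is contained in the intersection $\rho _A(\alpha )$ of all such $\phi $. For item (\ref{imediata2}), the inclusion $V_A(\rho _A(\alpha ))\subseteq V_A(\alpha )$ follows from (\ref{imediata1}) and the antitonicity of $V_A$ (Remark \ref{clara},(\ref{clara2})); conversely, if $\phi \in V_A(\alpha )$, then $\phi $ is one of the prime congruences in the intersection defining $\rho _A(\alpha )$, whence $\rho _A(\alpha )\subseteq \phi $, i.e. $\phi \in V_A(\rho _A(\alpha ))$.

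Item (\ref{imediata3}) follows from (\ref{imediata2}) together with the join formula $V_A(\bigvee _{i\in I}\alpha _i)=\bigcap _{i\in I}V_A(\alpha _i)$ from Proposition \ref{stonetop},(\ref{stonetop3}): applying it on both sides and using $V_A(\alpha _i)=V_A(\rho _A(\alpha _i))$ for each $i$ gives equality. For item (\ref{imediata4}), the chain $V_A([\alpha ,\beta ]_A)=V_A(\alpha \cap \beta )=V_A(\alpha )\cup V_A(\beta )$ is exactly Proposition \ref{stonetop},(\ref{stonetop2}). It remains to show $V_A([\alpha ,\beta ]_A^n)=V_A([\alpha ,\beta ]_A)$, which I would do by induction on $n$: the base case $n=1$ is trivial, and for the inductive step, $[\alpha ,\beta ]_A^{n+1}=[[\alpha ,\beta ]_A^n,[\alpha ,\beta ]_A^n]_A$, so applying the identity $V_A([\gamma ,\gamma ]_A)=V_A(\gamma )$ (the $\alpha =\beta $ instance of Proposition \ref{stonetop},(\ref{stonetop2}), since $\gamma \cap \gamma =\gamma $) with $\gamma =[\alpha ,\beta ]_A^n$ gives $V_A([\alpha ,\beta ]_A^{n+1})=V_A([\alpha ,\beta ]_A^n)$, which equals $V_A([\alpha ,\beta ]_A)$ by the induction hypothesis. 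Finally, item (\ref{imediata5}) is the special case $\beta =\alpha $ of (\ref{imediata4}), using $\alpha \cap \alpha =\alpha $; note that here one also has the iterated commutators $(\alpha ,\alpha ]_A^n$, but the same induction applies verbatim, so I would remark that $V_A((\alpha ,\beta ]_A^n)=V_A([\alpha ,\beta ]_A)$ as well if it is needed later.

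There is essentially no obstacle here: every step is a direct application of the spectrum identities already established, and the only mild subtlety is keeping straight which instance of Proposition \ref{stonetop},(\ref{stonetop2}) is being invoked (the general one $V_A([\alpha ,\beta ]_A)=V_A(\alpha )\cup V_A(\beta )$ versus its diagonal consequence $V_A([\gamma ,\gamma ]_A)=V_A(\gamma )$) in the induction for (\ref{imediata4}). The whole argument hinges only on the commutativity and join-distributivity of the commutator, which are standing assumptions in this section, so no appeal to congruence-modularity or semi-degeneracy is required.
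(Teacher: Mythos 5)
Your proposal is correct and follows essentially the same route as the paper: (i) is immediate, (ii) combines monotonicity with Remark \ref{clara},(\ref{clara4}), (iii) uses Proposition \ref{stonetop},(\ref{stonetop3}), and (iv)–(v) are the same induction on $n$ via Proposition \ref{stonetop},(\ref{stonetop2}), the only cosmetic difference being that the paper carries $V_A([\alpha ,\beta ]_A^n)=V_A(\alpha )\cup V_A(\beta )$ through the induction while you reduce each level to the previous one by the diagonal instance.
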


\begin{proof} (\ref{imediata1}) Trivial.

\noindent (\ref{imediata2}) By (\ref{imediata1}) and Remark \ref{dvinclus}, $V_A(\rho _A(\alpha ))\subseteq V_A(\alpha )$. If $\phi \in V_A(\alpha )$, then  $\phi \in V_A(\rho _A(\alpha ))$, according to Remark \ref{clara}, (\ref{clara4}), thus $V_A(\alpha )\subseteq V_A(\rho _A(\alpha ))$. Hence $V_A(\alpha )=V_A(\rho _A(\alpha ))$.

\noindent (\ref{imediata3}) By (\ref{imediata2}) and Proposition \ref{stonetop}, (\ref{stonetop3}), $\displaystyle V_A(\bigvee _{i\in I}\alpha _i)=\bigcap _{i\in I}V_A(\alpha _i)=\bigcap _{i\in I}V_A(\rho _A(\alpha _i))=V_A(\bigvee _{i\in I}\rho _A(\alpha _i))$.

\noindent (\ref{imediata4}) By Proposition \ref{stonetop}, (\ref{stonetop2}). $V_A([\alpha ,\beta ]_A)=V_A(\alpha \cap \beta )=V_A(\alpha )\cup V_A(\beta )$ Now we prove that $V_A([\alpha ,\beta ]_A^n)=V_A(\alpha )\cup V_A(\beta )$ by induction on $n\in \N ^*$. $V_A([\alpha ,\beta ]_A^1)=V_A([\alpha ,\beta ]_A)=V_A(\alpha )\cup V_A(\beta )$. Now let $n\in \N ^*$ such that $V_A([\theta ,\zeta ]_A^n)=V_A(\theta )\cup V_A(\zeta )$ for all $\theta ,\zeta \in {\rm Con}(A)$. Then $V_A([\alpha ,\beta ]_A^{n+1})=V_A([[\alpha ,\beta ]_A^n,[\alpha ,\beta ]_A^n]_A)=V_A([\alpha ,\beta ]_A^n)\cup V_A([\alpha ,\beta ]_A^n)=V_A([\alpha ,\beta ]_A^n)=V_A(\alpha )\cup V_A(\beta )$.

\noindent (\ref{imediata5}) By (\ref{imediata4}).\end{proof}

\begin{proposition} For all $\alpha ,\beta ,\theta \in {\rm Con}(A)$, the following hold:\begin{enumerate}
\item\label{esential1} $\rho _A(\alpha )\subseteq \rho _A(\beta )$ iff $\alpha \subseteq \rho _A(\beta )$ iff $V_A(\alpha )\supseteq V_A(\beta )$;
\item\label{esential2} $\rho _A(\alpha )=\rho _A(\beta )$ iff $V_A(\alpha )=V_A(\beta )$;
\item\label{esential3} if $\theta \subseteq \alpha $, then $\rho _{A/\theta }(\alpha /\theta )=\rho _A(\alpha )/\theta $;
\item\label{esential4} $\rho _{A/\theta }(\Delta _{A/\theta })=\rho _A(\theta )/\theta $;
\item\label{esential5} $\rho _{A/\theta }((\alpha \vee \theta )/\theta )=\rho _A(\alpha \vee \theta )/\theta $.\end{enumerate}\label{esential}\end{proposition}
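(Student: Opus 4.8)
The proof will rest on three ingredients: the defining formula $\rho _A(\theta )=\bigcap _{\phi \in V_A(\theta )}\phi $ (with the empty intersection read as $\nabla _A$, cf.\ Remark \ref{clara}, (\ref{clara0})), the inclusion $\theta \subseteq \rho _A(\theta )$ of Lemma \ref{imediata}, (\ref{imediata1}), and the description of the prime spectrum of a quotient recorded in Remark \ref{specquo}. Statements (\ref{esential2}), (\ref{esential4}) and (\ref{esential5}) will drop out of (\ref{esential1}) and (\ref{esential3}), so the real content is in those two.

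For (\ref{esential1}) I would prove the cycle $\rho _A(\alpha )\subseteq \rho _A(\beta )\Rightarrow \alpha \subseteq \rho _A(\beta )\Rightarrow V_A(\alpha )\supseteq V_A(\beta )\Rightarrow \rho _A(\alpha )\subseteq \rho _A(\beta )$. The first implication is just $\alpha \subseteq \rho _A(\alpha )\subseteq \rho _A(\beta )$. For the second, recall that $\rho _A(\beta )\subseteq \phi $ for every $\phi \in V_A(\beta )$; hence $\alpha \subseteq \rho _A(\beta )$ forces $\alpha \subseteq \phi $, i.e.\ $\phi \in V_A(\alpha )$, for each such $\phi $, so $V_A(\beta )\subseteq V_A(\alpha )$. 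The third holds because a larger index set produces a smaller intersection: $\rho _A(\alpha )=\bigcap _{\phi \in V_A(\alpha )}\phi \subseteq \bigcap _{\phi \in V_A(\beta )}\phi =\rho _A(\beta )$. Then (\ref{esential2}) follows by running (\ref{esential1}) for the pairs $(\alpha ,\beta )$ and $(\beta ,\alpha )$ and invoking antisymmetry of $\subseteq $.

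For (\ref{esential3}), assume $\theta \subseteq \alpha $. By Remark \ref{specquo} we have $V_{A/\theta }(\alpha /\theta )=\{\psi /\theta \ |\ \psi \in V_A(\alpha )\}$, and every $\psi \in V_A(\alpha )$ contains $\alpha $, hence contains $\theta $; therefore $\rho _{A/\theta }(\alpha /\theta )=\bigcap _{\psi \in V_A(\alpha )}(\psi /\theta )$. It then remains to identify this with $\bigl(\bigcap _{\psi \in V_A(\alpha )}\psi \bigr)/\theta =\rho _A(\alpha )/\theta $, i.e.\ to check that $p_\theta $ sends the intersection of a family of congruences all lying above $\theta $ to the intersection of their images. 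The inclusion ``$\subseteq $'' of that identity is trivial; for ``$\supseteq $'', if $(a/\theta ,b/\theta )\in \psi /\theta $ for every $\psi \in V_A(\alpha )$, then for each $\psi $ there is $(a',b')\in \psi $ with $a'/\theta =a/\theta $ and $b'/\theta =b/\theta $, so $(a,a'),(b',b)\in \theta \subseteq \psi $ and hence $(a,b)\in \psi $; thus $(a,b)\in \bigcap _{\psi \in V_A(\alpha )}\psi $. Finally, (\ref{esential4}) is the instance $\alpha =\theta $ of (\ref{esential3}) together with $\Delta _{A/\theta }=\theta /\theta $, and (\ref{esential5}) is (\ref{esential3}) applied to the congruence $\alpha \vee \theta $, which contains $\theta $.

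The only genuinely non-formal step is this interchange of $p_\theta $ with arbitrary intersections inside (\ref{esential3}): since direct images do not commute with intersections in general, one must exploit precisely the feature that all the congruences involved contain $\theta $ (guaranteed here by $\theta \subseteq \alpha $), which is exactly the correspondence-theorem situation of Remark \ref{bijlatcongr}. Everything else is bookkeeping with the operator $\rho _A$ and its defining intersection.
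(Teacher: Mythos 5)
Your proof is correct and follows essentially the same route as the paper: the cycle of implications for (\ref{esential1}) using $\alpha \subseteq \rho _A(\alpha )$ and the monotonicity of $V_A$ and of intersections, (\ref{esential2}) as a formal consequence, and (\ref{esential3}) via the bijection of Remark \ref{specquo} together with the identity $\bigcap _{\psi }(\psi /\theta )=(\bigcap _{\psi }\psi )/\theta $, with (\ref{esential4}) and (\ref{esential5}) as specializations. The only difference is that you verify the interchange of $p_{\theta }$ with arbitrary intersections by hand, whereas the paper leaves it implicit (it follows from the order isomorphism $\alpha \mapsto \alpha /\theta $ of Remark \ref{bijlatcongr}); this is a harmless, indeed welcome, addition of detail rather than a different argument.
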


\begin{proof} (\ref{esential1}) Clearly, if $V_A(\alpha )\supseteq V_A(\beta )$, then $\rho _A(\alpha )\subseteq \rho _A(\beta )$. If $\rho _A(\alpha )\subseteq \rho _A(\beta )$, then, since $\alpha \subseteq \rho _A(\alpha )$, it follows that $\alpha \subseteq \rho _A(\beta )$. Finally, if $\alpha \subseteq \rho _A(\beta )$, then $V_A(\alpha )\supseteq V_A(\rho _A(\beta ))=V_A(\beta )$, by Remark \ref{clara}, (\ref{clara2}), and Lemma \ref{imediata}, (\ref{imediata2}).

\noindent (\ref{esential2}) By (\ref{esential1}).

\noindent (\ref{esential3}) If $\theta \subseteq \alpha $, then we may write: $\displaystyle \rho _{A/\theta }(\alpha /\theta )=\bigcap _{\psi \in V_{A/\theta }(\alpha /\theta )}\psi =\bigcap _{\phi \in V_A(\alpha )}\phi /\theta =(\bigcap _{\phi \in V_A(\alpha )}\phi )/\theta =\rho _A(\alpha )/\theta $.

\noindent (\ref{esential4}) By (\ref{esential3}), $\rho _{A/\theta }(\Delta _{A/\theta })=\rho _{A/\theta }(\theta /\theta )=\rho _A(\theta )/\theta $.

\noindent (\ref{esential5}) By (\ref{esential3}).\end{proof}

\begin{proposition} For any $n\in \N ^*$, any $\alpha \in {\rm Con}(A)$ and any family $(\alpha _i)_{i\in I}\subseteq {\rm Con}(A)$:\begin{enumerate}
\item\label{onrho1} if ${\cal C}$ is congruence--modular and semi--degenerate, then: $\rho _A(\alpha )=\nabla _A$ iff $\alpha =\nabla _A$;
\item\label{onrho2} $\rho _A([\alpha ,\beta ]_A^n)=\rho _A([\alpha ,\beta ]_A)=\rho _A(\alpha \cap \beta )=\rho _A(\alpha )\cap \rho _A(\beta )$;
\item\label{onrho0} $\rho _A([\alpha ,\alpha ]_A^n)=\rho _A([\alpha ,\alpha ]_A)=\rho _A(\alpha )$;
\item\label{onrho3} $\rho _A(\rho _A(\alpha ))=\rho _A(\alpha )$;
\item\label{onrho4} $\displaystyle \rho _A(\bigvee _{i\in I}\rho _A(\alpha _i))=\rho _A(\bigvee _{i\in I}\alpha _i)$;
\item\label{onrho5} if ${\cal C}$ is congruence--modular and semi--degenerate, then: $\displaystyle \bigvee _{i\in I}\rho _A(\alpha _i)=\nabla _A$ iff $\displaystyle \bigvee _{i\in I}\alpha _i=\nabla _A$.\end{enumerate}\label{onrho}\end{proposition}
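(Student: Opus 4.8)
The plan is to reduce every item to the now-standard correspondence between radicals and closed sets of the prime spectrum, i.e. to Proposition \ref{esential}, (\ref{esential2}) (which states $\rho _A(\alpha )=\rho _A(\beta )$ iff $V_A(\alpha )=V_A(\beta )$), together with Lemma \ref{imediata} and Proposition \ref{sdtop}. For (\ref{onrho1}), one implication is just Remark \ref{clara}, (\ref{clara0}), namely $\rho _A(\nabla _A)=\nabla _A$. For the converse I would argue that if $\alpha \neq \nabla _A$ then, ${\cal C}$ being congruence--modular and semi--degenerate, Lemma \ref{folclor} supplies a (maximal, hence prime) congruence $\phi $ with $\alpha \subseteq \phi \subsetneq \nabla _A$, whence $\rho _A(\alpha )\subseteq \phi \subsetneq \nabla _A$; equivalently, $V_A(\alpha )\neq \emptyset $ by Proposition \ref{sdtop}, so $V_A(\rho _A(\alpha ))=V_A(\alpha )\neq \emptyset $ by Lemma \ref{imediata}, (\ref{imediata2}), and Proposition \ref{sdtop} again gives $\rho _A(\alpha )\neq \nabla _A$.

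For (\ref{onrho2}) and (\ref{onrho0}) I would first invoke Lemma \ref{imediata}, (\ref{imediata4}) and (\ref{imediata5}): $V_A([\alpha ,\beta ]_A^n)=V_A([\alpha ,\beta ]_A)=V_A(\alpha \cap \beta )=V_A(\alpha )\cup V_A(\beta )$ and $V_A([\alpha ,\alpha ]_A^n)=V_A(\alpha )$; by Proposition \ref{esential}, (\ref{esential2}), the matching radicals then coincide, giving $\rho _A([\alpha ,\beta ]_A^n)=\rho _A([\alpha ,\beta ]_A)=\rho _A(\alpha \cap \beta )$ and $\rho _A([\alpha ,\alpha ]_A^n)=\rho _A(\alpha )$. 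What remains is the identity $\rho _A(\alpha \cap \beta )=\rho _A(\alpha )\cap \rho _A(\beta )$, and this is the one place where a little care is needed: a prime congruence $\phi $ contains $\alpha \cap \beta $ iff it contains $[\alpha ,\beta ]_A$ (their $V_A$'s agree), and, $\phi $ being prime with respect to the commutator, this holds iff $\alpha \subseteq \phi $ or $\beta \subseteq \phi $; hence the family of prime congruences above $\alpha \cap \beta $ is exactly $V_A(\alpha )\cup V_A(\beta )$, and intersecting it yields $\rho _A(\alpha )\cap \rho _A(\beta )$ (with the usual convention that an empty intersection is $\nabla _A$, consistent with Remark \ref{clara}, (\ref{clara0})). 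Item (\ref{onrho0}) is then the case $\beta =\alpha $.

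For (\ref{onrho3}), Lemma \ref{imediata}, (\ref{imediata2}) gives $V_A(\rho _A(\alpha ))=V_A(\alpha )$, so Proposition \ref{esential}, (\ref{esential2}) yields $\rho _A(\rho _A(\alpha ))=\rho _A(\alpha )$. For (\ref{onrho4}), I would use Lemma \ref{imediata}, (\ref{imediata3}) (or Lemma \ref{imediata}, (\ref{imediata2}) combined with Proposition \ref{stonetop}, (\ref{stonetop3})) to get $V_A(\bigvee _{i\in I}\rho _A(\alpha _i))=\bigcap _{i\in I}V_A(\alpha _i)=V_A(\bigvee _{i\in I}\alpha _i)$, and then apply Proposition \ref{esential}, (\ref{esential2}). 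Finally (\ref{onrho5}) follows by applying (\ref{onrho1}) to $\bigvee _{i\in I}\rho _A(\alpha _i)$ and then (\ref{onrho4}): $\bigvee _{i\in I}\rho _A(\alpha _i)=\nabla _A$ iff $\rho _A(\bigvee _{i\in I}\rho _A(\alpha _i))=\nabla _A$ iff $\rho _A(\bigvee _{i\in I}\alpha _i)=\nabla _A$ iff $\bigvee _{i\in I}\alpha _i=\nabla _A$; alternatively, one can chain Proposition \ref{sdtop} directly with the equality of $V_A$'s established for (\ref{onrho4}). None of these steps is a genuine obstacle; the only delicate point is the equality $\rho _A(\alpha \cap \beta )=\rho _A(\alpha )\cap \rho _A(\beta )$ in (\ref{onrho2}), which must be routed through the commutator--primeness of the spectrum elements rather than through meet--irreducibility in ${\rm Con}(A)$, since prime congruences need not be meet--irreducible there.
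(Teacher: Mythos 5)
Your proposal is correct and follows essentially the same route as the paper: it reduces each item to the $V_A$--identities of Lemma \ref{imediata} and Proposition \ref{stonetop}, (\ref{stonetop2}) (the commutator--primeness argument giving $V_A(\alpha \cap \beta )=V_A(\alpha )\cup V_A(\beta )$, hence $\rho _A(\alpha \cap \beta )=\rho _A(\alpha )\cap \rho _A(\beta )$ by splitting the intersection over the union), uses Lemma \ref{folclor}/Proposition \ref{sdtop} for (\ref{onrho1}), and derives (\ref{onrho5}) by chaining (\ref{onrho4}) with (\ref{onrho1}), exactly as in the paper. The only cosmetic difference is that you pass equality of radicals through Proposition \ref{esential}, (\ref{esential2}), where the paper cites Remark \ref{clara}, (\ref{clara2}) and Lemma \ref{imediata} directly; the substance is the same.
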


\begin{proof} (\ref{onrho1}) By Lemma \ref{imediata}, (\ref{imediata1}), $\nabla _A\subseteq \rho _A(\nabla _A)$, thus $\rho _A(\nabla _A)=\nabla _A$. If $\alpha \neq \nabla _A$, then there exists $\phi \in V_A(\alpha )$, thus $\rho _A(\alpha )\subseteq \phi \subsetneq \nabla _A$.

\noindent (\ref{onrho2}) By Remark \ref{clara}, (\ref{clara2}), Lemma \ref{imediata}, (\ref{imediata4}), and Proposition \ref{stonetop}, (\ref{stonetop2}), $\displaystyle \rho _A([\alpha ,\beta ]_A^n)=\rho _A([\alpha ,\beta ]_A)=\rho _A(\alpha \cap \beta )=\bigcap _{\phi \in V_A(\alpha \cap \beta )}\phi =\bigcap _{\phi \in V_A(\alpha )\cup V_A(\beta )}=\bigcap _{\phi \in V_A(\alpha )}\phi \cap \bigcap _{\phi \in V_A(\beta )}\phi =\rho _A(\alpha )\cap \rho _A(\beta )$.

\noindent (\ref{onrho0}) By (\ref{onrho2}).

\noindent (\ref{onrho3}) By Remark \ref{clara}, (\ref{clara2}), and Lemma \ref{imediata}, (\ref{imediata2}).

\noindent (\ref{onrho4}) By Remark \ref{clara}, (\ref{clara2}), and Lemma \ref{imediata}, (\ref{imediata3}), $\displaystyle \rho _A(\bigvee _{i\in I}\rho _A(\alpha _i))=\rho _A(\bigvee _{i\in I}\alpha _i)$.

\noindent (\ref{onrho5}) By (\ref{onrho4}) and (\ref{onrho1}), $\displaystyle \bigvee _{i\in I}\rho _A(\alpha _i)=\nabla _A$ iff $\displaystyle \rho _A(\bigvee _{i\in I}\rho _A(\alpha _i))=\nabla _A$ iff $\displaystyle \rho _A(\bigvee _{i\in I}\alpha _i)=\nabla _A$ iff $\displaystyle \bigvee _{i\in I}\alpha _i=\nabla _A$.\end{proof}

The {\em radical congruences} of $A$ are the congruences $\alpha $ of $A$ such that $\alpha =\rho _A(\alpha )$. Let us denote by ${\rm RCon}(A)$ the set of the radical congruences of $A$.

\begin{remark} By Remark \ref{clara}, (\ref{clara1}), ${\rm Spec}(A)\subseteq {\rm RCon}(A)$; moreover, the elements of ${\rm RCon}(A)$ are exactly the intersections of prime congruences of $A$.\label{radspec}\end{remark}

\begin{remark} ${\rm RCon}(A)=\{\alpha \in {\rm Con}(A)\ |\ \alpha =\rho _A(\alpha )\}=\{\rho _A(\alpha )\ |\ \alpha \in {\rm Con}(A)\}$. Indeed, the first of these equalities is the definition of ${\rm RCon}(A)$ and the second equality follows from Proposition \ref{onrho}, (\ref{onrho3}).\label{radical}\end{remark}

\begin{proposition} If the commutator of $A$ equals the intersection, in particular if ${\cal C}$ is congruence--distributive, then ${\rm RCon}(A)={\rm Con}(A)$.\label{radcongrdistrib}\end{proposition}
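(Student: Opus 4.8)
The plan is to prove the nontrivial inclusion ${\rm Con}(A)\subseteq {\rm RCon}(A)$, i.e. that $\rho _A(\alpha )=\alpha $ for every $\alpha \in {\rm Con}(A)$; the ``in particular'' assertion then follows immediately from Theorem \ref{distrib}. By Lemma \ref{imediata}, (\ref{imediata1}), we always have $\alpha \subseteq \rho _A(\alpha )$, so everything reduces to showing $\rho _A(\alpha )\subseteq \alpha $, equivalently: for each $(a,b)\in A^2\setminus \alpha $ there is some $\phi \in {\rm Spec}(A)$ with $\alpha \subseteq \phi $ and $(a,b)\notin \phi $. Two preliminary observations make this routine. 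First, under the hypothesis the lattice ${\rm Con}(A)$ is distributive: $[\cdot ,\cdot ]_A$ equals $\cap $ and, by the standing assumptions of this section, is distributive with respect to arbitrary joins, so $\cap $ distributes over $\vee $ in ${\rm Con}(A)$ (this is exactly the remark following Proposition \ref{1.3}). Second, since $[\cdot ,\cdot ]_A=\cap $, Definition \ref{1.4} says that a proper congruence $\phi $ is prime precisely when it is meet--prime in ${\rm Con}(A)$, that is, $\beta \cap \gamma \subseteq \phi $ implies $\beta \subseteq \phi $ or $\gamma \subseteq \phi $.

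Now fix $(a,b)\notin \alpha $ and put ${\cal F}=\{\theta \in {\rm Con}(A)\ |\ \alpha \subseteq \theta ,\ (a,b)\notin \theta \}$, which is nonempty because $\alpha \in {\cal F}$. Since the union of a chain of congruences of $A$ is again a congruence of $A$, the union of any chain in ${\cal F}$ lies in ${\cal F}$ (it contains $\alpha $ and omits $(a,b)$) and bounds the chain; by Zorn's Lemma ${\cal F}$ has a maximal element $\phi $. Then $\phi \neq \nabla _A$ because $(a,b)\notin \phi $. Suppose $\beta \cap \gamma \subseteq \phi $ with $\beta \nsubseteq \phi $ and $\gamma \nsubseteq \phi $; then $\phi \vee \beta \supsetneq \phi $ and $\phi \vee \gamma \supsetneq \phi $, so neither belongs to ${\cal F}$ although both contain $\alpha $, which forces $(a,b)\in \phi \vee \beta $ and $(a,b)\in \phi \vee \gamma $. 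By distributivity of ${\rm Con}(A)$, $(a,b)\in (\phi \vee \beta )\cap (\phi \vee \gamma )=\phi \vee (\beta \cap \gamma )\subseteq \phi $, contradicting $(a,b)\notin \phi $. Hence $\phi $ is meet--prime, i.e. $\phi \in {\rm Spec}(A)$, with $\alpha \subseteq \phi $ and $(a,b)\notin \phi $; thus $(a,b)\notin \rho _A(\alpha )$. As $(a,b)\in A^2\setminus \alpha $ was arbitrary, $\rho _A(\alpha )\subseteq \alpha $, and therefore $\alpha =\rho _A(\alpha )\in {\rm RCon}(A)$.

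I do not anticipate a real obstacle: the argument is the familiar Zorn--style separation of an element from a given one by a prime, and the only points that need a little care are the two preliminary remarks above — that the hypothesis together with the standing assumptions of this section yields distributivity of ${\rm Con}(A)$, and that under $[\cdot ,\cdot ]_A=\cap $ the commutator--theoretic notion of prime congruence collapses to meet--primeness. As an alternative one could avoid Zorn's Lemma by invoking that ${\rm Con}(A)$ is an algebraic lattice, hence each of its elements is a meet of completely meet--irreducible elements, together with the fact that in a distributive lattice every meet--irreducible element is meet--prime; this likewise exhibits every congruence of $A$ as an intersection of prime congruences.
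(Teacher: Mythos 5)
Your proof is correct, but it follows a genuinely different route from the paper. The paper's own proof is essentially a two-line citation argument: it invokes \cite[Lemma 1.6]{agl}, which (under the standing hypotheses of the section) identifies the radical congruences of $A$ with its semiprime congruences, and then observes that when $[\cdot ,\cdot ]_A=\cap $ the condition $[\alpha ,\alpha ]_A\subseteq \theta \Rightarrow \alpha \subseteq \theta $ is trivially satisfied, so every congruence is semiprime and hence radical. You instead prove the separation property from scratch: using that the standing assumption (distributivity of the commutator with respect to joins) together with $[\cdot ,\cdot ]_A=\cap $ makes ${\rm Con}(A)$ a distributive (indeed algebraic) lattice, and that primeness in the sense of Definition \ref{1.4} collapses to meet--primeness, you run the classical Zorn--style argument producing, for each $(a,b)\notin \alpha $, a congruence maximal among those containing $\alpha $ and omitting $(a,b)$, and verify it is meet--prime via $(\phi \vee \beta )\cap (\phi \vee \gamma )=\phi \vee (\beta \cap \gamma )$. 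Both preliminary reductions you flag are justified by the remark following Proposition \ref{1.3} and by Theorem \ref{distrib}, and the Zorn step is sound since unions of chains of congruences are congruences; your alternative via completely meet--irreducible elements in an algebraic distributive lattice is also valid. What the paper's route buys is brevity, by outsourcing the nontrivial content (radical $=$ semiprime, which rests on Proposition \ref{rho}-type iteration of the commutator) to the cited lemma; what your route buys is self-containedness and a slightly stronger explicit conclusion, namely that when the commutator is the intersection every congruence is literally an intersection of prime congruences, without appeal to the external characterization of radicals.
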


\begin{proof} By \cite[Lemma $1.6$]{agl}, the radical congruences of $A$ coincide to its semiprime congruences, that is the congruences $\theta $ of $A$ such that, for all $\alpha \in {\rm Con}(A)$, $[\alpha ,\alpha ]_A\subseteq \theta $ implies $\alpha \subseteq \theta $. Clearly, if $[\cdot ,\cdot ]_A=\cap $, then every congruence of $A$ is semiprime, and thus radical.\end{proof}

Most of the previous results on the radicals of congruences are known, but, for the sake of completeness, we have provided short proofs for them. For any $\alpha ,\beta \in {\rm Con}(A)$, let us denote by $\alpha \dotvee \beta =\rho _A(\alpha \vee \beta )$. For any family $(\alpha _i)_{i\in I}\subseteq {\rm Con}(A)$, we shall denote by $\displaystyle 
\stackrel{\bullet }{\bigvee _{i\in I}}\alpha _i=\rho _A(\bigvee _{i\in I}\alpha _i)$.

\begin{proposition} $({\rm RCon}(A),\dotvee ,\cap ,\rho _A(\Delta _A),\rho _A(\nabla _A)=\nabla _A)$ is a bounded lattice, orderred by set inclusion. Moreover, it is a complete lattice, in which the arbitrary join is given by the $\displaystyle 
\stackrel{\bullet }{\bigvee }$ defined above.\label{radlat}\end{proposition}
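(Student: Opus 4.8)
The plan is to show that $({\rm RCon}(A),\dotvee,\cap)$ is a bounded lattice by verifying that $\dotvee$ and $\cap$ are well-defined operations on ${\rm RCon}(A)$ and satisfy the lattice laws, using the characterization ${\rm RCon}(A)=\{\rho_A(\alpha)\ |\ \alpha\in{\rm Con}(A)\}$ from Remark \ref{radical} together with the arithmetic of $\rho_A$ collected in Proposition \ref{onrho}. First I would check closure: for $\alpha,\beta\in{\rm RCon}(A)$, $\alpha\dotvee\beta=\rho_A(\alpha\vee\beta)$ is by definition a radical congruence (it is in the image of $\rho_A$, cf. Remark \ref{radical}), and $\alpha\cap\beta$ is radical because an intersection of intersections of prime congruences is again an intersection of prime congruences (Remark \ref{radspec}); alternatively, $\rho_A(\alpha\cap\beta)\supseteq\rho_A([\alpha,\beta]_A)=\rho_A(\alpha)\cap\rho_A(\beta)=\alpha\cap\beta$ by Proposition \ref{onrho}, (\ref{onrho2}), while the reverse inclusion is Lemma \ref{imediata}, (\ref{imediata1}), so $\rho_A(\alpha\cap\beta)=\alpha\cap\beta$. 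The bounds are $\rho_A(\Delta_A)$ (the least element, since $\rho_A$ is monotone by Remark \ref{clara}, (\ref{clara2}), and every radical congruence is $\rho_A$ of something, hence contains $\rho_A(\Delta_A)$) and $\rho_A(\nabla_A)=\nabla_A$ (Remark \ref{clara}, (\ref{clara0})).

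Next I would verify the lattice axioms. Commutativity and idempotence of $\cap$ are immediate, and $\alpha\dotvee\beta=\rho_A(\alpha\vee\beta)=\rho_A(\beta\vee\alpha)=\beta\dotvee\alpha$; idempotence $\alpha\dotvee\alpha=\rho_A(\alpha)=\alpha$ uses that $\alpha$ is radical. For associativity of $\dotvee$: $(\alpha\dotvee\beta)\dotvee\gamma=\rho_A(\rho_A(\alpha\vee\beta)\vee\gamma)$, and by Proposition \ref{onrho}, (\ref{onrho4}) (applied to the two-element family $\{\rho_A(\alpha\vee\beta),\gamma\}$, noting $\gamma=\rho_A(\gamma)$), this equals $\rho_A((\alpha\vee\beta)\vee\gamma)=\rho_A(\alpha\vee(\beta\vee\gamma))$, and running the computation backwards gives $\alpha\dotvee(\beta\dotvee\gamma)$. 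The absorption laws are the main point requiring care: $\alpha\dotvee(\alpha\cap\beta)=\rho_A(\alpha\vee(\alpha\cap\beta))=\rho_A(\alpha)=\alpha$ since $\alpha\vee(\alpha\cap\beta)=\alpha$ in ${\rm Con}(A)$; and $\alpha\cap(\alpha\dotvee\beta)=\alpha\cap\rho_A(\alpha\vee\beta)$, which contains $\alpha$-wait, we need it to equal $\alpha$: since $\alpha\subseteq\alpha\vee\beta\subseteq\rho_A(\alpha\vee\beta)$ by Lemma \ref{imediata}, (\ref{imediata1}), we get $\alpha\cap\rho_A(\alpha\vee\beta)=\alpha$. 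This gives the bounded lattice structure, and the order is set inclusion because $\alpha\cap\beta=\alpha\iff\alpha\subseteq\beta$.

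For the completeness claim, I would show that for any family $(\alpha_i)_{i\in I}\subseteq{\rm RCon}(A)$ the element $\stackrel{\bullet}{\bigvee}_{i\in I}\alpha_i=\rho_A(\bigvee_{i\in I}\alpha_i)$ is the supremum in ${\rm RCon}(A)$. It is radical by Remark \ref{radical}. It is an upper bound: each $\alpha_j\subseteq\bigvee_{i\in I}\alpha_i\subseteq\rho_A(\bigvee_{i\in I}\alpha_i)$. It is the least upper bound: if $\gamma\in{\rm RCon}(A)$ with $\alpha_i\subseteq\gamma$ for all $i$, then $\bigvee_{i\in I}\alpha_i\subseteq\gamma$ in ${\rm Con}(A)$, so by monotonicity of $\rho_A$ (Remark \ref{clara}, (\ref{clara2})) and $\gamma=\rho_A(\gamma)$ we obtain $\rho_A(\bigvee_{i\in I}\alpha_i)\subseteq\rho_A(\gamma)=\gamma$. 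Arbitrary meets in ${\rm RCon}(A)$ are just intersections, which stay in ${\rm RCon}(A)$ as noted above; a complete lattice with all meets (and a top) automatically has all joins, so in fact the existence of $\stackrel{\bullet}{\bigvee}$ is a bonus confirming the explicit formula. The main obstacle, such as it is, is bookkeeping: making sure every use of $\rho_A(\rho_A(-)\vee-)=\rho_A(-\vee-)$ is licensed by Proposition \ref{onrho}, (\ref{onrho4}) with the right finite family, and that the two absorption identities are handled on the correct side ($\dotvee$-over-$\cap$ versus $\cap$-over-$\dotvee$); no deep idea is needed beyond the already-established closure properties of $\rho_A$.
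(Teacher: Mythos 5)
Your proposal is correct and follows essentially the same route as the paper: verify the bounded-lattice identities on ${\rm RCon}(A)$ directly from the $\rho _A$-arithmetic of Proposition \ref{onrho} (closure, idempotence, associativity via $\rho _A(\rho _A(\cdot )\vee \cdot )=\rho _A(\cdot \vee \cdot )$, absorption), identify the order with inclusion, and then check that $\rho _A(\bigvee _{i\in I}\alpha _i)$ is the supremum using monotonicity of $\rho _A$ and radicality of the upper bound. Your treatment of the second absorption law (from $\alpha \subseteq \rho _A(\alpha \vee \beta )$ one gets $\alpha \cap \rho _A(\alpha \vee \beta )=\alpha $ directly) is in fact a slight simplification of the paper's longer computation, but it does not change the overall argument.
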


\begin{proof} Of course, $\cap $ is idempotent, commutative and associative, and, clearly, $\dotvee $ is commutative. Now let $\alpha ,\beta ,\gamma \in {\rm Con}(A)$ and $R=\{\rho _A(\alpha ),\rho _A(\beta ),\rho _A(\gamma )\}\subseteq {\rm RCon}(A)$; we shall use Proposition \ref{onrho}, (\ref{onrho2}), (\ref{onrho3}) and (\ref{onrho4}): $\alpha ,\beta ,\gamma \in {\rm Con}(A)$, $\rho _A(\alpha )\dotvee \rho _A(\alpha )=\rho _A(\rho _A(\alpha )\vee \rho _A(\alpha ))=\rho _A(\alpha \vee \alpha )=\rho _A(\alpha )$, so $\dotvee $ is idempotent; $\displaystyle \rho _A(\alpha )\dotvee (\rho _A(\beta )\dotvee \rho _A(\gamma ))=\rho _A(\alpha )\dotvee \rho _A(\rho _A(\beta )\vee \rho _A(\gamma ))=\rho _A(\alpha )\dotvee \rho _A(\beta \vee \gamma )=\rho _A(\rho _A(\alpha )\vee \rho _A(\rho _A(\beta \vee \gamma )))=\rho _A(\rho _A(\alpha )\vee \rho _A(\beta \vee \gamma ))=\rho _A(\alpha \vee (\beta \vee \gamma ))=\rho _A(\alpha \vee \beta \vee \gamma)=\rho _A(\rho _A(\alpha )\vee \rho _A(\beta )\vee \rho _A(\gamma ))=\stackrel{\bullet }{\bigvee _{\theta \in R}}\theta $, thus, by the commutativity of $\dotvee $, we also have $\displaystyle (\rho _A(\alpha )\dotvee \rho _A(\beta ))\dotvee \rho _A(\gamma )=\rho _A(\gamma )\dotvee (\rho _A(\alpha )\dotvee \rho _A(\beta ))=\stackrel{\bullet }{\bigvee _{\theta \in R}}\theta $, hence $\rho _A(\alpha )\dotvee (\rho _A(\beta )\dotvee \rho _A(\gamma ))=(\rho _A(\alpha )\dotvee \rho _A(\beta ))\dotvee \rho _A(\gamma )$, so $\dotvee $ is associative; $\rho _A(\alpha )\dotvee (\rho _A(\alpha )\cap \rho _A(\beta ))=\rho _A(\alpha )\dotvee \rho _A(\alpha \cap \beta )=\rho _A(\rho _A(\alpha )\vee \rho _A(\alpha \cap \beta ))=\rho _A(\alpha \vee (\alpha \cap \beta ))=\rho _A(\alpha )$ and $\rho _A(\alpha )\cap (\rho _A(\alpha )\dotvee \rho _A(\beta ))=\rho _A(\alpha \cap (\rho _A(\alpha )\dotvee \rho _A(\beta )))=\rho _A(\alpha \cap \rho _A(\rho _A(\alpha )\vee \rho _A(\beta )))=\rho _A(\alpha \cap \rho _A(\alpha \vee \beta ))=\rho _A(\rho _A(\alpha \cap \rho _A(\alpha \vee \beta )))=\rho _A(\rho _A(\alpha ))\cap \rho _A(\rho _A(\alpha \vee \beta )))=\rho _A(\alpha )\cap \rho _A(\alpha \vee \beta ))=\rho _A(\alpha \cap (\alpha \vee \beta ))=\rho _A(\alpha )$, so the absorption laws hold. Of course, for all $\theta ,\zeta \in {\rm RCon}(A)$, $\theta \cap \zeta =\theta $ iff $\theta \subseteq \zeta $. Therefore $({\rm RCon}(A),\dotvee ,\cap )$ is a lattice, orderred by set inclusion. From Remark \ref{clara}, (\ref{clara2}) and (\ref{clara0}), we obtain that this lattice has $\rho _A(\Delta _A)$ as first element and $\rho _A(\nabla _A)=\nabla _A$ as last element.

Now let us consider a family $(\alpha _i)_{i\in I}\subseteq {\rm Con}(A)$, $M=\{\rho _A(\alpha _i)\ |\ i\in I\}\subseteq {\rm RCon}(A)$ and let us denote by $\displaystyle \theta = 
\stackrel{\bullet }{\bigvee _{i\in I}}\rho _A(\alpha _i)=\rho _A(\bigvee _{i\in I}\rho _A(\alpha _i))=\rho _A(\bigvee _{i\in I}\alpha _i)$, by Proposition \ref{onrho}, (\ref{onrho4}). Then $\theta \in {\rm RCon}(A)$ and $\rho _A(\alpha _i)\subseteq \theta $ for all $i\in I$. Now, if $\zeta \in {\rm RCon}(A)$ and $\rho _A(\alpha _i)\subseteq \zeta $ for all $i\in I$, then $\displaystyle \bigvee _{i\in I}\rho _A(\alpha _i)\subseteq \zeta $, so, by Remark \ref{clara}, (\ref{clara2}), and Proposition \ref{onrho}, (\ref{onrho4}), $\displaystyle \zeta =\rho _A(\zeta )\supseteq \rho _A(\bigvee _{i\in I}\rho _A(\alpha _i))=\stackrel{\bullet }{\bigvee _{i\in I}}\rho _A(\alpha _i)=\theta $. Therefore $\displaystyle \theta =\sup (M)$ in the bounded lattice ${\rm RCon}(A)$, hence this lattice is complete.\end{proof}

Let us define a binary relation $\equiv _A $ on ${\rm Con}(A)$ by: $\alpha \equiv _A \beta $ iff $\rho _A(\alpha )=\rho _A(\beta )$, for any $\alpha ,\beta \in {\rm Con}(A)$. $\equiv _A \cap ({\cal K}(A))^2$ shall also be denoted by $\equiv _A $.

\begin{remark} Clearly, $\equiv _A $ is an equivalence on ${\rm Con}(A)$, thus also on ${\cal K}(A)$. On ${\rm RCon}(A)$, $\equiv _A $ coincides to the equality, that is to $\Delta _{{\rm RCon}(A)}$, because, for any $\alpha ,\beta \in {\rm Con}(A)$, $\rho _A(\alpha )\equiv _A \rho _A(\beta )$ iff $\rho _A(\rho _A(\alpha ))=\rho _A(\rho _A(\beta ))$ iff $\rho _A(\alpha )=\rho _A(\beta )$. So, trivially, $\equiv _A $ is a congruence of the lattice ${\rm RCon}(A)$.

On ${\rm Con}(A)$, $\equiv _A $ preserves the commutator, $\cap $, $\vee $ and $\dotvee $, even $\bigvee $ and $\stackrel{\bullet }{\bigvee }$ over arbitrary families of congruences, in particular it is a congruence of the lattice ${\rm Con}(A)$. Indeed, if $\alpha ,\alpha ^{\prime},\beta ,\beta ^{\prime}\in {\rm Con}(A)$ such that $\alpha \equiv _A \alpha ^{\prime}$ and $\beta \equiv _A \beta ^{\prime}$, that is $\rho _A(\alpha )=\rho _A(\alpha ^{\prime})$ and $\rho _A(\beta )=\rho _A(\beta ^{\prime})$, then, by Proposition \ref{onrho}, (\ref{onrho2}), $\rho _A([\alpha ,\beta ]_A)=\rho _A(\alpha \cap \beta )=\rho _A(\alpha )\cap \rho _A(\beta )=\rho _A(\alpha ^{\prime})\cap \rho _A(\beta ^{\prime})=\rho _A(\alpha ^{\prime}\cap \beta ^{\prime})=\rho _A([\alpha ^{\prime},\beta ^{\prime}]_A)$, thus $[\alpha ,\beta ]_A\equiv _A [\alpha ^{\prime},\beta ^{\prime}]_A\equiv _A \alpha \cap \beta \equiv _A \alpha ^{\prime}\cap \beta ^{\prime}$. Now, if $(\alpha _i)_{i\in I}\subseteq {\rm Con}(A)$ and $(\alpha _i^{\prime})_{i\in I}\subseteq {\rm Con}(A)$ such that, for all $i\in I$, $\alpha _i\equiv _A \alpha _i^{\prime}$, that is $\rho _A(\alpha _i)=\rho _A(\alpha _i^{\prime})$, then, by Proposition \ref{onrho}, (\ref{onrho3}) and (\ref{onrho4}), $\displaystyle \rho _A(\stackrel{\bullet }{\bigvee _{i\in I}}\alpha _i)=\rho _A(\rho _A(\bigvee _{i\in I}\alpha _i))=\rho _A(\bigvee _{i\in I}\alpha _i)=\rho _A(\bigvee _{i\in I}\rho _A(\alpha _i))=\rho _A(\bigvee _{i\in I}\rho _A(\alpha _i^{\prime}))=\rho _A(\bigvee _{i\in I}\alpha _i^{\prime})=\rho _A(\rho _A(\bigvee _{i\in I}\alpha _i^{\prime}))=\rho _A(\stackrel{\bullet }{\bigvee _{i\in I}}\alpha _i^{\prime})$, hence $\displaystyle \stackrel{\bullet }{\bigvee _{i\in I}}\alpha _i\equiv _A \stackrel{\bullet }{\bigvee _{i\in I}}\alpha _i^{\prime}\equiv _A \bigvee _{i\in I}\alpha _i\equiv _A \bigvee _{i\in I}\alpha _i^{\prime}$.

Moreover, as shown by Proposition \ref{onrho}, (\ref{onrho2}), (\ref{onrho3}) and (\ref{onrho4}), just as in the calculations above, for all $\alpha ,\beta \in {\rm Con}(A)$ and all $(\alpha _i)_{i\in I}\subseteq {\rm Con}(A)$, $[\alpha ,\beta ]_A\equiv _A \alpha \cap \beta $ and $\displaystyle \stackrel{\bullet }{\bigvee _{i\in I}}\alpha _i\equiv _A \bigvee _{i\in I}\alpha _i$.

Note, also, that, for all $\alpha \in {\rm Con}(A)$, $\alpha \equiv _A \rho _A(\alpha )$, by Proposition \ref{onrho}, (\ref{onrho3}).\label{eqrho}\end{remark}

For all $\alpha \in {\rm Con}(A)$, let us denote by $\widehat{\alpha }$ the equivalence class of $\alpha $ with respect to $\equiv _A $, and let ${\cal L}(A)={\cal K}(A)/_{\textstyle \equiv _A }=\{\widehat{\theta }\ |\ \theta \in {\cal K}(A)\}$. Let $\lambda _A:{\rm Con}(A)\rightarrow {\rm Con}(A)/_{\textstyle \equiv _A }$ be the canonical surjection: $\lambda _A(\theta )=\widehat{\theta }$ for all $\theta \in {\rm Con}(A)$; we denote in the same way its restriction to ${\cal K}(A)$, with its co--domain restricted to ${\cal L}(A)$, that is the canonical surjection $\lambda _A:{\cal K}(A)\rightarrow {\cal L}(A)$. Let us define the following operations on ${\rm Con}(A)$, where the second equalities follow from Remark \ref{eqrho}, as does the fact that these operations are well defined:\begin{itemize}
\item for all $\alpha ,\beta \in {\rm Con}(A)$, $\widehat{\alpha }\vee \widehat{\beta }=\widehat{\alpha \vee \beta }=\widehat{\alpha \dotvee \beta }$ and $\widehat{\alpha }\wedge \widehat{\beta }=\widehat{\alpha \cap \beta }=\widehat{[\alpha ,\beta ]_A}$;
\item ${\bf 0}=\widehat{\Delta _A}=\widehat{\rho _A(\Delta _A)}$ and ${\bf 1}=\widehat{\nabla _A}=\widehat{\rho _A(\nabla _A)}$.\end{itemize}

\begin{remark} By Proposition \ref{onrho}, (\ref{onrho1}), if ${\cal C}$ is congruence--modular and semi--degenerate, then, for any $\alpha \in {\rm Con}(A)$, $\widehat{\alpha }={\bf 1}$ iff $\alpha =\nabla _A$.\label{coronrho}\end{remark}

\begin{lemma} $({\rm Con}(A)/_{\textstyle \equiv _A },\vee ,\wedge ,{\bf 0},{\bf 1})$ is a bounded distributive lattice and $\lambda _A:{\rm Con}(A)\rightarrow {\rm Con}(A)/_{\textstyle \equiv _A }$ is a bounded lattice morphism. Moreover, ${\rm Con}(A)/_{\textstyle \equiv _A }$ is a complete lattice, in which $\displaystyle \bigvee _{i\in I}\widehat{\alpha _i}=\widehat{\bigvee _{i\in I}\alpha _i}$ and $\displaystyle \bigwedge _{i\in I}\widehat{\alpha _i}=\widehat{\bigcap _{i\in I}\alpha _i}$ for any family $(\alpha _i)_{i\in I}\subseteq {\rm Con}(A)$, and the meet is completely distributive with respect to the join, thus ${\rm Con}(A)/_{\textstyle \equiv _A }$ is a frame.\label{latcat}\end{lemma}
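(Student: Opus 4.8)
The plan is to read off the lattice structure and the morphism property directly from Remark \ref{eqrho}, and to reduce both distributive laws to the manifestly distributive calculus of the sets $V_A(\cdot )$. Since Remark \ref{eqrho} records that $\equiv _A$ is a congruence of the lattice $({\rm Con}(A),\vee ,\cap )$, the quotient set ${\rm Con}(A)/_{\textstyle \equiv _A}$ carries the well--defined operations $\widehat{\alpha }\vee \widehat{\beta }=\widehat{\alpha \vee \beta }$ and $\widehat{\alpha }\wedge \widehat{\beta }=\widehat{\alpha \cap \beta }$, which turn it into a lattice and make $\lambda _A$ a lattice homomorphism; and since $\Delta _A$ and $\nabla _A$ are the least and the greatest element of ${\rm Con}(A)$, their images ${\bf 0}=\widehat{\Delta _A}$ and ${\bf 1}=\widehat{\nabla _A}$ are the least and the greatest element of the quotient, so $\lambda _A$ is a bounded lattice morphism. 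This part is routine.

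For distributivity, the key point is that the order of the quotient satisfies $\widehat{\alpha }\leq \widehat{\beta }$ iff $\rho _A(\alpha )\subseteq \rho _A(\beta )$ (immediate from Proposition \ref{onrho}, (\ref{onrho2})), whence, by Proposition \ref{esential}, (\ref{esential1}) and (\ref{esential2}), $\widehat{\alpha }\leq \widehat{\beta }$ iff $\alpha \subseteq \rho _A(\beta )$ iff $V_A(\alpha )\supseteq V_A(\beta )$, and in particular $\widehat{\alpha }=\widehat{\beta }$ iff $V_A(\alpha )=V_A(\beta )$. So any equality between classes may be checked on the $V_A$--side, where one has the calculus of Proposition \ref{stonetop}, notably $V_A(\xi \cap \eta )=V_A(\xi )\cup V_A(\eta )$ and $\displaystyle V_A(\bigvee _i\xi _i)=\bigcap _iV_A(\xi _i)$. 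Thus $\widehat{\alpha }\wedge (\widehat{\beta }\vee \widehat{\gamma })=\widehat{\alpha \cap (\beta \vee \gamma )}$ and $(\widehat{\alpha }\wedge \widehat{\beta })\vee (\widehat{\alpha }\wedge \widehat{\gamma })=\widehat{(\alpha \cap \beta )\vee (\alpha \cap \gamma )}$, and the two congruences under the hats have the common image $V_A(\alpha )\cup (V_A(\beta )\cap V_A(\gamma ))$ under $V_A$, by Proposition \ref{stonetop} and the distributive law in the power set; hence the quotient is a bounded distributive lattice. The same computation carried out with an arbitrary family in place of the pair $\{\beta ,\gamma \}$ will give the complete distributivity of $\wedge $ over arbitrary joins, once the arbitrary join is identified.

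For completeness and the join/meet formulas I would use the bijection $\widehat{\alpha }\mapsto \rho _A(\alpha )$ from ${\rm Con}(A)/_{\textstyle \equiv _A}$ onto ${\rm RCon}(A)$, which is well defined and injective by the definition of $\equiv _A$ and surjective by Remark \ref{radical}; it is a lattice isomorphism onto $({\rm RCon}(A),\dotvee ,\cap ,\rho _A(\Delta _A),\nabla _A)$ of Proposition \ref{radlat}, since $\widehat{\alpha }\wedge \widehat{\beta }=\widehat{\alpha \cap \beta }\mapsto \rho _A(\alpha \cap \beta )=\rho _A(\alpha )\cap \rho _A(\beta )$ by Proposition \ref{onrho}, (\ref{onrho2}), and $\widehat{\alpha }\vee \widehat{\beta }=\widehat{\alpha \vee \beta }\mapsto \rho _A(\alpha \vee \beta )=\rho _A(\rho _A(\alpha )\vee \rho _A(\beta ))=\rho _A(\alpha )\dotvee \rho _A(\beta )$ by Proposition \ref{onrho}, (\ref{onrho4}). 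As ${\rm RCon}(A)$ is complete with arbitrary join $\stackrel{\bullet }{\bigvee }$ (Proposition \ref{radlat}), so is the quotient, and transporting along the isomorphism and using $\stackrel{\bullet }{\bigvee _i}\rho _A(\alpha _i)=\rho _A(\bigvee _i\alpha _i)$ (Proposition \ref{onrho}, (\ref{onrho4})) together with $\rho _A(\theta )\equiv _A\theta $ yields $\displaystyle \bigvee _i\widehat{\alpha _i}=\widehat{\bigvee _i\alpha _i}$. For the meet, $\widehat{\bigcap _i\alpha _i}$ is a lower bound of $(\widehat{\alpha _i})_i$ by monotonicity of $\rho _A$; to see that it is the greatest one, the point to establish is that, whenever $\gamma \in {\rm Con}(A)$ satisfies $\gamma \subseteq \rho _A(\alpha _i)$ for all $i$, then $\gamma \subseteq \rho _A(\bigcap _i\alpha _i)$, after which $\displaystyle \bigwedge _i\widehat{\alpha _i}=\widehat{\bigcap _i\alpha _i}$ follows from Proposition \ref{esential}, (\ref{esential1}). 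Once this is in place, the complete distributivity follows as announced, since $\widehat{\beta }\wedge \bigvee _i\widehat{\alpha _i}=\widehat{\beta \cap \bigvee _i\alpha _i}$ and $\bigvee _i(\widehat{\beta }\wedge \widehat{\alpha _i})=\widehat{\bigvee _i(\beta \cap \alpha _i)}$ have the common image $V_A(\beta )\cup \bigcap _iV_A(\alpha _i)$ under $V_A$; thus ${\rm Con}(A)/_{\textstyle \equiv _A}$ is a frame.

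The step I expect to require the most care is precisely the one isolated above, namely the identification of the arbitrary meet, equivalently the assertion that the radical commutes here with the relevant infima of congruences; the lattice--congruence statement is immediate from Remark \ref{eqrho}, and both the binary and the complete distributive laws collapse to the evidently distributive calculus of $V_A$ furnished by Proposition \ref{stonetop}.
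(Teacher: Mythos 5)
Your construction of the bounded distributive lattice structure, the morphism property, the join formula, completeness, and the frame law is sound (and proceeds by a partly different route: the $V_A$--calculus of Proposition \ref{stonetop} for distributivity and the transport along $\widehat{\alpha }\mapsto \rho _A(\alpha )$ onto ${\rm RCon}(A)$ of Proposition \ref{radlat}, where the paper instead gets the frame law directly from the join--distributivity of the commutator). The genuine gap is exactly the step you deferred: that $\gamma \subseteq \rho _A(\alpha _i)$ for all $i\in I$ forces $\gamma \subseteq \rho _A(\bigcap _{i\in I}\alpha _i)$. This is equivalent (take $\gamma =\bigcap _{i\in I}\rho _A(\alpha _i)$) to the inclusion $\bigcap _{i\in I}\rho _A(\alpha _i)\subseteq \rho _A(\bigcap _{i\in I}\alpha _i)$, which holds for finite families by Proposition \ref{onrho}, (\ref{onrho2}), but fails for infinite ones under the standing hypotheses, so no argument can close it in general. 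Concretely, let $A=\Z $ as a commutative unitary ring (a semi--degenerate congruence--modular variety, so all hypotheses of the section hold) and let $\alpha _n$ be the congruence associated to the ideal $2^n\Z $, $n\in \N ^*$. Then $\rho _A(\alpha _n)=\alpha _1$ for every $n$, so all the classes $\widehat{\alpha _n}$ equal $\widehat{\alpha _1}$ and the infimum of the family of classes is $\widehat{\alpha _1}$; but $\bigcap _{n}\alpha _n=\Delta _A$ and $\rho _A(\Delta _A)=\Delta _A$ (the zero ideal is prime), hence $\widehat{\bigcap _n\alpha _n}={\bf 0}\neq \widehat{\alpha _1}$. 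With $\gamma =\alpha _1$ your deferred implication fails, and the asserted identity $\bigwedge _{i\in I}\widehat{\alpha _i}=\widehat{\bigcap _{i\in I}\alpha _i}$ fails for this family.

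What your own isomorphism with ${\rm RCon}(A)$ actually yields is the correct general form of the arbitrary meet: since an arbitrary intersection of radical congruences is radical (Remark \ref{radspec}), infima in ${\rm RCon}(A)$ are intersections, and transporting back gives $\bigwedge _{i\in I}\widehat{\alpha _i}=\widehat{\bigcap _{i\in I}\rho _A(\alpha _i)}$, which coincides with $\widehat{\bigcap _{i\in I}\alpha _i}$ only for finite families. (The paper's own proof treats the meet formula as immediate from the surjectivity of $\lambda _A$, but a surjective lattice morphism from a complete lattice need not carry infinite meets to infinite meets, and here it does not.) Note that nothing else in your argument depends on this identification: the binary distributivity via $V_A$, the formula $\bigvee _{i\in I}\widehat{\alpha _i}=\widehat{\bigvee _{i\in I}\alpha _i}$, the completeness of the quotient, and your verification of the frame law use only joins and binary meets, so those parts stand; the defect is confined to the stated formula for arbitrary meets, which must either be restricted to finite families or rewritten with $\bigcap _{i\in I}\rho _A(\alpha _i)$ in place of $\bigcap _{i\in I}\alpha _i$.
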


\begin{proof} By Remark \ref{eqrho}, $\equiv _A $ is a congruence of the bounded lattice ${\rm Con}(A)$, hence $({\rm Con}(A)/_{\textstyle \equiv _A },\vee ,\wedge ,{\bf 0},{\bf 1})$ is a bounded lattice and the canonical surjection $\lambda _A:{\rm Con}(A)\rightarrow {\rm Con}(A)/_{\textstyle \equiv _A }$ is a bounded lattice morphism, in particular it is order--preserving. It is straightforward, from the fact that the lattice ${\rm Con}(A)$ is complete and the surjectivity of the lattice morphism $\lambda _A$, that the lattice ${\rm Con}(A)/_{\textstyle \equiv _A }$ is complete and its joins and meets of arbitrary families of elements have the form in the enunciation. By Proposition \ref{1.3}, for any families $(\alpha _i)_{i\in I}$ and $(\beta _j)_{j\in J}$ of congruences of $A$, $\displaystyle (\bigvee _{i\in I}\widehat{\alpha _i})\wedge (\bigvee _{j\in J}\widehat{\beta _j})=((\bigvee _{i\in I}\alpha _i)\cap (\bigvee _{j\in J}\beta _j))\widehat{\ }=([\bigvee _{i\in I}\alpha _i,\bigvee _{j\in J}\beta _j]_A)\widehat{\ }=(\bigvee _{i\in I}\bigvee _{j\in J}[\alpha _i,\beta _j]_A)\widehat{\ }=\bigvee _{i\in I}\bigvee _{j\in J}\widehat{[\alpha _i,\beta _j]_A}=\bigvee _{i\in I}\bigvee _{j\in J}(\widehat{\alpha _i}\wedge \widehat{\beta _j})$, that is the meet is completely distributive with respect to the join in ${\rm Con}(A)/_{\textstyle \equiv _A }$, thus ${\rm Con}(A)/_{\textstyle \equiv _A }$ is a frame, in particular it is a bounded distributive lattice.\end{proof}

We shall denote by $\leq $ the partial order of the lattice ${\rm Con}(A)/_{\textstyle \equiv _A }$.

\begin{proposition} $({\rm RCon}(A),\dotvee ,\cap ,\rho _A(\Delta _A),\rho _A(\nabla _A)=\nabla _A)$ is a frame, isomorphic to ${\rm Con}(A)/_{\textstyle \equiv _A }$.\end{proposition}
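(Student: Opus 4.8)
The plan is to construct an explicit bounded--lattice isomorphism $\Phi :{\rm Con}(A)/_{\textstyle \equiv _A }\to {\rm RCon}(A)$ and then transport the frame property across it. Define $\Phi (\widehat{\alpha })=\rho _A(\alpha )$ for every $\alpha \in {\rm Con}(A)$. By the very definition of $\equiv _A $, for $\alpha ,\beta \in {\rm Con}(A)$ we have $\widehat{\alpha }=\widehat{\beta }$ iff $\rho _A(\alpha )=\rho _A(\beta )$, so $\Phi $ is well defined and injective; by Remark \ref{radical}, ${\rm RCon}(A)=\{\rho _A(\alpha )\ |\ \alpha \in {\rm Con}(A)\}$, so $\Phi $ is surjective. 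Hence $\Phi $ is a bijection, whose inverse is the restriction of $\lambda _A$ to ${\rm RCon}(A)$, since $\widehat{\rho _A(\alpha )}=\widehat{\alpha }$ for all $\alpha \in {\rm Con}(A)$ by Remark \ref{eqrho}.

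Next I would check that $\Phi $ is a morphism of bounded lattices, using that the operations of ${\rm RCon}(A)$ are $\dotvee $ and $\cap $ (Proposition \ref{radlat}). For the meet, $\Phi (\widehat{\alpha }\wedge \widehat{\beta })=\Phi (\widehat{\alpha \cap \beta })=\rho _A(\alpha \cap \beta )=\rho _A(\alpha )\cap \rho _A(\beta )=\Phi (\widehat{\alpha })\cap \Phi (\widehat{\beta })$, by Proposition \ref{onrho}, (\ref{onrho2}). For the join, $\Phi (\widehat{\alpha }\vee \widehat{\beta })=\Phi (\widehat{\alpha \vee \beta })=\rho _A(\alpha \vee \beta )$, while $\Phi (\widehat{\alpha })\dotvee \Phi (\widehat{\beta })=\rho _A(\alpha )\dotvee \rho _A(\beta )=\rho _A(\rho _A(\alpha )\vee \rho _A(\beta ))=\rho _A(\alpha \vee \beta )$ by the definition of $\dotvee $ and Proposition \ref{onrho}, (\ref{onrho4}); so the two coincide. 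The bounds match by construction: $\Phi ({\bf 0})=\Phi (\widehat{\Delta _A})=\rho _A(\Delta _A)$ and $\Phi ({\bf 1})=\Phi (\widehat{\nabla _A})=\rho _A(\nabla _A)=\nabla _A$. A bijective morphism of bounded lattices is an isomorphism, so $\Phi $ is a bounded--lattice isomorphism.

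Finally, since ${\rm Con}(A)/_{\textstyle \equiv _A }$ is a complete lattice in which the meet is completely distributive over the join (Lemma \ref{latcat}), and $\Phi $ is an order isomorphism, it follows that ${\rm RCon}(A)$ is a complete lattice (as already known from Proposition \ref{radlat}), that $\Phi $ preserves arbitrary joins and meets, and that the meet is completely distributive over the join in ${\rm RCon}(A)$ as well; that is, ${\rm RCon}(A)$ is a frame. Alternatively, one may verify this distributivity directly inside ${\rm RCon}(A)$ from Proposition \ref{onrho}, (\ref{onrho2}) and (\ref{onrho4}), together with the distributivity of the commutator over arbitrary joins (Proposition \ref{1.3}), exactly as in the proof of Lemma \ref{latcat}. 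There is essentially no obstacle in this argument, every ingredient having been prepared in advance; the only point needing a touch of care is matching the operations $\dotvee $ and $\cap $ of ${\rm RCon}(A)$ with the images under $\Phi $ of the operations of ${\rm Con}(A)/_{\textstyle \equiv _A }$, which is precisely what parts (\ref{onrho2}) and (\ref{onrho4}) of Proposition \ref{onrho} provide.
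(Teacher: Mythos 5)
Your proposal is correct and follows essentially the same route as the paper: both define the map $\widehat{\alpha }\mapsto \rho _A(\alpha )$, verify well--definedness, injectivity and surjectivity via Remark \ref{radical}, check preservation of $\wedge $ and $\vee $ through Proposition \ref{onrho}, (\ref{onrho2}) and (\ref{onrho4}), and then transport the frame property from ${\rm Con}(A)/_{\textstyle \equiv _A }$ (Lemma \ref{latcat}) across the resulting order isomorphism. Your explicit matching of the join of ${\rm RCon}(A)$ with $\dotvee $, i.e.\ $\rho _A(\alpha )\dotvee \rho _A(\beta )=\rho _A(\alpha \vee \beta )$, is a slightly more careful rendering of the same computation the paper performs.
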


\begin{proof} Let $\varphi :{\rm Con}(A)/_{\textstyle \equiv _A }\rightarrow {\rm RCon}(A)$, for all $\alpha \in {\rm Con}(A)$, $\varphi(\widehat{\alpha })=\rho _A(\alpha )$. If $\alpha ,\beta \in {\rm Con}(A)$, then the following equivalences hold: $\widehat{\alpha }=\widehat{\beta }$ iff $\alpha \equiv _A \beta $ iff $\rho _A(\alpha )=\rho _A(\beta )$ iff $\varphi(\widehat{\alpha })=\varphi(\widehat{\beta })$, hence $\varphi $ is well defined and injective. By Remark \ref{radical}, $\varphi $ is surjective. By Proposition \ref{onrho}, (\ref{onrho2}) and (\ref{onrho4}), for all $\alpha ,\beta \in {\rm Con}(A)$, $\varphi (\widehat{\alpha }\wedge \widehat{\beta})=\varphi (\widehat{\alpha \cap \beta })=\rho _A(\alpha \cap \beta)=\rho _A(\alpha )\cap \rho _A(\beta )$ and $\varphi (\widehat{\alpha }\vee \widehat{\beta})=\varphi (\widehat{\alpha \vee \beta })=\rho _A(\alpha \vee \beta)=\rho _A(\alpha )\vee \rho _A(\beta )$ (actually, Proposition \ref{onrho}, (\ref{onrho4}), and Lemma \ref{latcat} show that $\varphi $ preserves arbitrary joins). Therefore $\varphi $ is a lattice isomorphism, thus an order isomorphism, hence it preserves arbitrary joins and meets. From this and Lemma \ref{latcat} we obtain that ${\rm RCon}(A)$ is a frame and $\varphi $ is a frame isomorphism.\end{proof}

Throughout the rest of this section, we shall assume that ${\cal K}(A)$ is closed with respect to the commutator.

\begin{proposition} ${\cal L}(A)$ is a bounded sublattice of ${\rm Con}(A)/_{\textstyle \equiv _A }$, thus it is a bounded distributive lattice.\label{reticd01}\end{proposition}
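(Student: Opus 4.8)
The plan is to verify that ${\cal L}(A) = {\cal K}(A)/_{\textstyle \equiv_A}$ is closed under the lattice operations $\vee$, $\wedge$ of ${\rm Con}(A)/_{\textstyle \equiv_A}$ and contains the bounds ${\bf 0}$, ${\bf 1}$; once this is known, ${\cal L}(A)$ is automatically a bounded sublattice, and distributivity is inherited from the ambient lattice ${\rm Con}(A)/_{\textstyle \equiv_A}$, which is a (distributive) frame by Lemma~\ref{latcat}.

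First I would handle the bounds. We have ${\bf 0} = \widehat{\Delta_A}$ and, since $\Delta_A = Cg_A(x,x) \in {\rm PCon}(A) \subseteq {\cal K}(A)$ (recorded in the Preliminaries), the class $\widehat{\Delta_A}$ lies in ${\cal L}(A)$. For the top element, ${\bf 1} = \widehat{\nabla_A}$, and the standing assumption of this section includes $\nabla_A \in {\cal K}(A)$ (equivalently, by Proposition~\ref{2.6}, ${\cal C}$ is semi--degenerate), so $\widehat{\nabla_A} \in {\cal L}(A)$ as well.

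Next, closure under joins and meets. Take $\widehat{\alpha}, \widehat{\beta} \in {\cal L}(A)$ with representatives $\alpha, \beta \in {\cal K}(A)$. For the join, $\widehat{\alpha} \vee \widehat{\beta} = \widehat{\alpha \vee \beta}$ by definition of the operation, and $\alpha \vee \beta \in {\cal K}(A)$ because ${\cal K}(A)$ is closed under finite joins (this is the easy half of Remark~\ref{ka}: the compact congruences are exactly the finitely generated ones, and a join of two finitely generated congruences is finitely generated). For the meet, $\widehat{\alpha} \wedge \widehat{\beta} = \widehat{[\alpha,\beta]_A}$ by the definition of $\wedge$, and here we invoke the standing assumption introduced just before the proposition, namely that ${\cal K}(A)$ is closed with respect to the commutator: thus $[\alpha,\beta]_A \in {\cal K}(A)$ and hence $\widehat{\alpha} \wedge \widehat{\beta} \in {\cal L}(A)$. (Alternatively one could use $\widehat{\alpha} \wedge \widehat{\beta} = \widehat{\alpha \cap \beta}$, but $\alpha \cap \beta$ need not be compact, so it is essential here to route the meet through the commutator rather than through the intersection — this is precisely why the hypothesis that ${\cal K}(A)$ be closed under the commutator is imposed.)

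The only mildly delicate point — and the one I would flag as the main thing to get right rather than a genuine obstacle — is the bookkeeping with the two descriptions of the meet: in ${\rm Con}(A)/_{\textstyle \equiv_A}$ the element $\widehat{\alpha} \wedge \widehat{\beta}$ equals both $\widehat{\alpha \cap \beta}$ and $\widehat{[\alpha,\beta]_A}$ (these coincide by Remark~\ref{eqrho}, since $[\alpha,\beta]_A \equiv_A \alpha \cap \beta$), and one must select the compact representative $[\alpha,\beta]_A$ to witness membership in ${\cal L}(A)$. Having checked that ${\cal L}(A)$ contains ${\bf 0}, {\bf 1}$ and is closed under $\vee$ and $\wedge$, it is a bounded sublattice of ${\rm Con}(A)/_{\textstyle \equiv_A}$; since the latter is distributive by Lemma~\ref{latcat}, so is ${\cal L}(A)$, which completes the proof.
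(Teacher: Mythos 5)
Your proof is correct and follows essentially the same route as the paper: bounds from $\Delta_A,\nabla_A\in{\cal K}(A)$, joins via closure of ${\cal K}(A)$ under finite joins (Remark~\ref{ka}), meets via the standing hypothesis that ${\cal K}(A)$ is closed under the commutator with $\widehat{\alpha}\wedge\widehat{\beta}=\widehat{[\alpha,\beta]_A}$, and distributivity inherited from Lemma~\ref{latcat}. (Only a minor quibble: the parenthetical claim that $\nabla_A\in{\cal K}(A)$ is \emph{equivalent} to semi--degeneracy of ${\cal C}$ overstates Proposition~\ref{2.6}, which concerns all members of ${\cal C}$ at once, but this aside plays no role in the argument.)
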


\begin{proof} Since $\nabla _A\in {\cal K}(A)$, we have ${\bf 1}=\widehat{\nabla _A}\in {\cal L}(A)$. By Remark \ref{ka}, $\Delta _A\in {\cal K}(A)$, thus ${\bf 0}=\widehat{\Delta _A}\in {\cal L}(A)$. If ${\cal K}(A)$ is closed with respect to the commutator, then, for each $\alpha ,\beta \in {\cal K}(A)$, we have $[\alpha ,\beta ]_A\in {\cal K}(A)$, thus $\widehat{\alpha }\wedge \widehat{\beta }=\widehat{[\alpha ,\beta ]_A}\in {\cal L}(A)$ Again by Remark \ref{ka}, for each $\alpha ,\beta \in {\cal K}(A)$, $\widehat{\alpha }\vee \widehat{\beta }=\widehat{\alpha \vee \beta }\in {\cal L}(A)$. Hence ${\cal L}(A)$ is a bounded sublattice of ${\rm Con}(A)/_{\textstyle \equiv _A }$, which is distributive by Lemma \ref{latcat}, thus ${\cal L}(A)$ is a bounded distributive lattice.\end{proof}

For any $\theta \in {\rm Con}(A)$ and any $I\in {\rm Id}({\cal L}(A))$, we shall denote by:\begin{itemize}
\item $\theta ^*=\{\widehat{\alpha }\ |\ \alpha \in {\cal K}(A),\alpha \subseteq \theta \}=\lambda _A({\cal K}(A)\cap (\theta ])\subseteq {\cal L}(A)$, where $(\theta ]=(\theta ]_{{\rm Con}(A)}\in {\rm PId}({\rm Con}(A))$;
\item $\displaystyle I_*=\bigvee \{\alpha \in {\cal K}(A)\ |\ \widehat{\alpha }\in I\}=\bigvee _{\alpha \in \lambda _A^{-1}(I)}\alpha \in {\rm Con}(A)$; note that $\lambda _A^{-1}(I)$ is non--empty, because $\Delta _A\in {\cal K}(A)$ and $\widehat{\Delta _A}=\widehat{\rho _A(\Delta _A)}={\bf 0}\in I$.\end{itemize}

\begin{lemma} For all $\theta \in {\rm Con}(A)$:\begin{itemize}
\item $\theta ^*\subseteq (\widehat{\theta }\, ]_{{\rm Con}(A)/_{\textstyle \equiv _A }}\cap {\cal L}(A)$ and $\theta ^*\in {\rm Id}({\cal L}(A))$;
\item if $\theta \in {\cal K}(A)$, then $\theta ^*=(\widehat{\theta }\, ]_{{\rm Con}(A)/_{\textstyle \equiv _A }}\cap {\cal L}(A)=(\widehat{\theta }\, ]_{{\cal L}(A)}\in {\rm PId}({\cal L}(A))$.\end{itemize}\label{kstar}\end{lemma}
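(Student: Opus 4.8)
The statement concerns $\theta^*=\lambda_A(\mathcal{K}(A)\cap(\theta])$ for $\theta\in\mathrm{Con}(A)$, and we must show (i) $\theta^*$ is an ideal of $\mathcal{L}(A)$ contained in $(\widehat{\theta}\,]\cap\mathcal{L}(A)$, and (ii) when $\theta\in\mathcal{K}(A)$ these inclusions become equalities and $\theta^*$ is the principal ideal generated by $\widehat{\theta}$. I would first record the obvious containment $\theta^*\subseteq(\widehat{\theta}\,]_{{\rm Con}(A)/\equiv_A}\cap\mathcal{L}(A)$: if $\alpha\in\mathcal{K}(A)$ with $\alpha\subseteq\theta$, then $\lambda_A$ is order-preserving (Lemma \ref{latcat}), so $\widehat{\alpha}\leq\widehat{\theta}$, and $\widehat{\alpha}\in\mathcal{L}(A)$ by definition of $\mathcal{L}(A)$.

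Next I would verify that $\theta^*$ is an ideal of $\mathcal{L}(A)$. It is non-empty since $\widehat{\Delta_A}={\bf 0}\in\theta^*$ (as $\Delta_A\in\mathcal{K}(A)$, $\Delta_A\subseteq\theta$). For closure under join: given $\widehat{\alpha},\widehat{\beta}\in\theta^*$ with $\alpha,\beta\in\mathcal{K}(A)$, $\alpha,\beta\subseteq\theta$, we have $\widehat{\alpha}\vee\widehat{\beta}=\widehat{\alpha\vee\beta}$ with $\alpha\vee\beta\in\mathcal{K}(A)$ (Remark \ref{ka}, ${\cal K}(A)$ closed under finite joins) and $\alpha\vee\beta\subseteq\theta$, so $\widehat{\alpha\vee\beta}\in\theta^*$. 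For downward closure in $\mathcal{L}(A)$: let $\widehat{\gamma}\leq\widehat{\alpha}$ in $\mathcal{L}(A)$ with $\gamma\in\mathcal{K}(A)$ and $\widehat{\alpha}\in\theta^*$ via some $\alpha\in\mathcal{K}(A)$, $\alpha\subseteq\theta$. Here is the one subtle point: $\widehat{\gamma}\leq\widehat{\alpha}$ does \emph{not} immediately give $\gamma\subseteq\theta$, only $\rho_A(\gamma)\subseteq\rho_A(\alpha)\subseteq\rho_A(\theta)$. But $\widehat{\gamma}=\widehat{\gamma}\wedge\widehat{\alpha}=\widehat{[\gamma,\alpha]_A}$ (or $\widehat{\gamma\cap\alpha}$), and I can replace the representative $\gamma$ by $[\gamma,\alpha]_A\in\mathcal{K}(A)$ (using that $\mathcal{K}(A)$ is closed under the commutator, the standing hypothesis of this part of the section), which satisfies $[\gamma,\alpha]_A\subseteq\alpha\subseteq\theta$ by Proposition \ref{1.3}; thus $\widehat{\gamma}=\widehat{[\gamma,\alpha]_A}\in\theta^*$. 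This confirms $\theta^*\in\mathrm{Id}(\mathcal{L}(A))$.

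For the second bullet, assume $\theta\in\mathcal{K}(A)$, so $\widehat{\theta}\in\mathcal{L}(A)$ and $\theta\in\mathcal{K}(A)\cap(\theta]$, hence $\widehat{\theta}\in\theta^*$; this already shows $(\widehat{\theta}\,]_{\mathcal{L}(A)}\subseteq\theta^*$ since $\theta^*$ is an ideal containing $\widehat{\theta}$. Conversely, any element of $\theta^*$ is of the form $\widehat{\alpha}$ with $\alpha\subseteq\theta$, so $\widehat{\alpha}\leq\widehat{\theta}$, i.e. $\widehat{\alpha}\in(\widehat{\theta}\,]_{\mathcal{L}(A)}$. Combined with the first bullet's reverse chain $\theta^*\subseteq(\widehat{\theta}\,]_{{\rm Con}(A)/\equiv_A}\cap\mathcal{L}(A)$, and noting $(\widehat{\theta}\,]_{{\rm Con}(A)/\equiv_A}\cap\mathcal{L}(A)=(\widehat{\theta}\,]_{\mathcal{L}(A)}$ because $\mathcal{L}(A)$ is a bounded sublattice (Proposition \ref{reticd01}) containing $\widehat{\theta}$, all four sets coincide, and $(\widehat{\theta}\,]_{\mathcal{L}(A)}\in\mathrm{PId}(\mathcal{L}(A))$ by definition.

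**Expected main obstacle.** The only place requiring care is the downward-closure argument: the partial order $\leq$ on $\mathrm{Con}(A)/\equiv_A$ is the order induced by radicals, not by literal inclusion of congruences, so one cannot lift $\widehat{\gamma}\leq\widehat{\alpha}$ directly to $\gamma\subseteq\alpha$. The fix is to change representatives to $[\gamma,\alpha]_A$ (equivalently $\gamma\cap\alpha$, but I need a \emph{compact} representative, which is why I use the commutator together with the closure hypothesis on $\mathcal{K}(A)$), and this is exactly the reason the standing assumption "$\mathcal{K}(A)$ is closed with respect to the commutator" is invoked at this point in the text. Everything else is routine bookkeeping with the frame structure of $\mathrm{Con}(A)/\equiv_A$ established in Lemma \ref{latcat} and the sublattice property from Proposition \ref{reticd01}.
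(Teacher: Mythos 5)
Your proof is correct and follows essentially the same route as the paper: the same verification that $\theta^*$ is an ideal, with the same key move of replacing a representative $\gamma$ by the compact congruence $[\gamma,\alpha]_A\subseteq\alpha\subseteq\theta$ to handle downward closure (this is precisely where the paper also invokes the closure of ${\cal K}(A)$ under the commutator). Your only deviation is a minor streamlining in the second bullet, where you deduce $(\widehat{\theta}\,]_{{\cal L}(A)}\subseteq\theta^*$ from $\widehat{\theta}\in\theta^*$ and the already-established ideal property instead of repeating the commutator argument as the paper does; this is sound.
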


\begin{proof} Let $\theta \in {\rm Con}(A)$, and, in this proof, let us denote by $\langle \widehat{\theta }\rangle =(\widehat{\theta }\, ]_{{\rm Con}(A)/_{\textstyle \equiv _A }}$ and, in the case when $\theta \in {\cal K}(A)$, by $(\widehat{\theta }\, ]=(\widehat{\theta }\, ]_{{\cal L}(A)}$. $\theta ^*=\{\widehat{\alpha }\ |\ \alpha \in (\theta ]\cap {\cal K}(A)\}$.

For all $\alpha \in (\theta ]\cap {\cal K}(A)$, we have $\widehat{\alpha }\in {\cal L}(A)$ and $\alpha \subseteq \theta $, thus $\widehat{\alpha }\leq \widehat{\theta }$ in ${\rm Con}(A)/_{\textstyle \equiv _A }$, hence $\widehat{\alpha }\in \langle \widehat{\theta }\rangle \cap {\cal L}(A)$, therefore $\theta ^*\subseteq \langle \widehat{\theta }\rangle \cap {\cal L}(A)$. $\Delta _A\in {\cal K}(A)$ and $\Delta _A\subseteq \theta $, thus $\widehat{\Delta _A}\in \theta ^*$, so $\theta ^*$ is non--empty. Since ${\cal K}(A)$ is closed w.r.t. $[\cdot ,\cdot ]_A$, $\alpha \vee \beta ,[\alpha ,\beta ]_A\in {\cal K}(A)$ for any $\alpha ,\beta \in {\cal K}(A)$. Let $x,y\in \theta ^*$, which means that $x=\widehat{\alpha }$ and $y=\widehat{\beta }$ for some $\alpha ,\beta \in {\cal K}(A)\cap (\theta ]$. Then $\alpha \vee \beta \in {\cal K}(A)\cap (\theta ]$, thus $x\vee y=\widehat{\alpha }\vee \widehat{\beta }=\widehat{\alpha \vee \beta }\in \theta ^*$. Now let $x\in \theta ^*$ and $y\in {\cal L}(A)$ such that $x\geq y$, so that $y=x\wedge y$. Then $x=\widehat{\alpha }$ for some $\alpha \in {\cal K}(A)\cap (\theta ]$ and $y=\widehat{\beta }$ for some $\beta \in {\cal K}(A)$. Thus $[\alpha ,\beta ]_A\in {\cal K}(A)$ and $[\alpha ,\beta ]_A\subseteq \alpha \cap \beta \subseteq \alpha \subseteq \theta $, hence $[\alpha ,\beta ]_A\in {\cal K}(A)\cap (\theta ]$, therefore $y=x\wedge y=\widehat{\alpha }\wedge \widehat{\beta }=\widehat{[\alpha ,\beta ]_A}\in \theta ^*$. Hence $\theta ^*\in {\rm Id}({\cal L}(A))$.

Now assume that $\theta \in {\cal K}(A)$, so that $\widehat{\theta }\in {\cal L}(A)$. By the above, $\theta ^*\subseteq \langle \widehat{\theta }\rangle \cap {\cal L}(A)=(\widehat{\theta }\, ]$. Let $x\in (\widehat{\theta }\, ]$, so that there exists an $\alpha \in {\cal K}(A)$ with $\widehat{\alpha }=x\leq \widehat{\theta }$, thus $\widehat{[\alpha ,\theta ]_A}=\widehat{\alpha }\cap \widehat{\theta }=\widehat{\alpha }=x$. But $[\alpha ,\theta ]_A\in {\cal K}(A)\cap (\theta ]$, so $x=\widehat{[\alpha ,\theta ]_A}\in \theta ^*$. Therefore we also have $(\widehat{\theta }\, ]\subseteq \theta ^*$, hence $\theta ^*=(\widehat{\theta }\, ]\in {\rm PId}({\cal L}(A))$.\end{proof}

By the above, we have two functions:\begin{itemize}
\item $\theta \in {\rm Con}(A)\mapsto \theta ^*\in {\rm Id}({\cal L}(A))$;
\item $I\in {\rm Id}({\cal L}(A))\mapsto I_*\in {\rm Con}(A)$.\end{itemize}

\begin{lemma} The two functions above are order--preserving.\label{fcts}\end{lemma}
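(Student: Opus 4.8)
The plan is to verify the two monotonicity claims separately. The key point throughout is that the partial order is set inclusion in all of the posets involved — $({\rm Con}(A),\subseteq )$ and $({\rm Id}({\cal L}(A)),\subseteq )$ — so that ``order--preserving'' simply means inclusion--preserving, and no compatibility with $\vee $, $\wedge $, $\dotvee $ or the commutator needs to be invoked.

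First I would handle $\theta \mapsto \theta ^*$. Let $\theta _1,\theta _2\in {\rm Con}(A)$ with $\theta _1\subseteq \theta _2$. Since $\theta _j^*=\lambda _A({\cal K}(A)\cap (\theta _j])$, any $\alpha \in {\cal K}(A)$ with $\alpha \subseteq \theta _1$ also satisfies $\alpha \subseteq \theta _2$, whence $\widehat{\alpha }\in \theta _2^*$; therefore $\theta _1^*\subseteq \theta _2^*$. By Lemma~\ref{kstar}, $\theta _1^*$ and $\theta _2^*$ are ideals of ${\cal L}(A)$, and ideals are compared by inclusion, so this is exactly the assertion that $\theta \mapsto \theta ^*$ is order--preserving.

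Next I would handle $I\mapsto I_*$. Let $I_1,I_2\in {\rm Id}({\cal L}(A))$ with $I_1\subseteq I_2$. Then $\lambda _A^{-1}(I_1)\subseteq \lambda _A^{-1}(I_2)$ as subsets of ${\cal K}(A)$, and both are non--empty (each contains $\Delta _A$, since $\widehat{\Delta _A}={\bf 0}$ lies in every ideal). Enlarging the index set of a join in the complete lattice ${\rm Con}(A)$ can only enlarge the join, so $(I_1)_*=\bigvee _{\alpha \in \lambda _A^{-1}(I_1)}\alpha \subseteq \bigvee _{\alpha \in \lambda _A^{-1}(I_2)}\alpha =(I_2)_*$. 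Since congruences are compared by inclusion, $I\mapsto I_*$ is order--preserving.

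I do not expect a genuine obstacle here: both arguments reduce to one--line set--theoretic facts — monotonicity of the passage to a subset, and monotonicity of an arbitrary join under enlarging the index set. The only thing to keep straight is that these really are the orders on the respective lattices, which is where Lemma~\ref{kstar} (for the codomain of the first map) is used.
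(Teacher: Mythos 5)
Your proof is correct and follows essentially the same route as the paper: $\theta _1\subseteq \theta _2$ gives $(\theta _1]\subseteq (\theta _2]$ and hence $\theta _1^*\subseteq \theta _2^*$, while $I_1\subseteq I_2$ gives $\lambda _A^{-1}(I_1)\subseteq \lambda _A^{-1}(I_2)$ and monotonicity of the join yields $(I_1)_*\subseteq (I_2)_*$. The extra remarks about non--emptiness and the role of Lemma~\ref{kstar} are harmless additions, not a different argument.
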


\begin{proof} For any $\theta ,\zeta \in {\rm Con}(A)$ such that $\theta \subseteq \zeta $, we have $(\theta ]\subseteq (\zeta ]$, hence $\theta ^*\subseteq \zeta ^*$. For any $I,J\in {\rm Id}({\cal L}(A))$ such that $I\subseteq J$, we have $\lambda _A^{-1}(I)\subseteq \lambda _A^{-1}(J)$, thus $I_*\subseteq J_*$.\end{proof}

\begin{lemma} Let $\alpha \in {\cal K}(A)$ and $I\in {\rm Id}({\cal L}(A))$. Then: $\alpha \subseteq I_*$ iff $\widehat{\alpha }\in I$.\label{lema5}\end{lemma}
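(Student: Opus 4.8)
The plan is to prove the two implications separately; the ``if'' direction is immediate from the definition of $I_*$, while the ``only if'' direction is where the hypothesis $\alpha \in {\cal K}(A)$ does the real work.

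First I would handle the implication $\widehat{\alpha }\in I\Rightarrow \alpha \subseteq I_*$. If $\widehat{\alpha }\in I$, then $\alpha $ belongs to the family $\{\beta \in {\cal K}(A)\ |\ \widehat{\beta }\in I\}=\lambda _A^{-1}(I)$ over which the join defining $I_*$ is taken, so $\alpha \subseteq \bigvee _{\beta \in \lambda _A^{-1}(I)}\beta =I_*$. Nothing beyond the definition of $I_*$ is needed.

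For the converse, assume $\alpha \subseteq I_*=\bigvee _{\beta \in \lambda _A^{-1}(I)}\beta $. Since $\alpha \in {\cal K}(A)$ is a compact element of the lattice ${\rm Con}(A)$, there are finitely many $\beta _1,\ldots ,\beta _n\in \lambda _A^{-1}(I)\subseteq {\cal K}(A)$ with $\alpha \subseteq \beta _1\vee \cdots \vee \beta _n$. By Remark \ref{ka}, ${\cal K}(A)$ is closed with respect to finite joins, so $\beta :=\beta _1\vee \cdots \vee \beta _n\in {\cal K}(A)$; and since $\lambda _A$ is a (bounded) lattice morphism by Lemma \ref{latcat} and $I$ is an ideal of ${\cal L}(A)$, we get $\widehat{\beta }=\widehat{\beta _1}\vee \cdots \vee \widehat{\beta _n}\in I$. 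Finally, $\alpha \subseteq \beta $ yields $\widehat{\alpha }\leq \widehat{\beta }$ because $\lambda _A$ is order--preserving, and, $I$ being a down--set, $\widehat{\beta }\in I$ forces $\widehat{\alpha }\in I$, as desired.

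The only slightly delicate step is the passage from the possibly infinite join $I_*$ to a finite subjoin, which is precisely where the assumption $\alpha \in {\cal K}(A)$ enters and is the crux of the argument; the remaining ingredients (closure of ${\cal K}(A)$ under finite joins, preservation of finite joins and of the order by $\lambda _A$, and the fact that ideals are down--sets closed under finite joins) are routine.
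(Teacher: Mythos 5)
Your proof is correct and follows essentially the same route as the paper's: the easy direction straight from the definition of $I_*$, and for the converse, compactness of $\alpha $ to pass to a finite subjoin $\beta _1\vee \cdots \vee \beta _n$ with $\widehat{\beta _1},\ldots ,\widehat{\beta _n}\in I$, then the fact that $\lambda _A$ preserves finite joins and order together with $I$ being an ideal. The only difference is that you spell out the intermediate element $\beta =\beta _1\vee \cdots \vee \beta _n$ and the down--set step explicitly, which the paper leaves implicit.
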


\begin{proof}``$\Leftarrow $:`` If $\widehat{\alpha }\in I$, then $\alpha \in \lambda _A^{-1}(I)$, thus $\alpha \subseteq I_*$.

\noindent ``$\Rightarrow $:`` If $\displaystyle \alpha \subseteq I_*=\bigvee \{\beta \in {\cal K}(A)\ |\ \widehat{\beta }\in I\}$, then, since $\alpha \in {\cal K}(A)$, it follows that there exist an $n\in \N ^*$ and $\beta _1,\ldots ,\beta _n\in {\cal K}(A)$ such that $\widehat{\beta _1},\ldots ,\widehat{\beta _n}\in I$ and $\displaystyle \alpha \subseteq \bigvee _{i=1}^n\beta _i$, hence $\displaystyle \widehat{\alpha }\subseteq \widehat{\bigvee _{i=1}^n\beta _i}=\bigvee _{i=1}^n\widehat{\beta _i}\in I$, thus $\widehat{\alpha }\in I$.\end{proof}

\begin{lemma}\begin{enumerate}
\item\label{lema6(1)} For any $\theta \in {\rm Con}(A)$, $\theta \subseteq (\theta ^*)_*$.
\item\label{lema6(2)} For any $I\in {\rm Id}({\cal L}(A))$, $I=(I_*)^*$.\end{enumerate}\label{lema6}\end{lemma}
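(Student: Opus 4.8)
I would prove the two inclusions/equalities by unwinding the definitions of $(\cdot)^*$ and $(\cdot)_*$ and repeatedly invoking Lemma \ref{lema5}, which is the workhorse that translates "$\alpha \subseteq I_*$" into "$\widehat{\alpha} \in I$" for compact $\alpha$. Part (\ref{lema6(1)}) should be almost immediate; Part (\ref{lema6(2)}) requires showing two inclusions, and the nontrivial direction is $I \subseteq (I_*)^*$.

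For Part (\ref{lema6(1)}): let $\theta \in {\rm Con}(A)$ be arbitrary. By definition, $(\theta^*)_* = \bigvee\{\beta \in {\cal K}(A) \mid \widehat{\beta} \in \theta^*\}$. Now $\theta^* = \{\widehat{\alpha} \mid \alpha \in {\cal K}(A), \alpha \subseteq \theta\}$, so for every compact $\alpha \subseteq \theta$ we have $\widehat{\alpha} \in \theta^*$ and hence $\alpha$ is among the congruences being joined; therefore $\bigvee\{\alpha \in {\cal K}(A) \mid \alpha \subseteq \theta\} \subseteq (\theta^*)_*$. But every congruence is the join of the compact (finitely generated) congruences below it, since ${\rm Con}(A)$ is an algebraic lattice — concretely, $\theta = \bigvee\{Cg_A(a,b) \mid (a,b) \in \theta\}$ and each $Cg_A(a,b) \in {\rm PCon}(A) \subseteq {\cal K}(A)$. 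Hence $\theta \subseteq \bigvee\{\alpha \in {\cal K}(A) \mid \alpha \subseteq \theta\} \subseteq (\theta^*)_*$.

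For Part (\ref{lema6(2)}): fix $I \in {\rm Id}({\cal L}(A))$. For $(I_*)^* \subseteq I$: an arbitrary element of $(I_*)^*$ has the form $\widehat{\alpha}$ with $\alpha \in {\cal K}(A)$ and $\alpha \subseteq I_*$; by Lemma \ref{lema5} (the "$\Rightarrow$" direction) this gives $\widehat{\alpha} \in I$, so $(I_*)^* \subseteq I$. Conversely, for $I \subseteq (I_*)^*$: take $x \in I \subseteq {\cal L}(A)$, so $x = \widehat{\alpha}$ for some $\alpha \in {\cal K}(A)$; then $\widehat{\alpha} \in I$, so by Lemma \ref{lema5} (the "$\Leftarrow$" direction) $\alpha \subseteq I_*$, and since $\alpha \in {\cal K}(A)$ this means $x = \widehat{\alpha} \in (I_*)^*$ by the definition of $(\cdot)^*$. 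Combining the two inclusions yields $I = (I_*)^*$.

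**Main obstacle.** There is no serious obstacle here; the real content has been front-loaded into Lemma \ref{lema5} and the fact that ${\rm Con}(A)$ is algebraic. The only point requiring a moment's care is the need, in Part (\ref{lema6(1)}), to justify that $\theta$ is the join of the compact congruences it contains — this is exactly the algebraicity of ${\rm Con}(A)$, i.e.\ that compact congruences are the finitely generated ones and every congruence is generated by its pairs, which is recalled in the Preliminaries. One should also note that $(I_*)^*$ is well-defined: $I_* \in {\rm Con}(A)$ by its definition, and $(\cdot)^*$ is defined on all of ${\rm Con}(A)$, so the composition makes sense; similarly $\theta^* \in {\rm Id}({\cal L}(A))$ by Lemma \ref{kstar}, so $(\theta^*)_* \in {\rm Con}(A)$ is defined. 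These observations are routine and can be stated in a single sentence.
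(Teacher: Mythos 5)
Your proof is correct and follows essentially the same route as the paper: part (\ref{lema6(1)}) is obtained by writing $\theta$ as the join of the compact (indeed principal) congruences below it, each of which lands in $\theta^*$ and hence below $(\theta^*)_*$, and part (\ref{lema6(2)}) is exactly the paper's chain of equivalences via both directions of Lemma \ref{lema5}. No gaps; the remarks on well-definedness and on ${\rm Con}(A)$ being algebraic are the same ingredients the paper relies on (via Lemma \ref{kstar} and ${\rm PCon}(A)\subseteq {\cal K}(A)$).
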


\begin{proof} (\ref{lema6(1)}) Let $\theta \in {\rm Con}(A)$. For any $(a,b)\in \theta $, $Cg_A(a,b)\in {\rm PCon}(A)\subseteq {\cal K}(A)$ and $Cg_A(a,b)\subseteq \theta $, thus $Cg_A(a,b)\in {\cal K}(A)\cap (\theta ]$, hence $\widehat{Cg_A(a,b)}\in \theta ^*$, therefore $Cg_A(a,b)\subseteq (\theta ^*)_*$ by Lemmas \ref{kstar} and \ref{lema5}, so $(a,b)\in (\theta ^*)_*$. Hence $\theta \subseteq (\theta ^*)_*$.

\noindent (\ref{lema6(2)}) For any $x\in {\cal L}(A)$, by Lemma \ref{lema5}, the following equivalences hold: $x\in (I_*)^*$ iff there exists an $\alpha \in {\cal K}(A)$ such that $\alpha \subseteq I_*$ and $x=\widehat{\alpha }$ iff there exists an $\alpha \in {\cal K}(A)$ such that $\widehat{\alpha }\in I$ and $x=\widehat{\alpha }$ iff $x\in I$. Therefore $(I_*)^*=I$.\end{proof}

\begin{proposition}\begin{enumerate}
\item\label{prop7(1)} The map $I\in {\rm Id}({\cal L}(A))\mapsto I_*\in {\rm Con}(A)$ is injective.

\item\label{prop7(2)} The map $\theta \in {\rm Con}(A)\mapsto \theta ^*\in {\rm Id}({\cal L}(A))$ is surjective.\end{enumerate}\label{prop7}\end{proposition}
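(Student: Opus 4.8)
The plan is to derive both statements directly from Lemma \ref{lema6}, which has already done the heavy lifting. Both parts are formal consequences of the two composition identities $\theta\subseteq(\theta^*)_*$ for $\theta\in{\rm Con}(A)$ and $I=(I_*)^*$ for $I\in{\rm Id}({\cal L}(A))$, together with the fact that both maps are order--preserving (Lemma \ref{fcts}).

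For part (\ref{prop7(1)}), I would argue as follows. Suppose $I,J\in{\rm Id}({\cal L}(A))$ satisfy $I_*=J_*$. Applying the star operation to both sides gives $(I_*)^*=(J_*)^*$. By Lemma \ref{lema6}, (\ref{lema6(2)}), the left side equals $I$ and the right side equals $J$, so $I=J$. Hence $I\mapsto I_*$ is injective.

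For part (\ref{prop7(2)}), I would show that every $I\in{\rm Id}({\cal L}(A))$ is of the form $\theta^*$ for some congruence $\theta$: simply take $\theta=I_*\in{\rm Con}(A)$. Then $\theta^*=(I_*)^*=I$, again by Lemma \ref{lema6}, (\ref{lema6(2)}). Thus the map $\theta\mapsto\theta^*$ is surjective onto ${\rm Id}({\cal L}(A))$.

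There is essentially no obstacle here: the content is entirely contained in the preceding lemma, and the proof is a two--line unwinding of the Galois--connection--style identities. The only thing to be slightly careful about is that the composite $(I_*)^*$ genuinely returns to $I$ on the nose (not merely up to $\equiv_A$ or up to radical), but this is exactly what Lemma \ref{lema6}, (\ref{lema6(2)}), asserts, so no extra work is needed. I would simply cite Lemma \ref{lema6} and Lemma \ref{fcts} and record the two short implications above.
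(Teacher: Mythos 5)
Your proof is correct and follows exactly the paper's own argument: injectivity by applying $(\cdot)^*$ to $I_*=J_*$ and invoking Lemma \ref{lema6}, (\ref{lema6(2)}), and surjectivity by taking $\theta=I_*$ and using the same identity. The extra citation of Lemma \ref{fcts} is harmless but not needed.
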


\begin{proof} (\ref{prop7(1)}) Let $I,J\in {\rm Id}({\cal L}(A))$ such that $I_*=J_*$. Then $(I_*)^*=(J_*)^*$, so $I=J$ by Lemma \ref{lema6}, (\ref{lema6(2)}).

\noindent (\ref{prop7(2)}) Let $I\in {\rm Id}({\cal L}(A))$, and denote $\theta =I_*\in {\rm Con}(A)$. Then $\theta ^*=(I_*)^*=I$ by Lemma \ref{lema6}, (\ref{lema6(2)}).\end{proof}

\begin{lemma} For any $\phi \in {\rm Spec}(A)$, $\phi =(\phi ^*)_*$.\label{lema8}\end{lemma}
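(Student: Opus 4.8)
We already know from Lemma \ref{lema6}, (\ref{lema6(1)}), that $\phi \subseteq (\phi ^*)_*$, so the whole task is to establish the reverse inclusion $(\phi ^*)_*\subseteq \phi $. The plan is to show that $(\phi ^*)_*$ is again a congruence contained in $\phi $ by exhibiting, for each generator of $(\phi ^*)_*$, membership in $\phi $. Recall $(\phi ^*)_*=\bigvee \{\alpha \in {\cal K}(A)\ |\ \widehat{\alpha }\in \phi ^*\}$, so it suffices to prove that every $\alpha \in {\cal K}(A)$ with $\widehat{\alpha }\in \phi ^*$ satisfies $\alpha \subseteq \phi $; since $\phi $ is closed under arbitrary joins of congruences contained in it (being itself a congruence, it absorbs the join $\bigvee$), this yields $(\phi ^*)_*\subseteq \phi $.

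So fix $\alpha \in {\cal K}(A)$ with $\widehat{\alpha }\in \phi ^*$. By definition $\phi ^*=\{\widehat{\beta }\ |\ \beta \in {\cal K}(A),\ \beta \subseteq \phi \}$, hence there is some $\beta \in {\cal K}(A)$ with $\beta \subseteq \phi $ and $\widehat{\beta }=\widehat{\alpha }$, i.e. $\rho _A(\alpha )=\rho _A(\beta )$. From $\beta \subseteq \phi $ and Remark \ref{clara}, (\ref{clara4}) (using $\phi \in {\rm Spec}(A)$, so $\phi \in V_A(\beta )$), we get $\rho _A(\beta )\subseteq \phi $. Therefore $\rho _A(\alpha )=\rho _A(\beta )\subseteq \phi $, and since $\alpha \subseteq \rho _A(\alpha )$ by Lemma \ref{imediata}, (\ref{imediata1}), we conclude $\alpha \subseteq \phi $, as desired. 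Combining with Lemma \ref{lema6}, (\ref{lema6(1)}), gives $\phi =(\phi ^*)_*$.

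I do not expect any serious obstacle here: the statement is essentially the observation that a prime congruence is a radical congruence (Remark \ref{radspec}) together with the characterization $\widehat{\alpha }=\widehat{\beta }\iff \rho _A(\alpha )=\rho _A(\beta )$, plus the already-proved ``easy half'' $\theta \subseteq (\theta ^*)_*$. The only point that requires a little care is the passage from ``each generating $\alpha $ lies in $\phi $'' to ``the join $(\phi ^*)_*$ lies in $\phi $'', which is immediate because in ${\rm Con}(A)$ the join of a family of congruences all contained in a fixed congruence $\phi$ is again contained in $\phi$. Alternatively, one can argue purely via the Galois-type machinery: $\phi ^*\in {\rm Id}({\cal L}(A))$ by Lemma \ref{kstar}, and then $(\phi ^*)_*\subseteq \phi$ follows by noting $\widehat{\alpha}\in\phi^*$ forces $\rho_A(\alpha)\subseteq\phi$ for every $\alpha\in\lambda_A^{-1}(\phi^*)$.
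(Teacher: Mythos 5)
Your proof is correct and follows essentially the same route as the paper's: the easy inclusion via Lemma \ref{lema6}, (\ref{lema6(1)}), and for the reverse, showing every $\alpha \in {\cal K}(A)$ with $\widehat{\alpha }\in \phi ^*$ satisfies $\alpha \subseteq \rho _A(\alpha )=\rho _A(\beta )\subseteq \phi $ for the witnessing $\beta \subseteq \phi $, then passing to the join. The only cosmetic difference is that you invoke Remark \ref{clara}, (\ref{clara4}), where the paper uses $\rho _A(\phi )=\phi $ from (\ref{clara1}); these are interchangeable.
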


\begin{proof} Let $\phi \in {\rm Spec}(A)$. Then $\phi \subseteq (\phi ^*)_*$ by Lemma \ref{lema6}, (\ref{lema6(1)}).

Now let $\beta \in {\cal K}(A)$ such that $\widehat{\beta }\in \phi ^*=\{\widehat{\alpha }\ |\ \alpha \in {\cal K}(A),\alpha \subseteq \phi \}$, which means that $\widehat{\beta }=\widehat{\alpha }$ for some $\alpha \in {\cal K}(A)$ with $\alpha \subseteq \phi $. Since $\widehat{\beta }=\widehat{\alpha }$, we have $\rho _A(\beta )=\rho _A(\alpha )$, while $\alpha \subseteq \phi $ gives us $\rho _A(\alpha )\subseteq \rho _A(\phi )=\phi $, where the last equality follows from the fact that $\phi \in {\rm Spec}(A)$. Hence $\beta \subseteq \rho _A(\beta )\subseteq \phi $. Therefore $\displaystyle (\phi ^*)_*=\bigvee \{\gamma \in {\cal K}(A)\ |\ \widehat{\gamma }\in \phi ^*\}\subseteq \phi $.

Hence $\phi =(\phi ^*)_*$.\end{proof}

\begin{lemma} For any $\phi \in {\rm Spec}(A)$, we have $\phi ^*\in {\rm Spec}_{\rm Id}({\cal L}(A))$.\label{lema9}\end{lemma}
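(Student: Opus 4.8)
The plan is to show that $\phi^*$ is a prime ideal of ${\cal L}(A)$, given $\phi \in {\rm Spec}(A)$. By Lemma \ref{kstar}, $\phi^* \in {\rm Id}({\cal L}(A))$, so it remains to check that $\phi^*$ is proper and satisfies the primeness condition. For properness, I would argue that if $\phi^* = {\cal L}(A)$, then ${\bf 1} = \widehat{\nabla_A} \in \phi^*$, so $\widehat{\nabla_A} = \widehat{\alpha}$ for some $\alpha \in {\cal K}(A)$ with $\alpha \subseteq \phi$; but $\widehat{\nabla_A} = \widehat{\alpha}$ means $\rho_A(\alpha) = \rho_A(\nabla_A) = \nabla_A$, while $\alpha \subseteq \phi$ gives $\rho_A(\alpha) \subseteq \rho_A(\phi) = \phi \subsetneq \nabla_A$ (using Remark \ref{clara}, (\ref{clara1}), and that $\phi$ is proper), a contradiction. (Alternatively, $\phi = (\phi^*)_*$ from Lemma \ref{lema8} together with Lemma \ref{lema5} shows directly that $\widehat{\nabla_A} \notin \phi^*$, since $\nabla_A \nsubseteq \phi$.)

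For the primeness condition, let $x, y \in {\cal L}(A)$ with $x \wedge y \in \phi^*$; I must show $x \in \phi^*$ or $y \in \phi^*$. Write $x = \widehat{\alpha}$, $y = \widehat{\beta}$ with $\alpha, \beta \in {\cal K}(A)$. Then $x \wedge y = \widehat{[\alpha,\beta]_A}$, and since ${\cal K}(A)$ is closed under the commutator, $[\alpha,\beta]_A \in {\cal K}(A)$; by Lemma \ref{lema8} and Lemma \ref{lema5}, $x \wedge y \in \phi^* = (\phi^*)_*{}^*$ translates to $[\alpha,\beta]_A \subseteq (\phi^*)_* = \phi$. Now I invoke the definition of a prime congruence (Definition \ref{1.4}): $[\alpha,\beta]_A \subseteq \phi$ forces $\alpha \subseteq \phi$ or $\beta \subseteq \phi$. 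In the first case $\widehat{\alpha} \in \phi^*$ by definition of $\phi^*$ (since $\alpha \in {\cal K}(A)$, $\alpha \subseteq \phi$), i.e. $x \in \phi^*$; in the second case $y \in \phi^*$. This gives primeness.

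The only genuinely delicate point is making sure the translation $\widehat{[\alpha,\beta]_A} \in \phi^*$ implies $[\alpha,\beta]_A \subseteq \phi$; this is where I rely on Lemmas \ref{lema8} and \ref{lema5} — namely $\phi^* = (\phi^*)_*{}^*$ and the equivalence $\gamma \subseteq I_* \iff \widehat{\gamma} \in I$ for compact $\gamma$, applied with $I = \phi^*$ and $(\phi^*)_* = \phi$. (Equivalently, one can argue directly: $\widehat{[\alpha,\beta]_A} = \widehat{\gamma}$ for some $\gamma \in {\cal K}(A)$, $\gamma \subseteq \phi$, so $\rho_A([\alpha,\beta]_A) = \rho_A(\gamma) \subseteq \rho_A(\phi) = \phi$, whence $[\alpha,\beta]_A \subseteq \phi$.) Everything else is routine bookkeeping with the correspondences already established, so I expect no serious obstacle; the main thing is to cite the right preceding lemmas rather than re-derive them.
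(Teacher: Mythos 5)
Your proposal is correct and takes essentially the same route as the paper: you represent elements of $\phi ^*$ as classes $\widehat{\gamma }$ with $\gamma \in {\cal K}(A)$, $\gamma \subseteq \phi $, translate $\widehat{\alpha }\wedge \widehat{\beta }=\widehat{[\alpha ,\beta ]_A}\in \phi ^*$ into $[\alpha ,\beta ]_A\subseteq \phi $ (the paper does this directly via $\rho _A([\alpha ,\beta ]_A)=\rho _A(\gamma )\subseteq \rho _A(\phi )=\phi $, which is exactly your parenthetical alternative, whereas your primary route through Lemmas \ref{lema8} and \ref{lema5} is equally legitimate since both precede this lemma), and then apply primeness of the congruence $\phi $. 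Your explicit properness check, $\widehat{\nabla _A}\notin \phi ^*$ because $\rho _A(\nabla _A)=\nabla _A\neq \phi =\rho _A(\phi )$, is a sound small addition that the paper's proof leaves implicit.
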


\begin{proof} Let $\phi \in {\rm Spec}(A)$. Then $\phi ^*\in {\rm Id}({\cal L}(A))={\rm Id}({\cal K}(A)/_{\textstyle \equiv _A })$. Let $\alpha ,\beta \in {\cal K}(A)$ such that $\widehat{[\alpha ,\beta ]_A}=\widehat{\alpha }\wedge \widehat{\beta }\in \phi ^*=\{\widehat{\gamma }\ |\ \gamma \in {\cal K}(A),\gamma \subseteq \phi \}$. Then there exists a $\gamma \in {\cal K}(A)$ such that $\gamma \subseteq \phi $ and $\widehat{\gamma }=\widehat{[\alpha ,\beta ]_A}$, thus $\rho _A(\gamma )=\rho _A([\alpha ,\beta ]_A)$ and $\rho _A(\gamma )\subseteq \rho _A(\phi )=\phi $ since $\phi \in {\rm Spec}(A)$. Hence $[\alpha ,\beta ]_A\subseteq \rho _A([\alpha ,\beta ]_A)\subseteq \phi $, hence $\alpha \subseteq \phi $ or $\beta \subseteq \phi $ since $\phi \in {\rm Spec}(A)$. But this means that $\widehat{\alpha }\in \phi ^*$ or $\widehat{\beta }\in \phi ^*$. Therefore $\phi ^*\in {\rm Spec}_{\rm Id}({\cal L}(A))$.\end{proof}

\begin{lemma} For any $P\in {\rm Spec}_{\rm Id}({\cal L}(A))$, we have $P_*\in {\rm Spec}(A)$.\label{lema10}\end{lemma}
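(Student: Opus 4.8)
The statement to prove is: for any $P \in {\rm Spec}_{\rm Id}({\cal L}(A))$, we have $P_* \in {\rm Spec}(A)$. The plan is to verify the two requirements in the definition of a prime congruence (Definition \ref{1.4}): that $P_*$ is a proper congruence, and that $[\alpha,\beta]_A \subseteq P_*$ implies $\alpha \subseteq P_*$ or $\beta \subseteq P_*$ for all $\alpha,\beta \in {\rm Con}(A)$. First I would record that $P_* \in {\rm Con}(A)$ by construction (it is a join of congruences). For properness: since $P$ is a proper ideal of ${\cal L}(A)$, we have ${\bf 1} = \widehat{\nabla_A} \notin P$; as $\nabla_A \in {\cal K}(A)$ by Proposition \ref{2.6} (our standing hypothesis $\nabla_A \in {\cal K}(A)$), Lemma \ref{lema5} gives $\nabla_A \subseteq P_*$ iff $\widehat{\nabla_A} \in P$, which fails, so $P_* \neq \nabla_A$.

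For the primeness condition, the key reduction is to pass from arbitrary congruences $\alpha,\beta$ to compact ones, where Lemma \ref{lema5} is available. Suppose $[\alpha,\beta]_A \subseteq P_*$ but $\alpha \nsubseteq P_*$ and $\beta \nsubseteq P_*$. Then there exist generators witnessing these non-inclusions: pick $(a,b) \in \alpha$ with $(a,b) \notin P_*$ and $(c,d) \in \beta$ with $(c,d) \notin P_*$, and set $\alpha_0 = Cg_A(a,b)$, $\beta_0 = Cg_A(c,d)$, both in ${\rm PCon}(A) \subseteq {\cal K}(A)$. Since $\alpha_0 \subseteq \alpha$ and $\beta_0 \subseteq \beta$, monotonicity of the commutator (Proposition \ref{1.3}) gives $[\alpha_0,\beta_0]_A \subseteq [\alpha,\beta]_A \subseteq P_*$, and $[\alpha_0,\beta_0]_A \in {\cal K}(A)$ because ${\cal K}(A)$ is closed under the commutator (our standing hypothesis). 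By Lemma \ref{lema5} applied to the compact congruence $[\alpha_0,\beta_0]_A$, we get $\widehat{[\alpha_0,\beta_0]_A} \in P$, i.e. $\widehat{\alpha_0} \wedge \widehat{\beta_0} \in P$. Since $P$ is a prime ideal of ${\cal L}(A)$, either $\widehat{\alpha_0} \in P$ or $\widehat{\beta_0} \in P$; applying Lemma \ref{lema5} in the reverse direction (again $\alpha_0,\beta_0$ are compact) yields $\alpha_0 \subseteq P_*$ or $\beta_0 \subseteq P_*$, i.e. $(a,b) \in P_*$ or $(c,d) \in P_*$ — contradicting the choice of the generators. Hence $\alpha \subseteq P_*$ or $\beta \subseteq P_*$, and $P_* \in {\rm Spec}(A)$.

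The main point to get right is the compactness bookkeeping: the translation between inclusion of congruences in $P_*$ and membership of classes in $P$ (Lemma \ref{lema5}) only works for compact congruences, so the reduction to principal congruences $Cg_A(a,b)$, $Cg_A(c,d)$ generated by single witnessing pairs is essential, together with the fact that their commutator stays compact. I expect no serious obstacle beyond making sure these hypotheses are invoked correctly; one should also double-check that $\widehat{\alpha_0}\wedge\widehat{\beta_0} = \widehat{[\alpha_0,\beta_0]_A}$ (the definition of $\wedge$ in ${\cal L}(A)$), which is immediate from the construction of the lattice operations on ${\rm Con}(A)/_{\textstyle \equiv_A}$.
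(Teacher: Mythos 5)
Your proof is correct and follows essentially the same route as the paper: transfer $[\alpha_0,\beta_0]_A\subseteq P_*$ to $\widehat{\alpha_0}\wedge\widehat{\beta_0}\in P$ via Lemma \ref{lema5} (using that ${\cal K}(A)$ is closed w.r.t.\ the commutator), invoke primality of the ideal $P$, and transfer back. The differences are only presentational: the paper verifies the prime condition directly for $\alpha,\beta\in{\rm PCon}(A)$, leaving implicit the easy reduction from arbitrary congruences that you spell out with witness pairs, and it re-derives the compactness step inside the proof instead of citing Lemma \ref{lema5}, while you additionally record the properness of $P_*$ explicitly.
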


\begin{proof} Let $P\in {\rm Spec}_{\rm Id}({\cal L}(A))$. Then $P_*\in {\rm Con}(A)$. Let $\alpha ,\beta \in {\rm PCon}(A)$ such that $[\alpha ,\beta ]_A\subseteq P_*$. Then $\alpha ,\beta \in {\cal K}(A)$, so that $[\alpha ,\beta ]_A\in {\cal K}(A)$, and $\displaystyle [\alpha ,\beta ]_A\subseteq \bigvee \{\gamma \in {\cal K}(A)\ |\ \widehat{\gamma }\in P\}$, hence there exist an $n\in \N ^*$ and $\gamma _1,\ldots ,\gamma _n\in {\cal K}(A)$ such that $\widehat{\gamma _1},\ldots ,\widehat{\gamma _n}\in P$ and $\displaystyle [\alpha ,\beta ]_A\subseteq \bigvee _{i=1}^n\gamma _i$. But then $\displaystyle \widehat{\bigvee _{i=1}^n\gamma _i}=\bigvee _{i=1}^n\widehat{\gamma _i}\in P$, hence $\widehat{\alpha }\wedge \widehat{\beta }=\widehat{[\alpha ,\beta ]_A}\in P$, thus $\widehat{\alpha }\in P$ or $\widehat{\beta }\in P$ since $P\in {\rm Spec}_{\rm Id}({\cal L}(A))$. By Lemma \ref{lema5}, it follows that $\alpha \subseteq P_*$ or $\beta \subseteq P_*$. Therefore $P_*\in {\rm Spec}(A)$.\end{proof}

By Lemmas \ref{lema9} and \ref{lema10}, we have these restrictions of the functions defined above:\begin{itemize}
\item $u:{\rm Spec}(A)\rightarrow {\rm Spec}_{\rm Id}({\cal L}(A))$, for all $\phi \in {\rm Spec}(A)$, $u(\phi )=\phi ^*$;
\item $v:{\rm Spec}_{\rm Id}({\cal L}(A))\rightarrow {\rm Spec}(A)$, for all $P\in {\rm Spec}_{\rm Id}({\cal L}(A))$, $v(P)=P_*$.\end{itemize}

\begin{proposition} $u$ and $v$ are homeomorphisms, inverses of each other, between the prime spectrum of $A$ and the prime spectrum of ideals of ${\cal L}(A)$, endowed with the Stone topologies.\label{prop11}\end{proposition}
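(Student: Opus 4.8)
The plan is to show that $u$ and $v$ are mutually inverse bijections and that each of them is continuous; since a continuous bijection whose inverse is continuous is a homeomorphism, this will suffice. Bijectivity is immediate from the lemmas already proved: by Lemma \ref{lema8}, $(v\circ u)(\phi )=(\phi ^*)_*=\phi $ for every $\phi \in {\rm Spec}(A)$, and by Lemma \ref{lema6}, (\ref{lema6(2)}), applied to a prime ideal $P$, $(u\circ v)(P)=(P_*)^*=P$ for every $P\in {\rm Spec}_{\rm Id}({\cal L}(A))$. Hence $v\circ u={\rm id}_{{\rm Spec}(A)}$ and $u\circ v={\rm id}_{{\rm Spec}_{\rm Id}({\cal L}(A))}$, so $u$ and $v$ are bijections, each the inverse of the other.

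For the continuity of $u$ it is enough to check that the preimage under $u$ of each member of a basis of closed sets of ${\rm Spec}_{\rm Id}({\cal L}(A))$ is closed in ${\rm Spec}(A)$. Such a basis is $\{V_{{\rm Id},{\cal L}(A)}(x)\ |\ x\in {\cal L}(A)\}$, and every $x\in {\cal L}(A)$ is $\widehat{\alpha }$ for some $\alpha \in {\cal K}(A)$. The key identity I would establish is that $u^{-1}(V_{{\rm Id},{\cal L}(A)}(\widehat{\alpha }))=V_A(\alpha )$ for every $\alpha \in {\cal K}(A)$: for $\phi \in {\rm Spec}(A)$, $\phi \in u^{-1}(V_{{\rm Id},{\cal L}(A)}(\widehat{\alpha }))$ iff $\widehat{\alpha }\in \phi ^*$, i.e. iff there is $\gamma \in {\cal K}(A)$ with $\gamma \subseteq \phi $ and $\widehat{\gamma }=\widehat{\alpha }$; and, exactly as in the proof of Lemma \ref{lema8} (from $\rho _A(\alpha )=\rho _A(\gamma )\subseteq \rho _A(\phi )=\phi $ and $\alpha \subseteq \rho _A(\alpha )$, the reverse implication being trivial by taking $\gamma =\alpha $), this holds iff $\alpha \subseteq \phi $, i.e. iff $\phi \in V_A(\alpha )$. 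By Remark \ref{ka} we may write $\alpha =\bigvee _{i=1}^nCg_A(a_i,b_i)$ with $n\in \N ^*$ and $a_i,b_i\in A$, so Proposition \ref{stonetop}, (\ref{stonetop3}), gives $V_A(\alpha )=\bigcap _{i=1}^nV_A(a_i,b_i)$, a finite intersection of basic closed sets, hence closed; thus $u$ is continuous.

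For the continuity of $v$ I would dually check preimages of the basic closed sets $V_A(a,b)$, $a,b\in A$. For $P\in {\rm Spec}_{\rm Id}({\cal L}(A))$, $P\in v^{-1}(V_A(a,b))$ iff $(a,b)\in P_*$ iff $Cg_A(a,b)\subseteq P_*$, and since $Cg_A(a,b)\in {\rm PCon}(A)\subseteq {\cal K}(A)$, Lemma \ref{lema5} turns this into $\widehat{Cg_A(a,b)}\in P$, i.e. $P\in V_{{\rm Id},{\cal L}(A)}(\widehat{Cg_A(a,b)})$. Hence $v^{-1}(V_A(a,b))=V_{{\rm Id},{\cal L}(A)}(\widehat{Cg_A(a,b)})$ is closed, so $v$ is continuous. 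Being mutually inverse continuous bijections, $u$ and $v$ are then homeomorphisms, inverses of each other, between the two spectra with their Stone topologies.

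I do not expect a genuine obstacle here: the proposition is assembled from Lemmas \ref{lema5}, \ref{lema6} and \ref{lema8} together with the descriptions of the Stone topologies from Section \ref{stonetops}. The only step that is not pure bookkeeping is the identity $u^{-1}(V_{{\rm Id},{\cal L}(A)}(\widehat{\alpha }))=V_A(\alpha )$, and even that merely reuses the radical computation already carried out for Lemma \ref{lema8}; in particular it is there that the hypothesis $\phi \in {\rm Spec}(A)$ (so that $\rho _A(\phi )=\phi $) is used, which is what makes $\widehat{\ }$ detect containment in primes.
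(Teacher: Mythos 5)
Your proof is correct, and it follows the paper's overall decomposition (mutually inverse bijections via Lemmas \ref{lema8} and \ref{lema6}, (\ref{lema6(2)}), then continuity in both directions), but the topological half is organized differently. The paper proves the direct-image identity $u(V_A(\theta ))=V_{{\rm Id},{\cal L}(A)}(\theta ^*)$ for \emph{every} $\theta \in {\rm Con}(A)$ (using Lemmas \ref{lema6}, (\ref{lema6(1)}), \ref{fcts} and \ref{lema10}), concludes that $u$ is a closed map, hence $v$ is continuous, and then invokes the surjectivity of $\theta \mapsto \theta ^*$ (Proposition \ref{prop7}, (\ref{prop7(2)})) to realize every ideal as some $\theta ^*$ and deduce that $v$ is closed, hence $u$ continuous. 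You instead compute preimages of \emph{basic} closed sets: $u^{-1}(V_{{\rm Id},{\cal L}(A)}(\widehat{\alpha }))=V_A(\alpha )$ for $\alpha \in {\cal K}(A)$ (reusing the radical computation from Lemma \ref{lema8}, where primeness of $\phi $ enters) and $v^{-1}(V_A(a,b))=V_{{\rm Id},{\cal L}(A)}(\widehat{Cg_A(a,b)})$ via Lemma \ref{lema5}. Your route is leaner: it bypasses Proposition \ref{prop7} and the monotonicity bookkeeping entirely, at the price of only describing the closed-set correspondence on a basis; the paper's argument is slightly heavier but delivers the explicit formula for the image of an arbitrary closed set $V_A(\theta )$, which is the kind of global description that is convenient for later transfer results. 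Two minor remarks: since $u$ and $v$ are inverse bijections, your preimage identities are just the paper's identity restricted to compact, respectively principal, congruences; and your decomposition of $V_A(\alpha )$ as $\bigcap _{i=1}^nV_A(a_i,b_i)$ is superfluous, because $V_A(\alpha )$ is closed by the very definition of the Stone topology on ${\rm Spec}(A)$.
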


\begin{proof} By Lemma \ref{lema6}, (\ref{lema6(2)}), for all $P\in {\rm Spec}_{\rm Id}({\cal L}(A))$, we have $u(v(P))=P$. By Lemma \ref{lema8}, for all $\phi \in {\rm Spec}(A)$, we have $v(u(\phi ))=\phi $. Thus $u$ and $v$ are bijections and they are inverses of each other.

Let $\theta \in {\rm Con}(A)$ and $\phi \in V_A(\theta )$, that is $\phi \in {\rm Spec}(A)$ and $\theta \subseteq \phi $. Then, by Lemmas \ref{lema10} and \ref{fcts}, $\phi ^*\in {\rm Spec}_{\rm Id}({\cal L}(A))$ and $\theta ^*\subseteq \phi ^*$, so $\phi ^*\in V_{{\rm Id},{\cal L}(A)}(\theta ^*)$, and we have $u(\phi )=\phi ^*$. Hence $u(V_A(\theta ))\subseteq V_{{\rm Id},{\cal L}(A)}(\theta ^*)$. Now let $P\in V_{{\rm Id},{\cal L}(A)}(\theta ^*)$, that is $P\in {\rm Spec}_{\rm Id}({\cal L}(A))$ and $\theta ^*\subseteq P$. Then, by Lemma \ref{lema6}, (\ref{lema6(1)}), and Lemmas \ref{fcts} and \ref{lema10}, $\theta \subseteq (\theta ^*)_*\subseteq P_*\in {\rm Spec}(A)$, thus $P_*\in V_A(\theta )$, and we have $u(P_*)=u(v(P))=P$. Hence $V_{{\rm Id},{\cal L}(A)}(\theta ^*)\subseteq u(V_A(\theta ))$. Therefore $u(V_A(\theta ))=V_{{\rm Id},{\cal L}(A)}(\theta ^*)$, thus $u$ is closed, hence $u$ is open, so $v$ is continuous. 

Now let $I\in {\rm Id}({\cal L}(A))$. Then, according to Proposition \ref{prop7}, (\ref{prop7(2)}), $I=\theta ^*$ for some $\theta \in {\rm Con}(A)$. By the above, $u(V_A(\theta ))=V_{{\rm Id},{\cal L}(A)}(\theta ^*)=V_{{\rm Id},{\cal L}(A)}(I)$, hence $v(V_{{\rm Id},{\cal L}(A)}(I))=v(u(V_A(\theta )))=V_A(\theta )$, therefore $v$ is closed, hence $v$ is open, thus $u$ is continuous.

Hence $u$ and $v$ are homeomorphisms.\end{proof}

\begin{corollary}[existence of the reticulation] ${\cal L}(A)$ is a reticulation for the algebra $A$.\end{corollary}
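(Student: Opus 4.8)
The plan is simply to assemble the two structural facts established in this section. First I would invoke Proposition~\ref{reticd01} to record that ${\cal L}(A)$ is a bounded distributive lattice; this is exactly the first half of what it means to be a reticulation. The hypotheses in force throughout this part of the section --- that $[\cdot ,\cdot ]_A$ is commutative and distributive with respect to arbitrary joins, that $\nabla _A\in {\cal K}(A)$, and that ${\cal K}(A)$ is closed under the commutator --- are precisely the ones under which ${\cal L}(A)$ was defined and shown to be such a lattice, so nothing further is needed here.

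Next I would appeal to Proposition~\ref{prop11}: the maps $u:{\rm Spec}(A)\rightarrow {\rm Spec}_{\rm Id}({\cal L}(A))$ and $v:{\rm Spec}_{\rm Id}({\cal L}(A))\rightarrow {\rm Spec}(A)$ given by $\phi \mapsto \phi ^*$ and $P\mapsto P_*$ are mutually inverse homeomorphisms when both spectra carry their Stone topologies. Combining this with the previous paragraph, ${\cal L}(A)$ is a bounded distributive lattice whose prime spectrum of ideals, endowed with the Stone topology, is homeomorphic to the prime spectrum of congruences of $A$ endowed with the Stone topology; this is exactly the defining property of a reticulation for $A$ announced in the introduction, so the corollary follows. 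If one prefers the ``prime spectrum of filters'' formulation, one composes $u$ with the standard order-reversing homeomorphism ${\rm Spec}_{\rm Id}({\cal L}(A))\rightarrow {\rm Spec}_{\rm Filt}({\cal L}(A))$, $P\mapsto {\cal L}(A)\setminus P$, available since ${\cal L}(A)$ is a bounded distributive lattice.

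There is essentially no obstacle here: all the work has been carried out in Propositions~\ref{reticd01} and~\ref{prop11}, and the corollary is a bookkeeping statement recording that the object ${\cal L}(A)$ constructed in this section meets the definition of a reticulation. The only point deserving a word of care is that the topological property must be read with respect to the correct topology --- the Stone topology on ${\rm Spec}(A)$ introduced in Section~\ref{stonetops}, whose very definition presupposes $[\cdot ,\cdot ]_A$ commutative and distributive over arbitrary joins --- but this hypothesis is assumed throughout and was already used in the setup preceding Proposition~\ref{stonetop}, so nothing is left to check.
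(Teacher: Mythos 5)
Your proposal is correct and matches the paper's own (implicit) justification: the corollary is stated as an immediate consequence of Proposition~\ref{reticd01} (${\cal L}(A)$ is a bounded distributive lattice) together with Proposition~\ref{prop11} (the mutually inverse homeomorphisms $u$, $v$ between ${\rm Spec}(A)$ and ${\rm Spec}_{\rm Id}({\cal L}(A))$ with the Stone topologies), which is exactly what you assemble. Your remark on passing to the filter spectrum via complementation is a harmless extra, consistent with the definition of reticulation given in the introduction.
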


\begin{proposition}{\rm \cite{bal},\cite{gratzer}} If $L$ and $M$ are bounded distributive lattices whose prime spectra of ideals, endowed with the Stone topologies, are homeomorphic, then $L$ and $M$ are isomorphic.\end{proposition}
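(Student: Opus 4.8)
The strategy is to show that a bounded distributive lattice can be recovered, up to isomorphism, from its prime spectrum of ideals equipped with the Stone topology, and then to transport this reconstruction along the given homeomorphism. Concretely, I will prove that the lattice $L$ is isomorphic to the lattice of compact open subsets of ${\rm Spec}_{\rm Id}(L)$, a datum that depends only on the topological space ${\rm Spec}_{\rm Id}(L)$; applying the same to $M$ and noting that a homeomorphism matches up compact opens, the conclusion $L\cong M$ follows.

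\textbf{Step 1: the canonical map is an injective bounded lattice morphism.}
Consider $d_L\colon L\rightarrow {\cal P}({\rm Spec}_{\rm Id}(L))$ given by $d_L(a)=D_{{\rm Id},L}(a)$. By Remark \ref{stoneidlat}, $d_L(a\wedge b)=d_L(a)\cap d_L(b)$, $d_L(a\vee b)=d_L(a)\cup d_L(b)$, $d_L(0)=\emptyset $ and $d_L(1)={\rm Spec}_{\rm Id}(L)$, so $d_L$ is a morphism of bounded lattices. It is injective: if $d_L(a)=d_L(b)$, then $V_{{\rm Id},L}(a)=V_{{\rm Id},L}(b)$, that is $\{P\in {\rm Spec}_{\rm Id}(L)\ |\ a\in P\}=\{P\in {\rm Spec}_{\rm Id}(L)\ |\ b\in P\}$; since $L$ is distributive, every ideal of $L$ is the intersection of the prime ideals containing it (Remark \ref{stoneidlat}), hence $(a]=\bigcap \{P\ |\ a\in P\}=\bigcap \{P\ |\ b\in P\}=(b]$, so $a=b$. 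Therefore $L$ is isomorphic, as a bounded lattice, to the image $d_L(L)$, a bounded sublattice of ${\cal P}({\rm Spec}_{\rm Id}(L))$.

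\textbf{Step 2: the image is exactly the family of compact open subsets of ${\rm Spec}_{\rm Id}(L)$.}
The sets $D_{{\rm Id},L}(a)$ form a basis of the Stone topology which is closed under finite unions, since $D_{{\rm Id},L}(a)\cup D_{{\rm Id},L}(b)=D_{{\rm Id},L}(a\vee b)$; hence every open set is a union of basic opens, and a compact open set is a finite such union, so it lies in $d_L(L)$. Conversely, each $D_{{\rm Id},L}(a)$ is compact: if $D_{{\rm Id},L}(a)\subseteq \bigcup _{i\in I}D_{{\rm Id},L}(a_i)=D_{{\rm Id},L}(J)$ where $J$ is the ideal of $L$ generated by $\{a_i\ |\ i\in I\}$, then $V_{{\rm Id},L}(J)\subseteq V_{{\rm Id},L}(a)$, so every prime ideal containing $J$ contains $a$; using again that $J$ is the intersection of the prime ideals above it, we get $a\in J$, i.e. $a\leq a_{i_1}\vee \cdots \vee a_{i_n}$ for finitely many indices, whence $D_{{\rm Id},L}(a)\subseteq D_{{\rm Id},L}(a_{i_1})\cup \cdots \cup D_{{\rm Id},L}(a_{i_n})$. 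Thus $d_L(L)$ is precisely the set $K(L)$ of compact open subsets of ${\rm Spec}_{\rm Id}(L)$ (a bounded sublattice of the powerset), and $L\cong K(L)$; symmetrically $M\cong K(M)$ for the corresponding lattice $K(M)$ of compact opens of ${\rm Spec}_{\rm Id}(M)$.

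\textbf{Step 3: transport along the homeomorphism.}
Let $h\colon {\rm Spec}_{\rm Id}(L)\rightarrow {\rm Spec}_{\rm Id}(M)$ be a homeomorphism. Being a bijection of underlying sets, $h$ preserves finite unions, finite intersections, the empty set and the whole space; being a homeomorphism, $U$ is open (respectively compact) iff $h(U)$ is. Hence $U\mapsto h(U)$ restricts to an isomorphism of bounded lattices $K(L)\rightarrow K(M)$, and composing with the isomorphisms of Step 2 we obtain $L\cong K(L)\cong K(M)\cong M$. The only nonroutine part is the reconstruction lemma of Steps 1--2 — injectivity of $d_L$ and the identification of its image with the compact opens — both of which rest on the Prime Ideal Theorem for distributive lattices (equivalently, on every ideal being the intersection of the primes above it), and it is exactly here that distributivity of $L$ is indispensable.
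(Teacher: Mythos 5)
Your proof is correct: it is the classical Stone-duality reconstruction (recover $L$ as the lattice of compact open subsets of ${\rm Spec}_{\rm Id}(L)$, then transport along the homeomorphism), which is precisely the argument behind the result as it appears in the cited references; the paper itself invokes this proposition without proof, so there is nothing to compare beyond noting that your route is the standard one. The two load-bearing points — injectivity of $a\mapsto D_{{\rm Id},L}(a)$ and the identification of the image with the compact opens — are both correctly reduced to the Prime Ideal Theorem, exactly as the paper's Remark on the Stone topology anticipates.
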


\begin{corollary}[uniqueness of the reticulation] The reticulation of $A$ is unique up to a lattice isomorphism.\end{corollary}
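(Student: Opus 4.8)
The plan is to obtain this as an immediate consequence of the preceding proposition, using nothing beyond transitivity of topological homeomorphism. First I would unwind the definition of a reticulation of $A$: a bounded distributive lattice $L$ is a reticulation of $A$ precisely when its prime spectrum of ideals ${\rm Spec}_{\rm Id}(L)$, equipped with the Stone topology, is homeomorphic to ${\rm Spec}(A)$ equipped with its Stone topology. Such lattices do exist: Proposition \ref{prop11}, via the mutually inverse homeomorphisms $u$ and $v$, exhibits ${\cal L}(A)$ as one of them, which is exactly the content of the existence corollary.

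Next, suppose $L$ and $M$ are both reticulations of $A$. Fix homeomorphisms $h_L:{\rm Spec}_{\rm Id}(L)\rightarrow {\rm Spec}(A)$ and $h_M:{\rm Spec}_{\rm Id}(M)\rightarrow {\rm Spec}(A)$; then $h_M^{-1}\circ h_L:{\rm Spec}_{\rm Id}(L)\rightarrow {\rm Spec}_{\rm Id}(M)$ is a homeomorphism between the prime spectra of ideals of $L$ and $M$, each endowed with its Stone topology. The preceding proposition then yields a lattice isomorphism $L\cong M$. Taking $M={\cal L}(A)$, it follows in particular that every reticulation of $A$ is isomorphic to the lattice ${\cal L}(A)$ constructed in this section, so the reticulation of $A$ is unique up to a lattice isomorphism.

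There is no genuine obstacle here; the work has already been discharged in Proposition \ref{prop11} and in the preceding proposition, the latter being the Stone-type duality statement that a bounded distributive lattice is recovered up to isomorphism from the Stone space of its prime spectrum of ideals. The only point requiring attention is a matter of conventions rather than of mathematics: the notion of ``reticulation'' must be fixed to a single one of the equivalent formulations — here, the one via prime spectra of \emph{ideals}, used throughout Section \ref{reticulatia} and in Proposition \ref{prop11} — so that both reticulations of $A$ are compared on the same footing before the preceding proposition is invoked.
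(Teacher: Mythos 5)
Your argument is correct and is exactly the one the paper intends: the corollary is stated without proof precisely because it follows by composing the homeomorphism of Proposition \ref{prop11} with the cited result that bounded distributive lattices with homeomorphic prime spectra of ideals are isomorphic. Nothing further is needed.
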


\begin{corollary} If ${\cal C}$ is congruence--modular and semi--degenerate, then $u$ and $v$ induce homeomorphisms, inverses of each other, between the maximal spectrum of $A$ and the maximal spectrum of ideals of ${\cal L}(A)$, endowed with the Stone topologies.\label{homeomaxspec}\end{corollary}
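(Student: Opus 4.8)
The plan is to deduce everything from Proposition \ref{prop11} by restricting the homeomorphisms $u$ and $v$ to the maximal spectra. Recall that, by definition, the Stone topology on ${\rm Max}(A)$ is the subspace topology induced by the Stone topology on ${\rm Spec}(A)$, and likewise the Stone topology on ${\rm Max}_{\rm Id}({\cal L}(A))$ is the subspace topology induced by that on ${\rm Spec}_{\rm Id}({\cal L}(A))$. Since a homeomorphism restricts to a homeomorphism between any subspace and its image, it suffices to prove the single set--theoretic equality $u({\rm Max}(A))={\rm Max}_{\rm Id}({\cal L}(A))$; as $v=u^{-1}$, this will simultaneously give $v({\rm Max}_{\rm Id}({\cal L}(A)))={\rm Max}(A)$. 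I will use freely the standard facts that, in the bounded distributive lattice ${\cal L}(A)$, every proper ideal is contained in a maximal ideal and every maximal ideal is prime.

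First I would prove $u({\rm Max}(A))\subseteq {\rm Max}_{\rm Id}({\cal L}(A))$. Let $M\in {\rm Max}(A)$. By Lemma \ref{folclor}, $M$ is prime, so $M\in {\rm Spec}(A)$ and, by Lemma \ref{lema9}, $u(M)=M^*\in {\rm Spec}_{\rm Id}({\cal L}(A))$, in particular $M^*$ is a proper ideal of ${\cal L}(A)$. If $M^*$ were not maximal, there would be a proper ideal $J$ of ${\cal L}(A)$ with $M^*\subsetneq J$; extending $J$ to a maximal, hence prime, ideal $P$ of ${\cal L}(A)$ gives $M^*\subsetneq P$. Applying the order--preserving map $v$ (Lemma \ref{fcts}) and using that $u$ and $v$ are mutually inverse (Proposition \ref{prop11}), we get $M=v(M^*)\subsetneq v(P)=P_*$, and $P_*\in {\rm Spec}(A)$ by Lemma \ref{lema10} is a proper congruence of $A$ strictly containing $M$ --- contradicting $M\in {\rm Max}(A)$. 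Hence $M^*\in {\rm Max}_{\rm Id}({\cal L}(A))$.

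Next I would prove the reverse inclusion. Let $P\in {\rm Max}_{\rm Id}({\cal L}(A))$; since $P$ is then prime, $P\in {\rm Spec}_{\rm Id}({\cal L}(A))$, so by Lemma \ref{lema10} $v(P)=P_*\in {\rm Spec}(A)$ is a proper congruence. If $P_*$ were not maximal in ${\rm Con}(A)$, there would be a proper congruence $\theta $ with $P_*\subsetneq \theta $; by Lemma \ref{folclor}, $\theta \subseteq N$ for some $N\in {\rm Max}(A)$, which is prime, so $P_*\subsetneq N\in {\rm Spec}(A)$. Applying the order--preserving map $u$ (Lemma \ref{fcts}) and mutual invertibility, $P=u(v(P))=u(P_*)\subsetneq u(N)=N^*$, with $N^*\in {\rm Spec}_{\rm Id}({\cal L}(A))$ (Lemma \ref{lema9}) a proper ideal strictly containing $P$ --- contradicting $P\in {\rm Max}_{\rm Id}({\cal L}(A))$. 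Hence $P_*\in {\rm Max}(A)$, and $P=u(v(P))=u(P_*)\in u({\rm Max}(A))$.

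Together these give $u({\rm Max}(A))={\rm Max}_{\rm Id}({\cal L}(A))$, and restricting the homeomorphisms $u,v$ of Proposition \ref{prop11} completes the proof. The only place the hypotheses ``congruence--modular and semi--degenerate'' enter --- and the only genuinely load--bearing point of the argument --- is Lemma \ref{folclor}, which supplies both that maximal congruences are prime and that every proper congruence lies below a maximal one; the corresponding facts on the lattice side hold automatically because ${\cal L}(A)$ is a bounded distributive lattice. Beyond this, everything is routine bookkeeping with the order--preserving, mutually inverse maps $u$ and $v$, so I do not anticipate any substantive obstacle.
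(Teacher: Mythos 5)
Your proof is correct and follows essentially the same route as the paper: the paper's (very terse) proof likewise combines Lemma \ref{folclor}, Proposition \ref{prop11} and the order--preservation from Lemma \ref{fcts} to see that $u$ and $v$ are order isomorphisms between the prime spectra, whence they match up the maximal spectra and restrict to homeomorphisms for the subspace (Stone) topologies. Your write-up just makes explicit the identification $u({\rm Max}(A))={\rm Max}_{\rm Id}({\cal L}(A))$ that the paper leaves implicit.
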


\begin{proof} By Lemma \ref{folclor}, Proposition \ref{prop11} and the fact that, as Lemma \ref{fcts} ensures us, $u$ and $v$ are order--preserving, and hence they are order isomorphisms between the posets $({\rm Spec}(A),\subseteq )$ and $({\rm Spec}_{\rm Id}({\cal L}(A)),\subseteq )$.\end{proof}

\begin{proposition}{\rm \cite[Proposition $4.1$]{agl}} For any $\theta \in {\rm Con}(A)$, $\rho _A(\theta )=\{(a,b)\in A^2\ |\ (\exists \, n\in \N ^*)\, ([Cg_A(a,b),$\linebreak $Cg_A(a,b)]_A^n\subseteq \theta )\}$, so $\rho _A(\Delta _A)=\{(a,b)\in A^2\ |\ (\exists \, n\in \N ^*)\, ([Cg_A(a,b),Cg_A(a,b)]_A^n=\Delta _A)\}$.\label{rho}\end{proposition}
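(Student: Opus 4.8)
The plan is to prove the set equality $\rho_A(\theta)=R(\theta)$, where $R(\theta)=\{(a,b)\in A^2\mid (\exists\, n\in \N ^*)\,([Cg_A(a,b),Cg_A(a,b)]_A^n\subseteq \theta )\}$, by establishing the two inclusions separately, and then to read off the description of $\rho_A(\Delta _A)$ as the special case $\theta =\Delta _A$. Throughout I would write $\gamma =Cg_A(a,b)$; since $\gamma \in {\rm PCon}(A)\subseteq {\cal K}(A)$ and ${\cal K}(A)$ is closed under the commutator (the standing hypothesis of this part of the section), each $[\gamma ,\gamma ]_A^n$ lies in ${\cal K}(A)$, hence is compact, and, because the commutator is smaller than its arguments (Proposition \ref{1.3}), the sequence $([\gamma ,\gamma ]_A^n)_{n\in \N ^*}$ is decreasing.

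For the inclusion $R(\theta )\subseteq \rho _A(\theta )$ I would fix $(a,b)\in R(\theta )$, say $[\gamma ,\gamma ]_A^n\subseteq \theta $, and let $\phi \in V_A(\theta )$, i.e.\ $\phi \in {\rm Spec}(A)$ with $\theta \subseteq \phi $. A prime congruence is in particular semiprime (Definition \ref{1.4}), so from $[\gamma ,\gamma ]_A^n=[[\gamma ,\gamma ]_A^{n-1},[\gamma ,\gamma ]_A^{n-1}]_A\subseteq \phi $ one gets $[\gamma ,\gamma ]_A^{n-1}\subseteq \phi $, and descending by induction one reaches $\gamma \subseteq \phi $, whence $(a,b)\in \gamma \subseteq \phi $. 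Since $\phi \in V_A(\theta )$ was arbitrary, $(a,b)\in \bigcap _{\phi \in V_A(\theta )}\phi =\rho _A(\theta )$.

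For the converse $\rho _A(\theta )\subseteq R(\theta )$ I would argue by contraposition, imitating the classical proof that the radical of an ideal is the set of elements some power of which lies in the ideal. Assume $(a,b)\notin R(\theta )$, i.e.\ $[\gamma ,\gamma ]_A^n\not\subseteq \theta $ for all $n\in \N ^*$, and consider $\Sigma =\{\mu \in {\rm Con}(A)\mid \theta \subseteq \mu \text{ and }[\gamma ,\gamma ]_A^n\not\subseteq \mu \text{ for all }n\in \N ^*\}$. Then $\theta \in \Sigma $, and the union of any chain in $\Sigma $ is again in $\Sigma $: it is a congruence containing $\theta $, and if it contained some $[\gamma ,\gamma ]_A^m$, then by compactness of $[\gamma ,\gamma ]_A^m$ one member of the chain would already contain it. By Zorn's Lemma pick $\phi $ maximal in $\Sigma $; since $[\gamma ,\gamma ]_A^1=[\gamma ,\gamma ]_A\not\subseteq \phi $, we get both that $\phi $ is a proper congruence and that $\gamma \not\subseteq \phi $, i.e.\ $(a,b)\notin \phi $. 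It then remains to check that $\phi \in {\rm Spec}(A)$: given $\alpha ,\beta \in {\rm Con}(A)$ with $\alpha \not\subseteq \phi $ and $\beta \not\subseteq \phi $, maximality of $\phi $ forces $\phi \vee \alpha $ and $\phi \vee \beta $ out of $\Sigma $, so (passing to the larger index, using that the sequence is decreasing) there is $k\in \N ^*$ with $[\gamma ,\gamma ]_A^k\subseteq \phi \vee \alpha $ and $[\gamma ,\gamma ]_A^k\subseteq \phi \vee \beta $; monotonicity, distributivity of the commutator over joins, and the fact that $[\cdot ,\cdot ]_A$ is below its arguments then give $[\gamma ,\gamma ]_A^{k+1}=[[\gamma ,\gamma ]_A^k,[\gamma ,\gamma ]_A^k]_A\subseteq [\phi \vee \alpha ,\phi \vee \beta ]_A=[\phi ,\phi ]_A\vee [\phi ,\beta ]_A\vee [\alpha ,\phi ]_A\vee [\alpha ,\beta ]_A\subseteq \phi \vee [\alpha ,\beta ]_A$, which forces $[\alpha ,\beta ]_A\not\subseteq \phi $ since otherwise $[\gamma ,\gamma ]_A^{k+1}\subseteq \phi $, contradicting $\phi \in \Sigma $. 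Hence $\phi $ is prime, and, being a member of $V_A(\theta )$ with $(a,b)\notin \phi $, it witnesses $(a,b)\notin \rho _A(\theta )$.

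Finally, taking $\theta =\Delta _A$ and noting that $[\gamma ,\gamma ]_A^n\subseteq \Delta _A$ holds iff $[\gamma ,\gamma ]_A^n=\Delta _A$ yields the stated formula for $\rho _A(\Delta _A)$. I expect the only genuine obstacle to be the verification that the Zorn-maximal $\phi $ is prime: this is the step where the join-distributivity of the commutator and its smallness relative to its arguments must be combined carefully, and it is also where the section hypothesis that ${\cal K}(A)$ is closed under the commutator is essential — it is exactly what makes each $[\gamma ,\gamma ]_A^n$ compact, without which the chain condition needed for Zorn's Lemma could fail.
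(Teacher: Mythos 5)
Your proof is correct, but note that there is nothing in the paper to compare it with: Proposition \ref{rho} is not proved here, it is imported from \cite[Proposition 4.1]{agl}. What you supply is the classical ``radical of an ideal'' argument transposed to commutators, and it goes through under the standing hypotheses of this section. The easy inclusion is handled exactly as one would expect (primes are semiprime by Definition \ref{1.4}, then descend through the iterated commutators by induction), and the delicate point of the converse --- that a Zorn-maximal element $\phi $ of $\Sigma =\{\mu \in {\rm Con}(A)\ |\ \theta \subseteq \mu ,\ [\gamma ,\gamma ]_A^n\nsubseteq \mu \mbox{ for all }n\}$ is prime --- is verified correctly: monotonicity and join-distributivity of the commutator (Proposition \ref{1.3}) together with $[\alpha ,\beta ]_A\subseteq \alpha \cap \beta $ yield $[\gamma ,\gamma ]_A^{k+1}\subseteq \phi \vee [\alpha ,\beta ]_A$, which is exactly what is needed. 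Your appeal to the standing hypothesis that ${\cal K}(A)$ is closed under the commutator, in order to make each $[\gamma ,\gamma ]_A^n$ compact and so justify the chain step in Zorn's Lemma, is legitimate, since Proposition \ref{rho} is placed after that hypothesis is put in force. The only caveat concerns your closing remark: that hypothesis is essential to the particular argument you chose (it is what gives compactness of the iterated commutators), but the proposition itself is simply quoted from \cite{agl}, so you should not present the hypothesis as indispensable for the statement as such --- only as indispensable for this proof of it.
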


\begin{proposition} For any $\theta \in {\rm Con}(A)$, $(\theta ^*)_*=\rho _A(\theta )$.\label{prop12}\end{proposition}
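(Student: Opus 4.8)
The plan is to prove the double inclusion $(\theta^*)_* \subseteq \rho_A(\theta)$ and $\rho_A(\theta) \subseteq (\theta^*)_*$, using the description of the radical supplied by Proposition \ref{rho} for the harder inclusion and the spectral machinery of Proposition \ref{prop11} for a slicker alternative.

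First I would establish $\rho_A(\theta) \subseteq (\theta^*)_*$. By Proposition \ref{rho}, an arbitrary $(a,b) \in \rho_A(\theta)$ satisfies $[Cg_A(a,b),Cg_A(a,b)]_A^n \subseteq \theta$ for some $n \in \N^*$. Write $\gamma = Cg_A(a,b) \in {\rm PCon}(A) \subseteq {\cal K}(A)$; since ${\cal K}(A)$ is closed under the commutator, $[\gamma,\gamma]_A^n \in {\cal K}(A)$ as well, and it is included in $\theta$, so $\widehat{[\gamma,\gamma]_A^n} \in \theta^*$. By Remark \ref{eqrho} (or Proposition \ref{onrho}, (\ref{onrho0})), $[\gamma,\gamma]_A^n \equiv_A \gamma$, hence $\widehat{\gamma} = \widehat{[\gamma,\gamma]_A^n} \in \theta^*$, and then Lemma \ref{lema5} gives $\gamma \subseteq (\theta^*)_*$, so $(a,b) \in (\theta^*)_*$. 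This yields $\rho_A(\theta) \subseteq (\theta^*)_*$.

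For the reverse inclusion $(\theta^*)_* \subseteq \rho_A(\theta)$, I would use that $\rho_A(\theta) = \bigcap_{\phi \in V_A(\theta)} \phi$ and show $(\theta^*)_* \subseteq \phi$ for every prime $\phi \supseteq \theta$. Fix such a $\phi$; since $\phi \in {\rm Spec}(A)$ and $\theta \subseteq \phi$, Lemma \ref{fcts} gives $\theta^* \subseteq \phi^*$, and by Lemma \ref{fcts} again applied to the $(-)_*$ direction together with Lemma \ref{lema8} we get $(\theta^*)_* \subseteq (\phi^*)_* = \phi$. Intersecting over all $\phi \in V_A(\theta)$ yields $(\theta^*)_* \subseteq \rho_A(\theta)$. (If $V_A(\theta)$ is empty, then $\theta = \nabla_A$ under the standing hypotheses forcing maximal congruences — but without semi-degeneracy one should instead fall back on $(\theta^*)_* \subseteq (\nabla_A^*)_* = \nabla_A = \rho_A(\nabla_A)$, which is automatic; in fact the inclusion $(\theta^*)_* \subseteq \rho_A(\theta)$ via primes only needs $V_A(\theta) \neq \emptyset$, and when it is empty $\rho_A(\theta) = \nabla_A$ by convention so the inclusion is trivial.)

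Combining the two inclusions gives $(\theta^*)_* = \rho_A(\theta)$. The main obstacle I anticipate is the first inclusion: it hinges on the identity $[\gamma,\gamma]_A^n \equiv_A \gamma$, i.e. $\rho_A([\gamma,\gamma]_A^n) = \rho_A(\gamma)$, which is precisely Proposition \ref{onrho}, (\ref{onrho0}), and on ${\cal K}(A)$ being closed under the commutator so that the iterated commutator stays compact and hence has a class in ${\cal L}(A)$; both are available, so the proof is short. An even quicker route for $(\theta^*)_* \subseteq \rho_A(\theta)$ is to note $(\theta^*)_* = v(u(\cdot))$-flavoured reasoning is unavailable for non-prime $\theta$, so the per-prime argument above is the cleanest path.
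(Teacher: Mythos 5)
Your proof is correct, and it splits into one half that coincides with the paper's and one half that takes a different route. For $\rho _A(\theta )\subseteq (\theta ^*)_*$ you argue exactly as the paper does: Proposition \ref{rho} gives $[Cg_A(a,b),Cg_A(a,b)]_A^n\subseteq \theta $, closure of ${\cal K}(A)$ under the commutator puts this iterated commutator in ${\cal K}(A)$ so its class lies in $\theta ^*$, Proposition \ref{onrho}, (\ref{onrho0}), identifies that class with $\widehat{Cg_A(a,b)}$, and Lemma \ref{lema5} finishes (the paper inserts an extra, inessential detour through Lemma \ref{lema6}, (\ref{lema6(1)}), which you rightly skip). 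For the converse inclusion the paper argues directly: every compact $\beta $ with $\widehat{\beta }\in \theta ^*$ satisfies $\beta \subseteq \rho _A(\beta )=\rho _A(\alpha )\subseteq \rho _A(\theta )$ for some compact $\alpha \subseteq \theta $ in the same $\equiv _A$--class, and then the defining join gives $(\theta ^*)_*\subseteq \rho _A(\theta )$; you instead go through the prime spectrum, using monotonicity of both maps (Lemma \ref{fcts}) and $(\phi ^*)_*=\phi $ (Lemma \ref{lema8}) to get $(\theta ^*)_*\subseteq \phi $ for every $\phi \in V_A(\theta )$, and then intersect. Both are sound and non-circular, since Lemma \ref{lema8} precedes this proposition; your route reuses an already-proved special case at the cost of a (correctly handled) side remark about $V_A(\theta )=\emptyset $, while the paper's direct argument needs no primality and no case distinction. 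One small caution: your passing claim that $V_A(\theta )=\emptyset $ forces $\theta =\nabla _A$ ``under the standing hypotheses`` is not available here, since semi--degeneracy is not assumed in this section — but your fallback (empty intersection equals $\nabla _A$, so the inclusion is trivial) is exactly right and keeps the proof valid.
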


\begin{proof} For every $\beta \in {\cal K}(A)$ such that $\widehat{\beta }\in \theta ^*=\{\widehat{\gamma }\ |\ \gamma \in {\cal K}(A),\gamma \subseteq \theta \}$, there exists an $\alpha \in {\cal K}(A)$ such that $\alpha \subseteq \theta $ and $\widehat{\alpha }=\widehat{\beta }$, thus $\beta \subseteq \rho _A(\beta )=\rho _A(\alpha )\subseteq \rho _A(\theta )$. Therefore $\displaystyle (\theta ^*)_*=\bigvee \{\gamma \in {\cal K}(A)\ |\ \widehat{\gamma }\in \theta ^*\}\subseteq \rho _A(\theta )$. Now let $(a,b)\in \rho _A(\theta )$, so that, according to Proposition \ref{rho}, Lemma \ref{lema6}, (\ref{lema6(1)}), and Lemma \ref{lema5}, for some $n\in \N ^*$, $[Cg_A(a,b),Cg_A(a,b)]_A^n\subseteq \theta \subseteq (\theta ^*)_*$, hence $([Cg_A(a,b),Cg_A(a,b)]_A^n)\widehat{\ }\in \theta ^*$. But $\rho _A([Cg_A(a,b),Cg_A(a,b)]_A^n)=\rho _A(Cg_A(a,b))$, thus $\widehat{Cg_A(a,b)}=([Cg_A(a,b),Cg_A(a,b)]_A^n)\widehat{\ }\in \theta ^*$, hence $(a,b)\in Cg_A(a,b)\subseteq (\theta ^*)_*$ by Lemma \ref{lema5}. Therefore $\rho _A(\theta )\subseteq (\theta ^*)_*$. Hence $(\theta ^*)_*=\rho _A(\theta )$.\end{proof}

\begin{corollary}\begin{enumerate}
\item\label{cor13(1)} For all $\theta \in {\rm Con}(A)$, $\rho _A(\theta )^*=\theta ^*$.
\item\label{cor13(2)} For all $I\in {\rm Id}({\cal L}(A))$, $\rho _A(I_*)=I_*$.\end{enumerate}\label{cor13}\end{corollary}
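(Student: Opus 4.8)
The plan is to derive both parts directly from Proposition \ref{prop12}, which tells us that $(\theta ^*)_*=\rho _A(\theta )$ for every $\theta \in {\rm Con}(A)$, so that essentially no new work is needed beyond bookkeeping with the Galois--type pair $\theta \mapsto \theta ^*$ and $I\mapsto I_*$.

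For (\ref{cor13(1)}), I would first observe that $\rho _A(\theta )^*$ and $\theta ^*$ both lie in ${\rm Id}({\cal L}(A))$ by Lemma \ref{kstar}, and that the map $I\mapsto I_*$ on ${\rm Id}({\cal L}(A))$ is injective by Proposition \ref{prop7}, (\ref{prop7(1)}); hence it suffices to check $(\rho _A(\theta )^*)_*=(\theta ^*)_*$. Applying Proposition \ref{prop12} to the congruence $\rho _A(\theta )$ gives $(\rho _A(\theta )^*)_*=\rho _A(\rho _A(\theta ))$, which equals $\rho _A(\theta )$ by the idempotence of $\rho _A$ (Proposition \ref{onrho}, (\ref{onrho3})); and Proposition \ref{prop12} applied to $\theta $ itself gives $(\theta ^*)_*=\rho _A(\theta )$. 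The two ideals therefore have the same image under $I\mapsto I_*$, and injectivity yields $\rho _A(\theta )^*=\theta ^*$. For (\ref{cor13(2)}), I would simply apply Proposition \ref{prop12} to the congruence $\theta =I_*$, obtaining $((I_*)^*)_*=\rho _A(I_*)$, and then invoke Lemma \ref{lema6}, (\ref{lema6(2)}), which gives $(I_*)^*=I$; the left-hand side collapses to $I_*$, so $\rho _A(I_*)=I_*$.

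I do not anticipate any genuine obstacle, since both assertions are immediate consequences of Proposition \ref{prop12} together with the injectivity/inversion facts already established for the $*$ and $_*$ operators. If one preferred a proof of (\ref{cor13(1)}) not routed through injectivity of $I\mapsto I_*$, the alternative is to prove the inclusion $\rho _A(\theta )^*=\theta ^*$ by hand: the inclusion $\theta ^*\subseteq \rho _A(\theta )^*$ follows from $\theta \subseteq \rho _A(\theta )$ (Lemma \ref{imediata}, (\ref{imediata1})) and monotonicity of $*$ (Lemma \ref{fcts}), while for the reverse one takes $\widehat{\alpha }\in \rho _A(\theta )^*$ with $\alpha =\bigvee _{i=1}^nCg_A(a_i,b_i)\in {\cal K}(A)$ and $\alpha \subseteq \rho _A(\theta )$, uses Proposition \ref{rho} to pick $n_i\in \N ^*$ with $[Cg_A(a_i,b_i),Cg_A(a_i,b_i)]_A^{n_i}\subseteq \theta $, and notes that the compact congruence $\beta =\bigvee _{i=1}^n[Cg_A(a_i,b_i),Cg_A(a_i,b_i)]_A^{n_i}$ lies below $\theta $ and satisfies $\rho _A(\beta )=\rho _A(\alpha )$ by Proposition \ref{onrho}, (\ref{onrho0}) and (\ref{onrho4}), so that $\widehat{\alpha }=\widehat{\beta }\in \theta ^*$. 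The slick version via Proposition \ref{prop12} is the one I would actually write down.
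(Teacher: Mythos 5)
Your proof is correct and follows essentially the same route as the paper: both parts are immediate from Proposition \ref{prop12} combined with the inversion fact $(I_*)^*=I$ of Lemma \ref{lema6}, (\ref{lema6(2)}). For (\ref{cor13(1)}) the paper simply writes $\theta ^*=((\theta ^*)_*)^*=\rho _A(\theta )^*$, whereas you route through injectivity of $I\mapsto I_*$ (Proposition \ref{prop7}, (\ref{prop7(1)})) and idempotence of $\rho _A$; since that injectivity is itself an immediate consequence of Lemma \ref{lema6}, (\ref{lema6(2)}), this is only a cosmetic rearrangement, and your proof of (\ref{cor13(2)}) coincides with the paper's.
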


\begin{proof} (\ref{cor13(1)}) By Lemma \ref{lema6}, (\ref{lema6(2)}), and Proposition \ref{prop12}, $\theta ^*=((\theta ^*)_*)^*=\rho _A(\theta )^*$.

\noindent (\ref{cor13(2)}) By Proposition \ref{prop12} and Lemma \ref{lema6}, (\ref{lema6(2)}), we have $\rho _A(I_*)=((I_*)^*)_*=I_*$.\end{proof}

\begin{corollary} The maps:\begin{itemize}
\item $\theta \in {\rm RCon}(A)\mapsto \theta ^*\in {\rm Id}({\cal L}(A))$,

\item $I\in {\rm Id}({\cal L}(A))\mapsto I_*\in {\rm RCon}(A)$\end{itemize}

\noindent are frame isomorphisms and inverses of each other.\label{cor14}\end{corollary}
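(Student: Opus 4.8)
The plan is to read this off from the results already in place, essentially with no fresh computation: once the domain of $\theta \mapsto \theta ^*$ is cut down from ${\rm Con}(A)$ to ${\rm RCon}(A)$, the two maps $\theta \mapsto \theta ^*$ and $I\mapsto I_*$ become mutually inverse bijections, they are order--preserving, hence order isomorphisms, and the frame structure then transports automatically.

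First I would check well--definedness. For $\theta \in {\rm RCon}(A)\subseteq {\rm Con}(A)$, Lemma \ref{kstar} gives $\theta ^*\in {\rm Id}({\cal L}(A))$, so the first map indeed lands in ${\rm Id}({\cal L}(A))$. For $I\in {\rm Id}({\cal L}(A))$ we have $I_*\in {\rm Con}(A)$ by construction, and Corollary \ref{cor13}, (\ref{cor13(2)}), states $\rho _A(I_*)=I_*$, i.e.\ $I_*\in {\rm RCon}(A)$, so the second map lands in ${\rm RCon}(A)$. Next, the two maps are mutually inverse: for $\theta \in {\rm RCon}(A)$, Proposition \ref{prop12} gives $(\theta ^*)_*=\rho _A(\theta )=\theta $, the last equality holding precisely because $\theta $ is radical; for $I\in {\rm Id}({\cal L}(A))$, Lemma \ref{lema6}, (\ref{lema6(2)}), gives $(I_*)^*=I$. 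Hence both maps are bijections and each is the inverse of the other.

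Finally, to upgrade from bijection to frame isomorphism: by Lemma \ref{fcts} both $\theta \mapsto \theta ^*$ and $I\mapsto I_*$ are order--preserving, and so are their restrictions here, so these mutually inverse order--preserving maps constitute an order isomorphism between the complete lattices $({\rm RCon}(A),\subseteq )$ and $({\rm Id}({\cal L}(A)),\subseteq )$. An order isomorphism between complete lattices preserves arbitrary joins and arbitrary meets; since ${\rm RCon}(A)$ is a frame (established above, using Lemma \ref{latcat} and the isomorphism ${\rm RCon}(A)\cong {\rm Con}(A)/_{\textstyle \equiv _A }$), the frame distributive law transports across this order isomorphism, so ${\rm Id}({\cal L}(A))$ is a frame as well, and both maps, preserving in particular finite meets and arbitrary joins, are frame homomorphisms, hence frame isomorphisms. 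There is no real obstacle here: every substantive ingredient is supplied by Lemma \ref{lema6}, Proposition \ref{prop12}, Corollary \ref{cor13} and Lemma \ref{fcts}, and the only point worth stating with care is the routine observation that an order isomorphism between complete lattices automatically transports the frame law, so that nothing beyond order--preservation has to be verified.
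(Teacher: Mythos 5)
Your proposal is correct and follows essentially the same route as the paper's own proof: well--definedness via Corollary \ref{cor13}, (\ref{cor13(2)}) (and Lemma \ref{kstar}), mutual inverseness via Proposition \ref{prop12} and Lemma \ref{lema6}, (\ref{lema6(2)}), and the upgrade to frame isomorphisms via order--preservation (Lemma \ref{fcts}) and the fact that mutually inverse order--preserving bijections preserve arbitrary joins and meets. The only difference is that you spell out explicitly the transport of the frame law, which the paper leaves implicit.
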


\begin{proof} By Corollary \ref{cor13}, (\ref{cor13(2)}), for all $I\in {\rm Id}({\cal L}(A))$, we have $I_*\in {\rm RCon}(A)$, hence the second map above is well defined. By Lemma \ref{lema6}, (\ref{lema6(2)}), for all $I\in {\rm Id}({\cal L}(A))$, $(I_*)^*=I$. By Proposition \ref{prop12}, for all $\theta \in {\rm RCon}(A)$, $\theta =\rho _A(\theta )=(\theta ^*)_*$. Hence these functions are inverses of each other, thus they are bijections. By Lemma \ref{fcts}, these maps are order--preserving, thus they are order isomorphisms, hence they preserve arbitrary joins and meets, therefore they are frame isomorphisms.\end{proof}

\section{Some Examples, Particular Cases and Preservation of Finite Direct Products}
\label{examples}

Throughout this section, we shall assume that $[\cdot ,\cdot ]_A$ is commutative and distributive w.r.t. arbitrary joins and $\nabla _A\in {\cal K}(A)$. These hypotheses are sufficient for the following results we cite from other works to hold. We shall denote by ${\cal HSP}(A)$ the variety generated by $A$. In the following examples, we determine the prime spectra by using \cite[Proposition 1.2]{agl}, which says that, for each proper congruence $\phi $ of $A$: $\phi $ is prime iff $\phi $ is meet--irreducible and semiprime. So, if we know that ${\cal HSP}(A)$ is congruence--modular, then we only have to calculate $[\alpha ,\alpha ]_A$ for every $\alpha \in {\rm Con}(A)$. The complete tables of the commutators for the following algebras show that their commutators are commutative and distributive w.r.t. the join. Of course, since each of the algebras $M$ from the following examples is finite, we have $\nabla _M\in {\cal K}(M)$. We have used the method in \cite{meo} to calculate the commutators, excepting those in groups, where we have used the commutators on normal subgroups; recall that the variety of groups is congruence--modular \cite{mcks}. Following \cite{agl}, we say that $A$ is: {\em Abelian} iff $[\nabla _A,\nabla _A]_A=\Delta _A$; {\em solvable} iff $[\nabla _A,\nabla _A]_A^n=\Delta _A$ for some $n\in \N ^*$; {\em nilpotent} iff $(\nabla _A,\nabla _A]_A^n=\Delta _A$ for some $n\in \N ^*$. For any $n\in \N ^*$, we shall denote by ${\cal L}_n$ the $n$--element chain. By $\oplus $ we shall denote the ordinal sum of bounded lattices.

\begin{remark} If ${\rm Spec}(A)=\emptyset $, then $\rho _A(\alpha )=\nabla _A$ for all $\alpha \in {\rm Con}(A)$, hence $\equiv _A=\nabla _{{\cal K}(A)}$, thus ${\cal L}(A)={\cal K}(A)/\nabla _{{\cal K}(A)}\cong {\cal L}_1$. If ${\rm Spec}(A)=\{\phi \}$ for some $\phi \in {\rm Con}(A)\setminus \{\nabla _A\}$, then: $\rho _A(\theta )=\phi =\rho _A(\Delta _A)$ for all $\theta \in (\phi ]$, and $\rho _A(\theta )=\nabla _A=\rho _A(\nabla _A)$ for all $\theta \in {\rm Con}(A)\setminus (\phi ]$, therefore, since $\Delta _A,\nabla _A\in {\cal K}(A)$, ${\cal L}(A)={\cal K}(A)/\equiv _A\cong {\cal L}_2$.

Obviously, if $A$ is Abelian, then $A$ is nilpotent and solvable and ${\rm Spec}(A)=\emptyset $. Moreover, by \cite[Proposition $1.3$]{agl}, if $A$ is solvable or nilpotent, then ${\rm Spec}(A)=\emptyset $. Thus, if $A$ is solvable or nilpotent, in particular if $A$ is Abelian, then ${\cal L}(A)\cong {\cal L}_1$. For instance, according to \cite{mcks}, any Abelian group is an Abelian algebra, hence its reticulation is trivial.

If $A$ is simple, that is ${\rm Con}(A)=\{\Delta _A,\nabla _A\}\subseteq {\cal K}(A)\subseteq {\rm Con}(A)$, so that ${\cal K}(A)={\rm Con}(A)=\{\Delta _A,\nabla _A\}$, thus ${\cal L}(A)=\{{\bf 0},{\bf 1}\}$, so we are situated in one of the following two cases: either $A$ is Abelian, so that ${\cal L}(A)\cong {\cal L}_1$, or the commutator of $A$ equals the intersection, so that ${\rm Spec}(A)=\{\Delta _A\}$ and thus ${\cal L}(A)\cong {\cal L}_2$.\label{retictriv}\end{remark}

\begin{proposition} If the commutator of $A$ equals the intersection, in particular if ${\cal C}$ is congruence--distributive, then ${\cal K}(A)$ is a bounded sublattice of the bounded distributive lattice ${\rm Con}(A)$ and $\lambda _A:{\cal K}(A)\rightarrow {\cal L}(A)$ is a lattice isomorphism, thus we may take ${\cal L}(A)={\cal K}(A)$.\label{reticdistrib}\end{proposition}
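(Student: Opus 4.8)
The plan is to reduce the statement to two facts already in hand: that the hypothesis $[\cdot,\cdot]_A=\cap$ makes ${\rm Con}(A)$ distributive, and that, by Proposition \ref{radcongrdistrib}, it makes every congruence of $A$ coincide with its own radical. First I would check that ${\cal K}(A)$ is a bounded sublattice of ${\rm Con}(A)$: it contains $\Delta_A$ (always) and $\nabla_A$ (by the standing hypothesis of this section); it is closed under finite joins by Remark \ref{ka}; and, using that ${\cal K}(A)$ is closed under the commutator (the hypothesis under which ${\cal L}(A)$ was constructed in Section \ref{reticulatia}) together with $[\cdot,\cdot]_A=\cap$, it is also closed under binary intersections. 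Moreover, since $[\cdot,\cdot]_A=\cap$ and the commutator is distributive with respect to arbitrary joins, intersection distributes over arbitrary joins in ${\rm Con}(A)$, so ${\rm Con}(A)$ is a bounded distributive lattice; hence ${\cal K}(A)$ is a bounded distributive lattice in its own right.

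Next I would observe that $\equiv_A$ collapses to equality. By Proposition \ref{radcongrdistrib} we have $\rho_A(\theta)=\theta$ for every $\theta\in{\rm Con}(A)$, so the relation $\alpha\equiv_A\beta\iff\rho_A(\alpha)=\rho_A(\beta)$ is exactly the diagonal of ${\rm Con}(A)$, and hence also of ${\cal K}(A)$; thus each class $\widehat{\theta}$ with $\theta\in{\cal K}(A)$ is the singleton $\{\theta\}$, and the canonical surjection $\lambda_A\colon{\cal K}(A)\to{\cal L}(A)={\cal K}(A)/_{\textstyle\equiv_A}$ is injective, hence bijective. Since $\widehat{\alpha}\vee\widehat{\beta}=\widehat{\alpha\vee\beta}$, $\widehat{\alpha}\wedge\widehat{\beta}=\widehat{[\alpha,\beta]_A}=\widehat{\alpha\cap\beta}$, ${\bf 0}=\widehat{\Delta_A}$ and ${\bf 1}=\widehat{\nabla_A}$ by the definitions in Section \ref{reticulatia}, the map $\lambda_A$ is a bounded lattice morphism; being bijective, it is a bounded lattice isomorphism. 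Therefore ${\cal L}(A)\cong{\cal K}(A)$ via $\lambda_A$, so one may simply identify the two and take ${\cal L}(A)={\cal K}(A)$.

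I do not expect any genuine obstacle: the proposition is essentially a bookkeeping corollary of Proposition \ref{radcongrdistrib}. The only point that deserves a moment of care is the clause ``${\cal K}(A)$ is a bounded sublattice of ${\rm Con}(A)$'', which quietly relies on the closure of ${\cal K}(A)$ under the commutator --- equivalently, in this congruence--distributive setting, on the compact intersection property --- the very assumption already needed for ${\cal L}(A)$ to be defined; granting this, all three steps above are immediate.
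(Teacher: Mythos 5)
Your proof is correct and follows essentially the same route as the paper's: establish that ${\cal K}(A)$ is a bounded sublattice of ${\rm Con}(A)$ using $\Delta_A\in{\rm PCon}(A)\subseteq{\cal K}(A)$, $\nabla_A\in{\cal K}(A)$, closure under joins (Remark \ref{ka}) and closure under the commutator $=\cap$, then invoke Proposition \ref{radcongrdistrib} to see that $\equiv_A$ is the equality on ${\cal K}(A)$, so $\lambda_A$ is a bijective lattice morphism, hence an isomorphism. Your closing caveat about the closure of ${\cal K}(A)$ under the commutator matches the paper, which likewise treats it as a standing assumption carried over from the construction of ${\cal L}(A)$.
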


\begin{proof} Assume that $[\cdot ,\cdot ]_A=\cap $. $\Delta _A\in {\rm PCon}(A)\subseteq {\cal K}(A)$. By Remark \ref{ka}, ${\cal K}(A)$ is closed w.r.t. the join, and we are under the assumptions that $\nabla _A\in {\cal K}(A)$ and ${\cal K}(A)$ is closed w.r.t. the commutator, so w.r.t. the intersection. Hence ${\cal K}(A)$ is a bounded sublattice of ${\rm Con}(A)$. By Proposition \ref{radcongrdistrib}, $\equiv _A=\Delta _{{\cal K}(A)}$, thus ${\cal L}(A)={\cal K}(A)/\Delta _{{\cal K}(A)}\cong{\cal K}(A)$ and the canonical surjection $\lambda _A:{\cal K}(A)\rightarrow {\cal L}(A)$ is a lattice isomorphism.\end{proof}

\begin{remark} If ${\rm Con}(A)={\cal K}(A)$, in particular if $A$ is finite, then ${\cal L}(A)={\rm Con}(A)/\equiv _A$, so, if, furthermore, the commutator of $A$ equals the intersection, in particular if ${\cal C}$ is congruence--distributive, then ${\cal L}(A)\cong {\rm Con}(A)$ by Proposition \ref{reticdistrib}, thus we may take ${\cal L}(A)={\rm Con}(A)$.

As a fact that may be interesting by its symmetry, if $A$ is finite and its commutator equals the intersection, so that ${\rm Con}(A)$ is a finite distributive lattice, then ${\cal L}({\rm Con}(A))={\rm Con}({\rm Con}(A))={\rm Con}({\cal L}(A))$. It might also be interesting to find weaker conditions on $A$ under which ${\cal L}({\rm Con}(A))\cong {\rm Con}({\cal L}(A))$.\label{finreticdistrib}\end{remark}

\begin{remark} By Proposition \ref{reticdistrib}, if $A$ is a residuated lattice, then ${\cal L}(A)={\cal K}(A)$. If we denote by ${\rm Filt}(A)$ the set of the filters of $A$ and by ${\rm PFilt}(A)$ the set of the principal filters of $A$, then, since ${\rm Con}(A)\cong {\rm Filt}(A)$ and the finitely generated filters of $A$ are principal filters \cite{gal}, \cite{jits}, it follows that ${\cal L}(A)={\cal K}(A)\cong {\rm PFilt}(A)$, which is the dual of the reticulation of a residuated lattice obtained in \cite{eu1}, \cite{eu}, \cite{eu2}, where the reticulation has the prime spectrum of filters homeomorphic to the prime spectrum of filters, thus to that of congruences of $A$ by the above, so this duality to the construction of ${\cal L}(A)$ from Section \ref{reticulatia} was to be expected.\end{remark}

\begin{remark} If $A$ is a commutative unitary ring and ${\rm Id}(A)$ is its lattice of ideals, then it is well known that ${\rm Id}(A)\cong {\rm Con}(A)$. If, for all $I\in {\rm Id}(A)$, we denote by $\sqrt{I}$ the intersection of the prime filters of $A$ which include $I$, then \cite[Lemma, p. 1861]{bell} shows that, for any $J\in {\rm Id}(A)$, there exists a finitely generated ideal $K$ of $A$ such that $\sqrt{J}=\sqrt{K}$. From this, it immediately follows that the lattice ${\cal L}(A)$ is isomorphic to the reticulation of $A$ constructed in \cite{bell}.\end{remark}

\begin{remark} Let $n,k\in \N ^*$ and assume that ${\cal C}$ is congruence--modular, $S$ is a subalgebra of $A$, $\alpha ,\beta \in {\rm Con}(A)$, $M_1,\ldots ,M_n$ are algebras from ${\cal C}$, $\displaystyle M=\prod _{i=1}^nM_i$ and, for all $i\in \overline{1,n}$, $\alpha _i,\beta _i\in {\rm Con}(M_i)$.

From Lemma \ref{1.8}, it is immediate that $[\alpha \cap S^2,\beta \cap S^2]_S^k\subseteq [\alpha ,\beta ]_A^k\cap S^2$ and $(\alpha \cap S^2,\beta \cap S^2]_S^k\subseteq (\alpha ,\beta ]_A^k\cap S^2$. Hence, if $A$ is Abelian or solvable or nilpotent, then $S$ is Abelian or solvable or nilpotent, respectively.

From Proposition \ref{comutprod}, it is immediate that $\displaystyle [\prod _{i=1}^n\alpha _i,\prod _{i=1}^n\beta _i]_M^k=\prod _{i=1}^n[\alpha _i,\beta _i]_{M_i}^k$ and $\displaystyle (\prod _{i=1}^n\alpha _i,\prod _{i=1}^n\beta _i]_M^k=\prod _{i=1}^n(\alpha _i,\beta _i]_{M_i}^k$. From this, it is easy to prove that: $M$ is Abelian or solvable or nilpotent iff $M_1,\ldots ,M_n$ are Abelian or solvable or nilpotent, respectively.\label{comutsup}\end{remark}

\begin{example} For any group $(G,\cdot )$, any $x\in G$ and any normal subgroup $H$ of $G$, let us denote by $\left<x\right>$ the subgroup of $G$ generated by $x$ and by $\equiv _H$ the congruence of $G$ associated to $H$: $\equiv _H=\{(y,z)\in G^2\ |\ yz^{-1}\in H\}$.

As shown by the following commutators calculations, the cuaternions group, $C_8=\{1,-1,i,-i,j,$\linebreak $-j,k,-k\}$, is a solvable algebra which is not Abelian, while the group $S_3=\{1,t,u,v,c,d\}$ of the permutations of the set $\overline{1,3}$, where $1=id_{\overline{1,3}}$, $t=(1\,2)$, $u=(1\,3)$, $v=(2\,3)$, $c=(1\,2\,3)$ and $d=c\circ c$, has ${\rm Spec}(S_3)=\emptyset $, without being solvable or nilpotent. The following are the subgroups of $C_8$, respectively $S_3$, all of which are normal, and the proper ones are cyclic, thus Abelian: $\left<1\right>$, $\left<-1\right>$, $\left<i\right>$, $\left<j\right>$, $\left<k\right>$ and $C_8$, respectively $\left<1\right>$, $\left<t\right>$, $\left<u\right>$, $\left<v\right>$, $\left<c\right>$ and $S_3$, so $C_8$ and $S_3$ have the following congruence lattices and commutators, which suffice to conclude that ${\rm Spec}(C_8)={\rm Spec}(S_3)=\emptyset $, since we are in a congruence--modular variety, and thus ${\cal L}(C_8)\cong {\cal L}(S_3)\cong {\cal L}_1$, by Remark \ref{retictriv}:\vspace*{-20pt}

\begin{center}
\begin{tabular}{cccc}
\begin{picture}(100,100)(0,0)
\put(45,0){$\Delta _{C_8}=\equiv _{\left<1\right>}$}
\put(45,89){$\nabla _{C_8}=\equiv _{C_8}$}
\put(50,10){\circle*{3}}
\put(50,10){\line(0,1){75}}
\put(50,35){\circle*{3}}

\put(25,60){\circle*{3}}
\put(50,60){\circle*{3}}
\put(75,60){\circle*{3}}
\put(50,85){\circle*{3}}
\put(50,35){\line(-1,1){25}}
\put(50,35){\line(1,1){25}}
\put(50,85){\line(-1,-1){25}}
\put(50,85){\line(1,-1){25}}
\put(7,58){$\equiv _{\left<i\right>}$}
\put(53,58){$\equiv _{\left<j\right>}$}
\put(78,58){$\equiv _{\left<k\right>}$}
\put(53,30){$\equiv _{\left<-1\right>}$}
\end{picture}
& 
\begin{picture}(100,100)(0,0)
\put(45,0){$\Delta _{S_3}=\equiv _{\left<1\right>}$}
\put(45,74){$\nabla _{S_3}=\equiv _{S_3}$}
\put(50,10){\circle*{3}}
\put(50,70){\circle*{3}}

\put(35,40){\circle*{3}}
\put(65,40){\circle*{3}}
\put(5,40){\circle*{3}}
\put(95,40){\circle*{3}}
\put(50,10){\line(-3,2){45}}
\put(50,10){\line(3,2){45}}
\put(50,10){\line(-1,2){15}}
\put(50,10){\line(1,2){15}}
\put(50,70){\line(-1,-2){15}}
\put(50,70){\line(1,-2){15}}
\put(50,70){\line(-3,-2){45}}
\put(50,70){\line(3,-2){45}}
\put(-14,38){$\equiv _{\left<t\right>}$}
\put(15,38){$\equiv _{\left<u\right>}$}
\put(68,38){$\equiv _{\left<v\right>}$}
\put(98,38){$\equiv _{\left<c\right>}$}
\end{picture}
& 
\begin{picture}(80,100)(0,0)
\put(20,40){\begin{tabular}{c|c}
$\theta $ & $[\theta ,\theta ]_{C_8}$\\ \hline 
$\Delta _{C_8}$ & $\Delta _{C_8}$\\ 
$\equiv _{\left<-1\right>}$ & $\Delta _{C_8}$\\ 
$\equiv _{\left<i\right>}$ & $\equiv _{\left<-1\right>}$\\ 
$\equiv _{\left<j\right>}$ & $\equiv _{\left<-1\right>}$\\ 
$\equiv _{\left<k\right>}$ & $\equiv _{\left<-1\right>}$\\ 
$\nabla _{C_8}$ & $\equiv _{\left<-1\right>}$\end{tabular}}
\end{picture}
& 
\begin{picture}(80,100)(0,0)
\put(20,40){\begin{tabular}{c|c}
$\theta $ & $[\theta ,\theta ]_{S_3}$\\ \hline 
$\Delta _{S_3}$ & $\Delta _{S_3}$\\ 
$\equiv _{\left<t\right>}$ & $\Delta _{S_3}$\\ 
$\equiv _{\left<u\right>}$ & $\Delta _{S_3}$\\ 
$\equiv _{\left<v\right>}$ & $\Delta _{S_3}$\\ 
$\equiv _{\left<c\right>}$ & $\Delta _{S_3}$\\ 
$\nabla _{S_3}$ & $\nabla _{S_3}$\end{tabular}}
\end{picture}
\end{tabular}
\end{center}\vspace*{-5pt}

Notice, also, that $C_8$ is solvable, as we have announced, thus, according to Remark \ref{comutsup}, so is any finite direct product whose factors are subgroups of $C_8$, which, of course, is Abelian if all those subgroups are proper.\label{reticgr}\end{example}

\begin{example} This is the algebra from \cite[Example 6.3]{urs2} and \cite[Example 4.2]{urs3}: $U=(\{0,a,b,c,d\},+)$, with $+$ defined by the following table, which has the congruence lattice represented below, where $U/\alpha =\{\{0,a\},\{b,c,d\}\}$, $U/\beta =\{\{0,b\},\{a,c,d\}\}$, $U/\gamma =\{\{0,c,d\},\{a,b\}\}$ and $U/\delta =\{\{0\},\{a\},\{b\},\{c,d\}\}$:

\begin{center}\begin{tabular}{ccc}
\begin{picture}(100,70)(0,0)
\put(0,30){
\begin{tabular}{c|ccccc}
$+$ & $0$ & $a$ & $b$ & $c$ & $d$\\ \hline  
$a$ & $0$ & $a$ & $b$ & $c$ & $d$\\ 
$b$ & $a$ & $0$ & $c$ & $b$ & $b$\\ 
$x$ & $b$ & $c$ & $0$ & $a$ & $a$\\ 
$y$ & $c$ & $b$ & $a$ & $0$ & $0$\\ 
$z$ & $d$ & $b$ & $a$ & $0$ & $0$\end{tabular}}\end{picture}
&\hspace*{12pt}
\begin{picture}(80,70)(0,0)
\put(36,-10){$\Delta _U$}
\put(36,63){$\nabla _U$}
\put(40,0){\circle*{3}}

\put(40,20){\circle*{3}}
\put(20,40){\circle*{3}}
\put(40,40){\circle*{3}}
\put(60,40){\circle*{3}}
\put(40,60){\circle*{3}}
\put(40,20){\line(-1,1){20}}
\put(40,20){\line(1,1){20}}
\put(40,0){\line(0,1){60}}
\put(40,60){\line(-1,-1){20}}
\put(40,60){\line(1,-1){20}}
\put(12,37){$\alpha $}
\put(43,37){$\beta $}
\put(63,37){$\gamma $}
\put(43,16){$\delta $}
\end{picture}
&
\begin{picture}(200,70)(0,0)
\put(0,30){
\begin{tabular}{c|cccccc}
$[\cdot ,\cdot ]_U$ & $\Delta _U$ & $\alpha $ & $\beta $ & $\gamma $ & $\delta $ & $\nabla _U$\\ \hline 
$\Delta _U$ & $\Delta _U$ & $\Delta _U$ & $\Delta _U$ & $\Delta _U$ & $\Delta _U$ & $\Delta _U$\\ 
$\alpha $ & $\Delta _U$ & $\delta $ & $\delta $ & $\delta $ & $\delta $ & $\alpha $\\ 
$\beta $ & $\Delta _U$ & $\delta $ & $\delta $ & $\delta $ & $\delta $ & $\beta $\\ 
$\gamma $ & $\Delta _U$ & $\delta $ & $\delta $ & $\gamma $ & $\delta $ & $\gamma $\\ 
$\delta $ & $\Delta _U$ & $\delta $ & $\delta $ & $\delta $ & $\Delta _U$ & $\delta $\\ 
$\nabla _U$ & $\Delta _U$ & $\alpha $ & $\beta $ & $\gamma $ & $\delta $ & $\nabla _U$\end{tabular}}\end{picture}\end{tabular}
\end{center}

$U$ is not Abelian, nor is it solvable or nilpotent, as shown by the table of $[\cdot , \cdot ]_U$ above, but ${\rm Spec}(A)=\emptyset $, thus ${\cal L}(U)\cong {\cal L}_1$ by Remark \ref{retictriv}.\end{example}

\begin{example} Let $M=(\{a,b,x,y,z\},+)$ and $N=(\{a,b,c,x,y\},+)$, with $+$ defined by the following tables. Then ${\rm Con}(M)$ and ${\rm Con}(N)$ have the Hasse diagrams below, where:\begin{itemize}
\item $M/\alpha =\{\{a,b\},\{x,y,z\}\}$, $M/\beta =\{\{a,b\},\{x,y\},\{z\}\}$, $M/\gamma =\{\{a,b\},\{x,z\},\{y\}\}$, $M/\delta =\{\{a,b\},\{x\},$\linebreak $\{y,z\}\}$ and $M/\varepsilon =\{\{a,b\},\{x\},\{y\},\{z\}\}$;
\item $N/\chi =\{\{a,b,c\},\{x,y\}\}$, $N/\chi _1=\{\{a,b,c\},\{x\},\{y\}\}$, $N/\xi =\{\{a,b\},\{c\},\{x,y\}\}$, $N/\xi _1=\{\{a,b\},\{c\},\{x\},\{y\}\}$, $N/\psi =\{\{a\},\{b,c\},\{x,y\}\}$, $N/\psi _1=\{\{a\},\{b,c\},\{x\},\{y\}\}$ and $N/\phi =\{\{a\},\{b\},\{c\},\{x,y\}\}$.\end{itemize}

\begin{center}
\begin{tabular}{cccc}
\begin{picture}(80,85)(0,0)
\put(-10,40){\begin{tabular}{c|ccccc}
$+$ & $a$ & $b$ & $x$ & $y$ & $z$\\ \hline  
$a$ & $a$ & $b$ & $a$ & $a$ & $a$\\ 
$b$ & $b$ & $b$ & $b$ & $b$ & $b$\\ 
$x$ & $x$ & $x$ & $x$ & $x$ & $x$\\ 
$y$ & $y$ & $y$ & $y$ & $y$ & $y$\\ 
$z$ & $z$ & $z$ & $z$ & $z$ & $z$\end{tabular}}
\end{picture}
&

\begin{picture}(80,85)(0,0)
\put(36,-10){$\Delta _M$}
\put(36,83){$\nabla _M$}
\put(40,0){\circle*{3}}
\put(40,20){\circle*{3}}
\put(20,40){\circle*{3}}
\put(40,40){\circle*{3}}
\put(60,40){\circle*{3}}
\put(40,60){\circle*{3}}
\put(40,80){\circle*{3}}
\put(40,20){\line(-1,1){20}}
\put(40,20){\line(1,1){20}}
\put(40,0){\line(0,1){80}}
\put(40,60){\line(-1,-1){20}}
\put(40,60){\line(1,-1){20}}

\put(12,37){$\beta $}
\put(43,37){$\gamma $}
\put(63,37){$\delta $}
\put(43,60){$\alpha $}
\put(43,16){$\varepsilon $}
\end{picture}
&
\begin{picture}(80,85)(0,0)
\put(-10,40){\begin{tabular}{c|ccccc}
$+$ & $a$ & $b$ & $c$ & $x$ & $y$\\ \hline  

$a$ & $a$ & $b$ & $c$ & $a$ & $a$\\ 
$b$ & $b$ & $b$ & $c$ & $b$ & $b$\\ 
$c$ & $c$ & $c$ & $c$ & $c$ & $c$\\ 
$x$ & $x$ & $x$ & $x$ & $x$ & $x$\\ 
$y$ & $y$ & $y$ & $y$ & $y$ & $y$\end{tabular}}
\end{picture}
&
\begin{picture}(80,85)(0,0)

\put(40,0){\circle*{3}}
\put(36,-12){$\Delta _N$}
\put(40,0){\line(1,1){20}}
\put(40,0){\line(-1,1){20}}
\put(40,0){\line(0,1){20}}
\put(20,20){\circle*{3}}

\put(40,20){\circle*{3}}
\put(60,20){\circle*{3}}
\put(20,40){\circle*{3}}
\put(40,40){\circle*{3}}
\put(60,40){\circle*{3}}
\put(40,60){\circle*{3}}
\put(40,80){\circle*{3}}
\put(20,20){\line(0,1){20}}
\put(20,20){\line(1,1){20}}
\put(60,20){\line(-1,1){20}}
\put(60,20){\line(0,1){20}}
\put(40,20){\line(1,1){20}}
\put(40,20){\line(-1,1){20}}

\put(40,80){\line(0,-1){40}}
\put(40,60){\line(-1,-1){20}}
\put(40,60){\line(1,-1){20}}
\put(14,12){$\xi _1$}
\put(42,15){$\phi $}
\put(61,13){$\psi _1$}
\put(14,43){$\xi $}
\put(43,43){$\chi _1$}
\put(43,63){$\chi $}
\put(61,43){$\psi $}
\put(36,83){$\nabla _N$}
\end{picture}
\end{tabular}
\end{center}

Note that, despite the fact that $M$ is congruence--modular and $N$ is congruence--distributive, neither ${\cal HSP}(M)$, nor ${\cal HSP}(N)$ is congruence--modular, because $S=(\{a,b\},+)\cong ({\cal L}_2,\max )\cong (\Z _2,\cdot )$ is a subalgebra of both $M$ and $N$, and it can be easily checked that $S^2$ is not congruence--modular. Thus neither ${\cal HSP}(M)$, nor ${\cal HSP}(N)$ is semi—degenerate, which is also obvious from the fact that $(\{a\},+)$ is a subalgebra of both $M$ and $N$.

We have: $[\theta ,\zeta ]_M=\varepsilon $ for all $\theta ,\zeta \in [\varepsilon )$ and, of course, $[\Delta _M,\theta ]_M=[\theta ,\Delta _M]_M=\Delta _M$ for all $\theta \in {\rm Con}(M)$, hence ${\rm Spec}(M)=\{\Delta _M\}$ and thus ${\cal L}(M)\cong {\cal L}_2$, while $[\cdot ,\cdot ]_N$ is given by the following table, thus ${\rm Spec}(N)=\{\psi ,\xi \}$, so $\rho _N$ is defined as follows and hence ${\cal L}(M)\cong {\cal L}_2^2$:

\begin{center}
\begin{tabular}{ccc}
\begin{picture}(300,105)(0,0)
\put(0,50){\begin{tabular}{c|ccccccccc}
$[\cdot ,\cdot ]_N$ & $\Delta _N$ & $\psi $ & $\psi _1$ & $\phi $ & $\xi $ & $\xi _1$ & $\chi $ & $\chi _1$ & $\nabla _N$\\ \hline 
$\Delta _N$ & $\Delta _N$ & $\Delta _N$ & $\Delta _N$ & $\Delta _N$ & $\Delta _N$ & $\Delta _N$ & $\Delta _N$ & $\Delta _N$ & $\Delta _N$\\ 
$\psi $ & $\Delta _N$ & $\psi _1$ & $\psi _1$ & $\Delta _N$ & $\Delta _N$ & $\Delta _N$ & $\psi _1$ & $\psi _1$ & $\psi _1$\\ 
$\psi _1$ & $\Delta _N$ & $\psi _1$ & $\psi _1$ & $\Delta _N$ & $\Delta _N$ & $\Delta _N$ & $\psi _1$ & $\psi _1$ & $\psi _1$\\ 
$\phi $ & $\Delta _N$ & $\Delta _N$ & $\Delta _N$ & $\Delta _N$ & $\Delta _N$ & $\Delta _N$ & $\Delta _N$ & $\Delta _N$ & $\Delta _N$\\ 
$\xi $ & $\Delta _N$ & $\Delta _N$ & $\Delta _N$ & $\Delta _N$ & $\xi _1$ & $\xi _1$ & $\xi _1$ & $\xi _1$ & $\xi _1$\\ 
$\xi _1$ & $\Delta _N$ & $\Delta _N$ & $\Delta _N$ & $\Delta _N$ & $\xi _1$ & $\xi _1$ & $\xi _1$ & $\xi _1$ & $\xi _1$\\ 
$\chi $ & $\Delta _N$ & $\psi _1$ & $\psi _1$ & $\Delta _N$ & $\xi _1$ & $\xi _1$ & $\chi _1$ & $\chi _1$ & $\chi _1$\\ 
$\chi _1$ & $\Delta _N$ & $\psi _1$ & $\psi _1$ & $\Delta _N$ & $\xi _1$ & $\xi _1$ & $\chi _1$ & $\chi _1$ & $\chi _1$\\ 
$\nabla _N$ & $\Delta _N$ & $\psi _1$ & $\psi _1$ & $\Delta _N$ & $\xi _1$ & $\xi _1$ & $\chi _1$ & $\chi _1$ & $\chi _1$
\end{tabular}}
\end{picture}
&
\begin{picture}(60,105)(0,0)
\put(0,50){\begin{tabular}{c|c}
$\theta $ & $\rho _N(\theta )$\\ \hline 
$\Delta _N$ & $\phi $\\ 
$\psi $ & $\psi $\\ 
$\psi _1$ & $\psi $\\ 
$\phi $ & $\phi $\\ 
$\xi $ & $\xi $\\ 
$\xi _1$ & $\xi $\\ 
$\chi $ & $\nabla _N$\\ 

$\chi _1$ & $\nabla _N$\\ 
$\nabla _N$ & $\nabla _N$
\end{tabular}}
\end{picture}
&
\begin{picture}(80,100)(0,0)
\put(38,10){${\bf 0}$}
\put(40,20){\circle*{3}}
\put(20,40){\circle*{3}}
\put(60,40){\circle*{3}}
\put(40,60){\circle*{3}}
\put(40,20){\line(-1,1){20}}

\put(40,20){\line(1,1){20}}
\put(40,60){\line(-1,-1){20}}
\put(40,60){\line(1,-1){20}}
\put(13,37){$\widehat{\xi }$}
\put(63,37){$\widehat{\psi }$}
\put(38,63){${\bf 1}$}
\put(28,-2){${\cal L}(N)$}

\end{picture}

\end{tabular}
\end{center}
\end{example}

\begin{example} Here are some finite congruence--distributive examples, thus in which the reticulations are isomorphic to the congruence lattices. Regarding the preservation properties fulfilled by the reticulation, these examples show that there is no embedding relation between the reticulation of an algebra and those of its subalgebras: if ${\cal E}$ is the following bounded lattice, then, for instance, $\{0,x,y,1\}={\cal L}_4={\cal L}_2\oplus {\cal L}_2\oplus {\cal L}_2$, ${\cal D}=\{0,a,x,b,1\}$ and ${\cal P}=\{0,a,x,y,1\}$ are bounded sublattices of ${\cal E}$. We have: ${\cal L}({\cal E})\cong {\rm Con}({\cal E})=\{\Delta _{\cal E},\mu ,\nabla _{\cal E}\}\cong {\cal L}_3$, where ${\cal E}/\mu =\{\{0\},\{a\},\{x,y\},\{b\},\{1\}\}$, ${\cal L}({\cal L}_4)\cong {\rm Con}({\cal L}_4)={\rm Con}({\cal L}_2\oplus {\cal L}_2\oplus {\cal L}_2)\cong {\rm Con}({\cal L}_2)^3\cong {\cal L}_2^3$, ${\cal L}({\cal D})\cong {\rm Con}({\cal D})=\{\Delta _{\cal D},\nabla _{\cal D}\}\cong {\cal L}_2$ and ${\cal L}({\cal P})\cong {\rm Con}({\cal P})=\{\Delta _{\cal P},\alpha ,\beta ,\gamma ,\nabla _{\cal P}\}\cong {\cal L}_2\oplus {\cal L}_2^2$, where ${\cal P}/\alpha =\{\{0,x,y\},\{a,1\}\}$, ${\cal P}/\beta =\{\{0,a\},\{x,y,1\}\}$ and ${\cal P}/\gamma =\{\{0\},\{a\},\{x,y\},\{1\}\}$:\vspace*{-9pt}

\begin{center}
\begin{tabular}{ccc}
\begin{picture}(80,70)(0,0)
\put(40,20){\circle*{3}}
\put(20,40){\circle*{3}}
\put(40,33){\circle*{3}}
\put(40,47){\circle*{3}}
\put(60,40){\circle*{3}}
\put(40,60){\circle*{3}}
\put(40,20){\line(-1,1){20}}
\put(40,20){\line(1,1){20}}
\put(40,20){\line(0,1){40}}
\put(40,60){\line(-1,-1){20}}
\put(40,60){\line(1,-1){20}}
\put(13,37){$a$}
\put(43,30){$x$}
\put(43,44){$y$}
\put(63,37){$b$}
\put(38,10){$0$}
\put(38,63){$1$}
\put(-5,36){${\cal E}:$}
\end{picture}
&
\begin{picture}(80,70)(0,0) 
\put(36,59){$\nabla _{\cal E}$}
\put(43,36){$\mu $}
\put(40,20){\line(0,1){36}}
\put(40,20){\circle*{3}}
\put(40,38){\circle*{3}}
\put(40,56){\circle*{3}}
\put(36,10){$\Delta _{\cal E}$}
\end{picture}
&
\begin{picture}(80,70)(0,0)
\put(36,10){$\Delta _{\cal P}$}

\put(42,30){$\gamma $}
\put(40,20){\circle*{3}}
\put(40,35){\circle*{3}}
\put(30,45){\circle*{3}}
\put(50,45){\circle*{3}}
\put(40,55){\circle*{3}}
\put(40,20){\line(0,1){15}}
\put(40,35){\line(-1,1){10}}
\put(40,35){\line(1,1){10}}
\put(40,55){\line(-1,-1){10}}
\put(40,55){\line(1,-1){10}}
\put(22,43){$\alpha $}
\put(53,42){$\beta $}
\put(36,58){$\nabla _{\cal P}$}
\end{picture}
\end{tabular}
\end{center}\label{reticlat}\end{example}\vspace*{-30pt}

\begin{remark} By \cite[Lemma $3.3$]{cblp}, in any variety, arbitrary intersections commute with arbitrary direct products of congruences. If ${\cal C}$ is congruence--modular and $M$ is an algebra from ${\cal C}$ such that $A\times M$ has no skew congruences, then ${\rm Spec}(A\times M)=\{\phi \times \nabla _M\ |\ \phi \in {\rm Spec}(A)\}\cup \{\nabla _A\times \psi \ |\ \psi \in {\rm Spec}(M)\}$. This follows from Proposition \ref{comutprod} in the same way as in the congruence--distributive case, treated in \cite[Proposition $3.5$,(ii)]{cblp}.\label{conprod}\end{remark}

\begin{proposition}[the reticulation preserves finite direct products without skew congruences] Let $M$ be an algebra from ${\cal C}$ such that the direct product $A\times M$ has no skew congruences. Then:\begin{enumerate}
\item\label{reticprod1} for all $\emptyset \neq X\subseteq A^2$ and all $\emptyset \neq Y\subseteq M^2$, $Cg_{A\times M}(X\times Y)=Cg_A(X)\times Cg_M(Y)$, and the map $(\alpha ,\mu )\mapsto \alpha \times \mu $ is a lattice isomorphism from ${\rm Con}(A)\times {\rm Con}(M)$ to ${\rm Con}(A\times M)$;
\item\label{reticprod2} ${\rm PCon}(A\times M)=\{\alpha \times \mu \ |\ \alpha \in {\rm PCon}(A),\mu \in {\rm PCon}(M)\}$ and ${\cal K}(A\times M)=\{\alpha \times \mu \ |\ \alpha \in {\cal K}(A),\mu \in {\cal K}(M)\}$.\end{enumerate}

If ${\cal C}$ is congruence--modular and $\nabla _M\in {\cal K}(M)$, then:\begin{itemize}
\item for all $\alpha \in {\rm Con}(A)$ and all $\mu \in {\rm Con}(M)$, $\rho _{A\times M}(\alpha \times \mu )=\rho _A(\alpha )\times \rho _{M}(\mu )$;
\item $\equiv _{A\times M}=\equiv _A\times \equiv _M$ and ${\cal L}(A\times M)\cong {\cal L}(A)\times {\cal L}(M)$.\end{itemize}\label{reticprod}\end{proposition}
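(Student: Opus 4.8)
The plan is to push everything through the absence of skew congruences, which turns ${\rm Con}(A\times M)$ into the product lattice ${\rm Con}(A)\times {\rm Con}(M)$, and then compute componentwise.

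For (\ref{reticprod1}), the hypothesis together with Proposition \ref{prodcongr} gives ${\rm Con}(A\times M)=\{\alpha \times \mu \ |\ \alpha \in {\rm Con}(A),\mu \in {\rm Con}(M)\}$. The assignment $(\alpha ,\mu )\mapsto \alpha \times \mu $ is order--preserving, and it is injective because $\alpha $ and $\mu $, being reflexive, are non--empty, so they are recovered as the coordinate projections of $\alpha \times \mu $; since it is also onto by the above, it is a lattice isomorphism. For the congruence generation formula, $X\times Y\subseteq Cg_A(X)\times Cg_M(Y)\in {\rm Con}(A\times M)$ yields $Cg_{A\times M}(X\times Y)\subseteq Cg_A(X)\times Cg_M(Y)$; conversely, writing $Cg_{A\times M}(X\times Y)=\alpha \times \mu $ and projecting (using $X,Y\neq \emptyset $) gives $X\subseteq \alpha $ and $Y\subseteq \mu $, hence $Cg_A(X)\subseteq \alpha $ and $Cg_M(Y)\subseteq \mu $, i.e. the reverse inclusion. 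Then (\ref{reticprod2}) follows at once: $Cg_{A\times M}((a,x),(b,y))=Cg_A(a,b)\times Cg_M(x,y)$ by (\ref{reticprod1}), and conversely every such product is principal; and since finite joins are computed componentwise under the isomorphism of (\ref{reticprod1}), a compact congruence $\bigvee _{i=1}^nCg_{A\times M}((a_i,x_i),(b_i,y_i))$ equals $(\bigvee _{i=1}^nCg_A(a_i,b_i))\times (\bigvee _{i=1}^nCg_M(x_i,y_i))$, while $\kappa \times \lambda $ with $\kappa =\bigvee _{i=1}^nCg_A(a_i,b_i)$ and $\lambda =\bigvee _{j=1}^mCg_M(x_j,y_j)$ equals $\bigvee _{i,j}Cg_{A\times M}((a_i,x_j),(b_i,y_j))$, hence is compact (equivalently, the compact elements of a product lattice are exactly the pairs of compact elements).

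Now assume ${\cal C}$ congruence--modular and $\nabla _M\in {\cal K}(M)$. For the radical formula, invoke Remark \ref{conprod}: ${\rm Spec}(A\times M)=\{\phi \times \nabla _M\ |\ \phi \in {\rm Spec}(A)\}\cup \{\nabla _A\times \psi \ |\ \psi \in {\rm Spec}(M)\}$. Since (projecting, using non--emptiness) $\phi \times \nabla _M\supseteq \alpha \times \mu $ iff $\phi \supseteq \alpha $, and $\nabla _A\times \psi \supseteq \alpha \times \mu $ iff $\psi \supseteq \mu $, we get, using that arbitrary intersections commute with direct products of congruences (\cite[Lemma $3.3$]{cblp}, as recalled in Remark \ref{conprod}),
\[\rho _{A\times M}(\alpha \times \mu )=\Bigl(\bigcap _{\phi \in V_A(\alpha )}(\phi \times \nabla _M)\Bigr)\cap \Bigl(\bigcap _{\psi \in V_M(\mu )}(\nabla _A\times \psi )\Bigr)=(\rho _A(\alpha )\times \nabla _M)\cap (\nabla _A\times \rho _M(\mu ))=\rho _A(\alpha )\times \rho _M(\mu ),\]
the cases of empty intersection giving $\nabla $ on both sides consistently.

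Finally, all the standing hypotheses hold for $A\times M$: $[\cdot ,\cdot ]_{A\times M}$ is commutative and distributive w.r.t. arbitrary joins by Proposition \ref{1.3}, $\nabla _{A\times M}=\nabla _A\times \nabla _M\in {\cal K}(A\times M)$ by (\ref{reticprod2}), and ${\cal K}(A\times M)={\cal K}(A)\times {\cal K}(M)$ is closed under $[\cdot ,\cdot ]_{A\times M}$ by Proposition \ref{comutprod} and (\ref{reticprod2}), so ${\cal L}(A\times M)$ is well defined. Identifying ${\cal K}(A\times M)$ with ${\cal K}(A)\times {\cal K}(M)$ via (\ref{reticprod2}), the radical formula gives, for compact $\kappa _i\times \lambda _i$, that $\kappa _1\times \lambda _1\equiv _{A\times M}\kappa _2\times \lambda _2$ iff $\rho _A(\kappa _1)\times \rho _M(\lambda _1)=\rho _A(\kappa _2)\times \rho _M(\lambda _2)$ iff $\kappa _1\equiv _A\kappa _2$ and $\lambda _1\equiv _M\lambda _2$; hence $\equiv _{A\times M}=\equiv _A\times \equiv _M$, so ${\cal L}(A\times M)=({\cal K}(A)\times {\cal K}(M))/(\equiv _A\times \equiv _M)\cong {\cal L}(A)\times {\cal L}(M)$. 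This bijection respects $\vee $, $\wedge $ and the bounds, because joins and (by Proposition \ref{comutprod}) commutators in ${\rm Con}(A\times M)$ — hence joins and meets in ${\cal L}(A\times M)$ — are computed componentwise, and $\widehat{\Delta _{A\times M}}=\widehat{\Delta _A\times \Delta _M}$, $\widehat{\nabla _{A\times M}}=\widehat{\nabla _A\times \nabla _M}$ correspond to the bounds of ${\cal L}(A)\times {\cal L}(M)$. The one point requiring external input, and thus the main obstacle, is the description of ${\rm Spec}(A\times M)$ underpinning the radical computation, which is exactly what Remark \ref{conprod} (via \cite[Lemma $3.3$]{cblp} and Proposition \ref{comutprod}) supplies.
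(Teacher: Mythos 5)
Your proof is correct and follows essentially the same route as the paper's: the absence of skew congruences identifies ${\rm Con}(A\times M)$ with ${\rm Con}(A)\times {\rm Con}(M)$, Remark \ref{conprod} supplies the description of ${\rm Spec}(A\times M)$, the radical is then computed componentwise, and $\equiv _{A\times M}$ splits accordingly, yielding ${\cal L}(A\times M)\cong {\cal L}(A)\times {\cal L}(M)$; your part (\ref{reticprod1}) merely replaces the paper's computation via intersections commuting with direct products by a projection/order--isomorphism argument, which works. The only (inessential) overreach is the aside that ${\cal K}(A\times M)$ is closed under the commutator: that would require ${\cal K}(A)$ and ${\cal K}(M)$ to be closed under their commutators, which is neither assumed in this proposition nor needed, since ${\cal L}(A\times M)$ and the induced operations are already available under the standing hypotheses.
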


\begin{proof} $A\times M$ has no skew congruences, that is ${\rm Con}(A\times M)=\{\alpha \times \mu \ |\ \alpha \in {\rm Con}(A),\mu \in {\rm Con}(M)\}$.

\noindent (\ref{reticprod1}) By Remark \ref{conprod}, $Cg_{A\times M}(X\times Y)=\bigcap \{\theta \in {\rm Con}(A\times M)\ |\ X\times Y\subseteq \theta \}=\bigcap \{\alpha \times \mu \ |\ \alpha \in {\rm Con}(A),\mu \in {\rm Con}(M),X\times Y\subseteq \alpha \times \mu \}=\bigcap \{\alpha \times \mu \ |\ \alpha \in {\rm Con}(A),\mu \in {\rm Con}(M),X\subseteq \alpha ,Y\subseteq \mu \}=(\bigcap \{\alpha \in {\rm Con}(A)\ |\ X\subseteq \alpha \})\times (\bigcap \{\mu \in {\rm Con}(M) \ |\ Y\subseteq \mu \})=Cg_A(X)\times Cg_M(Y)$.

This also shows that the map $(\alpha ,\mu )\mapsto \alpha \times \mu $ is a lattice isomorphism from ${\rm Con}(A)\times {\rm Con}(M)$ to ${\rm Con}(A\times M)$, because it is clearly injective, it is surjective by the above, it preserves the intersection by Remark \ref{conprod} and, for all $\alpha ,\beta \in {\rm Con}(A)$ and all $\mu ,\nu \in {\rm Con}(M)$, $(\alpha \times \mu )\vee (\beta \times \nu )=Cg_{A\times M}((\alpha \times \mu )\cup (\beta \times \nu ))\subseteq Cg_{A\times M}((\alpha \cup \beta )\times (\mu \cup \nu ))=Cg_A(\alpha \cup \beta )\times Cg_M(\mu \cup \nu )=(\alpha \vee \beta )\times (\mu \vee \nu )$, since, clearly, $(\alpha \times \mu )\cup (\beta \times \nu )\subseteq (\alpha \cup \beta )\times (\mu \cup \nu )$, but, also, $(\alpha \times \mu )\vee (\beta \times \nu )\in {\rm Con}(A\times M)$, thus $(\alpha \times \mu )\cup (\beta \times \nu )\subseteq (\alpha \times \mu )\vee (\beta \times \nu )=\gamma \vee \sigma $ for some $\gamma \in {\rm Con}(A)$ and $\sigma \in {\rm Con}(M)$, so $\alpha \times \mu \subseteq \gamma \vee \sigma $ and $\beta \times \nu \subseteq \gamma \vee \sigma $, hence $\alpha \subseteq \gamma $, $\beta \subseteq \gamma $, $\mu \subseteq \sigma $ and $\nu \subseteq \sigma $, so $(\alpha \vee \beta )\subseteq \gamma $ and $(\mu \vee \nu )\subseteq \sigma $, hence $(\alpha \vee \beta )\times (\mu \vee \nu )\subseteq \gamma \vee \sigma =(\alpha \times \mu )\vee (\beta \times \nu )\subseteq (\alpha \vee \beta )\times (\mu \vee \nu )$, therefore $(\alpha \vee \beta )\times (\mu \vee \nu )=(\alpha \times \mu )\vee (\beta \times \nu )$.

\noindent (\ref{reticprod2}) By (\ref{reticprod1}), for all $a,b\in A$ and all $u,v\in M$, $Cg_{A\times M}((a,u),(b,v))=Cg_A(a,b)\times Cg_M(u,v)$, hence the expression of ${\rm PCon}(A\times M)$ in the enunciation. From this and the second statement in (\ref{reticprod1}), we obtain: $\displaystyle {\cal K}(A\times M)=\{Cg_{A\times M}(\{(a_1,u_1),\ldots ,(a_n,u_n)\})\ |\ n\in \N ^*,a_1,\ldots ,a_n\in A,u_1,\ldots ,u_n\in M\}=\{\bigvee _{i=1}^nCg_{A\times M}(a_i,u_i)\ |\ n\in \N ^*,a_1,\ldots ,a_n\in A,u_1,\ldots ,u_n\in M\}=\{\bigvee _{i=1}^n(Cg_A(a_i)\times Cg_M(u_i))\ |\ n\in \N ^*,a_1,\ldots ,a_n\in A,u_1,\ldots ,u_n\in M\}=\{(\bigvee _{i=1}^nCg_A(a_i))\times (\bigvee _{i=1}^nCg_M(u_i))\ |\ n\in \N ^*,a_1,\ldots ,a_n\in A,u_1,\ldots ,u_n\in M\}=\{Cg_A(a_1,\ldots ,a_n)\times Cg_M(u_1,\ldots ,u_n)\ |\ n\in \N ^*,a_1,\ldots ,a_n\in A,u_1,\ldots ,u_n\in M\}=\{\alpha \times \mu \ |\ \alpha \in {\cal K}(A),\mu \in {\cal K}(M)\}$, since the above also hold if some of the elements $a_1,\ldots ,a_n$ or $u_1,\ldots ,u_n$ coincide.

Now assume that ${\cal C}$ is congruence--modular and $\nabla _M\in {\cal K}(M)$. Then, by Remark \ref{conprod}, for any $\alpha \in {\rm Con}(A)$ and any $\mu \in {\rm Con}(M)$, $\rho _{A\times M}(\alpha \times \mu )=\bigcap \{\chi \in {\rm Spec}(A\times M)\ |\ \alpha \times \mu \subseteq \chi \}=\bigcap \{\phi \times \nabla _M\ |\ \phi \in {\rm Spec}(A),\alpha \times \mu \subseteq \phi \times \nabla _M\}\cap \bigcap \{\nabla _A\times \psi \ |\ \psi \in {\rm Spec}(M),\alpha \times \mu \subseteq \nabla _A\times \psi \}=\bigcap \{\phi \times \nabla _M\ |\ \phi \in {\rm Spec}(A),\alpha \subseteq \phi \}\cap \bigcap \{\nabla _A\times \psi \ |\ \psi \in {\rm Spec}(M),\mu \subseteq \psi \}=(\bigcap \{\phi \ |\ \phi \in {\rm Spec}(A),\alpha \subseteq \phi \}\times \nabla _M)\cap (\nabla _A\times \bigcap \{\psi \ |\ \psi \in {\rm Spec}(M),\mu \subseteq \psi \})=(\rho _A(\alpha )\times \nabla _M)\cap (\nabla _A\times \rho _M(\mu ))=(\rho _A(\alpha )\cap \nabla _A)\times (\nabla _M\cap \rho _M(\mu ))=\rho _A(\alpha )\times \rho _M(\mu )$. Hence, for all $\theta ,\zeta \in {\rm Con}(A\times M)$, we have: $\theta =\alpha \times \mu $ and $\zeta =\beta \times \nu $ for some $\alpha ,\beta \in {\rm Con}(A)$ and $\mu ,\nu \in {\rm Con}(M)$, and thus: $\theta \equiv _{A\times M}\zeta $ iff $\rho _{A\times M}(\theta )=\rho _{A\times M}(\zeta )$ iff $\rho _{A\times M}(\alpha \times \mu )=\rho _{A\times M}(\beta \times \nu )$ iff $\rho _A(\alpha )\times \rho _M(\mu )=\rho _A(\beta )\times \rho _M(\nu )$ iff $\rho _A(\alpha )=\rho _A(\beta )=\rho _A(\beta )$ and $\rho _M(\mu )=\rho _M(\nu )$ iff $\alpha \equiv _A\beta $ and $\mu \equiv _M\nu $.

Now let $\varphi :{\cal L}(A)\times {\cal L}(M)\rightarrow {\cal L}(A\times M)$, for all $\alpha \in {\cal K}(A)$ and all $\mu \in {\cal K}(M)$, $\varphi (\widehat{\alpha },\widehat{\mu })=\widehat{\alpha \times \mu }$. By (\ref{reticprod2}), $\varphi $ is well defined and surjective and fulfills: $\varphi ((\widehat{\alpha },\widehat{\mu })\vee (\widehat{\beta },\widehat{\nu }))=\varphi (\widehat{\alpha }\vee \widehat{\beta },\widehat{\mu }\vee \widehat{\nu })=\varphi (\widehat{\alpha \vee \beta },\widehat{\mu \vee \nu })=((\alpha \vee \beta )\times (\mu \vee \nu ))^{\wedge }=((\alpha \times \mu )\vee (\beta \times \nu ))^{\wedge }=\widehat{(\alpha \times \mu )}\vee \widehat{(\beta \times \nu )}=\varphi (\widehat{\alpha },\widehat{\mu })\vee \varphi (\widehat{\beta },\widehat{\nu })$ and, similarly, $\varphi ((\widehat{\alpha },\widehat{\mu })\wedge (\widehat{\beta },\widehat{\nu }))=\varphi (\widehat{\alpha },\widehat{\mu })\wedge \varphi (\widehat{\beta },\widehat{\nu })$. By the form of $\equiv _{A\times M}$ above, $\varphi $ is injective. Hence $\varphi $ is a lattice isomorphism.\end{proof}

\begin{example} Let ${\cal V}$ be the variety generated by the variety of lattices and that of groups. Then, according to \cite[Theorem $1$, Lemma $1$, Proposition $3$]{chaj} and \cite{klp}, ${\cal V}$ is congruence--modular and any algebra $M$ from ${\cal V}$ is of the form $M=(L,\vee ,\wedge )\times (G,\cdot ,\star )$, where $(L,\vee ,\wedge )$ is a lattice, $(G,\cdot )$ is a group and $x\star y=x^{-1}\cdot y$ for all $x,y\in G$, and the direct product above has no skew congruences, thus, by Proposition \ref{reticprod}, ${\rm Con}(M)\cong {\rm Con}(L)\times {\rm Con}(G)$ and ${\cal L}(M)\cong {\cal L}(L)\times {\cal L}(G)$, since each congruence of the group $G$ also preserves the operation $\star $. Thus, for instance, in we consider the lattice ${\cal P}$ from Example \ref{reticlat} and the group $(S_3,\circ )$ from Example \ref{reticgr}, and we denote $\sigma \star \tau =\sigma ^{-1}\circ \tau $ for all $\sigma ,\tau \in S_3$, and $M=({\cal P},\vee ,\wedge )\times (S_3,\circ ,\star )$, then $M$ is a finite algebra from ${\cal V}$ which is not congruence--distributive, because ${\rm Con}(M)\cong {\rm Con}({\cal P})\times {\rm Con}(S_3)$ and ${\rm Con}(S_3)$ is not distributive, and ${\cal L}(M)\cong {\cal L}({\cal P})\times {\cal L}(S_3)\cong {\cal L}({\cal P})\times {\cal L}_1\cong {\cal L}({\cal P})\cong {\rm Con}({\cal P})\cong {\cal L}_2\oplus {\cal L}_2^2$.\end{example}

\section{Further Results on The Commutator}
\label{further}

Throughout this section, we shall assume that $[\cdot ,\cdot ]_A$ is commutative and distributive w.r.t. arbitrary joins and $\nabla _A\in {\cal K}(A)$.

\begin{lemma} For all $n\in \N ^*$ and all $\alpha ,\beta \in {\rm Con}(A)$, $[\alpha ,\beta ]_A^{n+1}=[[\alpha ,\beta ]_A,[\alpha ,\beta ]_A]_A^n$.\label{lema2.9}\end{lemma}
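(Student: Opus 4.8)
The plan is a straightforward induction on $n\in \N ^*$, after isolating the right object. Put $\gamma =[\alpha ,\beta ]_A\in {\rm Con}(A)$; then the right--hand side of the claimed identity is $[\gamma ,\gamma ]_A^n$, and what we must prove is simply $[\alpha ,\beta ]_A^{n+1}=[\gamma ,\gamma ]_A^n$ for every $n\in \N ^*$. Everything will be extracted directly from the recursive definition of the iterated commutator, namely $[\phi ,\psi ]_A^1=[\phi ,\psi ]_A$ and $[\phi ,\psi ]_A^{m+1}=[[\phi ,\psi ]_A^m,[\phi ,\psi ]_A^m]_A$ for $\phi ,\psi \in {\rm Con}(A)$ and $m\in \N ^*$.

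For the base case $n=1$, both sides unfold in one step: the left--hand side is $[\alpha ,\beta ]_A^{2}=[[\alpha ,\beta ]_A^1,[\alpha ,\beta ]_A^1]_A=[[\alpha ,\beta ]_A,[\alpha ,\beta ]_A]_A=[\gamma ,\gamma ]_A$, while the right--hand side is $[\gamma ,\gamma ]_A^1=[\gamma ,\gamma ]_A$, so they agree.

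For the inductive step, assume $[\alpha ,\beta ]_A^{n+1}=[\gamma ,\gamma ]_A^n$ for some $n\in \N ^*$. Applying the recursive definition to the index $n+2$ on the left, and then the induction hypothesis, and finally the recursive definition of $[\gamma ,\gamma ]_A^{n+1}$, we get $[\alpha ,\beta ]_A^{n+2}=[[\alpha ,\beta ]_A^{n+1},[\alpha ,\beta ]_A^{n+1}]_A=[[\gamma ,\gamma ]_A^{n},[\gamma ,\gamma ]_A^{n}]_A=[\gamma ,\gamma ]_A^{n+1}$, which is exactly the assertion for $n+1$. This closes the induction and proves the lemma.

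The main point worth flagging is that there is essentially no obstacle here beyond keeping the exponent bookkeeping straight: one must not confuse $[\alpha ,\beta ]_A^{n+1}$ (one more doubling starting from $\alpha ,\beta $) with $[\gamma ,\gamma ]_A^{n}$ ($n$ doublings starting from $\gamma =[\alpha ,\beta ]_A$), and the content of the lemma is precisely that these two readings coincide. In particular, none of the arithmetic properties of the commutator assumed throughout this section (commutativity, distributivity with respect to joins, compactness of $\nabla _A$) are used; only the defining recursion of the iterate is needed.
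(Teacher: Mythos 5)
Your proof is correct and is essentially identical to the paper's: both argue by induction on $n$ directly from the recursive definition of the iterated commutator, with your $\gamma =[\alpha ,\beta ]_A$ merely serving as convenient shorthand. Nothing further is needed.
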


\begin{proof} Let $\alpha ,\beta \in {\rm Con}(A)$. We proceed by induction on $n$. By its definition, $[\alpha ,\beta ]_A^2=[[\alpha ,\beta ]_A,[\alpha ,\beta ]_A]_A$. Now let $n\in \N ^*$ such that $[\alpha ,\beta ]_A^{n+1}=[[\alpha ,\beta ]_A,[\alpha ,\beta ]_A]_A^n$. Then, by the induction hypothesis, $[\alpha ,\beta ]_A^{n+2}=[[\alpha ,\beta ]_A^{n+1},[\alpha ,\beta ]_A^{n+1}]_A=[[[\alpha ,\beta ]_A,[\alpha ,\beta ]_A]_A^n,[[\alpha ,\beta ]_A,[\alpha ,\beta ]_A]_A^n]_A=[[\alpha ,\beta ]_A,[\alpha ,\beta ]_A]_A^{n+1}$.\end{proof}

\begin{lemma} If the commutator of $A$ is associative, then, for any $n\in \N ^*$ and all $\alpha ,\beta \in {\rm Con}(A)$, $[\alpha ,\beta ]_A^{n+1}=[[\alpha ,\alpha ]_A^n,[\beta ,\beta ]_A^n]_A$.\label{commassoc}\end{lemma}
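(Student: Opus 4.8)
The plan is to proceed by induction on $n$, reducing the identity to its case $n=1$. Since $[\cdot ,\cdot ]_A$ is commutative (a standing assumption of this section) and is now also assumed associative, I will treat it as an associative commutative binary operation on ${\rm Con}(A)$, so that inside any commutator expression built from congruences of $A$ the factors may be reassociated and permuted freely; intuitively the identity to be proved just says that the commutator-product of $2^n$ copies of $\alpha $ and $2^n$ copies of $\beta $ can be regrouped as the commutator-product of $2^n$ copies of $\alpha $ with the commutator-product of $2^n$ copies of $\beta $.

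First I would dispatch the base case $n=1$, i.e.\ $[\alpha ,\beta ]_A^2=[[\alpha ,\alpha ]_A,[\beta ,\beta ]_A]_A$. Starting from the definition $[\alpha ,\beta ]_A^2=[[\alpha ,\beta ]_A,[\alpha ,\beta ]_A]_A$ and repeatedly applying associativity, together with commutativity of $[\cdot ,\cdot ]_A$ once:
\begin{align*}
[[\alpha ,\beta ]_A,[\alpha ,\beta ]_A]_A & =[[[\alpha ,\beta ]_A,\alpha ]_A,\beta ]_A=[[\alpha ,[\beta ,\alpha ]_A]_A,\beta ]_A=[[\alpha ,[\alpha ,\beta ]_A]_A,\beta ]_A\\
 & =[[[\alpha ,\alpha ]_A,\beta ]_A,\beta ]_A=[[\alpha ,\alpha ]_A,[\beta ,\beta ]_A]_A ,
\end{align*}
where associativity is used at the first, second, fourth and fifth equalities and commutativity at the third.

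For the inductive step, assume the statement holds at $n$ for all pairs of congruences of $A$, and put $\phi =[\alpha ,\alpha ]_A^n$ and $\psi =[\beta ,\beta ]_A^n$. By the inductive hypothesis $[\alpha ,\beta ]_A^{n+1}=[\phi ,\psi ]_A$, so
\[
[\alpha ,\beta ]_A^{n+2}=[[\alpha ,\beta ]_A^{n+1},[\alpha ,\beta ]_A^{n+1}]_A=[[\phi ,\psi ]_A,[\phi ,\psi ]_A]_A=[\phi ,\psi ]_A^2 .
\]
The base case applied to $\phi $ and $\psi $ gives $[\phi ,\psi ]_A^2=[[\phi ,\phi ]_A,[\psi ,\psi ]_A]_A$, and by the recursive definition of the iterated commutator $[\phi ,\phi ]_A=[[\alpha ,\alpha ]_A^n,[\alpha ,\alpha ]_A^n]_A=[\alpha ,\alpha ]_A^{n+1}$ and, symmetrically, $[\psi ,\psi ]_A=[\beta ,\beta ]_A^{n+1}$; combining these identities yields $[\alpha ,\beta ]_A^{n+2}=[[\alpha ,\alpha ]_A^{n+1},[\beta ,\beta ]_A^{n+1}]_A$, which closes the induction.

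I do not anticipate a genuine obstacle here: given associativity, the base case is a two-line rearrangement and the induction is purely formal. The only points needing a little care are the bookkeeping with the iterated-commutator superscripts and invoking the inductive hypothesis in its universally quantified form, so that it applies at $\phi =[\alpha ,\alpha ]_A^n$ and $\psi =[\beta ,\beta ]_A^n$ and not merely at $\alpha ,\beta $. As an alternative one could introduce, for associative commutative $[\cdot ,\cdot ]_A$, the well-defined $k$-fold commutator product, prove by induction that $[\alpha ,\beta ]_A^n$ is the product of $2^{n-1}$ copies of $\alpha $ with $2^{n-1}$ copies of $\beta $ while $[\gamma ,\gamma ]_A^n$ is the product of $2^n$ copies of $\gamma $, and then read off the identity; the direct induction above is shorter.
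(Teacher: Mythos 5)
Your proof is correct and follows essentially the same route as the paper's: induction on $n$, with the base case $[[\alpha ,\beta ]_A,[\alpha ,\beta ]_A]_A=[[\alpha ,\alpha ]_A,[\beta ,\beta ]_A]_A$ obtained from associativity and commutativity, and the inductive step reduced to that interchange identity applied to $[\alpha ,\alpha ]_A^n$ and $[\beta ,\beta ]_A^n$. The only difference is cosmetic: you spell out the rearrangement in the base case, which the paper merely asserts, and you factor the inductive step explicitly through the $n=1$ case rather than rewriting the interchange inline.
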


\begin{proof} Assume that the commutator of $A$ is associative, and let us also use its commutativity, along with Lemma \ref{lema2.9}. Let $\alpha ,\beta \in {\rm Con}(A)$. We apply induction on $n$. For $n=1$, $[\alpha ,\beta ]_A^2=[[\alpha ,\beta ]_A,[\alpha ,\beta ]_A]_A=[[\alpha ,\alpha ]_A,[\beta ,\beta ]_A]_A$. Now let $n\in \N ^*$ such that $[\alpha ,\beta ]_A^{n+1}=[[\alpha ,\alpha ]_A^n,[\beta ,\beta ]_A^n]_A$. Then $[\alpha ,\beta ]_A^{n+2}=[[\alpha ,\beta ]_A^{n+1},[\alpha ,\beta ]_A^{n+1}]_A=[[[\alpha ,\alpha ]_A^n, $\linebreak $[\beta ,\beta ]_A^n]_A,[[\alpha ,\alpha ]_A^n,[\beta ,\beta ]_A^n]_A]_A=[[[\alpha ,\alpha ]_A^n,[\alpha ,\alpha ]_A^n]_A,[[\beta ,\beta ]_A^n,[\beta ,\beta ]_A^n]_A]_A=[[\alpha ,\alpha ]_A^{n+1},[\beta ,\beta ]_A^{n+1}]_A$.\end{proof}

\begin{lemma} For all $n,k\in \N ^*$ and all $\alpha ,\beta ,\phi ,\psi ,\alpha _1,\alpha _2,\ldots ,\alpha _k\in {\rm Con}(A)$:\begin{enumerate}
\item\label{aritmcomut1} if $\alpha \subseteq \beta $ and $\phi \subseteq \psi $, then $[\alpha ,\phi ]_A^n\subseteq [\beta ,\psi ]_A^n$;
\item\label{aritmcomut2} if $k\leq n$, then $[\alpha ,\beta ]_A^n\subseteq [\alpha ,\beta ]_A^k$;
\item\label{aritmcomut3} if $k\geq 2$ and $n\geq 2$, then $[\alpha ,\beta ]_A^{k\cdot n}\subseteq [[\alpha ,\beta ]_A^k,[\alpha ,\beta ]_A^k]_A^n$;
\item\label{aritmcomut4} $[\alpha \vee \beta ,\alpha \vee \beta ]_A^n\subseteq \alpha \vee [\beta ,\beta ]_A^n$;
\item\label{aritmcomut5} $[\alpha \vee \beta ,\alpha \vee \beta ]_A^{n\cdot k}\subseteq [\alpha ,\alpha ]_A^k\vee [\beta ,\beta ]_A^n$;
\item\label{aritmcomut6} $[\alpha \vee \beta ,\alpha \vee \beta ]_A^{n^2}\subseteq [\alpha ,\alpha ]_A^n\vee [\beta ,\beta ]_A^n$;
\item\label{aritmcomut7} $[\alpha _1\vee \ldots \vee \alpha _k,\alpha _1\vee \ldots \vee \alpha _k]_A^{n^k}\subseteq [\alpha _1,\alpha _1]_A^n\vee \ldots \vee [\alpha _k,\alpha _k]_A^n$.\end{enumerate}\label{aritmcomut}\end{lemma}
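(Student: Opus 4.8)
The plan is to derive all seven inclusions by short inductions that feed into one another; the only ingredients are the monotonicity of the commutator and the inclusion $[\theta,\zeta]_A\subseteq\theta\cap\zeta$ (Proposition \ref{1.3}), the commutativity and distributivity of $[\cdot,\cdot]_A$ over arbitrary joins (the standing hypotheses of the section), and Lemma \ref{lema2.9}. Parts (i) and (ii) come first and are immediate: for (i), induct on $n$, the base case being monotonicity, and in the step apply monotonicity to each entry of the outer bracket, $[\alpha,\phi]_A^{n+1}=[[\alpha,\phi]_A^n,[\alpha,\phi]_A^n]_A\subseteq[[\beta,\psi]_A^n,[\beta,\psi]_A^n]_A=[\beta,\psi]_A^{n+1}$; for (ii), since $[\theta,\theta]_A\subseteq\theta$ we get $[\alpha,\beta]_A^{m+1}=[[\alpha,\beta]_A^m,[\alpha,\beta]_A^m]_A\subseteq[\alpha,\beta]_A^m$, so the sequence $([\alpha,\beta]_A^m)_m$ decreases and $k\le n$ yields the claim.

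Next I would record the telescoping identity $[[\alpha,\beta]_A^k,[\alpha,\beta]_A^k]_A^m=[\alpha,\beta]_A^{k+m}$ for all $k,m\ge1$, which follows by a one-line induction on $m$ of exactly the same shape as Lemma \ref{lema2.9} (its case $k=1$). Part (iii) is then immediate: with $m=n$ it gives $[[\alpha,\beta]_A^k,[\alpha,\beta]_A^k]_A^n=[\alpha,\beta]_A^{k+n}$, and since $k+n\le kn$ (as $k,n\ge2$), (ii) gives $[\alpha,\beta]_A^{kn}\subseteq[\alpha,\beta]_A^{k+n}$. Part (iv) is the heart of the lemma and is proved by induction on $n$. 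For $n=1$: distributivity and commutativity give $[\alpha\vee\beta,\alpha\vee\beta]_A=[\alpha,\alpha]_A\vee[\alpha,\beta]_A\vee[\beta,\beta]_A$, and the first two joinands lie below $\alpha$, so $[\alpha\vee\beta,\alpha\vee\beta]_A\subseteq\alpha\vee[\beta,\beta]_A$. For the step, by (i) and the induction hypothesis $[\alpha\vee\beta,\alpha\vee\beta]_A^{n+1}\subseteq[\alpha\vee[\beta,\beta]_A^n,\alpha\vee[\beta,\beta]_A^n]_A$, and expanding this bracket by distributivity gives $[\alpha,\alpha]_A\vee[\alpha,[\beta,\beta]_A^n]_A\vee[[\beta,\beta]_A^n,[\beta,\beta]_A^n]_A$, whose first two terms lie below $\alpha$ and whose third equals $[\beta,\beta]_A^{n+1}$.

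For (v), put $\delta=\alpha\vee\beta$; the telescoping identity gives $[\delta,\delta]_A^{nk}=[[\delta,\delta]_A^n,[\delta,\delta]_A^n]_A^{n(k-1)}$, and by (iv) $[\delta,\delta]_A^n\subseteq\alpha\vee[\beta,\beta]_A^n$, so (i) bounds the above by $[\alpha\vee[\beta,\beta]_A^n,\alpha\vee[\beta,\beta]_A^n]_A^{n(k-1)}$; a second application of (iv) — now with $[\beta,\beta]_A^n$ playing the role of $\alpha$ and $\alpha$ that of $\beta$ — bounds this by $[\beta,\beta]_A^n\vee[\alpha,\alpha]_A^{n(k-1)}$, and since $k\le n(k-1)$ a last appeal to (ii) shrinks $[\alpha,\alpha]_A^{n(k-1)}$ to $[\alpha,\alpha]_A^k$. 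Part (vi) is the case $k=n$. For (vii), induct on the number $k$ of congruences: $k=1$ is trivial, and for the step put $\gamma=\alpha_1\vee\cdots\vee\alpha_k$, so that $\alpha_1\vee\cdots\vee\alpha_{k+1}=\gamma\vee\alpha_{k+1}$; (v) applied with exponents $n$ and $n^k$ gives $[\gamma\vee\alpha_{k+1},\gamma\vee\alpha_{k+1}]_A^{n^{k+1}}\subseteq[\gamma,\gamma]_A^{n^k}\vee[\alpha_{k+1},\alpha_{k+1}]_A^n$, and the induction hypothesis bounds $[\gamma,\gamma]_A^{n^k}$ by $[\alpha_1,\alpha_1]_A^n\vee\cdots\vee[\alpha_k,\alpha_k]_A^n$.

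The single real obstacle is the exponent bookkeeping: getting the telescoping identity $[[\alpha,\beta]_A^k,[\alpha,\beta]_A^k]_A^m=[\alpha,\beta]_A^{k+m}$ right, and, in (v) and (vii), organizing the two successive applications of (iv) so that the exponent that survives — $n(k-1)$ in (v), $n^k$ in (vii) — is at least the target exponent and can therefore be reduced to it by (ii) (this forces the same hypothesis $n,k\ge 2$ that appears in (iii), the remaining degenerate cases being handled, where they hold, either trivially or directly from (iv)). Apart from this, every step is a direct unwinding of the recursive definition of $[\cdot,\cdot]_A^n$ combined with the basic properties in Proposition \ref{1.3}.
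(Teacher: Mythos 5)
Your proof follows the paper's own argument almost step for step: (\ref{aritmcomut1}), (\ref{aritmcomut2}) and (\ref{aritmcomut4}) are proved exactly as in the paper, your telescoping identity $[[\alpha ,\beta ]_A^k,[\alpha ,\beta ]_A^k]_A^m=[\alpha ,\beta ]_A^{k+m}$ is just the general form of Lemma \ref{lema2.9} (which the paper in effect re-derives inside its induction on $k$ for (\ref{aritmcomut3})), and (\ref{aritmcomut5})--(\ref{aritmcomut7}) are obtained by the same two applications of (\ref{aritmcomut4}), an exponent reduction via (\ref{aritmcomut2}), and an induction on the number of joinands. Whether one first invokes (\ref{aritmcomut3}) and then pushes (\ref{aritmcomut4}) through the outer bracket, as the paper does, or first telescopes $[\delta ,\delta ]_A^{nk}=[[\delta ,\delta ]_A^n,[\delta ,\delta ]_A^n]_A^{n(k-1)}$, as you do, is only a reorganization of the same computation; if anything, your treatment of (\ref{aritmcomut3}) (an equality with $[\alpha ,\beta ]_A^{k+n}$, then (\ref{aritmcomut2})) is slightly cleaner.

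The one point to flag is the boundary exponents in (\ref{aritmcomut5})--(\ref{aritmcomut7}): the statement is for all $n,k\in \N ^*$, while your argument (like the paper's main computation) needs $n,k\geq 2$. Your parenthetical claim that the remaining cases follow ``trivially or directly from (\ref{aritmcomut4}), where they hold'' does not go through: for $k=1$ the target $[\alpha ,\alpha ]_A\vee [\beta ,\beta ]_A^n$ lies strictly below the bound $\alpha \vee [\beta ,\beta ]_A^n$ that (\ref{aritmcomut4}) provides, and the instance $n=k=1$ of (\ref{aritmcomut5}) (equivalently $n=1$ of (\ref{aritmcomut6}), or $n=1$, $k=2$ of (\ref{aritmcomut7})) is in fact false in general: take $A$ a nonabelian nilpotent group of class $2$ that is the product of two abelian normal subgroups $M,N$ (e.g. the Heisenberg group over $\Z _3$, with $M,N$ the normal closures of the two standard generators), and let $\alpha ,\beta $ be the corresponding congruences; then $\alpha \vee \beta =\nabla _A$, $[\alpha \vee \beta ,\alpha \vee \beta ]_A$ corresponds to $[A,A]\neq \{1\}$, while $[\alpha ,\alpha ]_A\vee [\beta ,\beta ]_A$ corresponds to $[M,M][N,N]=\{1\}$, and all standing hypotheses of the section hold. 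So that hedge cannot be discharged as written. To be fair, the paper's own proof of (\ref{aritmcomut5}) merely asserts the cases $n=1$ and $k=1$ without argument, so on this point your proof is no weaker than the paper's; and in the lemma's later uses (e.g. Lemma \ref{lema4.2}) the exponents can always be taken $\geq 2$, so the cases you do prove are the ones that matter.
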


\begin{proof} (\ref{aritmcomut1}) By Proposition \ref{1.3}, through induction on $n$.

\noindent (\ref{aritmcomut2}) For all $p\in \N ^*$, $[\alpha ,\beta ]_A^{p+1}=[[\alpha ,\beta ]_A^p,[\alpha ,\beta ]_A^p]_A\subseteq [\alpha ,\beta ]_A^p$, hence the inclusion in the enunciation.

\noindent (\ref{aritmcomut3}) Assume that $n\geq 2$. We apply induction on $k$, (\ref{aritmcomut2}) and Lemma \ref{lema2.9}. For $k=2$, we have: $[[\alpha ,\beta ]_A^2,[\alpha ,\beta ]_A^2]_A^n=[[[\alpha ,\beta ]_A, [\alpha ,\beta ]_A]_A,[[\alpha ,\beta ]_A,[\alpha ,\beta ]_A]_A]_A^n=[\alpha ,\beta ]_A^{n+2}\supseteq [\alpha ,\beta ]_A^{2n}$. Now take a $k\geq 2$ that fulfills the inclusion in the enunciation for all $\alpha ,\beta \in {\rm Con}(A)$. Then $[[\alpha ,\beta ]_A^{k+1},[\alpha ,\beta ]_A^{k+1}]_A^n=[[[\alpha ,\beta ]_A,[\alpha ,\beta ]_A]_A^k,[[\alpha ,\beta ]_A,[\alpha ,\beta ]_A]_A^k]_A^n\supseteq [[\alpha ,\beta ]_A,[\alpha ,\beta ]_A]_A^{k\cdot n}=[\alpha ,\beta ]_A^{k\cdot n+1}\supseteq [\alpha ,\beta ]_A^{k\cdot n+n}=[\alpha ,\beta ]_A^{(k+1)\cdot n}$.

\noindent (\ref{aritmcomut4}) We apply induction on $n$. For $n=1$ and all $\alpha ,\beta \in {\rm Con}(A)$, we have $[\alpha \vee \beta ,\alpha \vee \beta ]_A=[\alpha ,\alpha ]_A\vee [\alpha ,\beta ]_A\vee [\beta ,\alpha ]_A\vee [\beta ,\beta ]_A\subseteq \alpha \vee [\beta ,\beta ]_A$. Now let $n\in \N ^*$ such that $[\alpha \vee \beta ,\alpha \vee \beta ]_A^n\subseteq \alpha \vee [\beta ,\beta ]_A^n$ for all $\alpha ,\beta \in {\rm Con}(A)$. Then, by the induction hypothesis and the case $n=1$, we have, for all $\alpha ,\beta \in {\rm Con}(A)$: $[\alpha \vee \beta ,\alpha \vee \beta ]_A^{n+1}=[[\alpha \vee \beta ,\alpha \vee \beta ]_A^n,[\alpha \vee \beta ,\alpha \vee \beta ]_A^n]_A\subseteq [\alpha \vee [\beta ,\beta ]_A^n,\alpha \vee [\beta ,\beta ]_A^n]_A\subseteq \alpha \vee [[\beta ,\beta ]_A^n,[\beta ,\beta ]_A^n]_A=\alpha \vee [\beta ,\beta ]_A^{n+1}$.

\noindent (\ref{aritmcomut5}) We apply Lemma \ref{aritmcomut}. For $n=1$, $[\alpha ,\alpha ]_A^k\vee [\beta ,\beta ]_A\supseteq [\alpha \vee \beta ,\alpha \vee \beta ]_A^k$. For $k=1$, $[\alpha ,\alpha ]_A\vee [\beta ,\beta ]_A^n\supseteq [\alpha \vee \beta ,\alpha \vee \beta ]_A^n$. For $k\geq 2$ and $n\geq 2$, $[\alpha ,\alpha ]_A^k\vee [\beta ,\beta ]_A^n\supseteq [[\alpha ,\alpha ]_A^k\vee \beta ,[\alpha ,\alpha ]_A^k\vee \beta ]_A^n\supseteq [[\alpha \vee \beta ,\alpha \vee \beta ]_A^k,[\alpha \vee \beta ,\alpha \vee \beta ]_A^k]_A^n\supseteq [\alpha \vee \beta ,\alpha \vee \beta ]_A^{k\cdot n}$.

\noindent (\ref{aritmcomut6}) Take $k=n$ in (\ref{aritmcomut5}).

\noindent (\ref{aritmcomut7}) We apply induction on $k$. The statement is trivial for $k=1$. Let $k\in \N ^*$ that fulfills the equality in the enunciation for any congruences of $A$, and let $\alpha _1,\ldots ,\alpha _k,\alpha _{k+1}\in {\rm Con}(A)$. By (\ref{aritmcomut5}), it follows that $[\alpha _1\vee \ldots \vee \alpha _k\vee \alpha _{k+1},\alpha _1\vee \ldots \vee \alpha _k\vee \alpha _{k+1}]_A^{n^{\scriptstyle k+1}}=[(\alpha _1\vee \ldots \vee \alpha _k)\vee \alpha _{k+1},(\alpha _1\vee \ldots \vee \alpha _k)\vee \alpha _{k+1}]_A^{n^{\scriptstyle k}\cdot n}\subseteq [\alpha _1\vee \ldots \vee \alpha _k,\alpha _1\vee \ldots \vee \alpha _k]_A^{n^{\scriptstyle k}}\vee =[\alpha _{k+1},\alpha _{k+1}]_A^n\subseteq [\alpha _1,\alpha _1]_A^n\vee \ldots \vee [\alpha _k,\alpha _k]_A^n\vee [\alpha _{k+1},\alpha _{k+1}]_A^n$.\end{proof}

For all $\theta ,\zeta \in {\rm Con}(A)$, we shall denote by $\displaystyle \theta \rightarrow \zeta =\bigvee \{\alpha \in {\rm Con}(A)\ |\ [\theta ,\alpha ]_A\subseteq \zeta \}$ and by $\displaystyle \theta ^{\perp }=\theta \rightarrow \Delta _A=\bigvee \{\alpha \in {\rm Con}(A)\ |\ [\theta ,\alpha ]_A=\Delta _A\}$.

\begin{remark} For all $\theta ,\zeta \in {\rm Con}(A)$, $\theta \rightarrow \zeta =\max \{\alpha \in {\rm Con}(A)\ |\ [\theta ,\alpha ]_A\subseteq \zeta \}$, because, if we denote by $M=\{\alpha \in {\rm Con}(A)\ |\ [\theta ,\alpha ]_A\subseteq \zeta \}$, then $\displaystyle [\theta ,\theta \rightarrow \zeta ]_A=[\theta ,\bigvee _{\alpha \in M}\alpha ]_A=\bigvee _{\alpha \in M}[\theta ,\alpha ]_A\subseteq \zeta $, hence $\theta \rightarrow \zeta \in M$.\end{remark}

\begin{lemma} For all $\alpha ,\beta ,\gamma \in {\rm Con}(A)$, $[\alpha ,\beta ]_A\subseteq \gamma $ iff $\alpha \subseteq \beta \rightarrow \gamma $.\label{reziduatie}\end{lemma}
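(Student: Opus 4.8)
The statement is essentially the assertion that, for fixed $\beta$, the maps $[\beta,-]_A$ and $\beta\rightarrow -$ form a Galois connection on $({\rm Con}(A),\subseteq )$; since we are working under the standing hypothesis that $[\cdot ,\cdot ]_A$ is commutative, and since $[\cdot ,\cdot ]_A$ is increasing in both arguments by Proposition \ref{1.3}, the proof is short and requires no new arithmetic. The one ingredient to have at hand is the Remark immediately preceding the lemma, which tells us that $\beta \rightarrow \gamma = \max \{\alpha \in {\rm Con}(A)\ |\ [\beta ,\alpha ]_A\subseteq \gamma \}$; in particular $[\beta ,\beta \rightarrow \gamma ]_A\subseteq \gamma $.

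First I would do the ``only if'' direction. Assume $[\alpha ,\beta ]_A\subseteq \gamma $. By commutativity of the commutator, $[\beta ,\alpha ]_A=[\alpha ,\beta ]_A\subseteq \gamma $, so $\alpha $ belongs to the set $\{\delta \in {\rm Con}(A)\ |\ [\beta ,\delta ]_A\subseteq \gamma \}$ whose join is $\beta \rightarrow \gamma $; hence $\alpha \subseteq \beta \rightarrow \gamma $.

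Then the ``if'' direction. Assume $\alpha \subseteq \beta \rightarrow \gamma $. Using commutativity and the fact that the commutator is increasing in its second argument (Proposition \ref{1.3}), together with the Remark recalled above, we get $[\alpha ,\beta ]_A=[\beta ,\alpha ]_A\subseteq [\beta ,\beta \rightarrow \gamma ]_A\subseteq \gamma $. This closes the equivalence. There is no real obstacle here: the only points worth stating carefully are the appeal to commutativity of $[\cdot ,\cdot ]_A$ (valid under the section's standing assumption) and the appeal to the maximality characterization of $\beta \rightarrow \gamma $ from the preceding Remark, which is what makes $[\beta ,\beta \rightarrow \gamma ]_A\subseteq \gamma $ available.
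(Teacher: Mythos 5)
Your proposal is correct and follows essentially the same route as the paper: commutativity plus the join definition of $\beta\rightarrow\gamma$ for the forward direction, and monotonicity together with $[\beta,\beta\rightarrow\gamma]_A\subseteq\gamma$ for the converse. The only cosmetic difference is that you cite the preceding Remark for that last inclusion, while the paper re-derives it inline by distributing the commutator over the defining join.
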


\begin{proof} ``$\Rightarrow $:`` $\displaystyle \beta \rightarrow \gamma =\bigvee \{\theta \in {\rm Con}(A)\ |\ [\beta ,\theta ]_A\subseteq \gamma \}$. Since $[\beta ,\alpha ]_A=[\alpha ,\beta ]_A\subseteq \gamma $, it follows that $\alpha \subseteq \beta \rightarrow \gamma $.

\noindent ``$\Leftarrow $:`` We have $\displaystyle \alpha \subseteq \beta \rightarrow \gamma =\bigvee \{\theta \in {\rm Con}(A)\ |\ [\beta ,\theta ]_A\subseteq \gamma \}$, hence $\displaystyle [\beta ,\alpha ]_A=[\alpha ,\beta ]_A\subseteq [\beta ,\beta \rightarrow \gamma ]_A=[\beta ,\bigvee \{\theta \in {\rm Con}(A)\ |\ [\beta ,\theta ]_A\subseteq \gamma \}]_A=\bigvee \{[\beta ,\theta ]_A\ |\ \theta \in {\rm Con}(A),[\beta ,\theta ]_A\subseteq \gamma \}\subseteq \gamma $.\end{proof}

For the following results, recall, also, the equivalences in Proposition \ref{prodcongr}.

\begin{lemma} For all $\alpha ,\beta \in {\rm Con}(A)$ such that $[\alpha ,\nabla _A]_A=\alpha $: $\alpha \rightarrow \beta =\nabla _A$ iff $\alpha \subseteq \beta $.\label{implicnabla}\end{lemma}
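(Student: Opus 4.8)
The plan is to derive this immediately from the residuation property established in Lemma \ref{reziduatie}, together with commutativity of the commutator (which holds under the standing assumptions of this section). The key observation is that $\nabla _A$ is the largest element of ${\rm Con}(A)$, so the equality $\alpha \rightarrow \beta =\nabla _A$ is equivalent to the inclusion $\nabla _A\subseteq \alpha \rightarrow \beta $.

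First I would apply Lemma \ref{reziduatie} with the first argument of the commutator set to $\nabla _A$ and the second to $\alpha $: that lemma gives $[\nabla _A,\alpha ]_A\subseteq \beta $ iff $\nabla _A\subseteq \alpha \rightarrow \beta $. Next I would use commutativity of $[\cdot ,\cdot ]_A$ to rewrite $[\nabla _A,\alpha ]_A=[\alpha ,\nabla _A]_A$, and then invoke the hypothesis $[\alpha ,\nabla _A]_A=\alpha $ to replace this commutator by $\alpha $. Combining these, $\alpha \subseteq \beta $ iff $\nabla _A\subseteq \alpha \rightarrow \beta $, and the latter, by maximality of $\nabla _A$ in ${\rm Con}(A)$, is equivalent to $\alpha \rightarrow \beta =\nabla _A$.

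Alternatively, one can argue directly from the description $\alpha \rightarrow \beta =\max \{\gamma \in {\rm Con}(A)\ |\ [\alpha ,\gamma ]_A\subseteq \beta \}$ recorded in the Remark preceding Lemma \ref{reziduatie}: since this maximum is always $\subseteq \nabla _A$, it equals $\nabla _A$ exactly when $\nabla _A$ itself lies in the defining set, i.e. when $[\alpha ,\nabla _A]_A\subseteq \beta $, which by hypothesis reads $\alpha \subseteq \beta $. Either route is a two-line computation; there is no real obstacle here, the only thing to be careful about is invoking commutativity when matching the shape of Lemma \ref{reziduatie}, and noting that the hypothesis $[\alpha ,\nabla _A]_A=\alpha $ is precisely condition (ii) of Proposition \ref{prodcongr} restricted to the congruence $\alpha $, which is why the preceding sentence of the paper points the reader to those equivalences.
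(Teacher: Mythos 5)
Your proposal is correct and is essentially the paper's own argument: the paper proves the lemma in one line via $\alpha\rightarrow\beta=\nabla_A$ iff $\nabla_A\subseteq\alpha\rightarrow\beta$ iff $\alpha=[\nabla_A,\alpha]_A\subseteq\beta$, invoking Lemma \ref{reziduatie} together with the hypothesis (and the standing commutativity assumption), exactly as in your first route. Your alternative via the $\max$ description is only a cosmetic variant of the same argument.
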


\begin{proof} $\alpha \rightarrow \beta =\nabla _A$ iff $\nabla _A\subseteq \alpha \rightarrow \beta $ iff $\alpha =[\nabla _A,\alpha ]_A\subseteq \beta $, according to Lemma \ref{reziduatie}.\end{proof}

\begin{remark} By the above, if $[\cdot ,\cdot ]_A$ is associative, then $({\rm Con}(A),\vee ,\cap ,[\cdot ,\cdot ]_A,\rightarrow ,\Delta _A,\nabla _A)$ is a residuated lattice, and, if a ${\cal C}$ is a congruence--distributive variety, then $({\rm Con}(A),\vee ,\cap ,[\cdot ,\cdot ]_A,\rightarrow ,\Delta _A,\nabla _A)$ is, moreover, a ${\rm G\ddot{o}del}$ algebra.\end{remark}

\begin{proposition} If $[\theta ,\nabla _A]_A=\theta $ for all $\theta \in {\rm Con}(A)$, for any $\alpha ,\beta ,\gamma \in {\rm Con}(A)$:\begin{enumerate}
\item\label{prop4.1(1)} if $\alpha \vee \beta =\nabla _A$, then $[\alpha ,\beta ]_A=\alpha \cap \beta $;
\item\label{prop4.1(2)} if $\alpha \vee \beta =\alpha \vee \gamma =\nabla _A$, then $\alpha \vee [\beta ,\gamma ]_A=\alpha \vee (\beta \cap \gamma )=\nabla _A $;
\item\label{prop4.1(3)} if $\alpha \vee \beta =\nabla _A$, then $[\alpha ,\alpha ]_A^n\vee [\beta ,\beta ]_A^n=\nabla _A$ for all $n\in \N ^*$.\end{enumerate}\label{prop4.1}\end{proposition}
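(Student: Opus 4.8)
The plan is to derive all three parts from the single hypothesis $[\theta ,\nabla _A]_A=\theta$ (valid for every $\theta \in {\rm Con}(A)$), using only distributivity of the commutator over joins, its monotonicity and commutativity, and the basic bound $[\alpha ,\beta ]_A\subseteq \alpha \cap \beta$ from Proposition \ref{1.3}. The first thing I would record is that, specializing $\theta =\nabla _A$, we get $[\nabla _A,\nabla _A]_A=\nabla _A$, and then a one-line induction via Lemma \ref{lema2.9} gives $[\nabla _A,\nabla _A]_A^n=\nabla _A$ for all $n\in \N ^*$; this is what I will feed into (\ref{prop4.1(3)}).

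For (\ref{prop4.1(1)}), the inclusion $[\alpha ,\beta ]_A\subseteq \alpha \cap \beta$ is free. For the reverse inclusion, assuming $\alpha \vee \beta =\nabla _A$, I would expand
$$\alpha \cap \beta =[\alpha \cap \beta ,\nabla _A]_A=[\alpha \cap \beta ,\alpha \vee \beta ]_A=[\alpha \cap \beta ,\alpha ]_A\vee [\alpha \cap \beta ,\beta ]_A,$$
and then note that $\alpha \cap \beta \subseteq \beta $, together with monotonicity and commutativity, gives $[\alpha \cap \beta ,\alpha ]_A\subseteq [\beta ,\alpha ]_A=[\alpha ,\beta ]_A$, and similarly $[\alpha \cap \beta ,\beta ]_A\subseteq [\alpha ,\beta ]_A$; hence $\alpha \cap \beta \subseteq [\alpha ,\beta ]_A$, and equality follows.

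For (\ref{prop4.1(2)}), assuming $\alpha \vee \beta =\alpha \vee \gamma =\nabla _A$, I would write
$$\nabla _A=[\nabla _A,\nabla _A]_A=[\alpha \vee \beta ,\alpha \vee \gamma ]_A=[\alpha ,\alpha ]_A\vee [\alpha ,\gamma ]_A\vee [\beta ,\alpha ]_A\vee [\beta ,\gamma ]_A,$$
observe that each of the first three joinands is contained in $\alpha $ (the commutator lies below each of its arguments), so $\nabla _A\subseteq \alpha \vee [\beta ,\gamma ]_A$, i.e. $\alpha \vee [\beta ,\gamma ]_A=\nabla _A$; and since $[\beta ,\gamma ]_A\subseteq \beta \cap \gamma $, this forces $\alpha \vee (\beta \cap \gamma )=\nabla _A$ as well. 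For (\ref{prop4.1(3)}), assuming $\alpha \vee \beta =\nabla _A$, I would invoke Lemma \ref{aritmcomut}, (\ref{aritmcomut6}), to get $[\alpha \vee \beta ,\alpha \vee \beta ]_A^{n^2}\subseteq [\alpha ,\alpha ]_A^n\vee [\beta ,\beta ]_A^n$, and since the left-hand side equals $[\nabla _A,\nabla _A]_A^{n^2}=\nabla _A$ by the first observation, conclude $[\alpha ,\alpha ]_A^n\vee [\beta ,\beta ]_A^n=\nabla _A$. A self-contained alternative avoiding Lemma \ref{aritmcomut}: the computation in (\ref{prop4.1(2)}) also shows that $\alpha \vee \beta =\nabla _A$ implies $[\alpha ,\alpha ]_A\vee \beta =\nabla _A$, whence $[\alpha ,\alpha ]_A\vee [\beta ,\beta ]_A=\nabla _A$, and then induction on $n$, applying the same step to the pair $([\alpha ,\alpha ]_A^n,[\beta ,\beta ]_A^n)$, finishes.

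I do not expect a genuine obstacle here: each part reduces to one short commutator computation. The only place that needs a small idea rather than bookkeeping is the reverse inclusion in (\ref{prop4.1(1)}) — the move of rewriting $\alpha \cap \beta $ as $[\alpha \cap \beta ,\alpha \vee \beta ]_A$ before distributing; once that is done, (\ref{prop4.1(2)}) and (\ref{prop4.1(3)}) are immediate variations on the same theme.
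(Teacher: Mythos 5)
Your proposal is correct, and for the most part it is the paper's own argument with the packaging changed. For part (\ref{prop4.1(1)}), the paper routes the reverse inclusion through the residuation: it shows $\alpha \subseteq (\alpha \cap \beta )\rightarrow [\alpha ,\beta ]_A$ and $\beta \subseteq (\alpha \cap \beta )\rightarrow [\alpha ,\beta ]_A$, deduces that this residual is $\nabla _A$, and then invokes Lemma \ref{implicnabla}; your direct expansion $\alpha \cap \beta =[\alpha \cap \beta ,\alpha \vee \beta ]_A=[\alpha \cap \beta ,\alpha ]_A\vee [\alpha \cap \beta ,\beta ]_A\subseteq [\alpha ,\beta ]_A$ is the same computation with the $\rightarrow $--machinery unfolded, and is if anything more self--contained since it avoids Lemma \ref{implicnabla}. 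Part (\ref{prop4.1(2)}) is verbatim the paper's proof. For part (\ref{prop4.1(3)}), the paper's proof is exactly your ``self--contained alternative'': the case $n=1$ obtained by applying (\ref{prop4.1(2)}) twice, then induction applying that case to the pair $([\alpha ,\alpha ]_A^n,[\beta ,\beta ]_A^n)$. I would advise against your primary route through Lemma \ref{aritmcomut}, (\ref{aritmcomut6}): for $n=1$ that item asserts $[\alpha \vee \beta ,\alpha \vee \beta ]_A\subseteq [\alpha ,\alpha ]_A\vee [\beta ,\beta ]_A$, a case which the proof of Lemma \ref{aritmcomut}, (\ref{aritmcomut5}), does not really cover (it only handles $k,n\geq 2$ in the relevant step) and which fails in general even under the hypothesis $[\theta ,\nabla _A]_A=\theta $: in the ring $\Z [x,y]$, where the commutator of ideals is their product, taking $\alpha =(x)$ and $\beta =(y)$ gives $xy\notin (x^2)+(y^2)$. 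So keep the inductive argument, which coincides with the paper's, as the actual proof of (\ref{prop4.1(3)}); note also that your preliminary fact $[\nabla _A,\nabla _A]_A^n=\nabla _A$ needs only the defining recursion for $[\cdot ,\cdot ]_A^n$, not Lemma \ref{lema2.9}.
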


\begin{proof} (\ref{prop4.1(1)}) Assume that $\alpha \vee \beta =\nabla _A$. Since $\displaystyle (\alpha \cap \beta )\rightarrow [\alpha ,\beta ]_A=\bigvee \{\theta \in {\rm Con}(A)\ |\ [\alpha \cap \beta ,\theta ]_A\subseteq [\alpha ,\beta ]_A\}$ and $[\alpha \cap \beta ,\beta ]_A\subseteq [\alpha ,\beta ]_A$ and $[\alpha ,\alpha \cap \beta ]_A\subseteq [\alpha ,\beta ]_A$, it follows that $\alpha \subseteq (\alpha \cap \beta )\rightarrow [\alpha ,\beta ]_A$ and $\beta \subseteq (\alpha \cap \beta )\rightarrow [\alpha ,\beta ]_A$, hence $\nabla _A=\alpha \vee \beta \subseteq (\alpha \cap \beta )\rightarrow [\alpha ,\beta ]_A$, therefore $(\alpha \cap \beta )\rightarrow [\alpha ,\beta ]_A=\nabla _A$, thus $\alpha \cap \beta \subseteq [\alpha ,\beta ]_A$ by Lemma \ref{implicnabla}. Since the converse inclusion always holds, it follows that $\alpha \cap \beta =[\alpha ,\beta ]_A$.

\noindent (\ref{prop4.1(2)}) Assume that $\alpha \vee \beta =\alpha \vee \gamma =\nabla _A$, so that $\nabla _A=[\nabla _A,\nabla _A]_A=[\alpha \vee \beta ,\alpha \vee \gamma ]_A=[\alpha ,\alpha ]_A\vee [\beta ,\alpha ]_A\vee [\alpha ,\gamma ]_A\vee [\beta ,\gamma ]_A\subseteq \alpha \vee [\beta ,\gamma ]_A\subseteq \alpha \vee (\beta \cap \gamma )\subseteq \nabla _A$, hence $\alpha \vee [\beta ,\gamma ]_A=\alpha \vee (\beta \cap \gamma )=\nabla _A$.

\noindent (\ref{prop4.1(3)}) We apply induction on $n$. Assume that $\alpha \vee \beta =\nabla _A$, so that, by (\ref{prop4.1(2)}), $\alpha \vee [\beta ,\beta ]_A=\nabla _A$, thus $[\alpha ,\alpha ]_A\vee [\beta ,\beta ]_A=\nabla _A$, hence the implication holds in the case $n=1$. Now, if $n\in \N ^*$ fulfills the implication in the enunciation for all $\alpha ,\beta \in {\rm Con}(A)$, and assume that $\alpha \vee \beta =\nabla _A$, so that $[\alpha ,\alpha ]_A^n\vee [\beta ,\beta ]_A^n=\nabla _A$. Then, by the case $n=1$, it follows that $[\alpha ,\alpha ]_A^{n+1}\vee [\beta ,\beta ]_A^{n+1}=[[\alpha ,\alpha ]_A^n,[\alpha ,\alpha ]_A^n]_A\vee [[\beta ,\beta ]_A^n,[\beta ,\beta ]_A^n]_A=\nabla _A$.\end{proof}

\begin{lemma} If $[\gamma ,\nabla _A]_A=\gamma $ for all $\gamma \in {\rm Con}(A)$, then, for all $\alpha \in {\cal B}({\rm Con}(A))$ and all $\theta \in {\rm Con}(A)$, $[\alpha ,\theta ]_A=\alpha \cap \theta $.\label{meetcomm}\end{lemma}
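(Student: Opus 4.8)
The plan is to exploit the fact that $\alpha $ is a complemented element of ${\rm Con}(A)$ together with the distributivity of the commutator over joins, so that no appeal to distributivity or modularity of the lattice ${\rm Con}(A)$ (which we do not have at this level of generality) is required. Fix $\alpha \in {\cal B}({\rm Con}(A))$ and let $\alpha ^{\prime }$ be a complement of $\alpha $, so that $\alpha \vee \alpha ^{\prime }=\nabla _A$ and $\alpha \cap \alpha ^{\prime }=\Delta _A$, and fix $\theta \in {\rm Con}(A)$. The inclusion $[\alpha ,\theta ]_A\subseteq \alpha \cap \theta $ is immediate from Proposition \ref{1.3} (the commutator lies below each of its arguments), so the whole content is the reverse inclusion.

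For the reverse inclusion I would apply the hypothesis $[\gamma ,\nabla _A]_A=\gamma $ to the congruence $\gamma =\alpha \cap \theta $, obtaining $\alpha \cap \theta =[\alpha \cap \theta ,\nabla _A]_A=[\alpha \cap \theta ,\alpha \vee \alpha ^{\prime }]_A$, and then distribute the commutator over the join to get $\alpha \cap \theta =[\alpha \cap \theta ,\alpha ]_A\vee [\alpha \cap \theta ,\alpha ^{\prime }]_A$. The key point is that the second joinand vanishes: again by Proposition \ref{1.3}, $[\alpha \cap \theta ,\alpha ^{\prime }]_A\subseteq (\alpha \cap \theta )\cap \alpha ^{\prime }\subseteq \alpha \cap \alpha ^{\prime }=\Delta _A$. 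Hence $\alpha \cap \theta =[\alpha \cap \theta ,\alpha ]_A$, and monotonicity of the commutator together with its commutativity gives $\alpha \cap \theta =[\alpha \cap \theta ,\alpha ]_A\subseteq [\theta ,\alpha ]_A=[\alpha ,\theta ]_A$. Combining the two inclusions yields $[\alpha ,\theta ]_A=\alpha \cap \theta $.

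There is no serious obstacle here; the one move to get right is precisely the step above, namely to expand $\nabla _A$ as $\alpha \vee \alpha ^{\prime }$ and push the commutator through the join, rather than trying to distribute $\cap $ over $\vee $ inside ${\rm Con}(A)$, which is unavailable. Alternatively one could first record, via Proposition \ref{prop4.1}(\ref{prop4.1(1)}) applied to the pair $\alpha ,\alpha ^{\prime }$, that $[\alpha ,\alpha ^{\prime }]_A=\Delta _A$ and then invoke monotonicity, but this detour is not needed.
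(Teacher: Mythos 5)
Your proof is correct and follows essentially the same route as the paper's: both expand $\nabla _A$ as $\alpha \vee \alpha ^{\prime }$, apply the hypothesis to $\alpha \cap \theta $, distribute the commutator over the join, annihilate the term involving $\alpha ^{\prime }$ via $\alpha \cap \alpha ^{\prime }=\Delta _A$, and finish with monotonicity and commutativity. No gaps.
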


\begin{proof} Let $\theta \in {\rm Con}(A)$ and $\alpha \in {\cal B}({\rm Con}(A))$, so that there exists a $\beta \in {\rm Con}(A)$ with $\alpha \vee \beta =\nabla _A$ and $\alpha \cap \beta =\Delta _A$. Then the following hold: $[\alpha ,\theta ]_A\subseteq \alpha \cap \theta =[\nabla _A,\alpha \cap \theta]_A=[\alpha \vee \beta ,\alpha \cap \theta]_A=[\alpha ,\alpha \cap \theta]_A\vee [\beta ,\alpha \cap \theta]_A\subseteq [\alpha ,\alpha \cap \theta]_A\vee (\beta \cap \alpha \cap \theta )\subseteq [\alpha ,\theta ]_A\vee \Delta _A=[\alpha ,\theta ]_A$, hence $[\alpha ,\theta ]_A=\alpha \cap \theta $. We have followed the argument from \cite[Lemma $4$]{jip}.\end{proof}

\begin{remark} By Lemma \ref{meetcomm}, if $[\gamma ,\nabla _A]_A=\gamma $ for all $\gamma \in {\rm Con}(A)$, then, in ${\cal B}({\rm Con}(A))$, the commutator of $A$ equals the intersection, in particular the intersection in ${\cal B}({\rm Con}(A))$ is distributive with respect to the join.\label{comutinters}\end{remark}

\begin{lemma}\begin{enumerate}
\item\label{fsurjk1} If $f$ is surjective, then:\begin{itemize}

\item $f({\rm PCon}(A)\cap [{\rm Ker}(f)))\subseteq f(\{\alpha \vee {\rm Ker}(f)\ |\ \alpha \in {\rm PCon}(A)\})={\rm PCon}(B)$; 
\item $f({\cal K}(A)\cap [{\rm Ker}(f)))\subseteq f(\{\alpha \vee {\rm Ker}(f)\ |\ \alpha \in {\cal K}(A)\})={\cal K}(B)$;
\item if ${\cal C}$ is congruence--modular and semi--degenerate, then $f({\cal B}({\rm Con}(A))\cap [{\rm Ker}(f)))\subseteq f(\{\alpha \vee {\rm Ker}(f)\ |\ \alpha \in {\cal B}({\rm Con}(A))\})\subseteq {\cal B}({\rm Con}(B))$.\end{itemize}
\item\label{fsurjk2} For all $\theta \in {\rm Con}(A)$:\begin{itemize}
\item $\{\alpha /\theta \ |\ \alpha \in {\rm PCon}(A)\cap [\theta )\}\subseteq \{(\alpha \vee \theta )/\theta \ |\ \alpha \in {\rm PCon}(A)\})={\rm PCon}(A/\theta )$; 
\item $\{\alpha /\theta \ |\ \alpha \in {\cal K}(A)\cap [\theta )\}\subseteq \{(\alpha \vee \theta )/\theta \ |\ \alpha \in {\cal K}(A)\})={\cal K}(A/\theta )$;
\item if ${\cal C}$ is congruence--modular and semi--degenerate, then $\{\alpha /\theta \ |\ \alpha \in {\cal B}({\rm Con}(A))\cap [\theta )\}\subseteq \{(\alpha \vee \theta )/\theta \ |\ \alpha \in {\cal B}({\rm Con}(A))\})\subseteq {\cal B}({\rm Con}(A/\theta ))$.\end{itemize}\end{enumerate}\label{fsurjk}\end{lemma}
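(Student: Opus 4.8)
The plan is to prove part (\ref{fsurjk1}) and to derive part (\ref{fsurjk2}) from it by specializing $f$ to the canonical surjection $p_\theta :A\to A/\theta $, for which ${\rm Ker}(p_\theta )=\theta $, $p_\theta (\gamma )=\gamma /\theta $ and $p_\theta (\alpha \vee \theta )=(\alpha \vee \theta )/\theta $ (one could also argue (\ref{fsurjk2}) directly along the same lines, using Lemma \ref{fsurjcongr}, (\ref{fsurjcongr2}), instead of its part (\ref{fsurjcongr1})). In all three bullets of (\ref{fsurjk1}) the leftmost inclusion is trivial: if $\alpha $ belongs to one of ${\rm PCon}(A)$, ${\cal K}(A)$, ${\cal B}({\rm Con}(A))$ and $\alpha \supseteq {\rm Ker}(f)$, then $\alpha =\alpha \vee {\rm Ker}(f)$, so $f(\alpha )$ already lies in the set on the right. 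For the equality in the first bullet I would invoke Lemma \ref{fsurjcongr}, (\ref{fsurjcongr1}): by surjectivity of $f$, the set $\{\alpha \vee {\rm Ker}(f)\ |\ \alpha \in {\rm PCon}(A)\}=\{Cg_A(a,b)\vee {\rm Ker}(f)\ |\ a,b\in A\}$ is carried by $f$ onto $\{Cg_B(f(a),f(b))\ |\ a,b\in A\}=\{Cg_B(c,d)\ |\ c,d\in B\}={\rm PCon}(B)$. For the second bullet, each $\alpha \in {\cal K}(A)$ has the form $Cg_A(X)$ with $X\subseteq A^2$ finite, so $f(\alpha \vee {\rm Ker}(f))=Cg_B(f(X))\in {\cal K}(B)$ by the same item of Lemma \ref{fsurjcongr}, and conversely any $Cg_B(Y)\in {\cal K}(B)$ ($Y$ finite) equals $f(Cg_A(X)\vee {\rm Ker}(f))$ for any finite $X\subseteq A^2$ with $f(X)=Y$; Remark \ref{ka} yields the same conclusion.

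The third bullet carries the actual content. Since ${\cal C}$ is congruence--modular and semi--degenerate, Lemma \ref{distribsemid}, (\ref{distribsemid1}), and Proposition \ref{prodcongr} give $[\gamma ,\nabla _A]_A=\gamma $ for every $\gamma \in {\rm Con}(A)$, so Lemma \ref{meetcomm} applies and yields $[\alpha ,\theta ]_A=\alpha \cap \theta $ for all $\alpha \in {\cal B}({\rm Con}(A))$ and $\theta \in {\rm Con}(A)$. Fix $\alpha \in {\cal B}({\rm Con}(A))$, choose a complement $\beta $ of $\alpha $ in ${\rm Con}(A)$ (so $\alpha \cap \beta =\Delta _A$, $\alpha \vee \beta =\nabla _A$), and put $k={\rm Ker}(f)$. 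I would show that $f(\beta \vee k)$ is a complement of $f(\alpha \vee k)$ in ${\rm Con}(B)$, whence $f(\alpha \vee k)\in {\cal B}({\rm Con}(B))$. Both $\alpha \vee k$ and $\beta \vee k$ lie in $[{\rm Ker}(f))$, and $\gamma \mapsto f(\gamma )$ is an order isomorphism of $[{\rm Ker}(f))$ onto ${\rm Con}(B)$ (Remark \ref{bijlatcongr}) sending ${\rm Ker}(f)$ to $\Delta _B$ and $\nabla _A$ to $\nabla _B$; hence it preserves the meet and join of these two congruences, and it suffices to prove $(\alpha \vee k)\vee (\beta \vee k)=\nabla _A$, which is clear, and $(\alpha \vee k)\cap (\beta \vee k)=k$. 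For the latter I would use the modular law ($k\subseteq \beta \vee k$), then Lemma \ref{meetcomm}, then distributivity of the commutator over joins together with $[\cdot ,\cdot ]_A\subseteq \cap $:
\begin{align*}
(\alpha \vee k)\cap (\beta \vee k) & =k\vee \bigl(\alpha \cap (\beta \vee k)\bigr)=k\vee [\alpha ,\beta \vee k]_A\\
& =k\vee [\alpha ,\beta ]_A\vee [\alpha ,k]_A\subseteq k\vee (\alpha \cap \beta )\vee (\alpha \cap k)=k,
\end{align*}
and since the reverse inclusion is trivial, equality holds; applying $f$ then gives $f(\alpha \vee k)\cap f(\beta \vee k)=f\bigl((\alpha \vee k)\cap (\beta \vee k)\bigr)=f(k)=\Delta _B$, as required.

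The step I expect to be the main obstacle is precisely this third bullet: forcing a complement of a Boolean congruence of $A$ to descend to a complement in ${\rm Con}(B)$ after being joined with ${\rm Ker}(f)$. This is where the congruence--modular and semi--degenerate hypotheses are genuinely used, via $[\gamma ,\nabla _A]_A=\gamma $ and hence via Lemma \ref{meetcomm}; without such a hypothesis a complement need not survive, which is also why only an inclusion, and not an equality, is asserted there. Everything else, including all of part (\ref{fsurjk2}) obtained by the specialization $f=p_\theta $ described above, is routine bookkeeping with Lemma \ref{fsurjcongr} and Remark \ref{ka}.
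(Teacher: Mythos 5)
Your proposal is correct, and for the principal/compact bullets as well as for part (\ref{fsurjk2}) (obtained by specializing to $f=p_{\theta }$) it coincides with the paper's proof, which simply cites Lemma \ref{fsurjcongr}, (\ref{fsurjcongr1}). The only genuine divergence is in the Boolean--center bullet. The paper argues downstairs, in ${\rm Con}(B)$: taking a complement $\beta $ of $\alpha $, it gets $f(\alpha \vee {\rm Ker}(f))\vee f(\beta \vee {\rm Ker}(f))=f(\nabla _A)=\nabla _B$ from Lemma \ref{fsurjcongr}, (\ref{fsurjcongr0}), and $[f(\alpha \vee {\rm Ker}(f)),f(\beta \vee {\rm Ker}(f))]_B=f([\alpha ,\beta ]_A\vee {\rm Ker}(f))=\Delta _B$ from Remark \ref{fsurjcomut}, and concludes membership in ${\cal B}({\rm Con}(B))$ (the passage from commutator $\Delta _B$ to intersection $\Delta _B$ being covered by $[\theta ,\nabla _B]_B=\theta $ in $B$, i.e.\ Proposition \ref{prop4.1}, (\ref{prop4.1(1)}), which holds for the same reason you invoke semi--degeneracy in $A$). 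You instead verify the lattice complement upstairs: $(\alpha \vee {\rm Ker}(f))\cap (\beta \vee {\rm Ker}(f))={\rm Ker}(f)$ via the modular law, Lemma \ref{meetcomm} and join--distributivity of the commutator, and then transport it through the order isomorphism $[{\rm Ker}(f))\cong {\rm Con}(B)$ of Remark \ref{bijlatcongr}. Both routes use the congruence--modular, semi--degenerate hypothesis in the same essential way (to obtain $[\gamma ,\nabla ]=\gamma $); the paper's is shorter because Remark \ref{fsurjcomut} transports the commutator in one step, while yours is a bit more self--contained on the $B$ side, needing nothing about the commutator of $B$ beyond its being below the intersection. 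Since Lemma \ref{meetcomm} and Proposition \ref{prop4.1} precede Lemma \ref{fsurjk} in the paper, there is no circularity in your appeal to them.
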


\begin{proof} The first inclusion in each statement is trivial.

\noindent (\ref{fsurjk1}) By Lemma \ref{fsurjcongr}, (\ref{fsurjcongr1}), for the statements on principal and on compact congruences. Now let $\alpha \in {\cal B}({\rm Con}(A))$, so that $\alpha \vee \beta =\nabla _A$ and $[\alpha ,\beta ]_A=\Delta _A$ for some $\beta \in {\rm Con}(A)$, hence, by Lemma \ref{fsurjcongr}, (\ref{fsurjcongr0}), and Remark \ref{fsurjcomut}, $f(\alpha \vee {\rm Ker}(f))\vee f(\beta \vee {\rm Ker}(f))=f(\alpha \vee {\rm Ker}(f)\vee \beta \vee {\rm Ker}(f))=f(\nabla _A)=\nabla _B$ and $[f(\alpha \vee {\rm Ker}(f)),f(\beta \vee {\rm Ker}(f)]_B=f([\alpha ,\beta ]_A\vee {\rm Ker}(f))=f(\Delta _A)=\Delta _B$, therefore $f(\alpha \vee {\rm Ker}(f))\in {\cal B}({\rm Con}(B))$. 

\noindent (\ref{fsurjk2}) By (\ref{fsurjk1}) for $f=p_{\theta }$.\end{proof}

\begin{proposition}\begin{enumerate}
\item\label{nablak1} Assume that $f$ is surjective. Then: if $\nabla _A\in {\rm PCon}(A)$, then $\nabla _B\in {\rm PCon}(B)$, while, if $\nabla _A\in {\cal K}(A)$, then $\nabla _B\in {\cal K}(B)$.
\item\label{nablak2} $\nabla _A\in {\rm PCon}(A)$ iff $\nabla _{A/\theta }\in {\rm PCon}(A/\theta )$ for all $\theta \in {\rm Con}(A)$. $\nabla _A\in {\cal K}(A)$ iff $\nabla _{A/\theta }\in {\cal K}(A/\theta )$ for all $\theta \in {\rm Con}(A)$.\end{enumerate}\label{nablak}\end{proposition}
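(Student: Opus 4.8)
Proposition \ref{nablak} states two transfer principles: (\ref{nablak1}) surjective morphisms push the property ``$\nabla$ is principal'' (resp. ``$\nabla$ is compact'') forward, and (\ref{nablak2}) both properties pass to and from all quotients.

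The plan is to derive everything from Lemma \ref{fsurjcongr}, (\ref{fsurjcongr1}), which tells us that for a surjection $f$ and any $a,b\in A$ we have $f(Cg_A(a,b)\vee {\rm Ker}(f))=Cg_B(f(a),f(b))$, and more generally $f(Cg_A(X)\vee {\rm Ker}(f))=Cg_B(f(X))$. First I would prove (\ref{nablak1}). Suppose $\nabla_A\in {\rm PCon}(A)$, i.e.\ $\nabla_A=Cg_A(a,b)$ for some $a,b\in A$. Since $f$ is surjective, $f(\nabla_A)=\nabla_B$; also $\nabla_A\vee {\rm Ker}(f)=\nabla_A$, so Lemma \ref{fsurjcongr}, (\ref{fsurjcongr1}), gives $\nabla_B=f(\nabla_A)=f(Cg_A(a,b)\vee {\rm Ker}(f))=Cg_B(f(a),f(b))\in {\rm PCon}(B)$. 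The case $\nabla_A\in {\cal K}(A)$ is identical using the second half of Lemma \ref{fsurjcongr}, (\ref{fsurjcongr1}): write $\nabla_A=Cg_A(X)$ for a finite $X\subseteq A^2$, then $\nabla_B=f(\nabla_A)=f(Cg_A(X)\vee {\rm Ker}(f))=Cg_B(f(X))$ with $f(X)$ finite, so $\nabla_B\in {\cal K}(B)$. (Alternatively, for the compact case one can simply invoke Lemma \ref{fsurjk}, (\ref{fsurjk1}), since $\nabla_A\in {\cal K}(A)\cap [{\rm Ker}(f))$ and $f(\nabla_A)=\nabla_B$.)

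Next I would prove (\ref{nablak2}). The forward directions are immediate: apply (\ref{nablak1}) to the canonical surjection $f=p_\theta:A\to A/\theta$ for each $\theta\in {\rm Con}(A)$, noting $p_\theta(\nabla_A)=\nabla_{A/\theta}$. For the converse directions, take $\theta=\Delta_A$: then $A/\Delta_A\cong A$ canonically, so $\nabla_{A/\Delta_A}\in {\rm PCon}(A/\Delta_A)$ (resp.\ ${\cal K}(A/\Delta_A)$) transports back to $\nabla_A\in {\rm PCon}(A)$ (resp.\ ${\cal K}(A)$) under this isomorphism. Hence both equivalences hold. One small point worth stating explicitly is that principality and compactness are preserved by isomorphism, which is clear since an isomorphism carries $Cg_A(X)$ to $Cg_{A'}(f(X))$.

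I do not anticipate a genuine obstacle here; the statement is essentially a bookkeeping consequence of how congruence generation interacts with surjections (Lemma \ref{fsurjcongr}) together with the trivial observation that the identity quotient recovers the original algebra. The only thing to be careful about is not conflating ``$\nabla_A$ is a finitely generated congruence'' with ``$\nabla_A$ is finitely generated as an algebra''; throughout, ${\cal K}(A)$ denotes finitely generated congruences, i.e.\ compact elements of ${\rm Con}(A)$, and Lemma \ref{fsurjcongr} is stated precisely for that notion, so the argument goes through verbatim. If one wants a slicker presentation, (\ref{nablak1}) for the compact case is literally the instance of Lemma \ref{fsurjk}, (\ref{fsurjk1}), applied to $\nabla_A$, and (\ref{nablak2}) follows by specializing $f$ to $p_\theta$ and to $p_{\Delta_A}$.
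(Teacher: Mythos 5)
Your proof is correct and follows essentially the same route as the paper: the paper proves (\ref{nablak1}) by citing Lemma \ref{fsurjk}, (\ref{fsurjk1}) (which itself rests on Lemma \ref{fsurjcongr}, (\ref{fsurjcongr1}), exactly the computation you spell out with $\nabla_A\vee{\rm Ker}(f)=\nabla_A$ and $f(\nabla_A)=\nabla_B$), and proves (\ref{nablak2}) by applying (\ref{nablak1}) to $p_\theta$ for the direct implications and using $A/\Delta_A\cong A$ for the converses, just as you do.
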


\begin{proof} (\ref{nablak1}) By Lemma \ref{fsurjk}, (\ref{fsurjk1}).

\noindent (\ref{nablak2}) By (\ref{nablak1}) for the direct implications, and the fact that $A/\Delta _A$ is isomorphic to $A$, for the converse implications.\end{proof}

\begin{lemma} If ${\cal C}$ is congruence--modular, then, for all $n\in \N ^*$ and any $\alpha ,\beta \in {\rm Con}(A)$:\begin{enumerate}
\item\label{2.10(1)} if $f$ is surjective, then $[f(\alpha \vee {\rm Ker}(f)),f(\beta \vee {\rm Ker}(f))]_B^n=f([\alpha ,\beta ]_A^n\vee {\rm Ker}(f))$;
\item\label{2.10(2)} for any $\theta \in {\rm Con}(A)$, $[(\alpha \vee \theta )/\theta ,(\beta \vee \theta )/\theta ]_{A/\theta }^n=([\alpha ,\beta ]_A^n\vee \theta )/\theta  $;
\item\label{2.10(3)} for any $\theta \in {\rm Con}(A)$ and any $X,Y\in {\cal P}(A^2)$, $[Cg_{A/\theta }(X/\theta ),Cg_{A/\theta }(Y/\theta )]_{A/\theta }^n=([Cg_A(X),Cg_A(Y)]_A^n\vee \theta )/\theta  $.\end{enumerate}\label{2.10}\end{lemma}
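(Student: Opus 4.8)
The plan is to prove part (\ref{2.10(1)}) first, by induction on $n$, and then read off parts (\ref{2.10(2)}) and (\ref{2.10(3)}) as specializations; the whole statement is simply an iteration of Remark \ref{fsurjcomut}.

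For (\ref{2.10(1)}), I would fix a surjective $f$ and put $\phi =f(\alpha \vee {\rm Ker}(f))$ and $\psi =f(\beta \vee {\rm Ker}(f))$, which are congruences of $B$ by Remark \ref{bijlatcongr}. The base case $n=1$ is exactly the first identity in Remark \ref{fsurjcomut}. For the inductive step, assume $[\phi ,\psi ]_B^n=f([\alpha ,\beta ]_A^n\vee {\rm Ker}(f))$ and set $\delta =[\alpha ,\beta ]_A^n$, so that $[\phi ,\psi ]_B^n=f(\delta \vee {\rm Ker}(f))$. By definition $[\phi ,\psi ]_B^{n+1}=[[\phi ,\psi ]_B^n,[\phi ,\psi ]_B^n]_B=[f(\delta \vee {\rm Ker}(f)),f(\delta \vee {\rm Ker}(f))]_B$, and applying Remark \ref{fsurjcomut} with the single congruence $\delta $ in both argument slots gives $[f(\delta \vee {\rm Ker}(f)),f(\delta \vee {\rm Ker}(f))]_B=f([\delta ,\delta ]_A\vee {\rm Ker}(f))=f([\alpha ,\beta ]_A^{n+1}\vee {\rm Ker}(f))$, since $[\delta ,\delta ]_A=[[\alpha ,\beta ]_A^n,[\alpha ,\beta ]_A^n]_A=[\alpha ,\beta ]_A^{n+1}$ by the recursive definition of $[\cdot ,\cdot ]_A^{n+1}$. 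This closes the induction. The kernel--join bookkeeping takes care of itself: Remark \ref{fsurjcomut} already holds for arbitrary congruences, so we may feed $\delta $ into it directly, without worrying that $\delta \vee {\rm Ker}(f)$ already contains ${\rm Ker}(f)$.

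For (\ref{2.10(2)}), I would apply (\ref{2.10(1)}) to the canonical surjection $f=p_{\theta }:A\rightarrow A/\theta $, for which ${\rm Ker}(p_{\theta })=\theta $ and $p_{\theta }(\gamma \vee \theta )=(\gamma \vee \theta )/\theta $ for all $\gamma \in {\rm Con}(A)$; this immediately yields $[(\alpha \vee \theta )/\theta ,(\beta \vee \theta )/\theta ]_{A/\theta }^n=([\alpha ,\beta ]_A^n\vee \theta )/\theta $. For (\ref{2.10(3)}), I would first use Lemma \ref{fsurjcongr}, (\ref{fsurjcongr2}), to rewrite $Cg_{A/\theta }(X/\theta )=(Cg_A(X)\vee \theta )/\theta $ and $Cg_{A/\theta }(Y/\theta )=(Cg_A(Y)\vee \theta )/\theta $, and then apply (\ref{2.10(2)}) with $\alpha =Cg_A(X)$ and $\beta =Cg_A(Y)$.

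There is no genuine obstacle here: the argument is purely formal once Remark \ref{fsurjcomut} (which itself rests on Theorem \ref{wow} and Remark \ref{bijlatcongr}) is available, and the only thing to be slightly careful about is the iteration in the inductive step of (\ref{2.10(1)}) — namely, expanding $[\cdot ,\cdot ]_A^{n+1}=[[\cdot ,\cdot ]_A^n,[\cdot ,\cdot ]_A^n]_A$ and reusing Remark \ref{fsurjcomut} on the \emph{already computed} congruence $\delta =[\alpha ,\beta ]_A^n$, rather than attempting to peel off one commutator at a time.
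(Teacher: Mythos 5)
Your proof is correct and follows essentially the same route as the paper: induction on $n$ for (\ref{2.10(1)}) with Remark \ref{fsurjcomut} supplying both the base case and the inductive step (applied to $\delta =[\alpha ,\beta ]_A^n$ in both arguments), then specializing to $f=p_{\theta }$ for (\ref{2.10(2)}), and combining this with Lemma \ref{fsurjcongr}, (\ref{fsurjcongr2}), for (\ref{2.10(3)}). No gaps; your remark about feeding $\delta $ directly into Remark \ref{fsurjcomut} is exactly the bookkeeping the paper's proof performs implicitly.
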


\begin{proof} (\ref{2.10(1)}) We proceed by induction on $n$. For $n=1$, this holds by Remark \ref{fsurjcomut}. Now take an $n\in \N ^*$ such that $[f(\alpha \vee {\rm Ker}(f)),f(\beta \vee {\rm Ker}(f))]_B^n=f([\alpha ,\beta ]_A^n\vee {\rm Ker}(f))$. Then, by the induction hypothesis and Remark \ref{fsurjcomut}, $[f(\alpha \vee {\rm Ker}(f)),f(\beta \vee {\rm Ker}(f))]_B^{n+1}=[[f(\alpha \vee {\rm Ker}(f)),f(\beta \vee {\rm Ker}(f))]_B^n,[f(\alpha \vee {\rm Ker}(f)),f(\beta \vee {\rm Ker}(f))]_B^n]_B=[f([\alpha ,\beta ]_A^n\vee {\rm Ker}(f)),f([\alpha ,\beta ]_A^n\vee {\rm Ker}(f))]_B=f([[\alpha ,\beta ]_A^n, [\alpha ,\beta ]_A^n]_A^n\vee {\rm Ker}(f))=f([\alpha ,\beta ]_A^{n+1}\vee {\rm Ker}(f))$.

\noindent (\ref{2.10(2)}) Take $f=p_{\theta }$ in (\ref{2.10(1)}).

\noindent (\ref{2.10(3)}) Take $\alpha =Cg_A(X)$ and $\beta =Cg_A(Y)$ in (\ref{2.10(2)}) and apply Lemma \ref{fsurjcongr}, (\ref{fsurjcongr2}).\end{proof}

\section{Boolean Congruences versus the Reticulation}
\label{boolean}

Throughout this section, we shall assume that $[\cdot ,\cdot ]_A$ is commutative and distributive w.r.t. arbitrary joins and $\nabla _A\in {\cal K}(A)$. We call $A$ a {\em semiprime algebra} iff $\rho _A(\Delta _A)=\Delta _A$. So $A$ is semiprime iff $\Delta _A\in {\rm RCon}(A)$.

\begin{remark} By Proposition \ref{radcongrdistrib}, if the commutator of $A$ equals the intersection, then $A$ is semiprime, hence, if ${\cal C}$ is congruence--distributive, then every member of ${\cal C}$ is semiprime.\label{allsemiprime}\end{remark}

\begin{proposition} $A/\rho _A(\Delta _A)$ is semiprime.\label{rhosemiprime}\end{proposition}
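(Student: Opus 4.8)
The plan is to reduce the statement to two facts about the radical operator that are already in hand, so that essentially no new work is needed. Recall that, by definition, $A/\rho _A(\Delta _A)$ is semiprime precisely when $\rho _{A/\rho _A(\Delta _A)}(\Delta _{A/\rho _A(\Delta _A)})=\Delta _{A/\rho _A(\Delta _A)}$. So the whole proof amounts to computing the left-hand side of this equality.

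First I would apply Proposition \ref{esential}, (\ref{esential4}), with $\theta :=\rho _A(\Delta _A)$; the only hypothesis there, the inclusion $\theta \subseteq \theta $ needed in part (\ref{esential3}), is trivially satisfied. This gives $\rho _{A/\rho _A(\Delta _A)}(\Delta _{A/\rho _A(\Delta _A)})=\rho _A(\rho _A(\Delta _A))/\rho _A(\Delta _A)$. Next I would invoke the idempotency of the radical, Proposition \ref{onrho}, (\ref{onrho3}), namely $\rho _A(\rho _A(\Delta _A))=\rho _A(\Delta _A)$; substituting this into the previous equality turns the right-hand side into $\rho _A(\Delta _A)/\rho _A(\Delta _A)=\Delta _{A/\rho _A(\Delta _A)}$. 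Hence $\rho _{A/\rho _A(\Delta _A)}(\Delta _{A/\rho _A(\Delta _A)})=\Delta _{A/\rho _A(\Delta _A)}$, which is exactly the assertion that $A/\rho _A(\Delta _A)$ is semiprime.

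There is no real obstacle here: the two cited results do all the work, and the argument is a one-line substitution. The only points to be mildly careful about are that the running assumptions of the section ($[\cdot ,\cdot ]_A$ commutative and distributive w.r.t. arbitrary joins, and $\nabla _A\in {\cal K}(A)$) are in force, so that Propositions \ref{esential} and \ref{onrho} apply, and that the radical operator and the notion of semiprimeness are meaningful for the quotient; for completeness one may remark that these hypotheses are inherited by $A/\rho _A(\Delta _A)$ (using Remark \ref{fsurjcomut} and Proposition \ref{nablak}), though this is not needed for the bare computation above. The result is, in effect, the general-algebra analogue of the classical fact that the quotient of a commutative ring by its nilradical is reduced.
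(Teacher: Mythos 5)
Your proof is correct and coincides with the paper's own argument: both apply Proposition \ref{esential}, (\ref{esential4}), with $\theta =\rho _A(\Delta _A)$ and then the idempotency of the radical from Proposition \ref{onrho}, (\ref{onrho3}), to get $\rho _{A/\rho _A(\Delta _A)}(\Delta _{A/\rho _A(\Delta _A)})=\rho _A(\rho _A(\Delta _A))/\rho _A(\Delta _A)=\Delta _{A/\rho _A(\Delta _A)}$. Nothing further is needed.
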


\begin{proof} By Proposition \ref{esential}, (\ref{esential4}), and Proposition \ref{onrho}, (\ref{onrho3}), $\rho _{A/\rho _A(\Delta _A)}(\Delta _{A/\rho _A(\Delta _A)})=\rho _A(\rho _A(\Delta _A))/\rho _A(\Delta _A)=\rho _A(\Delta _A)/\rho _A(\Delta _A)=\Delta _{A/\rho _A(\Delta _A)}$.\end{proof}

\begin{lemma} If $A$ is semiprime, then, for all $\alpha ,\beta \in {\rm Con}(A)$:\begin{itemize}
\item $\lambda _A(\alpha )={\bf 0}$ iff $\alpha =\Delta _A$;
\item $[\alpha ,\beta ]_A=\Delta _A$ iff $\alpha \cap \beta =\Delta _A$.\end{itemize}\label{deltasemiprime}\end{lemma}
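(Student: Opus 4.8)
The plan is to unwind both equivalences directly from the definition of $\equiv_A$ together with the basic properties of $\rho_A$ already established, using the semiprimeness hypothesis $\rho_A(\Delta_A)=\Delta_A$ only at the one crucial spot.

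For the first bullet, recall from the definition of $\equiv_A$ and Remark \ref{eqrho} that $\lambda_A(\alpha)=\widehat{\alpha}={\bf 0}=\widehat{\Delta_A}$ holds iff $\alpha\equiv_A\Delta_A$, i.e. iff $\rho_A(\alpha)=\rho_A(\Delta_A)$. Since $A$ is semiprime, $\rho_A(\Delta_A)=\Delta_A$, so this is equivalent to $\rho_A(\alpha)=\Delta_A$. By Lemma \ref{imediata}, (\ref{imediata1}), $\alpha\subseteq\rho_A(\alpha)$, so $\rho_A(\alpha)=\Delta_A$ forces $\alpha=\Delta_A$; the converse implication $\alpha=\Delta_A\Rightarrow\lambda_A(\alpha)={\bf 0}$ is immediate. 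This gives the equivalence.

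For the second bullet, the nontrivial direction is $[\alpha,\beta]_A=\Delta_A\Rightarrow\alpha\cap\beta=\Delta_A$. I would apply Proposition \ref{onrho}, (\ref{onrho2}), which gives $\rho_A(\alpha\cap\beta)=\rho_A([\alpha,\beta]_A)$; from $[\alpha,\beta]_A=\Delta_A$ and semiprimeness this equals $\rho_A(\Delta_A)=\Delta_A$. Then, again by $\alpha\cap\beta\subseteq\rho_A(\alpha\cap\beta)=\Delta_A$, we conclude $\alpha\cap\beta=\Delta_A$. The reverse direction is just the fact that the commutator is smaller than its arguments (Proposition \ref{1.3}): $[\alpha,\beta]_A\subseteq\alpha\cap\beta=\Delta_A$, hence $[\alpha,\beta]_A=\Delta_A$.

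Both parts are short and rely only on previously recorded facts; I do not anticipate any real obstacle. The only point requiring care is making sure the semiprimeness hypothesis is invoked correctly (it is exactly what converts $\rho_A(\Delta_A)$ into $\Delta_A$ in each of the two arguments), and that the inclusion $\theta\subseteq\rho_A(\theta)$ from Lemma \ref{imediata}, (\ref{imediata1}), is used to descend from a statement about radicals back to a statement about the congruences themselves.
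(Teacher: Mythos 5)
Your proof is correct and follows essentially the same route as the paper: semiprimeness converts $\rho_A(\Delta_A)$ into $\Delta_A$, the inclusion $\theta\subseteq\rho_A(\theta)$ descends to the congruence itself, and the second bullet reduces to the identity $\rho_A(\alpha\cap\beta)=\rho_A([\alpha,\beta]_A)$. The only cosmetic difference is that the paper phrases the second bullet through $\lambda_A$ and its first bullet, while you work directly with $\rho_A$ via Proposition \ref{onrho}, (\ref{onrho2}) — the same argument in different notation.
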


\begin{proof} Let $\alpha ,\beta \in {\rm Con}(A)$. Since $\lambda _A(\Delta _A)={\bf 0}$ and $[\alpha ,\beta ]_A\subseteq \alpha \cap \beta $, the converse implications always hold. Now assume that $A$ is semiprime. If $\lambda _A(\alpha )={\bf 0}=\lambda _A(\Delta _A)$, then $\alpha \subseteq \rho _A(\alpha )=\rho _A(\Delta _A)=\Delta _A$, thus $\alpha =\Delta _A$. If $[\alpha ,\beta ]_A=\Delta _A$, then $\lambda _A(\alpha \cap \beta )=\lambda _A([\alpha ,\beta ]_A)=\lambda _A(\Delta _A)={\bf 0}$, hence $\alpha \cap \beta =\Delta _A$ by the above.\end{proof}

\begin{lemma} For any $\theta \in {\rm Con}(A)$, the following hold:\begin{enumerate}
\item\label{lema4.2(1)} $\displaystyle \rho _A(\theta )=\bigvee \{\alpha \in {\rm Con}(A)\ |\ (\exists \, k\in \N ^*)\, ([\alpha ,\alpha ]_A^k\subseteq \theta )\}=\bigvee \{\alpha \in {\cal K}(A)\ |\ (\exists \, k\in \N ^*)\, ([\alpha ,\alpha ]_A^k\subseteq \theta )\}=\bigvee \{\alpha \in {\rm PCon}(A)\ |\ (\exists \, k\in \N ^*)\, ([\alpha ,\alpha ]_A^k\subseteq \theta )\}$;
\item\label{lema4.2(2)} for any $\alpha \in {\cal K}(A)$, $\alpha \subseteq \rho _A(\theta )$ iff there exists a $k\in \N ^*$ such that $[\alpha ,\alpha ]_A^k\subseteq \theta $.\end{enumerate}\label{lema4.2}\end{lemma}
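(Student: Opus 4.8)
The plan is to anchor everything on Proposition \ref{rho}, which already expresses $\rho _A(\theta )$ element-wise through principal congruences and commutator powers, and then to upgrade this to the three join descriptions in (\ref{lema4.2(1)}) using the radical identities of Proposition \ref{onrho} and the commutator-power arithmetic of Lemma \ref{aritmcomut}. It is convenient to prove (\ref{lema4.2(1)}) first, since the compact version of it is exactly what is needed to run the argument for the forward implication of (\ref{lema4.2(2)}); the reverse implication of (\ref{lema4.2(2)}) will just be a by-product of one of the inclusions used in (\ref{lema4.2(1)}).

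For (\ref{lema4.2(1)}), write $S_{\rm Con}$, $S_{\cal K}$, $S_{\rm PCon}$ for the three index sets of congruences, compact congruences, respectively principal congruences $\alpha $ admitting some $k\in \N ^*$ with $[\alpha ,\alpha ]_A^k\subseteq \theta $. Since ${\rm PCon}(A)\subseteq {\cal K}(A)\subseteq {\rm Con}(A)$, we have $S_{\rm PCon}\subseteq S_{\cal K}\subseteq S_{\rm Con}$, hence $\bigvee S_{\rm PCon}\subseteq \bigvee S_{\cal K}\subseteq \bigvee S_{\rm Con}$, so it suffices to prove $\bigvee S_{\rm Con}\subseteq \rho _A(\theta )$ and $\rho _A(\theta )\subseteq \bigvee S_{\rm PCon}$. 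For the first inclusion: if $\alpha \in S_{\rm Con}$ with $[\alpha ,\alpha ]_A^k\subseteq \theta $, then by Remark \ref{clara}, (\ref{clara2}), $\rho _A([\alpha ,\alpha ]_A^k)\subseteq \rho _A(\theta )$, and by Proposition \ref{onrho}, (\ref{onrho0}), $\rho _A([\alpha ,\alpha ]_A^k)=\rho _A(\alpha )$, so $\alpha \subseteq \rho _A(\alpha )\subseteq \rho _A(\theta )$ by Lemma \ref{imediata}, (\ref{imediata1}); as this holds for every member of $S_{\rm Con}$ and $\rho _A(\theta )\in {\rm Con}(A)$, we get $\bigvee S_{\rm Con}\subseteq \rho _A(\theta )$. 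For the second inclusion: by Proposition \ref{rho}, each $(a,b)\in \rho _A(\theta )$ satisfies $[Cg_A(a,b),Cg_A(a,b)]_A^n\subseteq \theta $ for some $n\in \N ^*$, so $Cg_A(a,b)\in S_{\rm PCon}$, whence $(a,b)\in Cg_A(a,b)\subseteq \bigvee S_{\rm PCon}$; thus $\rho _A(\theta )\subseteq \bigvee S_{\rm PCon}$, closing the chain of equalities.

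For (\ref{lema4.2(2)}), the implication ``$\Leftarrow $'' is precisely the computation used above (it needs no compactness of $\alpha $): from $[\alpha ,\alpha ]_A^k\subseteq \theta $ we obtain $\alpha \subseteq \rho _A(\alpha )=\rho _A([\alpha ,\alpha ]_A^k)\subseteq \rho _A(\theta )$. For ``$\Rightarrow $'', assume $\alpha \in {\cal K}(A)$ and $\alpha \subseteq \rho _A(\theta )$. By (\ref{lema4.2(1)}), $\rho _A(\theta )=\bigvee S_{\cal K}$, so compactness of $\alpha $ yields $m\in \N ^*$ and $\beta _1,\ldots ,\beta _m\in S_{\cal K}$ with $\alpha \subseteq \beta _1\vee \ldots \vee \beta _m$; choosing $k_i\in \N ^*$ with $[\beta _i,\beta _i]_A^{k_i}\subseteq \theta $ and putting $k=\max \{k_1,\ldots ,k_m\}$, Lemma \ref{aritmcomut}, (\ref{aritmcomut2}), gives $[\beta _i,\beta _i]_A^k\subseteq \theta $ for all $i$. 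Then Lemma \ref{aritmcomut}, (\ref{aritmcomut1}) and (\ref{aritmcomut7}), yields $[\alpha ,\alpha ]_A^{k^m}\subseteq [\beta _1\vee \ldots \vee \beta _m,\beta _1\vee \ldots \vee \beta _m]_A^{k^m}\subseteq [\beta _1,\beta _1]_A^k\vee \ldots \vee [\beta _m,\beta _m]_A^k\subseteq \theta $, so $k^m$ is the required exponent.

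The only genuinely delicate point is the forward direction of (\ref{lema4.2(2)}): one must bound a single commutator power of a finite join of compact congruences by $\theta $, and this is exactly what Lemma \ref{aritmcomut}, (\ref{aritmcomut7}), was designed to deliver, so once that arithmetic lemma is in hand the step is mechanical. A secondary subtlety worth stating explicitly is that this argument relies on having already proved the compact version in (\ref{lema4.2(1)})—that is, on knowing $\rho _A(\theta )$ is a join of \emph{compact} congruences—before compactness of $\alpha $ can be invoked; everything else reduces to the basic radical identities and monotonicity already recorded in Remark \ref{clara}, Lemma \ref{imediata} and Proposition \ref{onrho}.
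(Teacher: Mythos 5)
Your proposal is correct and follows essentially the same route as the paper: part (\ref{lema4.2(1)}) rests on Proposition \ref{rho} plus a circular chain of inclusions among the three joins, and the forward direction of part (\ref{lema4.2(2)}) is exactly the paper's argument via compactness, a common exponent, and Lemma \ref{aritmcomut}, (\ref{aritmcomut1}), (\ref{aritmcomut2}) and (\ref{aritmcomut7}). The only (harmless) variation is in closing the chain in (\ref{lema4.2(1)}): you bound $\bigvee S_{\rm Con}$ by $\rho _A(\theta )$ through the radical identities $\rho _A([\alpha ,\alpha ]_A^k)=\rho _A(\alpha )$, whereas the paper bounds it by $\bigvee S_{\rm PCon}$ directly, decomposing each $\alpha $ into principal congruences and using monotonicity of commutator powers.
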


\begin{proof} (\ref{lema4.2(1)}) By Proposition \ref{rho} and the fact that ${\rm PCon}(A)\subseteq {\cal K}(A)\subseteq {\rm Con}(A)$, $\displaystyle \rho _A(\theta )=\bigvee \{Cg_A(a,b)\ |\ (a,b)\in A^2,(\exists \, k\in \N ^*)\, ([Cg_A(a,b),Cg_A(a,b)]_A^k\subseteq \theta )\}=\bigvee \{\alpha \in {\rm PCon}(A)\ |\ (\exists \, k\in \N ^*)\, ([\alpha ,\alpha ]_A^k\subseteq \theta )\}\subseteq \bigvee \{\alpha \in {\cal K}(A)\ |\ (\exists \, k\in \N ^*)\, ([\alpha ,\alpha ]_A^k\subseteq \theta )\}\subseteq \bigvee \{\alpha \in {\rm Con}(A)\ |\ (\exists \, k\in \N ^*)\, ([\alpha ,\alpha ]_A^k\subseteq \theta )\}\subseteq \bigvee \{\alpha \in {\rm PCon}(A)\ |\ (\exists \, k\in \N ^*)\, ([\alpha ,\alpha ]_A^k\subseteq \theta )\}$, where the last inclusion holds because, for any $\alpha \in {\rm Con}(A)$, if $k\in \N ^*$ is such that $[\alpha ,\alpha ]_A^k\subseteq \theta $, then, for any $(a,b)\in \alpha $, $[Cg_A(a,b),Cg_A(a,b)]_A^k\subseteq [\alpha ,\alpha ]_A^k\subseteq \theta $, thus $\displaystyle \alpha =\bigvee _{(a,b)\in \alpha }Cg_A(a,b)\subseteq \bigvee \{Cg_A(a,b)\ |\ (a,b)\in A^2,(\exists \, k\in \N ^*)\, ([Cg_A(a,b),Cg_A(a,b)]_A^k\subseteq \theta )\}$. Hence the equalities in the enunciation.

\noindent (\ref{lema4.2(2)}) The converse implication follows directly from (\ref{lema4.2(1)}).

For the direct implication, from (\ref{lema4.2(1)}) it follows that, for any $\alpha \in {\cal K}(A)$ such that $\alpha \subseteq \rho _A(\theta )$, there exist non--empty families $(\beta _j)_{j\in J}\subseteq {\cal K}(A)$ and $(k_j)_{j\in J}\subseteq \N ^*$ such that $\displaystyle \alpha \subseteq \bigvee _{j\in J}\beta _j$ and $[\beta _j,\beta _j]_A^{k_j}\subseteq \theta $ for all $j\in J$. Since $\alpha \in {\cal K}(A)$, it follows that there exist an $n\in \N ^*$ and $j_1,\ldots ,j_n\in J$ such that $\displaystyle \alpha \subseteq \bigvee _{i=1}^n\beta _{j_i}$. Let $j=\max \{j_1,\ldots ,j_n\}\in \N ^*$. Then $[\beta _{j_i},\beta _{j_i}]_A^k\subseteq [\beta _{j_i},\beta _{j_i}]_A^{k_i}\subseteq \theta $ for each $i\in \overline{1,n}$, thus, by Lemma \ref{aritmcomut}, (\ref{aritmcomut7}), $\displaystyle [\alpha ,\alpha ]_A^{k^n}\subseteq [\bigvee _{i=1}^n\beta _{j_i},\bigvee _{i=1}^n\beta _{j_i}]_A^{k^n}\subseteq \bigvee _{i=1}^n[\beta _{j_i},\beta _{j_i}]_A^k\subseteq \theta $.\end{proof}

\begin{proposition}\begin{enumerate}
\item\label{prop4.5(1)} $\displaystyle \rho _A(\Delta _A)=\bigvee \{\alpha \in {\rm Con}(A)\ |\ (\exists \, k\in \N ^*)\, ([\alpha ,\alpha ]_A^k=\Delta _A)\}=\bigvee \{\alpha \in {\cal K}(A)\ |\ (\exists \, k\in \N ^*)\, ([\alpha ,\alpha ]_A^k=\Delta _A)\}=\bigvee \{\alpha \in {\rm PCon}(A)\ |\ (\exists \, k\in \N ^*)\, ([\alpha ,\alpha ]_A^k=\Delta _A)\}$;
\item\label{prop4.5(2)} for any $\alpha \in {\cal K}(A)$, $\alpha \subseteq \rho _A(\Delta _A)$ iff there exists a $k\in \N ^*$ such that $[\alpha ,\alpha ]_A^k=\Delta _A$.\end{enumerate}\label{prop4.5}\end{proposition}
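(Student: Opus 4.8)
The plan is to derive both parts directly from Lemma \ref{lema4.2} by specializing $\theta $ to $\Delta _A$. The single observation that makes this work is that $\Delta _A$ is the least element of the lattice ${\rm Con}(A)$, so that, for every $\alpha \in {\rm Con}(A)$ and every $k\in \N ^*$, the inclusion $[\alpha ,\alpha ]_A^k\subseteq \Delta _A$ is equivalent to the equality $[\alpha ,\alpha ]_A^k=\Delta _A$.

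First I would record this equivalence and then substitute it into the two parts of Lemma \ref{lema4.2}. Substituting $\theta =\Delta _A$ into Lemma \ref{lema4.2}, (\ref{lema4.2(1)}), turns each of the three joins $\bigvee \{\alpha \in {\rm Con}(A)\mid (\exists \, k)([\alpha ,\alpha ]_A^k\subseteq \Delta _A)\}$, $\bigvee \{\alpha \in {\cal K}(A)\mid \ldots \}$ and $\bigvee \{\alpha \in {\rm PCon}(A)\mid \ldots \}$ into the corresponding joins appearing in (\ref{prop4.5(1)}), and the three-term equality of Lemma \ref{lema4.2}, (\ref{lema4.2(1)}), becomes exactly the chain of equalities asserted in (\ref{prop4.5(1)}); likewise, substituting $\theta =\Delta _A$ into Lemma \ref{lema4.2}, (\ref{lema4.2(2)}), yields (\ref{prop4.5(2)}) verbatim.

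I do not expect any genuine obstacle: the statement is a corollary of Lemma \ref{lema4.2}, which in turn rests on the pointwise description of $\rho _A$ from Proposition \ref{rho}, together with the distributivity of the commutator over arbitrary joins and the monotonicity properties collected in Lemma \ref{aritmcomut}. The only point one might want to spell out is why the three joins in (\ref{prop4.5(1)}) all coincide, in particular why the join over arbitrary congruences $\alpha $ with $[\alpha ,\alpha ]_A^k=\Delta _A$ for some $k$ is contained in the join over principal such congruences; but this was already established inside the proof of Lemma \ref{lema4.2}, (\ref{lema4.2(1)}) — each such $\alpha $ is the join of the principal congruences $Cg_A(a,b)$ it contains, and $[Cg_A(a,b),Cg_A(a,b)]_A^k\subseteq [\alpha ,\alpha ]_A^k=\Delta _A$ — so nothing new needs to be done here.
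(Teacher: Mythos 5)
Your proposal is correct and coincides with the paper's own proof, which simply cites Lemma \ref{lema4.2}: both parts follow by taking $\theta =\Delta _A$ there, since $[\alpha ,\alpha ]_A^k\subseteq \Delta _A$ is the same as $[\alpha ,\alpha ]_A^k=\Delta _A$. Nothing further is needed.
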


\begin{proof} By Lemma \ref{lema4.2}.\end{proof}

\begin{corollary} $A$ is semiprime iff, for any $\alpha \in {\cal K}(A)$ and any $k\in \N ^*$, if $[\alpha ,\alpha ]_A^k=\Delta _A$, then $\alpha =\Delta _A$.\end{corollary}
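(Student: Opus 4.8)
The statement is an immediate consequence of Proposition \ref{prop4.5}, so the plan is simply to unwind the definition of semiprimeness and feed it into the characterizations of $\rho _A(\Delta _A)$ proved there. Recall that, by definition, $A$ is semiprime iff $\rho _A(\Delta _A)=\Delta _A$.

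For the forward implication, I would assume $A$ is semiprime, i.e. $\rho _A(\Delta _A)=\Delta _A$, and take $\alpha \in {\cal K}(A)$ and $k\in \N ^*$ with $[\alpha ,\alpha ]_A^k=\Delta _A$. By Proposition \ref{prop4.5}, (\ref{prop4.5(2)}), this forces $\alpha \subseteq \rho _A(\Delta _A)=\Delta _A$, hence $\alpha =\Delta _A$, which is the desired conclusion.

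For the converse, I would assume the stated condition on compact congruences and invoke Proposition \ref{prop4.5}, (\ref{prop4.5(1)}): $\rho _A(\Delta _A)=\bigvee \{\alpha \in {\cal K}(A)\ |\ (\exists \, k\in \N ^*)\, ([\alpha ,\alpha ]_A^k=\Delta _A)\}$. By hypothesis every $\alpha $ appearing in this join equals $\Delta _A$ (and the set is non--empty, since $\Delta _A$ itself belongs to it), so the join collapses to $\Delta _A$; thus $\rho _A(\Delta _A)=\Delta _A$, i.e. $A$ is semiprime.

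There is no real obstacle here: the work has already been done in Lemma \ref{lema4.2} and Proposition \ref{prop4.5}, and this corollary merely repackages $\rho _A(\Delta _A)=\Delta _A$ as a closure property of $\Delta _A$ under ``$[\cdot ,\cdot ]_A$--nilpotence'' of compact congruences. The only point to be slightly careful about is noting that the join in Proposition \ref{prop4.5}, (\ref{prop4.5(1)}), is over a non--empty index set, so that ``all members equal $\Delta _A$'' indeed yields $\bigvee =\Delta _A$ rather than an empty join; this is trivial since $\Delta _A\in {\cal K}(A)$ and $[\Delta _A,\Delta _A]_A^k=\Delta _A$ for all $k\in \N ^*$.
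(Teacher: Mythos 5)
Your proof is correct and follows exactly the route the paper intends: the corollary is stated as an immediate consequence of Proposition \ref{prop4.5}, with part (\ref{prop4.5(2)}) giving the forward implication and part (\ref{prop4.5(1)}) (or again (\ref{prop4.5(2)}) applied to principal congruences) giving the converse. Your care about the non-emptiness of the join is fine but not strictly needed, since an empty join in ${\rm Con}(A)$ would equal $\Delta _A$ anyway.
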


Throughout the rest of this section, we shall assume that ${\cal K}(A)$ is closed w.r.t. the commutator of $A$ and $[\theta ,\nabla _A]_A=\theta $ for all $\theta \in {\rm Con}(A)$; see also Proposition \ref{prodcongr}.

For any bounded lattice $L$, we shall denote by ${\cal B}(L)$ the set of the complemented elements of $L$. If $L$ is distributive, then ${\cal B}(L)$ is the Boolean center of $L$. Although ${\rm Con}(A)$ is not necessarily distributive, we shall call ${\cal B}({\rm Con}(A))$ the {\em Boolean Center} of ${\rm Con}(A)$. So ${\cal B}({\rm Con}(A))$ is the set of the $\alpha \in {\rm Con}(A)$ such that there exists a $\beta \in {\rm Con}(A)$ which fulfills $\alpha \vee \beta =\nabla _A$ and $\alpha \cap \beta =\Delta _A$, thus also $[\alpha ,\beta ]_A=\Delta _A$.

\begin{remark} Obviously, $\Delta _A,\nabla _A\in {\cal B}({\rm Con}(A))$.\end{remark}

\begin{lemma} ${\cal B}({\rm Con}(A))\subseteq {\cal K}(A)$.\label{lema6.1}\end{lemma}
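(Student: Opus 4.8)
The plan is to show that an arbitrary complemented congruence $\alpha$ is equal to a suitable finitely generated subcongruence of itself, so that it lies in ${\cal K}(A)$. The engine is the compactness of $\nabla _A$ together with the identity $[\theta ,\nabla _A]_A=\theta $ and the distributivity of the commutator over joins.

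First I would fix $\alpha \in {\cal B}({\rm Con}(A))$ and a complement $\beta \in {\rm Con}(A)$, so that $\alpha \vee \beta =\nabla _A$ and $\alpha \cap \beta =\Delta _A$. Writing each of $\alpha $ and $\beta $ as the join of the principal congruences it contains, $\alpha =\bigvee _{(a,b)\in \alpha }Cg_A(a,b)$ and $\beta =\bigvee _{(c,d)\in \beta }Cg_A(c,d)$, we get $\nabla _A$ as a join of a family of compact congruences. Since $\nabla _A\in {\cal K}(A)$ is a compact element of ${\rm Con}(A)$, there are finitely many pairs $(a_i,b_i)\in \alpha $ $(i\in \overline{1,m})$ and $(c_j,d_j)\in \beta $ $(j\in \overline{1,n})$ with $\nabla _A=\bigvee _{i=1}^mCg_A(a_i,b_i)\vee \bigvee _{j=1}^nCg_A(c_j,d_j)$. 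Set $\alpha _0=\bigvee _{i=1}^mCg_A(a_i,b_i)$ and $\beta _0=\bigvee _{j=1}^nCg_A(c_j,d_j)$, reading an empty join as $\Delta _A$; then $\alpha _0,\beta _0\in {\cal K}(A)$ by Remark \ref{ka}, and $\alpha _0\subseteq \alpha $, $\beta _0\subseteq \beta $, $\alpha _0\vee \beta _0=\nabla _A$.

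Next I would note that $[\alpha ,\beta _0]_A\subseteq [\alpha ,\beta ]_A\subseteq \alpha \cap \beta =\Delta _A$ by Proposition \ref{1.3} (the commutator is increasing and smaller than its arguments), so $[\alpha ,\beta _0]_A=\Delta _A$. Then, using the standing hypothesis $[\theta ,\nabla _A]_A=\theta $ together with the distributivity of the commutator with respect to joins (Proposition \ref{1.3}), I obtain $\alpha =[\alpha ,\nabla _A]_A=[\alpha ,\alpha _0\vee \beta _0]_A=[\alpha ,\alpha _0]_A\vee [\alpha ,\beta _0]_A=[\alpha ,\alpha _0]_A\subseteq \alpha _0$. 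Combined with $\alpha _0\subseteq \alpha $ this gives $\alpha =\alpha _0\in {\cal K}(A)$, completing the argument. The only genuine obstacle is that ${\rm Con}(A)$ need not be distributive, so one cannot recover $\alpha $ from the decomposition $\nabla _A=\alpha _0\vee \beta _0$ by a lattice-theoretic absorption step; the role of distributivity of the lattice is played here by distributivity of the commutator over joins, which is precisely what makes the identity $\alpha =[\alpha ,\alpha _0]_A$ go through.
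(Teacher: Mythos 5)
Your proof is correct and follows essentially the same route as the paper's: both rest on the compactness of $\nabla _A$, the identity $\alpha =[\alpha ,\nabla _A]_A$, the distributivity of the commutator over joins, and $[\alpha ,\beta ]_A\subseteq \alpha \cap \beta =\Delta _A$ for a complement $\beta $ of $\alpha $. The only difference is one of packaging: the paper verifies the compactness condition against an arbitrary family of congruences covering $\alpha $, while you specialize to the family of principal congruences below $\alpha $ and $\beta $ and conclude that $\alpha $ literally equals the compact congruence $\alpha _0$.
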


\begin{proof} Let $\alpha \in {\cal B}({\rm Con}(A))$, so that $\alpha \vee
\beta =\nabla _A$ and $\alpha \cap \beta =\Delta _A$ for some $\beta \in {\rm Con}(A)$. Now let $\emptyset \neq (\alpha _i)_{i\in I}\subseteq {\rm Con}(A)$ such that $\displaystyle \alpha \subseteq \bigvee _{i\in I}\alpha _i$, so that $\displaystyle \beta \vee \bigvee _{i\in I}\alpha _i=\nabla _A\in {\cal K}(A)$, thus $\displaystyle \nabla _A=\beta \vee \bigvee _{j=1}^n\alpha _{i_j}$ for some $n\in \N ^*$ and some $i_1,\ldots ,i_n\subseteq I$, hence, by Proposition \ref{prop4.1}, (\ref{prop4.1(1)}), $\displaystyle \alpha =[\alpha ,\nabla _A]_A=[\alpha ,\beta \vee \bigvee _{j=1}^n\alpha _{i_j}]_A=[\alpha ,\beta ]_A\vee [\alpha ,\bigvee _{j=1}^n\alpha _{i_j}]_A=\Delta _A\vee [\alpha ,\bigvee _{j=1}^n\alpha _{i_j}]_A=[\alpha ,\bigvee _{j=1}^n\alpha _{i_j}]_A\subseteq \bigvee _{j=1}^n\alpha _{i_j}$, hence $\alpha \in {\cal K}(A)$.\end{proof}

\begin{proposition} If ${\cal K}(A)={\cal B}({\rm Con}(A))$, then ${\cal L}(A)={\cal B}({\cal L}(A))$.\label{noua8.16(1)}\end{proposition}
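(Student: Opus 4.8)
The plan is to prove the nontrivial inclusion ${\cal L}(A)\subseteq {\cal B}({\cal L}(A))$, the reverse inclusion ${\cal B}({\cal L}(A))\subseteq {\cal L}(A)$ being immediate from the very definition of the Boolean center as a subset of the ambient lattice. So I would fix an arbitrary element of ${\cal L}(A)$, necessarily of the form $\widehat{\alpha }$ with $\alpha \in {\cal K}(A)$, and aim to produce a complement for it inside ${\cal L}(A)$. Using the hypothesis ${\cal K}(A)={\cal B}({\rm Con}(A))$, we get $\alpha \in {\cal B}({\rm Con}(A))$, hence there is a $\beta \in {\rm Con}(A)$ with $\alpha \vee \beta =\nabla _A$ and $\alpha \cap \beta =\Delta _A$ (so also $[\alpha ,\beta ]_A=\Delta _A$, since $[\alpha ,\beta ]_A\subseteq \alpha \cap \beta $).

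The one point needing a moment's care is that this $\beta $ is itself a \emph{compact} congruence: $\beta $ is complemented (its complement being $\alpha $), so $\beta \in {\cal B}({\rm Con}(A))={\cal K}(A)$, again by the hypothesis. Consequently $\widehat{\beta }\in {\cal L}(A)$, and we may compute in the bounded distributive lattice ${\rm Con}(A)/_{\textstyle \equiv _A }$ with the defining equalities of its operations:
\[
\widehat{\alpha }\vee \widehat{\beta }=\widehat{\alpha \vee \beta }=\widehat{\nabla _A}={\bf 1},\qquad \widehat{\alpha }\wedge \widehat{\beta }=\widehat{[\alpha ,\beta ]_A}=\widehat{\Delta _A}={\bf 0}.
\]
Since, by Proposition \ref{reticd01}, ${\cal L}(A)$ is a bounded sublattice of ${\rm Con}(A)/_{\textstyle \equiv _A }$ (so ${\bf 0},{\bf 1}\in {\cal L}(A)$ are its bounds and it contains both $\widehat{\alpha }$ and $\widehat{\beta }$), these two identities exhibit $\widehat{\beta }$ as a complement of $\widehat{\alpha }$ in ${\cal L}(A)$. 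Hence $\widehat{\alpha }\in {\cal B}({\cal L}(A))$, and as $\widehat{\alpha }$ was arbitrary, ${\cal L}(A)={\cal B}({\cal L}(A))$.

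There is really no serious obstacle here: the proof is a transport of the complementation datum from ${\rm Con}(A)$ to the quotient via the lattice morphism $\lambda _A$. The only subtlety — the one I would flag explicitly in the write-up — is the closure of ${\cal B}({\rm Con}(A))$ under taking complements, which is what guarantees $\widehat{\beta }$ lives in ${\cal L}(A)$ in the first place; everything else is the routine bookkeeping that $\widehat{\Delta _A}={\bf 0}$ and $\widehat{\nabla _A}={\bf 1}$ together with $\lambda _A$ preserving $\vee $ and $\wedge $.
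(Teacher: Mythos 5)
Your proof is correct and is essentially the paper's argument: the paper simply invokes Lemma \ref{lema6.2}, whose proof is exactly your transport of the complementation pair $(\alpha ,\beta )$ through $\lambda _A$ into the bounded sublattice ${\cal L}(A)$ of ${\rm Con}(A)/_{\textstyle \equiv _A }$. Your only (harmless) variation is that you get compactness of the complement $\beta $ directly from the hypothesis ${\cal K}(A)={\cal B}({\rm Con}(A))$, rather than via Lemma \ref{lema6.1} as the paper's lemma does.
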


\begin{proof} If ${\cal K}(A)={\cal B}({\rm Con}(A))$, then ${\cal L}(A)=\lambda _A({\cal K}(A))=\lambda _A({\cal B}({\rm Con}(A)))\subseteq {\cal B}({\cal L}(A))\subseteq {\cal L}(A)$ by Lemma \ref{lema6.2}, thus ${\cal L}(A)={\cal B}({\cal L}(A))$.\end{proof}

\begin{lemma} For any $\sigma ,\theta \in {\rm Con}(A)$: $\displaystyle \theta ^{\perp }=\bigvee \{\alpha \in {\rm PCon}(A)\ |\ [\alpha ,\theta ]_A=\Delta _A\}=\bigvee \{\alpha \in {\cal K}(A)\ |\ [\alpha ,\theta ]_A=\Delta _A\}=\bigvee \{\alpha \in {\rm Con}(A)\ |\ [\alpha ,\theta ]_A=\Delta _A\}=\max \{\alpha \in {\rm Con}(A)\ |\ [\alpha ,\theta ]_A=\Delta _A\}$, thus: $\sigma \subseteq \theta ^{\perp }$ iff $[\sigma ,\theta ]_A=\Delta _A$.\label{lema5.16}\end{lemma}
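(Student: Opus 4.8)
The statement is a direct analogue of Lemma \ref{lema4.2}, with the self-commutator $[\alpha,\alpha]_A^k$ replaced by the single commutator $[\alpha,\theta]_A$, and the role of the radical $\rho_A$ played here by pseudocomplementation $(\cdot)^{\perp}$. The plan is to verify the chain of four sets is a chain of inclusions that closes up into equality, and then derive the maximum description and the residuation equivalence from Lemma \ref{reziduatie}.

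\emph{First}, I would establish the inclusions from left to right. The inclusions
\[
\bigvee \{\alpha \in {\rm PCon}(A)\ |\ [\alpha ,\theta ]_A=\Delta _A\}\subseteq \bigvee \{\alpha \in {\cal K}(A)\ |\ [\alpha ,\theta ]_A=\Delta _A\}\subseteq \bigvee \{\alpha \in {\rm Con}(A)\ |\ [\alpha ,\theta ]_A=\Delta _A\}
\]
are trivial from ${\rm PCon}(A)\subseteq {\cal K}(A)\subseteq {\rm Con}(A)$. The middle term equals $\theta^{\perp}$ essentially by definition once one rewrites $\theta^{\perp}=\theta\rightarrow\Delta_A=\bigvee\{\alpha\in{\rm Con}(A)\ |\ [\theta,\alpha]_A=\Delta_A\}$, using commutativity of the commutator to swap $[\theta,\alpha]_A$ and $[\alpha,\theta]_A$. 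So in fact the rightmost term is literally $\theta^{\perp}$; what remains is to show $\theta^{\perp}$ is contained in the leftmost term, i.e. the join over \emph{principal} congruences suffices.

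\emph{Second}, for the wrap-around inclusion, I would argue as in the proof of Lemma \ref{lema4.2}, (\ref{lema4.2(1)}): take any $\alpha\in{\rm Con}(A)$ with $[\alpha,\theta]_A=\Delta_A$; then for every $(a,b)\in\alpha$ we have $Cg_A(a,b)\subseteq\alpha$, so by monotonicity of the commutator (Proposition \ref{1.3}) $[Cg_A(a,b),\theta]_A\subseteq[\alpha,\theta]_A=\Delta_A$, hence $Cg_A(a,b)$ belongs to the indexing set of the leftmost join; since $\alpha=\bigvee_{(a,b)\in\alpha}Cg_A(a,b)$, we get $\alpha\subseteq\bigvee\{\gamma\in{\rm PCon}(A)\ |\ [\gamma,\theta]_A=\Delta_A\}$. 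Taking the join over all such $\alpha$ gives $\theta^{\perp}$ contained in the leftmost term, closing the chain to a chain of equalities. The distributivity of the commutator over arbitrary joins is what makes the pass from $\alpha$ to its principal pieces clean — this is the one place where the standing hypothesis on $[\cdot,\cdot]_A$ is used, and it is the mild ``obstacle,'' though it is essentially the same trick already used twice earlier in the paper.

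\emph{Third}, for the ``$\max$'' claim and the final biconditional, I would invoke the remark preceding Lemma \ref{reziduatie} (which identifies $\theta\rightarrow\zeta$ as a maximum): since $[\theta,\theta^{\perp}]_A=[\theta,\bigvee_{\alpha\in M}\alpha]_A=\bigvee_{\alpha\in M}[\theta,\alpha]_A=\Delta_A$ where $M=\{\alpha\in{\rm Con}(A)\ |\ [\alpha,\theta]_A=\Delta_A\}$, the congruence $\theta^{\perp}$ itself lies in $M$, so $\theta^{\perp}=\max M$. Finally, the equivalence $\sigma\subseteq\theta^{\perp}$ iff $[\sigma,\theta]_A=\Delta_A$ is exactly Lemma \ref{reziduatie} with $\alpha:=\sigma$, $\beta:=\theta$, $\gamma:=\Delta_A$, after using commutativity, and also follows immediately from the $\max$ description together with the fact (just shown) that $\theta^{\perp}\in M$ and monotonicity $[\sigma,\theta]_A\subseteq[\theta^{\perp},\theta]_A=\Delta_A$ whenever $\sigma\subseteq\theta^{\perp}$. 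This completes the proof.
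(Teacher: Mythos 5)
Your proposal is correct and follows essentially the same route as the paper's proof: decompose each $\alpha$ with $[\alpha ,\theta ]_A=\Delta _A$ into its principal pieces $Cg_A(a,b)$, use monotonicity to see these lie in the indexing set, close the chain of joins, then use distributivity of the commutator over arbitrary joins to get $[\theta ^{\perp },\theta ]_A=\Delta _A$, whence the maximum description and the biconditional. The only cosmetic difference is your optional appeal to Lemma \ref{reziduatie} for the final equivalence, which the paper instead derives directly from the $\max$ description.
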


\begin{proof} Let $M=\{\alpha \in {\rm Con}(A)\ |\ [\alpha ,\theta ]_A=\Delta _A\}$. For all $\alpha \in M$ and all $(a,b)\in \alpha $, $[Cg_A(a,b),\theta ]_A\subseteq [\alpha ,\theta ]_A=\Delta _A$, thus $Cg_A(a,b)\in M\cap {\rm PCon}(A)$. Hence $\displaystyle \theta ^{\perp }=\bigvee _{\alpha \in M}\alpha =\bigvee _{\alpha \in M}\bigvee _{(a,b)\in \alpha }Cg_A(a,b)\subseteq \bigvee _{\gamma \in M\cap {\rm PCon}(A)}\gamma \subseteq \bigvee _{\gamma \in M\cap {\cal K}(A)}\gamma \subseteq \bigvee _{\alpha \in M}\alpha $, therefore $\displaystyle \theta ^{\perp }=\bigvee _{\alpha \in M}\alpha =\bigvee _{\alpha \in M\cap {\cal K}(A)}\alpha =\bigvee _{\alpha \in M\cap {\rm PCon}(A)}\alpha $. Note, also, that $\displaystyle [\theta ^{\perp },\theta ]_A=[\bigvee _{\alpha \in M}\alpha ,\theta ]_A=\bigvee _{\alpha \in M}[\alpha ,\theta ]_A=\bigvee _{\alpha \in M}\Delta _A=\Delta _A$, hence $\theta ^{\perp }\in M$, thus $\theta ^{\perp }=\max (M)$. If $\sigma \subseteq \theta ^{\perp }$, then $[\sigma ,\theta ]_A\subseteq [\theta ^{\perp },\theta ]_A=\Delta _A$, thus $[\sigma ,\theta ]_A=\Delta _A$, and conversely: if $[\sigma ,\theta ]_A=\Delta _A$, then $\sigma \in M$, thus $\sigma \subseteq \max (M)=\theta ^{\perp }$.\end{proof}

\begin{lemma} $\lambda _A({\cal B}({\rm Con}(A)))={\cal B}({\rm Con}(A))/_{\textstyle \equiv _A }\subseteq {\cal B}({\cal L}(A))\subseteq {\cal B}({\rm Con}(A)/_{\textstyle \equiv _A })$ and $\lambda _A\mid _{{\cal B}({\rm Con}(A))}:{\cal B}({\rm Con}(A))$\linebreak $\rightarrow {\cal B}({\cal L}(A))$ is a Boolean morphism.\label{lema6.2}\end{lemma}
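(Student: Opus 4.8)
The plan is to dispatch the chain of inclusions and the morphism claim one at a time; the only substantial inputs will be Lemma \ref{lema6.1} (${\cal B}({\rm Con}(A))\subseteq {\cal K}(A)$), the fact that $\lambda _A$ is a bounded lattice morphism (Lemma \ref{latcat}), and that ${\cal L}(A)$ is a bounded sublattice of the bounded distributive lattice ${\rm Con}(A)/_{\textstyle \equiv _A }$ (Proposition \ref{reticd01}). To begin with, the equality $\lambda _A({\cal B}({\rm Con}(A)))={\cal B}({\rm Con}(A))/_{\textstyle \equiv _A }$ is merely the unwinding of the notation $\lambda _A(\theta )=\widehat{\theta }$, so nothing is to be proved there.

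For the inclusion ${\cal B}({\rm Con}(A))/_{\textstyle \equiv _A }\subseteq {\cal B}({\cal L}(A))$ I would take $\alpha \in {\cal B}({\rm Con}(A))$ and a complement $\beta \in {\rm Con}(A)$, so $\alpha \vee \beta =\nabla _A$ and $\alpha \cap \beta =\Delta _A$; then $\beta \in {\cal B}({\rm Con}(A))$ as well, whence $\alpha ,\beta \in {\cal K}(A)$ by Lemma \ref{lema6.1}, so $\widehat{\alpha },\widehat{\beta }\in {\cal L}(A)$. Applying the bounded lattice morphism $\lambda _A$ to the two identities yields $\widehat{\alpha }\vee \widehat{\beta }=\widehat{\nabla _A}={\bf 1}$ and $\widehat{\alpha }\wedge \widehat{\beta }=\widehat{\Delta _A}={\bf 0}$ inside ${\cal L}(A)$, so $\widehat{\alpha }$ is complemented in ${\cal L}(A)$, i.e. $\widehat{\alpha }\in {\cal B}({\cal L}(A))$. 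For the inclusion ${\cal B}({\cal L}(A))\subseteq {\cal B}({\rm Con}(A)/_{\textstyle \equiv _A })$ I would use that ${\cal L}(A)$ is a bounded sublattice of ${\rm Con}(A)/_{\textstyle \equiv _A }$, hence the two lattices share the bounds ${\bf 0},{\bf 1}$, so an element of ${\cal L}(A)$ whose complement already lies in ${\cal L}(A)$ (which is exactly the case for elements of ${\cal B}({\cal L}(A))$) is a fortiori complemented in ${\rm Con}(A)/_{\textstyle \equiv _A }$.

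It then remains to check that $\lambda _A\mid _{{\cal B}({\rm Con}(A))}$ is a morphism of Boolean algebras ${\cal B}({\rm Con}(A))\rightarrow {\cal B}({\cal L}(A))$; note the codomain is legitimate by the first inclusion. Recalling that, under the standing hypotheses of this section, and in particular $[\theta ,\nabla _A]_A=\theta $ for all $\theta $ (see Lemma \ref{meetcomm} and Remark \ref{comutinters}), ${\cal B}({\rm Con}(A))$ is a Boolean algebra whose lattice operations are the restrictions of $\vee $ and $\cap $ from ${\rm Con}(A)$, while ${\cal B}({\cal L}(A))$ carries the operations induced from the distributive lattice ${\cal L}(A)$, the fact that $\lambda _A$ is a bounded lattice morphism on ${\rm Con}(A)$ gives at once $\lambda _A(\alpha \vee \beta )=\widehat{\alpha }\vee \widehat{\beta }$, $\lambda _A(\alpha \cap \beta )=\widehat{\alpha }\wedge \widehat{\beta }$, $\lambda _A(\Delta _A)={\bf 0}$ and $\lambda _A(\nabla _A)={\bf 1}$ for all $\alpha ,\beta \in {\cal B}({\rm Con}(A))$. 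Complement preservation is then automatic: if $\alpha \vee \neg \alpha =\nabla _A$ and $\alpha \cap \neg \alpha =\Delta _A$, then $\widehat{\alpha }\vee \widehat{\neg \alpha }={\bf 1}$ and $\widehat{\alpha }\wedge \widehat{\neg \alpha }={\bf 0}$, and by the uniqueness of complements in the distributive lattice ${\cal L}(A)$ this forces $\widehat{\neg \alpha }=\neg \widehat{\alpha }$. Hence $\lambda _A\mid _{{\cal B}({\rm Con}(A))}$ is a Boolean morphism.

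The one genuinely delicate point, which I have just taken for granted, is that ${\cal B}({\rm Con}(A))$ really is a \emph{Boolean sublattice} of ${\rm Con}(A)$, so that "restricting $\lambda _A$" transports the Boolean operations and not only the order; this is what rests on Lemma \ref{meetcomm} (turning the commutator with a central congruence into intersection) together with the join-distributivity of $\cap $ on ${\cal B}({\rm Con}(A))$ recorded in Remark \ref{comutinters}. Once that structural fact is in place, every remaining step is a routine transfer of identities through the lattice morphism $\lambda _A$.
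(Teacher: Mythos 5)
Your proof is correct and follows essentially the same route as the paper: the first inclusion via Lemma \ref{lema6.1} and applying the bounded lattice morphism $\lambda _A$ to a complementary pair, the second inclusion from ${\cal L}(A)$ being a bounded sublattice of the distributive lattice ${\rm Con}(A)/_{\textstyle \equiv _A }$, and the Boolean morphism claim from the fact that a bounded lattice morphism between Boolean algebras preserves complements. Your closing remark about the Boolean structure of ${\cal B}({\rm Con}(A))$ is the only point the paper handles differently, deferring it to a later lemma (Lemma \ref{algboole}, whose proof indeed rests on Remark \ref{comutinters} and Lemma \ref{lema5.16}) rather than addressing it here.
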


\begin{proof} $\lambda _A({\cal B}({\rm Con}(A)))={\cal B}({\rm Con}(A))/_{\textstyle \equiv _A }$. Now we use Lemma \ref{lema6.1}. Let $\alpha \in {\cal B}({\rm Con}(A))\subseteq {\cal K}(A)$, so that $\lambda _A(\alpha )\in {\cal L}(A)$ and, for some $\beta \in {\cal B}({\rm Con}(A))\subseteq {\cal K}(A)$, we have $\alpha \vee \beta =\nabla _A$ and $\alpha \cap \beta =\Delta _A$. Then $\lambda _A(\beta )\in {\cal L}(A)$, ${\bf 1}=\lambda _A(\nabla _A)=\lambda _A(\alpha \vee \beta )=\lambda _A(\alpha )\vee \lambda _A(\beta )$ and ${\bf 0}=\lambda _A(\Delta _A)=\lambda _A(\alpha \cap \beta )=\lambda _A(\alpha )\wedge \lambda _A(\beta )$, hence $\lambda _A(\alpha )\in {\cal B}({\cal L}(A))$. Therefore $\lambda _A({\cal B}({\rm Con}(A)))\subseteq {\cal B}({\cal L}(A))$. Since ${\cal L}(A)$ is a bounded sublattice of the bounded distributive lattice ${\rm Con}(A)/_{\textstyle \equiv _A }$, it follows that ${\cal B}({\cal L}(A))$ is a Boolean subalgebra of ${\cal B}({\rm Con}(A)/_{\textstyle \equiv _A })$. Hence $\lambda _A({\cal B}({\rm Con}(A)))={\cal B}({\rm Con}(A))/_{\textstyle \equiv _A }\subseteq {\cal B}({\cal L}(A))\subseteq {\cal B}({\rm Con}(A)/_{\textstyle \equiv _A })$. $\lambda _A:{\rm Con}(A)\rightarrow {\rm Con}(A)/_{\textstyle \equiv _A }$ is a (surjective) bounded lattice morphism. Hence $\lambda _A\mid _{{\cal B}({\rm Con}(A))}:{\cal B}({\rm Con}(A))\rightarrow {\cal B}({\cal L}(A))$ is well defined and it is a bounded lattice morphism, thus it is a Boolean morphism.\end{proof}

Throughout the rest of this section, ${\cal C}$ shall be congruence--modular and semi--degenerate.

\begin{proposition}\begin{enumerate}
\item\label{prop6.3(1)} The Boolean morphism $\lambda _A\mid _{{\cal B}({\rm Con}(A))}:{\cal B}({\rm Con}(A))\rightarrow {\cal B}({\cal L}(A))$ is injective.
\item\label{prop6.3(2)} If the commutator of $A$ is associative, then $\lambda _A({\cal B}({\rm Con}(A)))={\cal B}({\cal L}(A))={\cal B}({\rm Con}(A))/_{\textstyle \equiv _A }\subseteq {\cal B}({\rm Con}(A)/_{\textstyle \equiv _A })$ and $\lambda _A\mid _{{\cal B}({\rm Con}(A))}:{\cal B}({\rm Con}(A))\rightarrow {\cal B}({\cal L}(A))$ is a Boolean isomorphism.
\item\label{prop6.3(3)} If $A$ is semiprime, then $\lambda _A({\cal B}({\rm Con}(A)))={\cal B}({\cal L}(A))={\cal B}({\rm Con}(A))/_{\textstyle \equiv _A }={\cal B}({\rm Con}(A)/_{\textstyle \equiv _A })$ and $\lambda _A\mid _{{\cal B}({\rm Con}(A))}:{\cal B}({\rm Con}(A))\rightarrow {\cal B}({\cal L}(A))$ is a Boolean isomorphism.\end{enumerate}\label{prop6.3}\end{proposition}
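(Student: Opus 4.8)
The plan is to treat the three parts in the order (\ref{prop6.3(1)}), (\ref{prop6.3(3)}), (\ref{prop6.3(2)}): once injectivity is secured the semiprime case is almost immediate, and the associative case reduces to it by a nilpotency argument. Throughout I would lean on Lemma \ref{lema6.2}, which already tells us that $\lambda _A\mid _{{\cal B}({\rm Con}(A))}:{\cal B}({\rm Con}(A))\rightarrow {\cal B}({\cal L}(A))$ is a Boolean morphism and that $\lambda _A({\cal B}({\rm Con}(A)))={\cal B}({\rm Con}(A))/_{\textstyle \equiv _A }\subseteq {\cal B}({\cal L}(A))\subseteq {\cal B}({\rm Con}(A)/_{\textstyle \equiv _A })$. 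For part (\ref{prop6.3(1)}) I would first check that $\widehat{\alpha }={\bf 0}$ with $\alpha \in {\cal B}({\rm Con}(A))$ forces $\alpha =\Delta _A$: picking a complement $\beta $ of $\alpha $, we get $\widehat{\beta }=\neg \widehat{\alpha }={\bf 1}$, hence $\beta =\nabla _A$ by Remark \ref{coronrho} (this is the one place congruence--modularity and semi--degeneracy of ${\cal C}$ are needed), so $\alpha =\alpha \cap \nabla _A=\alpha \cap \beta =\Delta _A$. Then, for arbitrary $\alpha ,\gamma \in {\cal B}({\rm Con}(A))$ with $\widehat{\alpha }=\widehat{\gamma }$, applying the Boolean morphism $\lambda _A$ to $\alpha \cap \neg \gamma \in {\cal B}({\rm Con}(A))$ and to $\neg \alpha \cap \gamma \in {\cal B}({\rm Con}(A))$ shows both have image ${\bf 0}$, hence both equal $\Delta _A$ by the previous step, whence $\alpha =\gamma $ in the Boolean algebra ${\cal B}({\rm Con}(A))$.

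For part (\ref{prop6.3(3)}), assume $A$ semiprime; by Lemma \ref{lema6.2} and part (\ref{prop6.3(1)}) it remains only to prove ${\cal B}({\rm Con}(A)/_{\textstyle \equiv _A })\subseteq \lambda _A({\cal B}({\rm Con}(A)))$. Given $\widehat{\alpha }\in {\cal B}({\rm Con}(A)/_{\textstyle \equiv _A })$, choose any $\alpha \in {\rm Con}(A)$ in that class and a complement $\widehat{\beta }$. From $\widehat{\alpha }\vee \widehat{\beta }={\bf 1}$ and Remark \ref{coronrho} we get $\alpha \vee \beta =\nabla _A$, so $[\alpha ,\beta ]_A=\alpha \cap \beta $ by Proposition \ref{prop4.1}, (\ref{prop4.1(1)}); from $\widehat{\alpha }\wedge \widehat{\beta }={\bf 0}$, i.e. $\widehat{[\alpha ,\beta ]_A}={\bf 0}$, Lemma \ref{deltasemiprime} gives $[\alpha ,\beta ]_A=\Delta _A$, hence $\alpha \cap \beta =\Delta _A$. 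Thus $\alpha \in {\cal B}({\rm Con}(A))\subseteq {\cal K}(A)$ by Lemma \ref{lema6.1}, so $\widehat{\alpha }\in \lambda _A({\cal B}({\rm Con}(A)))$. This turns the chain of inclusions into a chain of equalities, and together with part (\ref{prop6.3(1)}) it makes $\lambda _A\mid _{{\cal B}({\rm Con}(A))}$ a bijective Boolean morphism, hence a Boolean isomorphism.

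For part (\ref{prop6.3(2)}), assume the commutator of $A$ is associative; again the only new point is ${\cal B}({\cal L}(A))\subseteq \lambda _A({\cal B}({\rm Con}(A)))$. Take $x=\widehat{\alpha }\in {\cal B}({\cal L}(A))$ with $\alpha \in {\cal K}(A)$ and complement $\widehat{\beta }$, $\beta \in {\cal K}(A)$; as before $\alpha \vee \beta =\nabla _A$, but now $\widehat{[\alpha ,\beta ]_A}={\bf 0}$ only yields $[\alpha ,\beta ]_A\subseteq \rho _A(\Delta _A)$. Since $[\alpha ,\beta ]_A\in {\cal K}(A)$, Proposition \ref{prop4.5}, (\ref{prop4.5(2)}), combined with Lemma \ref{lema2.9}, produces an $m\geq 2$ with $[\alpha ,\beta ]_A^m=\Delta _A$. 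Associativity now enters through Lemma \ref{commassoc}: $\Delta _A=[\alpha ,\beta ]_A^m=[[\alpha ,\alpha ]_A^{m-1},[\beta ,\beta ]_A^{m-1}]_A$. Putting $\alpha _1=[\alpha ,\alpha ]_A^{m-1}$ and $\beta _1=[\beta ,\beta ]_A^{m-1}$, Proposition \ref{onrho}, (\ref{onrho0}), gives $\rho _A(\alpha _1)=\rho _A(\alpha )$ and $\rho _A(\beta _1)=\rho _A(\beta )$, hence $\rho _A(\alpha _1\vee \beta _1)=\rho _A(\alpha \vee \beta )=\nabla _A$ by Proposition \ref{onrho}, (\ref{onrho4}), so $\alpha _1\vee \beta _1=\nabla _A$ by Proposition \ref{onrho}, (\ref{onrho1}), and then $\alpha _1\cap \beta _1=[\alpha _1,\beta _1]_A=\Delta _A$ by Proposition \ref{prop4.1}, (\ref{prop4.1(1)}). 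So $\alpha _1\in {\cal B}({\rm Con}(A))$ with $\lambda _A(\alpha _1)=\widehat{\alpha _1}=\widehat{\alpha }=x$, which gives the inclusion; with Lemma \ref{lema6.2} and part (\ref{prop6.3(1)}) we obtain the stated equalities and the Boolean isomorphism.

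I expect the crux to be part (\ref{prop6.3(2)}), namely passing from the merely radical identity $\widehat{[\alpha ,\beta ]_A}={\bf 0}$ to a genuinely complemented congruence representing $x$. The mechanism is that radical--triviality of the compact congruence $[\alpha ,\beta ]_A$ is an honest nilpotency statement $[\alpha ,\beta ]_A^m=\Delta _A$, and associativity is exactly what lets that nilpotency be relocated onto $[\alpha ,\alpha ]_A^{m-1}$ without changing its radical class, after which Proposition \ref{prop4.1}, (\ref{prop4.1(1)}), promotes the ``join--complement'' pair $(\alpha _1,\beta _1)$ to a genuine complemented pair. Without associativity this factorisation breaks down, which is presumably why the statement is limited to the associative and semiprime cases.
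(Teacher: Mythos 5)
Your proposal is correct and follows essentially the same route as the paper: injectivity of the Boolean morphism via Remark \ref{coronrho}, the semiprime case via Remark \ref{coronrho} together with Lemma \ref{deltasemiprime}, and the associative case via Proposition \ref{prop4.5}, (\ref{prop4.5(2)}), Lemmas \ref{lema2.9} and \ref{commassoc}, and Proposition \ref{prop4.1} to replace $\alpha $ by the complemented congruence $[\alpha ,\alpha ]_A^{m-1}$ in the same $\equiv _A$--class. The only deviations are cosmetic: you phrase injectivity through the kernel ideal ($\lambda _A(\alpha )={\bf 0}$ forces $\alpha =\Delta _A$) where the paper uses the dual filter condition, and in the associative case you get $[\alpha ,\alpha ]_A^{m-1}\vee [\beta ,\beta ]_A^{m-1}=\nabla _A$ via radicals and Proposition \ref{onrho}, (\ref{onrho1}), instead of the paper's direct appeal to Proposition \ref{prop4.1}, (\ref{prop4.1(3)}).
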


\begin{proof} (\ref{prop6.3(1)}) By Lemma \ref{lema6.2}, $\lambda _A\mid _{{\cal B}({\rm Con}(A))}:{\cal B}({\rm Con}(A))\rightarrow {\cal B}({\cal L}(A))$ is a Boolean morphism. By Remark \ref{coronrho}, $\lambda _A(\alpha )={\bf 1}$ iff $\alpha =\nabla _A$, hence this Boolean morphism is injective.

\noindent (\ref{prop6.3(2)}) Assume that $A$ is semiprime, and let $x\in {\cal B}({\rm Con}(A)/_{\textstyle \equiv _A })$, so that $x\vee y={\bf 1}$ and $x\wedge y={\bf 0}$ for some $y\in {\cal B}({\rm Con}(A)/_{\textstyle \equiv _A })$. Hence there exist $\alpha ,\beta \in {\rm Con}(A)$ such that $x=\lambda _A(\alpha )$ and $y=\lambda _A(\beta )$, thus ${\bf 1}=x\vee y=\lambda _A(\alpha )\vee \lambda _A(\beta )=\lambda _A(\alpha \vee \beta )$ and ${\bf 0}=x\wedge y=\lambda _A(\alpha )\wedge \lambda _A(\beta )=\lambda _A(\alpha \cap \beta )$, therefore $\alpha \vee \beta =\nabla _A$ and $\alpha \cap \beta =\Delta _A$, by Remark \ref{coronrho} and Lemma \ref{deltasemiprime}. Hence $\alpha \in {\cal B}({\rm Con}(A))$, thus $x=\lambda _A(\alpha )\in \lambda _A({\cal B}({\rm Con}(A)))={\cal B}({\rm Con}(A))/_{\textstyle \equiv _A }$, therefore, by Lemma \ref{lema6.2}, ${\cal B}({\rm Con}(A)/_{\textstyle \equiv _A })\subseteq \lambda _A({\cal B}({\rm Con}(A)))={\cal B}({\rm Con}(A))/_{\textstyle \equiv _A }\subseteq {\cal B}({\cal L}(A))\subseteq {\cal B}({\rm Con}(A)/_{\textstyle \equiv _A })$, hence $\lambda _A({\cal B}({\rm Con}(A)))={\cal B}({\rm Con}(A))/_{\textstyle \equiv _A }={\cal B}({\cal L}(A))={\cal B}({\rm Con}(A)/_{\textstyle \equiv _A })$. Therefore $\lambda _A\mid _{{\cal B}({\rm Con}(A))}:{\cal B}({\rm Con}(A))\rightarrow {\cal B}({\cal L}(A))$ is surjective, so, by (\ref{prop6.3(1)}), it is a Boolean isomorphism.

\noindent (\ref{prop6.3(3)}) Assume that the commutator of $A$ is associative, and let $x\in {\cal B}({\cal L}(A))\subseteq {\cal L}(A)=\lambda _A({\cal K}(A))$, so that $x\vee y={\bf 1}$ and $x\wedge y={\bf 0}$ for some $y\in {\cal B}({\cal L}(A))$ and there exist $\alpha ,\beta \in {\cal K}(A)$ such that $x=\lambda _A(\alpha )$ and $y=\lambda _A(\beta )$. Then $\lambda _A(\alpha \vee \beta )=\lambda _A(\alpha )\vee \lambda _A(\beta )=x\vee y={\bf 1}=\lambda _A(\nabla _A)$, hence $\alpha \vee \beta =\nabla _A$ by Remark \ref{coronrho}. We also have $\lambda _A([\alpha ,\beta ]_A)=\lambda _A(\alpha )\wedge \lambda _A(\beta )=x\wedge y={\bf 0}=\lambda _A(\Delta _A)$, thus $[\alpha ,\beta ]_A\subseteq \rho _A(\alpha \cap \beta )=\rho _A(\Delta _A)$, and, since ${\cal K}(A)$ is closed with respect to the commutator, we have $[\alpha ,\beta ]_A\in {\cal K}(A)$, thus, according to Proposition \ref{prop4.5}, (\ref{prop4.5(2)}), $[[\alpha ,\alpha ]_A^k,[\beta ,\beta ]_A^k]_A=[\alpha ,\beta ]_A^{k+1}=[[\alpha ,\beta ]_A,[\alpha ,\beta ]_A]_A^k=\Delta _A$ for some $k\in \N ^*$; we have applied Lemmas \ref{lema2.9} and \ref{commassoc}. But $\alpha \vee \beta =\nabla _A$, thus $[\alpha ,\alpha ]_A^k\vee [\beta ,\beta ]_A^k=\nabla _A$, hence $[\alpha ,\alpha ]_A^k\cap [\beta ,\beta ]_A^k=[[\alpha ,\alpha ]_A^k,[\beta ,\beta ]_A^k]_A=\Delta _A$ by Proposition \ref{prop4.1}, (\ref{prop4.1(3)}) and (\ref{prop4.1(1)}). Therefore $[\alpha ,\alpha ]_A^k\in {\cal B}({\rm Con}(A))$, thus $x=\lambda _A(\alpha )=\lambda _A([\alpha ,\alpha ]_A^k)\in \lambda _A({\cal B}({\rm Con}(A)))$, hence ${\cal B}({\cal L}(A))\subseteq \lambda _A({\cal B}({\rm Con}(A)))$, thus ${\cal B}({\cal L}(A))\subseteq \lambda _A({\cal B}({\rm Con}(A)))={\cal B}({\rm Con}(A))/_{\textstyle \equiv _A }\subseteq {\cal B}({\cal L}(A))\subseteq {\cal B}({\rm Con}(A)/_{\textstyle \equiv _A })$ by Lemma \ref{lema6.2}, therefore $\lambda _A({\cal B}({\rm Con}(A)))={\cal B}({\rm Con}(A))/_{\textstyle \equiv _A }={\cal B}({\cal L}(A))\subseteq {\cal B}({\rm Con}(A)/_{\textstyle \equiv _A })$. Therefore $\lambda _A\mid _{{\cal B}({\rm Con}(A))}:{\cal B}({\rm Con}(A))\rightarrow {\cal B}({\cal L}(A))$ is surjective, so, by (\ref{prop6.3(1)}), it is a Boolean isomorphism.\end{proof}

\begin{lemma} If $A$ is semiprime and $\alpha \in {\rm Con}(A)$, then: $\alpha \in {\cal B}({\rm Con}(A))$ iff $\lambda _A(\alpha )\in {\cal B}({\cal L}(A))$.\label{lambdaboole}\end{lemma}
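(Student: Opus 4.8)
The plan is to prove the two implications separately; only the backward one uses that $A$ is semiprime, and the workhorse throughout is Remark \ref{eqrho}, which tells us that $\equiv _A$ is a congruence of the lattice ${\rm Con}(A)$, together with the two ``reflection'' principles that $\equiv _A$-equivalence to $\Delta _A$ (respectively to $\nabla _A$) forces equality to $\Delta _A$ (respectively to $\nabla _A$).

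\emph{The forward implication is essentially free.} Lemma \ref{lema6.2} already states that $\lambda _A\mid _{{\cal B}({\rm Con}(A))}$ is a well-defined map ${\cal B}({\rm Con}(A))\rightarrow {\cal B}({\cal L}(A))$; hence, if $\alpha \in {\cal B}({\rm Con}(A))$, then $\lambda _A(\alpha )\in {\cal B}({\cal L}(A))$, with nothing further to check, and this direction does not even need the semiprimeness hypothesis.

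\emph{For the converse}, assume $\lambda _A(\alpha )\in {\cal B}({\cal L}(A))$. First I would invoke Proposition \ref{prop6.3}, (\ref{prop6.3(3)}): since $A$ is semiprime, ${\cal B}({\cal L}(A))=\lambda _A({\cal B}({\rm Con}(A)))$, so there is a \emph{Boolean} congruence $\beta \in {\cal B}({\rm Con}(A))$ in the same $\equiv _A$-class as $\alpha $, i.e. $\alpha \equiv _A \beta $. Fix a complement $\beta ^{\prime}$ of $\beta $, so that $\beta \vee \beta ^{\prime}=\nabla _A$ and $\beta \cap \beta ^{\prime}=\Delta _A$. Since $\equiv _A$ respects $\cap $ and $\vee $ (Remark \ref{eqrho}), we get $\alpha \cap \beta ^{\prime}\equiv _A \beta \cap \beta ^{\prime}=\Delta _A$ and $\alpha \vee \beta ^{\prime}\equiv _A \beta \vee \beta ^{\prime}=\nabla _A$. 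Now I would apply the reflection facts: by semiprimeness, $\lambda _A(\mu )={\bf 0}$ iff $\mu =\Delta _A$ (Lemma \ref{deltasemiprime}), hence $\alpha \cap \beta ^{\prime}=\Delta _A$; and since ${\cal C}$ is congruence--modular and semi--degenerate, $\lambda _A(\mu )={\bf 1}$ iff $\mu =\nabla _A$ (Remark \ref{coronrho}), hence $\alpha \vee \beta ^{\prime}=\nabla _A$. Thus $\beta ^{\prime}$ is a complement of $\alpha $ in ${\rm Con}(A)$, that is, $\alpha \in {\cal B}({\rm Con}(A))$, which finishes the proof.

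\emph{On the difficulty:} there is no genuine obstacle once the right earlier results are assembled; the only subtle point is seeing \emph{where} each hypothesis is used. Semiprimeness is precisely what upgrades ``$\equiv _A \Delta _A$'' to ``$=\Delta _A$'', congruence--modularity together with semi--degeneracy does the dual job at $\nabla _A$, and Proposition \ref{prop6.3}, (\ref{prop6.3(3)}), is exactly what lets us replace $\alpha $ by an honest Boolean congruence in its $\equiv _A$-class. If one wished to bypass the citation of Proposition \ref{prop6.3}, (\ref{prop6.3(3)}), an alternative would be to first verify directly that every $\beta \in {\cal B}({\rm Con}(A))$ satisfies $\rho _A(\beta )=\beta $ when $A$ is semiprime (expanding $[\rho _A(\beta ),\beta ^{\prime}]_A$ as $\bigvee _{\gamma }[\gamma ,\beta ^{\prime}]_A$ over the congruences furnished by Lemma \ref{lema4.2}, (\ref{lema4.2(1)}), noting that each term collapses to $\Delta _A$ by semiprimeness, and then using modularity), and afterwards argue from $\rho _A(\alpha )=\rho _A(\beta )=\beta $ that $\beta ^{\prime}$ complements $\alpha $; this route is longer but more self-contained relative to Section \ref{boolean}.
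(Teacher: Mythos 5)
Your proof is correct, and the forward implication is handled exactly as in the paper (via Lemma \ref{lema6.2}, with no use of semiprimeness). For the converse you take a somewhat heavier route than the paper: you first invoke Proposition \ref{prop6.3} (the semiprime item) to replace $\alpha $ by an honest Boolean congruence $\beta \in {\cal B}({\rm Con}(A))$ with $\alpha \equiv _A\beta $, and then transport the complement $\beta ^{\prime}$ of $\beta $ to $\alpha $ using the fact that $\equiv _A$ is a lattice congruence (Remark \ref{eqrho}), finishing with Lemma \ref{deltasemiprime} and Remark \ref{coronrho}. The paper skips this detour: since every element of ${\cal B}({\cal L}(A))\subseteq {\cal L}(A)=\lambda _A({\cal K}(A))$ is of the form $\lambda _A(\beta )$, it simply picks a $\beta \in {\rm Con}(A)$ with $\lambda _A(\alpha \vee \beta )={\bf 1}$ and $\lambda _A(\alpha \cap \beta )={\bf 0}$, and concludes $\alpha \vee \beta =\nabla _A$ and $\alpha \cap \beta =\Delta _A$ directly from the same two reflection facts (Remark \ref{coronrho} at $\nabla _A$, Lemma \ref{deltasemiprime} at $\Delta _A$), so that this $\beta $ itself complements $\alpha $. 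Both arguments hinge on exactly those two facts, and your diagnosis of where semiprimeness versus congruence--modularity plus semi--degeneracy enters is accurate; the only cost of your version is the citation of Proposition \ref{prop6.3}, whose semiprime case is proved by precisely the direct pull-back trick the paper uses here, so your route is logically sound (that proposition precedes this lemma, so there is no circularity) but repeats work, and the longer self-contained alternative you sketch at the end is unnecessary once one notices that the complement in ${\cal L}(A)$ can be lifted through $\lambda _A$ directly.
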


\begin{proof} We apply Lemma \ref{lema6.2}, which, first of all, gives us the direct implication. For the converse, assume that $\lambda _A(\alpha )\in {\cal B}({\cal L}(A))={\cal B}({\rm Con}(A)/_{\textstyle \equiv _A })$, so that there exists a $\beta \in {\rm Con}(A)$ with $\lambda _A(\alpha \vee \beta )=\lambda _A(\alpha )\vee \lambda _A(\beta )={\bf 1}=\lambda _A(\nabla _A)$ and $\lambda _A(\alpha \cap \beta )=\lambda _A(\alpha )\wedge \lambda _A(\beta )={\bf 0}$, thus $\alpha \vee \beta =\nabla _A$ and $\alpha \cap \beta =\Delta _A$ by Remark \ref{coronrho} and Lemma \ref{deltasemiprime}. Therefore $\alpha \in {\cal B}({\rm Con}(A))$.\end{proof}

For any $\Omega \subseteq {\rm Con}(A)$, let us consider the property:

\noindent $(A,\Omega )\quad $ for all $\alpha ,\beta \in \Omega $ and all $n\in \N ^*$, there exists a $k\in \N ^*$ such that $[[\alpha ,\alpha ]_A^k,[\beta ,\beta ]_A^k]_A\subseteq [\alpha ,\beta ]_A^n$

\begin{remark} By Lemma \ref{commassoc}, if the commutator of $A$ is associative, then $(A,{\rm Con}(A))$ holds.

Notice, from the proof of statement (\ref{prop6.3(3)}) from Proposition \ref{prop6.3}, that this statement, and thus the fact that $\lambda _A\mid _{{\cal B}({\rm Con}(A))}:{\cal B}({\rm Con}(A))\rightarrow {\cal B}({\cal L}(A))$ is a Boolean isomorphism, also hold if property $(A,{\cal K}(A))$ is fulfilled, instead of the associativity of the commutator of $A$.\end{remark}

\begin{openproblem} Under the current context, determine whether $(A,{\cal K}(A))$ always holds; if it doesn`t, then determine whether $(A,{\cal K}(A))$ is equivalent to the associativity of the commutator of $A$.\end{openproblem}

\begin{lemma} $({\cal B}({\rm Con}(A)),\vee ,[\cdot ,\cdot ]_A=\cap ,\perp ,\Delta _A,\nabla _A)$ is a Boolean algebra.\label{algboole}\end{lemma}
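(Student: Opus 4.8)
The plan is to establish, in this order, that $\mathcal{B}(\mathrm{Con}(A))$ is closed under the lattice operations $\vee $ and $\cap $ of $\mathrm{Con}(A)$, that the resulting bounded sublattice is distributive, and that every element of it has $\perp $ as its unique complement; the conclusion is then immediate, the commutator coinciding with $\cap $ on $\mathcal{B}(\mathrm{Con}(A))$ by Lemma \ref{meetcomm}. Note that the standing hypothesis $[\theta ,\nabla _A]_A=\theta $ for all $\theta \in \mathrm{Con}(A)$ applied at $\theta =\nabla _A$ gives $[\nabla _A,\nabla _A]_A=\nabla _A$, which I will use freely.

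First I would fix $\alpha ,\beta \in \mathcal{B}(\mathrm{Con}(A))$, choose $\alpha ^{\prime },\beta ^{\prime }\in \mathrm{Con}(A)$ with $\alpha \vee \alpha ^{\prime }=\beta \vee \beta ^{\prime }=\nabla _A$ and $\alpha \cap \alpha ^{\prime }=\beta \cap \beta ^{\prime }=\Delta _A$ (so $\alpha ^{\prime },\beta ^{\prime }\in \mathcal{B}(\mathrm{Con}(A))$ as well), and show that $\alpha ^{\prime }\cap \beta ^{\prime }$ is a complement of $\alpha \vee \beta $ and $\alpha ^{\prime }\vee \beta ^{\prime }$ a complement of $\alpha \cap \beta $. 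For the join of $\alpha \vee \beta $ with $\alpha ^{\prime }\cap \beta ^{\prime }$ I would expand $\nabla _A=[\nabla _A,\nabla _A]_A=[\alpha \vee \alpha ^{\prime },\beta \vee \beta ^{\prime }]_A=[\alpha ,\beta ]_A\vee [\alpha ,\beta ^{\prime }]_A\vee [\alpha ^{\prime },\beta ]_A\vee [\alpha ^{\prime },\beta ^{\prime }]_A$ using distributivity of the commutator w.r.t.\ joins, and observe that the first three terms lie below $\alpha \vee \beta $ and the last below $\alpha ^{\prime }\cap \beta ^{\prime }$. The remaining three identities are obtained by repeatedly turning $[\gamma ,-]_A$ into $\gamma \cap -$ via Lemma \ref{meetcomm} (valid since the $\gamma $'s involved lie in $\mathcal{B}(\mathrm{Con}(A))$) together with commutator distributivity, e.g.\ $(\alpha \vee \beta )\cap \alpha ^{\prime }=[\alpha ^{\prime },\alpha ]_A\vee [\alpha ^{\prime },\beta ]_A=\Delta _A\vee (\alpha ^{\prime }\cap \beta )$, whence $(\alpha \vee \beta )\cap \alpha ^{\prime }\cap \beta ^{\prime }=\alpha ^{\prime }\cap \beta \cap \beta ^{\prime }=\Delta _A$; and $\alpha =\alpha \cap (\beta \vee \beta ^{\prime })=(\alpha \cap \beta )\vee (\alpha \cap \beta ^{\prime })\subseteq (\alpha \cap \beta )\vee \beta ^{\prime }$, which forces $(\alpha \cap \beta )\vee (\alpha ^{\prime }\vee \beta ^{\prime })\supseteq \alpha \vee \alpha ^{\prime }=\nabla _A$, while $(\alpha \cap \beta )\cap (\alpha ^{\prime }\vee \beta ^{\prime })\subseteq \alpha \cap (\alpha ^{\prime }\vee \beta ^{\prime })\cap \beta =(\alpha \cap \beta ^{\prime })\cap \beta =\Delta _A$. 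This shows $\mathcal{B}(\mathrm{Con}(A))$ is a bounded sublattice of $\mathrm{Con}(A)$ (it obviously contains $\Delta _A$ and $\nabla _A$).

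Distributivity of this sublattice is then essentially free: for $\alpha ,\beta ,\gamma \in \mathcal{B}(\mathrm{Con}(A))$, Lemma \ref{meetcomm} and commutator distributivity give $\alpha \cap (\beta \vee \gamma )=[\alpha ,\beta \vee \gamma ]_A=[\alpha ,\beta ]_A\vee [\alpha ,\gamma ]_A=(\alpha \cap \beta )\vee (\alpha \cap \gamma )$. Next I would verify that $\alpha ^{\perp }$ is the complement of $\alpha \in \mathcal{B}(\mathrm{Con}(A))$: if $\alpha ^{\prime }$ is any complement of $\alpha $ then $[\alpha ^{\prime },\alpha ]_A\subseteq \alpha ^{\prime }\cap \alpha =\Delta _A$, so $\alpha ^{\prime }\subseteq \alpha ^{\perp }$ by Lemma \ref{lema5.16}, hence $\alpha \vee \alpha ^{\perp }=\nabla _A$; and $\alpha \cap \alpha ^{\perp }=[\alpha ,\alpha ^{\perp }]_A=\Delta _A$ by Lemma \ref{meetcomm} and the fact that $[\alpha ^{\perp },\alpha ]_A=\Delta _A$ (Lemma \ref{lema5.16}). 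Since a bounded distributive lattice admits at most one complement per element, $\perp $ is precisely the complementation of $\mathcal{B}(\mathrm{Con}(A))$, so this lattice is a Boolean algebra; as $[\cdot ,\cdot ]_A=\cap $ on it by Lemma \ref{meetcomm}, we get exactly the presentation $(\mathcal{B}(\mathrm{Con}(A)),\vee ,[\cdot ,\cdot ]_A=\cap ,\perp ,\Delta _A,\nabla _A)$ in the statement. The main obstacle is the closure under $\vee $ and $\cap $: because $\mathrm{Con}(A)$ is only congruence--modular, one cannot appeal to the general fact that the complemented elements of a distributive lattice form a sublattice, and the argument genuinely relies on the commutator identities (Lemma \ref{meetcomm}, distributivity of $[\cdot ,\cdot ]_A$ over joins) both to produce the candidate complements and to verify them; everything after that is routine.
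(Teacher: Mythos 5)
Your proof is correct and follows essentially the same route as the paper's: closure of ${\cal B}({\rm Con}(A))$ under $\vee$ and $\cap$ via the explicit complements $\alpha'\cap\beta'$ and $\alpha'\vee\beta'$ (using Lemma \ref{meetcomm} and distributivity of the commutator over joins), distributivity of the resulting bounded sublattice from the same facts, and identification of the complement with $\theta^{\perp}$ via Lemma \ref{lema5.16}. The only cosmetic differences are that you obtain $(\alpha\vee\beta)\vee(\alpha'\cap\beta')=\nabla_A$ by expanding $\nabla_A=[\nabla_A,\nabla_A]_A=[\alpha\vee\alpha',\beta\vee\beta']_A$ (the technique of Proposition \ref{prop4.1}) where the paper uses a lattice absorption computation, and you phrase the last step through uniqueness of complements rather than the paper's "complement as maximum" characterization; both variants are sound.
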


\begin{proof} We follow, in part, the argument from \cite[Lemma $4$]{jip}. Let $\alpha ,\beta \in {\cal B}({\rm Con}(A))$, so that there exist $\overline{\alpha },\overline{\beta }\in {\cal B}({\rm Con}(A))$ such that $\alpha \vee \overline{\alpha }=\beta \vee \overline{\beta }=\nabla _A$ and $\alpha \cap \overline{\alpha }=\beta \cap \overline{\beta }=\Delta _A$. Then, by Remark \ref{comutinters}, the following hold: $(\alpha \vee \beta )\cap \overline{\alpha }\cap \overline{\beta }=(\alpha \cap \overline{\alpha }\cap \overline{\beta })\vee (\beta \cap \overline{\alpha }\cap \overline{\beta })=\Delta _A\vee \Delta _A=\Delta _A$ and, since $\overline{\alpha }\cap \beta \subseteq \beta $, it follows that $\alpha \vee \beta \vee (\overline{\alpha }\cap \overline{\beta })=\alpha \vee \beta \vee (\overline{\alpha }\cap \beta )\vee (\overline{\alpha }\cap \overline{\beta })=\alpha \vee \beta \vee (\overline{\alpha }\cap (\beta \vee \overline{\beta }))=\alpha \vee \beta \vee (\overline{\alpha }\cap \nabla _A)=\alpha \vee \beta \vee \overline{\alpha }=\nabla _A$. Analogously, $(\overline{\alpha }\vee \overline{\beta })\cap \alpha \cap \beta =\Delta _A$ and $\overline{\alpha }\vee \overline{\beta }\vee (\alpha \cap \beta )=\nabla _A$. Hence $\alpha \vee \beta ,\alpha \cap \beta \in {\cal B}({\rm Con}(A))$. Clearly, $\Delta _A,\nabla _A\in {\cal B}({\rm Con}(A))$. Therefore ${\cal B}({\rm Con}(A))$ is a bounded sublattice of ${\rm Con}(A)$. By Remark \ref{comutinters}, it follows that $({\cal B}({\rm Con}(A)),\vee ,[\cdot ,\cdot ]_A=\cap ,\Delta _A,\nabla _A)$ is a bounded distributive lattice, and, by its definition, it is also complemented, thus it is a Boolean lattice. By a well--known characterization of the complement in a Boolean lattice, for any $\theta \in {\cal B}({\rm Con}(A))$, the complement of $\theta $ in ${\cal B}({\rm Con}(A))$ is $\overline{\theta }=\max \{\alpha \in {\cal B}({\rm Con}(A))\ |\ \alpha \cap \theta =\Delta _A\}=\max \{\alpha \in {\cal B}({\rm Con}(A))\ |\ [\alpha ,\theta ]_A=\Delta _A\}\subseteq \max \{\alpha \in {\rm Con}(A)\ |\ [\alpha ,\theta ]_A=\Delta _A\}=\theta ^{\perp }$ according to Lemma \ref{lema5.16}, thus $\nabla _A=\theta \vee \overline{\theta }\subseteq \theta \vee \theta ^{\perp }$, so $\theta \vee \theta ^{\perp }=\nabla _A$. Again by Lemma \ref{lema5.16}, $\Delta _A=[\theta ,\theta ^{\perp }]_A=\theta \cap \theta ^{\perp }$. Therefore $\theta ^{\perp }\in {\cal B}({\rm Con}(A))$ and $\theta ^{\perp }$ is the complement of $\theta $ in ${\cal B}({\rm Con}(A))$.\end{proof}

For any bounded lattice $L$ and any $I\in {\rm Id}(L)$, we shall denote by ${\rm Ann}(I)$ the {\em annihilator of $I$} in $L$: ${\rm Ann}(I)=\{a\in L\ |\ (\forall \, x\in I)\, (a\wedge x=0)\}$. It is immediate that, if $L$ is distributive, then ${\rm Ann}(I)\in {\rm Id}(L)$. Throughout the rest of this paper, all annihilators shall be considerred in the bounded distributive lattice ${\cal L}(A)$, so they shall be ideals of the lattice ${\cal L}(A)$. Recall that ${\cal L}(A)=\lambda _A({\cal K}(A))$.

\begin{lemma} For any $\alpha \in {\cal K}(A)$:\begin{itemize}
\item ${\rm Ann}(\alpha ^*)=\{\lambda _A(\beta )\ |\ \beta \in {\cal K}(A),\lambda _A([\alpha ,\beta ]_A)={\bf 0}\}$;
\item if $A$ is semiprime, then ${\rm Ann}(\alpha ^*)=\{\lambda _A(\beta )\ |\ \beta \in {\cal K}(A),[\alpha ,\beta ]_A=\Delta _A\}$.\end{itemize}\label{calcann}\end{lemma}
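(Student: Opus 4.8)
The plan is to reduce the defining condition for ${\rm Ann}(\alpha ^*)$ to a single meet in ${\cal L}(A)$, and then translate that meet back into the commutator of $A$.

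First I would recall that, since $\alpha \in {\cal K}(A)$, Lemma \ref{kstar} yields $\alpha ^*=(\widehat{\alpha }\, ]_{{\cal L}(A)}$, the principal ideal of ${\cal L}(A)$ generated by $\widehat{\alpha }=\lambda _A(\alpha )$; in particular $\widehat{\alpha }\in \alpha ^*$ and $\widehat{\alpha }=\max \alpha ^*$. Since ${\cal L}(A)=\lambda _A({\cal K}(A))$, every element of ${\cal L}(A)$ is of the form $\lambda _A(\beta )=\widehat{\beta }$ with $\beta \in {\cal K}(A)$, so such an element lies in ${\rm Ann}(\alpha ^*)$ iff $\widehat{\beta }\wedge y={\bf 0}$ for all $y\in \alpha ^*$. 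Because $\alpha ^*$ has $\widehat{\alpha }$ as greatest element, monotonicity of $\wedge $ shows that this is equivalent to the single identity $\widehat{\beta }\wedge \widehat{\alpha }={\bf 0}$: one direction is immediate by taking $y=\widehat{\alpha }\in \alpha ^*$, and conversely if $\widehat{\beta }\wedge \widehat{\alpha }={\bf 0}$ then $\widehat{\beta }\wedge y\leq \widehat{\beta }\wedge \widehat{\alpha }={\bf 0}$ for every $y\leq \widehat{\alpha }$.

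Next I would invoke the definition of the meet on ${\cal K}(A)/_{\textstyle \equiv _A }$, namely $\widehat{\alpha }\wedge \widehat{\beta }=\widehat{\alpha \cap \beta }=\widehat{[\alpha ,\beta ]_A}=\lambda _A([\alpha ,\beta ]_A)$; this is legitimate because ${\cal K}(A)$ is closed under the commutator, so $[\alpha ,\beta ]_A\in {\cal K}(A)$. Combining with the previous paragraph, $\lambda _A(\beta )\in {\rm Ann}(\alpha ^*)$ iff $\lambda _A([\alpha ,\beta ]_A)={\bf 0}$, which is exactly the first asserted equality ${\rm Ann}(\alpha ^*)=\{\lambda _A(\beta )\ |\ \beta \in {\cal K}(A),\lambda _A([\alpha ,\beta ]_A)={\bf 0}\}$.

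Finally, for the \emph{semiprime} case I would apply the first part of Lemma \ref{deltasemiprime}: when $A$ is semiprime, $\lambda _A(\gamma )={\bf 0}$ iff $\gamma =\Delta _A$, for any $\gamma \in {\rm Con}(A)$. Taking $\gamma =[\alpha ,\beta ]_A$ converts the condition $\lambda _A([\alpha ,\beta ]_A)={\bf 0}$ into $[\alpha ,\beta ]_A=\Delta _A$, giving the second equality. I do not expect any genuine obstacle here; the only point that needs care is the reduction of the annihilator condition to a single meet, which rests precisely on $\alpha ^*$ being a principal ideal of ${\cal L}(A)$ — hence on the hypothesis $\alpha \in {\cal K}(A)$ together with Lemma \ref{kstar}.
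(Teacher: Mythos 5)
Your proposal is correct and follows essentially the same route as the paper: Lemma \ref{kstar} reduces ${\rm Ann}(\alpha ^*)$ to the annihilator of the principal ideal $(\lambda _A(\alpha )]$, hence to the single condition $\lambda _A(\alpha )\wedge \lambda _A(\beta )=\lambda _A([\alpha ,\beta ]_A)={\bf 0}$, and Lemma \ref{deltasemiprime} handles the semiprime case. No gaps.
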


\begin{proof} By Lemma \ref{kstar}, ${\rm Ann}(\alpha ^*)={\rm Ann}((\lambda _A(\alpha )])=\{\lambda _A(\beta )\ |\ \beta \in {\cal K}(A),(\forall \, x\in (\lambda _A(\alpha )])\, (x\wedge \lambda _A(\beta )={\bf 0})\}=\{\lambda _A(\beta )\ |\ \beta \in {\cal K}(A),\lambda _A(\alpha )\wedge \lambda _A(\beta )={\bf 0})\}=\{\lambda _A(\beta )\ |\ \beta \in {\cal K}(A),\lambda _A([\alpha ,\beta ]_A)={\bf 0}\}$. By Lemma \ref{deltasemiprime}, if $A$ is semiprime, then, for any $\beta \in {\cal K}(A)$, $\lambda _A([\alpha ,\beta ]_A)={\bf 0}$ iff $[\alpha ,\beta ]_A=\Delta _A$, hence the second equality in the enunciation.\end{proof}

\begin{lemma} For any $\alpha \in {\rm Con}(A)$ and any $I\in {\rm Id}({\cal L}(A))$, if ${\rm Ann}(\alpha ^*)\subseteq I$, then $\alpha ^{\perp }\subseteq I_*$. If $A$ is semiprime and $\alpha \in {\cal K}(A)$, then the converse implication holds, as well.\label{lema5.17}\end{lemma}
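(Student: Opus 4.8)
The plan is to reduce both implications, via the join formula $\alpha ^{\perp }=\bigvee \{\beta \in {\cal K}(A)\ |\ [\alpha ,\beta ]_A=\Delta _A\}$ from Lemma \ref{lema5.16} and the equivalence ``$\beta \subseteq I_*$ iff $\widehat{\beta }\in I$'' for $\beta \in {\cal K}(A)$ from Lemma \ref{lema5}, to statements about a single compact congruence, after which everything is a short computation with $\equiv _A$ and the monotonicity of the commutator.

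For the first implication, I would assume ${\rm Ann}(\alpha ^*)\subseteq I$. Since $I_*\in {\rm Con}(A)$ and, by Lemma \ref{lema5.16}, $\alpha ^{\perp }$ is the join of the compact congruences $\beta $ with $[\alpha ,\beta ]_A=\Delta _A$, it suffices to show $\beta \subseteq I_*$ for each such $\beta $. The key step is to observe that $\widehat{\beta }\in {\rm Ann}(\alpha ^*)$: a generic element of $\alpha ^*=\{\widehat{\gamma }\ |\ \gamma \in {\cal K}(A),\gamma \subseteq \alpha \}$ has the form $\widehat{\gamma }$ with $\gamma \subseteq \alpha $, whence $[\gamma ,\beta ]_A\subseteq [\alpha ,\beta ]_A=\Delta _A$ by the monotonicity of the commutator (Proposition \ref{1.3}), so $\widehat{\beta }\wedge \widehat{\gamma }=\widehat{[\beta ,\gamma ]_A}=\widehat{\Delta _A}={\bf 0}$. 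Thus $\widehat{\beta }\in {\rm Ann}(\alpha ^*)\subseteq I$, and Lemma \ref{lema5} gives $\beta \subseteq I_*$; taking the join over all such $\beta $ yields $\alpha ^{\perp }\subseteq I_*$.

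For the converse, I would assume $A$ is semiprime and $\alpha \in {\cal K}(A)$, with $\alpha ^{\perp }\subseteq I_*$, and take an arbitrary $x\in {\rm Ann}(\alpha ^*)$. Here the semiprimeness hypothesis is used precisely so that Lemma \ref{calcann} applies in its sharp form ${\rm Ann}(\alpha ^*)=\{\lambda _A(\beta )\ |\ \beta \in {\cal K}(A),[\alpha ,\beta ]_A=\Delta _A\}$, letting me choose $\beta \in {\cal K}(A)$ with $x=\widehat{\beta }$ and $[\alpha ,\beta ]_A=\Delta _A$ \emph{on the nose} (rather than merely $\lambda _A([\alpha ,\beta ]_A)={\bf 0}$). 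Then $[\beta ,\alpha ]_A=\Delta _A$ means $\beta \subseteq \alpha ^{\perp }$ by Lemma \ref{lema5.16}, so $\beta \subseteq I_*$; since $\beta \in {\cal K}(A)$, Lemma \ref{lema5} gives $\widehat{\beta }\in I$, i.e. $x\in I$. Hence ${\rm Ann}(\alpha ^*)\subseteq I$.

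I do not expect a serious obstacle here: the two points demanding care are (i) remembering that $\alpha ^{\perp }$ can be written as a join of \emph{compact} congruences, which is what makes Lemma \ref{lema5} usable, and (ii) in the converse direction, seeing that semiprimeness is exactly the hypothesis that upgrades the description of ${\rm Ann}(\alpha ^*)$ in Lemma \ref{calcann} to one with $[\alpha ,\beta ]_A=\Delta _A$, which is needed to feed into Lemma \ref{lema5.16}.
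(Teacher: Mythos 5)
Your proposal is correct and follows essentially the same route as the paper: in the direct implication both arguments show $\widehat{\beta }\in {\rm Ann}(\alpha ^*)$ for each compact $\beta $ with $[\alpha ,\beta ]_A=\Delta _A$ (you via monotonicity of the commutator, the paper via $x\leq \lambda _A(\alpha )$ in the quotient lattice, a cosmetic difference) and then apply Lemmas \ref{lema5} and \ref{lema5.16}; the converse is identical, using semiprimeness exactly to invoke the sharp form of Lemma \ref{calcann}.
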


\begin{proof} For the direct implication, assume that ${\rm Ann}(\alpha ^*)\subseteq I$ and let $\beta \in {\cal K}(A)$ such that $[\alpha ,\beta ]_A=\Delta _A$, hence $\lambda _A(\alpha )\wedge \lambda _A(\beta )=\lambda _A([\alpha ,\beta ]_A)=\lambda _A(\Delta _A)={\bf 0}$. Now let $x\in \alpha ^*$, so that $x=\lambda _A(\gamma )$ for some $\gamma \in {\cal K}(A)$ with $\gamma \subseteq \alpha $. Then $x=\lambda _A(\gamma )\leq \lambda _A(\alpha )$, hence $x\wedge \lambda _A(\beta )=\lambda _A(\gamma )\wedge \lambda _A(\beta )\leq \lambda _A(\alpha )\wedge \lambda _A(\beta )={\bf 0}$, so $x\wedge \lambda _A(\beta )={\bf 0}$, thus $\lambda _A(\beta )\in {\rm Ann}(\alpha ^*)\subseteq I$, therefore $\beta \subseteq I_*$ by Lemma \ref{lema5}. According to Lemma \ref{lema5.16}, $\displaystyle \alpha ^{\perp }=\bigvee \{\beta \in {\cal K}(A)\ |\ [\alpha ,\beta ]_A=\Delta _A\}\subseteq I_*$.

For the converse implication, assume that $A$ is semiprime, $\alpha \in {\cal K}(A)$ and $\alpha ^{\perp }\subseteq I_*$, and let $x\in {\rm Ann}(\alpha ^*)$, which means that $x=\lambda _A(\beta )$ for some $\beta \in {\cal K}(A)$ with $[\alpha ,\beta ]_A=\Delta _A$, according to Lemma \ref{calcann}. Hence, by Lemmas \ref{lema5.16} and \ref{lema5}, $\beta \subseteq \alpha ^{\perp }\subseteq I_*$, thus $x=\lambda _A(\beta )\in I$, therefore ${\rm Ann}(\alpha ^{\perp })\subseteq I$.\end{proof}

\begin{proposition} For any $\theta \in {\rm Con}(A)$:\begin{enumerate}
\item\label{prop5.18(1)} $(\theta ^{\perp })^*\subseteq {\rm Ann}(\theta ^*)$;
\item\label{prop5.18(2)} if $A$ is semiprime, then $(\theta ^{\perp })^*={\rm Ann}(\theta ^*)$.\end{enumerate}\label{prop5.18}\end{proposition}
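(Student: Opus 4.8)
The plan is to prove the two inclusions separately, leaning on the residuation-type description of $\theta^{\perp}$ from Lemma \ref{lema5.16} together with the identity $\widehat{\alpha}\wedge\widehat{\beta}=\widehat{[\alpha,\beta]_A}$ in ${\cal L}(A)$. I would first record that both sides are ideals of ${\cal L}(A)$ — $(\theta^{\perp})^*$ by Lemma \ref{kstar} and ${\rm Ann}(\theta^*)$ because ${\cal L}(A)$ is distributive — so that it suffices to verify the set-theoretic inclusions.

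For part (\ref{prop5.18(1)}), take $x\in(\theta^{\perp})^*$, say $x=\widehat{\beta}$ with $\beta\in{\cal K}(A)$ and $\beta\subseteq\theta^{\perp}$; by Lemma \ref{lema5.16} this means $[\beta,\theta]_A=\Delta_A$. For an arbitrary $y\in\theta^*$, write $y=\widehat{\gamma}$ with $\gamma\in{\cal K}(A)$ and $\gamma\subseteq\theta$; then monotonicity of the commutator gives $[\beta,\gamma]_A\subseteq[\beta,\theta]_A=\Delta_A$, hence $x\wedge y=\widehat{[\beta,\gamma]_A}=\widehat{\Delta_A}={\bf 0}$. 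As $y$ was arbitrary, $x\in{\rm Ann}(\theta^*)$; note this half uses no semiprimeness.

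For part (\ref{prop5.18(2)}), with $A$ semiprime, only the reverse inclusion is new. Take $x\in{\rm Ann}(\theta^*)$; since ${\cal L}(A)=\lambda_A({\cal K}(A))$, write $x=\widehat{\beta}$ with $\beta\in{\cal K}(A)$. The goal is $\beta\subseteq\theta^{\perp}$, equivalently (Lemma \ref{lema5.16}) $[\beta,\theta]_A=\Delta_A$. For each $(a,b)\in\theta$ we have $Cg_A(a,b)\in{\rm PCon}(A)\subseteq{\cal K}(A)$ with $Cg_A(a,b)\subseteq\theta$, so $\widehat{Cg_A(a,b)}\in\theta^*$ and therefore $\widehat{[\beta,Cg_A(a,b)]_A}=\widehat{\beta}\wedge\widehat{Cg_A(a,b)}={\bf 0}$; since $A$ is semiprime, Lemma \ref{deltasemiprime} upgrades this to $[\beta,Cg_A(a,b)]_A=\Delta_A$. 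Using $\theta=\bigvee_{(a,b)\in\theta}Cg_A(a,b)$ and the distributivity of the commutator over arbitrary joins, $[\beta,\theta]_A=\bigvee_{(a,b)\in\theta}[\beta,Cg_A(a,b)]_A=\Delta_A$, so $\beta\subseteq\theta^{\perp}$ and $x=\widehat{\beta}\in(\theta^{\perp})^*$. Together with (\ref{prop5.18(1)}) this yields equality.

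The routine parts are the evaluations of meets in ${\cal L}(A)$ and the bookkeeping with $\lambda_A$; the two load-bearing steps are Lemma \ref{lema5.16}, which makes ``$\,\subseteq\theta^{\perp}$'' interchangeable with ``$[-,\theta]_A=\Delta_A$'', and, for (\ref{prop5.18(2)}), the appeal to semiprimeness via Lemma \ref{deltasemiprime} to pass from $\widehat{[\beta,\gamma]_A}={\bf 0}$ back to $[\beta,\gamma]_A=\Delta_A$ — without it one only gets $[\beta,\gamma]_A\subseteq\rho_A(\Delta_A)$, which is precisely why the reverse inclusion can fail in general. The main (mild) obstacle is noticing that, since $\theta$ need not be compact, one cannot manipulate $\widehat{\theta}$ directly but must instead test annihilation against the principal subcongruences $Cg_A(a,b)\subseteq\theta$ and then reassemble $\theta$ as their join.
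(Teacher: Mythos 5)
Your proof is correct and follows essentially the same route as the paper: Lemma \ref{lema5.16} to translate $\beta\subseteq\theta^{\perp}$ into $[\beta,\theta]_A=\Delta_A$, monotonicity of the commutator for the first inclusion, and, for the reverse inclusion under semiprimeness, passing from $\widehat{[\beta,\gamma]_A}={\bf 0}$ to $[\beta,\gamma]_A=\Delta_A$ and then reassembling $\theta$ as a join of compact congruences below it and using distributivity of the commutator over arbitrary joins. The only cosmetic difference is that you test against principal subcongruences $Cg_A(a,b)$ while the paper uses all compact $\beta\subseteq\theta$, which changes nothing.
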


\begin{proof} $(\theta ^{\perp })^*=\{\lambda _A(\alpha )\ |\ \alpha \in {\cal K}(A),\alpha \subseteq \theta ^{\perp }\}=\{\lambda _A(\alpha )\ |\ \alpha \in {\cal K}(A),[\alpha ,\theta ]_A=\Delta _A\}$, by Lemma \ref{lema5.16}. ${\rm Ann}(\theta ^*)=\{\lambda _A(\alpha )\ |\ \alpha \in {\cal K}(A),(\forall \, x\in \theta ^*)\, (\lambda _A(\alpha )\wedge x=\lambda _A(\Delta _A))\}=\{\lambda _A(\alpha )\ |\ \alpha \in {\cal K}(A),(\forall \, \beta \in {\cal K}(A))\, (\beta \subseteq \theta \Rightarrow \lambda _A([\alpha ,\beta ]_A)=\lambda _A(\alpha )\wedge \lambda _A(\beta )=\lambda _A(\Delta _A))\}=\{\lambda _A(\alpha )\ |\ \alpha \in {\cal K}(A),(\forall \, \beta \in {\cal K}(A))\, (\beta \subseteq \theta \Rightarrow \rho _A([\alpha ,\beta ]_A)=\rho _A(\Delta _A))\}$.

\noindent (\ref{prop5.18(1)}) Let $\alpha \in {\cal K}(A)$ such that $\lambda _A(\alpha )\in (\theta ^{\perp })^*$, which means that $[\alpha ,\theta ]_A=\Delta _A$. Then, for any $\beta \in {\cal K}(A)$ fulfilling $\beta \subseteq \theta $, we have $[\alpha ,\beta ]_A\subseteq [\alpha ,\theta ]_A=\Delta _A$, so $[\alpha ,\beta ]_A=\Delta _A$, thus $\rho _A([\alpha ,\beta ]_A)=\rho _A(\Delta _A))$, hence $\lambda _A(\alpha )\in {\rm Ann}(\theta ^*)$. Therefore $(\theta ^{\perp })^*\subseteq {\rm Ann}(\theta ^*)$.

\noindent (\ref{prop5.18(2)}) Assume that $A$ is semiprime and $\alpha \in {\cal K}(A)$ such that $\lambda _A(\alpha )\in {\rm Ann}(\theta ^*)$, which means that, for all $\beta \in {\cal K}(A)$ such that $\beta \subseteq \theta $, $[\alpha ,\beta ]_A\subseteq \rho _A([\alpha ,\beta ]_A)=\rho _A(\Delta _A)=\Delta _A$, so $[\alpha ,\beta ]_A=\Delta _A$. $\displaystyle \theta =\bigvee _{(a,b)\in \theta }Cg_A(a,b)\subseteq \bigvee \{\beta \in {\cal K}(A)\ |\ \beta \subseteq \theta \}\subseteq \theta $, thus $\displaystyle \theta =\bigvee \{\beta \in {\cal K}(A)\ |\ \beta \subseteq \theta \}$, so $\displaystyle [\alpha ,\theta ]_A=[\alpha ,\bigvee \{\beta \in {\cal K}(A)\ |\ \beta \subseteq \theta \}]_A=\bigvee \{[\alpha ,\beta ]_A\ |\ \beta \in {\cal K}(A),\beta \subseteq \theta \}=\bigvee \{\Delta _A\ |\ \beta \in {\cal K}(A),\beta \subseteq \theta \}=\bigvee \{\Delta _A\}=\Delta _A$, therefore $\lambda _A(\alpha )\in (\theta ^{\perp })^*$, hence ${\rm Ann}(\theta ^*)\subseteq (\theta ^{\perp })^*$, thus ${\rm Ann}(\theta ^*)=(\theta ^{\perp })^*$ by (\ref{prop5.18(1)}).\end{proof}

\begin{proposition} For any $I\in {\rm Id}({\cal L}(A))$:\begin{enumerate}
\item\label{prop5.19(1)} $(I_*)^{\perp }\subseteq {\rm Ann}(I)_*$;
\item\label{prop5.19(2)} if $A$ is semiprime, then $(I_*)^{\perp }={\rm Ann}(I)_*$.\end{enumerate}\label{prop5.19}\end{proposition}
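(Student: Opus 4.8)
The plan is to handle the two inclusions of part (\ref{prop5.19(2)}) separately: the easy one comes for free from part (\ref{prop5.19(1)}), and the other from a short direct computation in which semiprimeness enters only through Lemma \ref{deltasemiprime}; part (\ref{prop5.19(1)}) itself reduces to Lemma \ref{lema5.17} (or, equivalently, to Proposition \ref{prop5.18}, (\ref{prop5.18(1)})). Throughout I would use the identity $(I_*)^*=I$ from Lemma \ref{lema6}, (\ref{lema6(2)}), and the description ${\cal L}(A)=\lambda _A({\cal K}(A))$.

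For part (\ref{prop5.19(1)}) I would apply Lemma \ref{lema5.17} with the congruence $I_*$ in place of $\alpha $ and with ${\rm Ann}(I)$ playing the role of its ideal: the hypothesis ${\rm Ann}((I_*)^*)\subseteq {\rm Ann}(I)$ holds with equality since $(I_*)^*=I$, so the lemma gives $(I_*)^{\perp }\subseteq {\rm Ann}(I)_*$. Alternatively, Proposition \ref{prop5.18}, (\ref{prop5.18(1)}), applied to $\theta =I_*$ yields $((I_*)^{\perp })^*\subseteq {\rm Ann}((I_*)^*)={\rm Ann}(I)$, and then applying the order--preserving map $J\mapsto J_*$ (Lemma \ref{fcts}) together with $(((I_*)^{\perp })^*)_*=\rho _A((I_*)^{\perp })\supseteq (I_*)^{\perp }$ (Proposition \ref{prop12} and Lemma \ref{imediata}, (\ref{imediata1})) gives the same conclusion.

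For part (\ref{prop5.19(2)}), assume $A$ is semiprime; by part (\ref{prop5.19(1)}) it remains to prove ${\rm Ann}(I)_*\subseteq (I_*)^{\perp }$. Take $(a,b)\in {\rm Ann}(I)_*$. Since $Cg_A(a,b)\in {\rm PCon}(A)\subseteq {\cal K}(A)$ and $Cg_A(a,b)\subseteq {\rm Ann}(I)_*$, Lemma \ref{lema5} gives $\widehat{Cg_A(a,b)}\in {\rm Ann}(I)$. For every $\gamma \in {\cal K}(A)$ with $\widehat{\gamma }\in I$ we then have $\widehat{[Cg_A(a,b),\gamma ]_A}=\widehat{Cg_A(a,b)}\wedge \widehat{\gamma }={\bf 0}$ by the definition of the annihilator, whence $[Cg_A(a,b),\gamma ]_A=\Delta _A$ by Lemma \ref{deltasemiprime}. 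Writing $I_*=\bigvee \{\gamma \in {\cal K}(A)\ |\ \widehat{\gamma }\in I\}$ and using distributivity of the commutator over arbitrary joins, $[Cg_A(a,b),I_*]_A=\bigvee \{[Cg_A(a,b),\gamma ]_A\ |\ \gamma \in {\cal K}(A),\widehat{\gamma }\in I\}=\Delta _A$, so $(a,b)\in Cg_A(a,b)\subseteq (I_*)^{\perp }$ by Lemma \ref{lema5.16}; hence ${\rm Ann}(I)_*=(I_*)^{\perp }$.

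The only load--bearing step is the inclusion ${\rm Ann}(I)_*\subseteq (I_*)^{\perp }$ of part (\ref{prop5.19(2)}), and semiprimeness is indispensable there: it is exactly what upgrades $\widehat{[Cg_A(a,b),\gamma ]_A}={\bf 0}$ to $[Cg_A(a,b),\gamma ]_A=\Delta _A$ (Lemma \ref{deltasemiprime}); without it one only gets $[Cg_A(a,b),\gamma ]_A\subseteq \rho _A(\Delta _A)$, and thus, in general, merely $\rho _A((I_*)^{\perp })={\rm Ann}(I)_*$ (which is all Proposition \ref{prop5.18}, (\ref{prop5.18(2)}), and Proposition \ref{prop12} deliver on their own). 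An equivalent route is to combine that last equality with the fact that $(I_*)^{\perp }$ is a radical congruence when $A$ is semiprime, which follows from $[\rho _A((I_*)^{\perp }),I_*]_A\subseteq \rho _A([(I_*)^{\perp },I_*]_A)=\rho _A(\Delta _A)=\Delta _A$ (using Propositions \ref{1.3} and \ref{onrho}, (\ref{onrho2}), that $[(I_*)^{\perp },I_*]_A=\Delta _A$ by Lemma \ref{lema5.16}, and semiprimeness) together with Lemma \ref{lema5.16} again. Apart from this, the argument is bookkeeping along the correspondence $\theta \leftrightarrow \theta ^*$, $J\leftrightarrow J_*$; the one thing to be careful about is to keep part (\ref{prop5.19(1)}) free of the semiprimeness hypothesis and to invoke it only in part (\ref{prop5.19(2)}).
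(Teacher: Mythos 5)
Your proof is correct. It differs from the paper's mainly in packaging: the paper proves both parts at once by expanding $(I_*)^{\perp }$ and ${\rm Ann}(I)_*$, via Lemma \ref{lema5.16}, the definition of $I_*$ and the distributivity of the commutator over arbitrary joins, as joins over the compact congruences $\alpha $ satisfying, respectively, $[\alpha ,\beta ]_A=\Delta _A$ and $\lambda _A([\alpha ,\beta ]_A)=\lambda _A(\Delta _A)$ for all $\beta \in {\cal K}(A)$ with $\lambda _A(\beta )\in I$, and then observes that the first condition always implies the second and that semiprimeness makes them equivalent. You instead obtain (i) as a formal consequence of already--proved material, applying Lemma \ref{lema5.17} (or Proposition \ref{prop5.18}, (\ref{prop5.18(1)})) with $\alpha =I_*$ and the ideal ${\rm Ann}(I)$, using $(I_*)^*=I$; and you prove the hard inclusion of (ii) element--wise: for $(a,b)\in {\rm Ann}(I)_*$ you pass to $\widehat{Cg_A(a,b)}\in {\rm Ann}(I)$ by Lemma \ref{lema5}, upgrade $\widehat{[Cg_A(a,b),\gamma ]_A}={\bf 0}$ to $[Cg_A(a,b),\gamma ]_A=\Delta _A$ by Lemma \ref{deltasemiprime}, and conclude by distributivity and Lemma \ref{lema5.16} --- the same mechanism as the paper's, with semiprimeness entering at exactly the same point. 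Your route buys economy (part (i) needs no new computation, and (ii) reuses the $\theta \mapsto \theta ^*$, $I\mapsto I_*$ bookkeeping), while the paper's direct expansion keeps the proof self--contained and exhibits the two parts as two halves of a single comparison of index sets. Your closing alternative for (ii), combining $\rho _A((I_*)^{\perp })={\rm Ann}(I)_*$ with the observation that $(I_*)^{\perp }$ is radical when $A$ is semiprime, is also sound.
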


\begin{proof} $(I_*)^{\perp }=\bigvee \{\alpha \in {\cal K}(A)\ |\ [\alpha ,I_*]_A=\Delta _A\}=\bigvee \{\alpha \in {\cal K}(A)\ |\ [\alpha ,\bigvee \{\beta \in {\cal K}(A)\ |\ \lambda _A(\beta )\in I\}]_A=\Delta _A\}=\bigvee \{\alpha \in {\cal K}(A)\ |\ \bigvee \{[\alpha ,\beta ]_A\in {\cal K}(A)\ |\ \beta \in {\cal K}(A),\lambda _A(\beta )\in I\}=\Delta _A\}=\bigvee \{\alpha \in {\cal K}(A)\ |\ (\forall \, \beta \in {\cal K}(A))\, (\lambda _A(\beta )\in I\Rightarrow [\alpha ,\beta ]_A=\Delta _A)\}$. $({\rm Ann}(I))_*=\bigvee \{\alpha \in {\cal K}(A)\ |\ \lambda _A(\alpha )\in {\rm Ann}(I)\}=\bigvee \{\alpha \in {\cal K}(A)\ |\ (\forall \, \beta \in {\cal K}(A))\, (\lambda _A(\beta )\in I\Rightarrow \lambda _A(\alpha )\wedge \lambda _A(\beta )={\bf 0})\}=\bigvee \{\alpha \in {\cal K}(A)\ |\ (\forall \, \beta \in {\cal K}(A))\, (\lambda _A(\beta )\in I\Rightarrow \lambda _A([\alpha ,\beta ]_A)=\lambda _A(\Delta _A))\}$.

\noindent (\ref{prop5.19(1)}) For all $\alpha ,\beta \in {\rm Con}(A)$, if $[\alpha ,\beta ]_A=\Delta _A$, then $\lambda _A([\alpha ,\beta ]_A)=\lambda _A(\Delta _A)$, hence $(I_*)^{\perp }\subseteq {\rm Ann}(I)_*$.

\noindent (\ref{prop5.19(2)}) If $A$ is semiprime, then, for every $\alpha ,\beta \in {\rm Con}(A)$, $\lambda _A([\alpha ,\beta ]_A)=\lambda _A(\Delta _A)$ implies $[\alpha ,\beta ]_A\subseteq \rho _A([\alpha ,\beta ]_A)=\rho _A(\Delta _A)=\Delta _A$, thus $[\alpha ,\beta ]_A=\Delta _A$, hence ${\rm Ann}(I)_*\subseteq (I_*)^{\perp }$. By (\ref{prop5.19(1)}), it follows that $(I_*)^{\perp }={\rm Ann}(I)_*$.\end{proof}

We call $A$ a {\em hyperarchimedean algebra} iff, for all $\alpha \in {\rm PCon}(A)$, there exists an $n\in \N ^*$ such that $[\alpha ,\alpha ]_A^n\in {\cal B}({\rm Con}(A))$.

\begin{remark} If $\alpha \in {\rm Con}(A)$ and $n\in \N ^*$ are such that $[\alpha ,\alpha ]_A^n\in {\cal B}({\rm Con}(A))$, then, by Remark \ref{comutinters}, $[\alpha ,\alpha ]_A^{n+1}=[[\alpha ,\alpha ]_A^n,[\alpha ,\alpha ]_A^n]_A=[\alpha ,\alpha ]_A^n\cap [\alpha ,\alpha ]_A^n=[\alpha ,\alpha ]_A^n$, thus $[\alpha ,\alpha ]_A^k=[\alpha ,\alpha ]_A^n$ for all $k\in \N $ such that $k\geq n$.\end{remark}

\begin{remark} If $[\alpha ,\alpha ]_A\in {\cal B}({\rm Con}(A))$ for all $\alpha \in {\rm PCon}(A)$, then $A$ is hyperarchimedean. Thus, if ${\rm PCon}(A)\subseteq {\cal B}({\rm Con}(A))$ and $A$ has principal commutators, then $A$ is hyperarchimedean. If the commutator of $A$ equals the intersection, for instance if ${\cal C}$ is congruence--distributive, then: $A$ is hyperarchimedean iff ${\rm PCon}(A)\subseteq {\cal B}({\rm Con}(A))$. By Lemmas \ref{algboole} and \ref{lema6.1}, the following equivalences hold: ${\rm PCon}(A)\subseteq {\cal B}({\rm Con}(A))$ iff ${\cal K}(A)\subseteq {\cal B}({\rm Con}(A))$ iff ${\cal K}(A)={\cal B}({\rm Con}(A))$.\label{ahyp}\end{remark}

\begin{remark} By Lemma \ref{algboole}, the lattice ${\rm Con}(A)$ is Boolean iff ${\rm Con}(A)={\cal B}({\rm Con}(A))$, which implies that the commutator of $A$ equals the intersection, according to Remark \ref{comutinters}, and thus, since ${\rm PCon}(A)\subseteq {\rm Con}(A)={\cal B}({\rm Con}(A))$, $A$ is hyperarchimedean, while Remark \ref{allsemiprime} ensures us that $A$ is semiprime. From Lemma \ref{lema6.1}, we obtain the following equivalences: ${\rm Con}(A)$ is a Boolean lattice iff ${\cal B}({\rm Con}(A))={\rm Con}(A)$ iff ${\cal B}({\rm Con}(A))={\cal K}(A)={\rm Con}(A)$. Of course, since ${\cal L}(A)$ is a bounded distributive lattice, ${\cal L}(A)$ is a Boolean algebra iff ${\cal L}(A)={\cal B}({\cal L}(A))$.\label{congrbool}\end{remark}

\begin{proposition}\begin{enumerate}
\item\label{8.16(2)} If $A$ is semiprime, then: ${\cal K}(A)={\cal B}({\rm Con}(A))$ iff ${\cal L}(A)={\cal B}({\cal L}(A))$.
\item\label{8.16(3)} If ${\rm Con}(A)$ is a Boolean lattice, then $A$ is hyperarchimedean and semiprime and ${\cal L}(A)$ is isomorphic to ${\rm Con}(A)$, in particular ${\cal L}(A)$ is a Boolean lattice, as well.\end{enumerate}\label{8.16}\end{proposition}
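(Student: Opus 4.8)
The plan is to assemble this proposition from results already established in Sections \ref{reticulatia}, \ref{examples} and \ref{boolean}; each half reduces to a short chain of implications, with essentially no new computation.

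For part (\ref{8.16(2)}) I would observe first that the forward implication does not use semiprimeness at all --- it is precisely Proposition \ref{noua8.16(1)}, which states that ${\cal K}(A)={\cal B}({\rm Con}(A))$ implies ${\cal L}(A)={\cal B}({\cal L}(A))$. For the converse, assume $A$ is semiprime and ${\cal L}(A)={\cal B}({\cal L}(A))$. The inclusion ${\cal B}({\rm Con}(A))\subseteq {\cal K}(A)$ is Lemma \ref{lema6.1}, so it remains only to prove ${\cal K}(A)\subseteq {\cal B}({\rm Con}(A))$. Given $\alpha \in {\cal K}(A)$, one has $\lambda _A(\alpha )\in {\cal L}(A)={\cal B}({\cal L}(A))$, and then $\alpha \in {\cal B}({\rm Con}(A))$ by Lemma \ref{lambdaboole}, the characterization of Boolean congruences through $\lambda _A$ that holds precisely because $A$ is semiprime. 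This yields the desired equality.

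For part (\ref{8.16(3)}) I would invoke Remark \ref{congrbool}, according to which ${\rm Con}(A)$ being a Boolean lattice is equivalent to ${\cal B}({\rm Con}(A))={\cal K}(A)={\rm Con}(A)$, forces the commutator of $A$ to equal the intersection, and makes $A$ hyperarchimedean and semiprime. Since $[\cdot ,\cdot ]_A=\cap $, and the standing hypotheses of this section (commutator commutative and distributive w.r.t. arbitrary joins, $\nabla _A\in {\cal K}(A)$, ${\cal K}(A)$ closed under the commutator) are exactly what Proposition \ref{reticdistrib} requires, that proposition gives that $\lambda _A:{\cal K}(A)\rightarrow {\cal L}(A)$ is a lattice isomorphism; combined with ${\cal K}(A)={\rm Con}(A)$ this gives ${\cal L}(A)\cong {\rm Con}(A)$, and the final clause follows since ${\rm Con}(A)$ is Boolean.

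There is no genuine obstacle here: the work was done in the preceding lemmas and remarks. The only points deserving a second look are that Lemma \ref{lambdaboole}, and hence the converse in part (\ref{8.16(2)}), really does need semiprimeness (without it one has only the one-sided containment of Lemma \ref{lema6.2}), and that Proposition \ref{reticdistrib} does apply in the present setting because the hypotheses of Section \ref{examples} together with ${\cal K}(A)$ being closed under the commutator are all in force here.
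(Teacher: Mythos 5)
Your argument is correct, and for part (\ref{8.16(2)}) it coincides with the paper's proof: the forward direction is Proposition \ref{noua8.16(1)}, and the converse is exactly the chain $\alpha \in {\cal K}(A)\Rightarrow \lambda _A(\alpha )\in {\cal B}({\cal L}(A))\Rightarrow \alpha \in {\cal B}({\rm Con}(A))$ via Lemma \ref{lambdaboole}, closed off with Lemma \ref{lema6.1}. For part (\ref{8.16(3)}) you take a genuinely different (and slightly more economical) route: the paper, after extracting from Remark \ref{congrbool} that $A$ is hyperarchimedean, semiprime and ${\cal B}({\rm Con}(A))={\cal K}(A)={\rm Con}(A)$, applies Proposition \ref{noua8.16(1)} to get ${\cal L}(A)={\cal B}({\cal L}(A))$ and then the semiprime case of Proposition \ref{prop6.3} to conclude that $\lambda _A:{\rm Con}(A)={\cal B}({\rm Con}(A))\rightarrow {\cal B}({\cal L}(A))={\cal L}(A)$ is a Boolean isomorphism; you instead use the fact that the commutator equals the intersection (so $\equiv _A$ is trivial by Proposition \ref{radcongrdistrib}) to invoke Proposition \ref{reticdistrib}, getting that $\lambda _A:{\cal K}(A)\rightarrow {\cal L}(A)$ is a lattice isomorphism, and then ${\cal K}(A)={\rm Con}(A)$ finishes it. Both are sound; your version bypasses the Boolean-center machinery of Proposition \ref{prop6.3} altogether and is essentially the observation already recorded in Remark \ref{finreticdistrib}, while the paper's version explicitly exhibits the isomorphism as a Boolean isomorphism between the Boolean centers --- a distinction without much content here, since a lattice isomorphism between Boolean lattices automatically preserves complements. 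Your cautionary remarks (semiprimeness is genuinely needed for Lemma \ref{lambdaboole}, and the standing hypotheses of the section make Proposition \ref{reticdistrib} applicable) are accurate.
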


\begin{proof} (\ref{8.16(2)}) The direct implication is Proposition \ref{noua8.16(1)}. For the converse, let $\alpha \in {\cal K}(A)$, so that $\lambda _A(\alpha )\in {\cal L}(A)={\cal B}({\cal L}(A))$, thus $\alpha \in {\cal B}({\rm Con}(A))$ by Lemma \ref{lambdaboole}. Hence ${\cal K}(A)\subseteq {\cal B}({\rm Con}(A))$, thus ${\cal K}(A)={\cal B}({\rm Con}(A))$ by Lemma \ref{lema6.1}.

\noindent (\ref{8.16(3)}) By Lemma \ref{congrbool}, we obtain that $A$ is hyperarchimedean and semiprime, and ${\cal B}({\rm Con}(A))={\cal K}(A)={\rm Con}(A)$, hence ${\cal L}(A)={\cal B}({\cal L}(A))$ by Proposition \ref{noua8.16(1)}, and thus $\lambda _A:{\rm Con}(A)={\cal B}({\rm Con}(A))\rightarrow {\cal B}({\cal L}(A))={\cal L}(A)$ is a Boolean isomorphism, according to Proposition \ref{prop6.3}.\end{proof}

\begin{lemma} If $A$ is hyperarchimedean, then $A/\theta $ is hyperarchimedean for all $\theta \in {\rm Con}(A)$.\label{quohyp}\end{lemma}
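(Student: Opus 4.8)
The plan is to reduce the hyperarchimedean property of $A/\theta$ to that of $A$ by representing each principal congruence of the quotient by a principal congruence of $A$ and then transporting the witnessing Boolean congruence along the canonical surjection $p_\theta$. Fix $\theta \in {\rm Con}(A)$ and an arbitrary $\beta \in {\rm PCon}(A/\theta)$, say $\beta = Cg_{A/\theta}(a/\theta, b/\theta)$ for some $a, b \in A$. By Lemma \ref{fsurjcongr}, (\ref{fsurjcongr2}), we have $\beta = (Cg_A(a,b) \vee \theta)/\theta$, so, putting $\alpha = Cg_A(a,b) \in {\rm PCon}(A)$, it holds that $\beta = (\alpha \vee \theta)/\theta$. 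Since $A$ is hyperarchimedean and $\alpha \in {\rm PCon}(A)$, there is an $n \in \N ^*$ with $[\alpha ,\alpha ]_A^n \in {\cal B}({\rm Con}(A))$.

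Next I would compute the corresponding commutator power inside the quotient. As ${\cal C}$ is congruence--modular in this part of the section, Lemma \ref{2.10}, (\ref{2.10(2)}) applies and gives $[\beta ,\beta ]_{A/\theta }^n = [(\alpha \vee \theta)/\theta ,(\alpha \vee \theta)/\theta ]_{A/\theta }^n = ([\alpha ,\alpha ]_A^n \vee \theta )/\theta $. Because $[\alpha ,\alpha ]_A^n \in {\cal B}({\rm Con}(A))$ and ${\cal C}$ is congruence--modular and semi--degenerate here, Lemma \ref{fsurjk}, (\ref{fsurjk2}) yields $([\alpha ,\alpha ]_A^n \vee \theta )/\theta \in {\cal B}({\rm Con}(A/\theta ))$. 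Hence $[\beta ,\beta ]_{A/\theta }^n \in {\cal B}({\rm Con}(A/\theta ))$, and since $\beta$ was an arbitrary element of ${\rm PCon}(A/\theta)$, this shows that $A/\theta$ is hyperarchimedean.

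I do not expect any real obstacle: the two ingredients — that the $n$-th commutator power commutes with $p_\theta$ up to joining with $\theta$ (Lemma \ref{2.10}, (\ref{2.10(2)})) and that surjections carry Boolean congruences to Boolean congruences (Lemma \ref{fsurjk}) — are already isolated, so the argument is a short assembly of these facts together with the description of principal congruences of a quotient. The only point to watch is that the representative $\alpha$ of $\beta$ may indeed be taken principal, which is exactly what Lemma \ref{fsurjcongr}, (\ref{fsurjcongr2}) provides; alternatively, the whole argument can be phrased directly through Lemma \ref{fsurjk}, (\ref{fsurjk1}) applied to the surjective morphism $f = p_\theta$, for which ${\rm Ker}(p_\theta) = \theta$.
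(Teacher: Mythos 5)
Your proof is correct and follows essentially the same route as the paper: the paper also takes an arbitrary pair $a,b\in A$, picks $n$ with $[Cg_A(a,b),Cg_A(a,b)]_A^n\in {\cal B}({\rm Con}(A))$, and applies Lemma \ref{2.10} together with Lemma \ref{fsurjk}, (\ref{fsurjk2}), to conclude that $[Cg_{A/\theta }(a/\theta ,b/\theta ),Cg_{A/\theta }(a/\theta ,b/\theta )]_{A/\theta }^n=([Cg_A(a,b),Cg_A(a,b)]_A^n\vee \theta )/\theta \in {\cal B}({\rm Con}(A/\theta ))$. The only cosmetic difference is that you invoke Lemma \ref{2.10}, (\ref{2.10(2)}), combined with Lemma \ref{fsurjcongr}, (\ref{fsurjcongr2}), whereas the paper cites the packaged version Lemma \ref{2.10}, (\ref{2.10(3)}).
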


\begin{proof} Let $\theta \in {\rm Con}(A)$. For any $a,b\in A$, there exists an $n\in \N ^*$ such that $[Cg_A(a,b),Cg_A(a,b)]_A^n\in {\cal B}({\rm Con}(A))$. Then, according to Lemma \ref{2.10}, (\ref{2.10(3)}), and Lemma \ref{fsurjk}, (\ref{fsurjk2}), $[Cg_{A/\theta }(a/\theta ,b/\theta ),Cg_{A/\theta }(a/\theta ,b/\theta )]_{A/\theta }^n=([Cg_A(a,b),Cg_A(a,b)]_A^n\vee \theta )/\theta \in {\cal B}({\rm Con}(A/\theta ))$, therefore $A/\theta $ is hyperarchimedean.\end{proof}

\begin{lemma} If $A$ is hyperarchimedean, then ${\cal L}(A)$ is a Boolean lattice.\label{hypretic}\end{lemma}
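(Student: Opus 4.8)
The plan is to establish ${\cal L}(A) = {\cal B}({\cal L}(A))$ directly: since ${\cal L}(A)$ is already known to be a bounded distributive lattice (Proposition \ref{reticd01}), being Boolean is exactly the assertion that all of its elements are complemented. Recall that ${\cal L}(A) = \lambda _A({\cal K}(A))$, so it suffices to show $\lambda _A(\alpha ) \in {\cal B}({\cal L}(A))$ for every $\alpha \in {\cal K}(A)$.

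First I would fix $\alpha \in {\cal K}(A)$ and, using Remark \ref{ka}, write it as a finite join $\alpha = \bigvee _{i=1}^n \gamma _i$ of principal congruences $\gamma _1, \ldots , \gamma _n \in {\rm PCon}(A)$. Since $A$ is hyperarchimedean, for each $i$ there is some $m_i \in \N ^*$ with $\varepsilon _i := [\gamma _i, \gamma _i]_A^{m_i} \in {\cal B}({\rm Con}(A))$; by Lemma \ref{algboole}, ${\cal B}({\rm Con}(A))$ is a Boolean algebra, hence closed under finite joins, so $\varepsilon := \bigvee _{i=1}^n \varepsilon _i \in {\cal B}({\rm Con}(A))$ as well.

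Next I would note that each $\gamma _i$ and $\varepsilon _i$ have the same radical: by Proposition \ref{onrho}, (\ref{onrho0}), $\rho _A(\varepsilon _i) = \rho _A([\gamma _i, \gamma _i]_A^{m_i}) = \rho _A(\gamma _i)$, that is, $\gamma _i \equiv _A \varepsilon _i$ and hence $\lambda _A(\gamma _i) = \lambda _A(\varepsilon _i)$. Since $\lambda _A$ is a lattice morphism (Lemma \ref{latcat}), it follows that $\lambda _A(\alpha ) = \bigvee _{i=1}^n \lambda _A(\gamma _i) = \bigvee _{i=1}^n \lambda _A(\varepsilon _i) = \lambda _A(\varepsilon )$. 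Finally, $\varepsilon \in {\cal B}({\rm Con}(A))$ together with Lemma \ref{lema6.2} gives $\lambda _A(\varepsilon ) \in \lambda _A({\cal B}({\rm Con}(A))) \subseteq {\cal B}({\cal L}(A))$, so $\lambda _A(\alpha ) = \lambda _A(\varepsilon ) \in {\cal B}({\cal L}(A))$. As $\alpha $ was arbitrary, this yields ${\cal L}(A) \subseteq {\cal B}({\cal L}(A)) \subseteq {\cal L}(A)$, hence ${\cal L}(A)$ is Boolean.

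I do not expect a serious obstacle here: the whole argument is a bookkeeping assembly of results already in place. The one subtlety worth flagging is that hyperarchimedeanness is hypothesised only on principal congruences, so it is essential to reduce a general compact congruence to a finite join of principal ones and to invoke the closure of ${\cal B}({\rm Con}(A))$ under finite joins (Lemma \ref{algboole}); without that, the passage from ${\rm PCon}(A)$ to all of ${\cal K}(A)$ would not go through. Everything else (that $\lambda _A$ preserves finite joins, that $[\gamma ,\gamma ]_A^m$ has the radical of $\gamma $, and that $\lambda _A$ maps ${\cal B}({\rm Con}(A))$ into ${\cal B}({\cal L}(A))$) is quoted verbatim from earlier results.
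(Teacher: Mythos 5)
Your proposal is correct and follows essentially the same route as the paper: decompose the compact congruence into a finite join of principal ones, use hyperarchimedeanness to replace each principal $\gamma _i$ by some $[\gamma _i,\gamma _i]_A^{m_i}\in {\cal B}({\rm Con}(A))$ (which has the same radical, hence the same $\lambda _A$--image), push into ${\cal B}({\cal L}(A))$ via Lemma \ref{lema6.2}, and conclude ${\cal L}(A)={\cal B}({\cal L}(A))$. The only cosmetic difference is that you form the join $\varepsilon $ inside ${\cal B}({\rm Con}(A))$ using Lemma \ref{algboole} before applying $\lambda _A$, whereas the paper joins the images $\lambda _A(\alpha _i)$ directly inside ${\cal B}({\cal L}(A))$; both are equally valid.
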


\begin{proof} Let $\theta \in {\cal K}(A)$, so that $\theta =\alpha _1\vee \ldots \vee \alpha _n$ for some $n\in \N ^*$ and $\alpha _1,\ldots ,\alpha _n\in {\rm PCon}(A)$. Since $A$ is hyperarchimedean, there exists a $k\in \N ^*$ such that, for all $i\in \overline{1,n}$, $[\alpha _i,\alpha _i]_A^k\in {\cal B}({\rm Con}(A))$, thus $\lambda _A(\alpha _i)=\lambda _A([\alpha _i,\alpha _i]_A^k)\in \lambda _A({\cal B}({\rm Con}(A)))\subseteq {\cal B}({\cal L}(A))$ by Lemma \ref{lema6.2}, so that $\lambda _A(\theta )=\lambda _A(\alpha _1)\vee \ldots \vee \lambda _A(\alpha _n)\in {\cal B}({\cal L}(A))$. Hence $\lambda _A({\cal K}(A))={\cal L}(A)\subseteq {\cal B}({\cal L}(A))$, thus ${\cal L}(A)={\cal B}({\cal L}(A))$, so ${\cal L}(A)$ is a Boolean lattice.\end{proof}

\section{A Reticulation Functor}
\label{functor}

Throughout this section, ${\cal C}$ shall be congruence--modular and semi--degenerate and such that, in each of its members, the set of the compact congruences is closed w.r.t. the commutator. Also, the morphism $f:A\rightarrow B$ shall be surjective, so that the map $\varphi _f:{\rm Con}(A)\rightarrow {\rm Con}(B)$, $\varphi _f(\alpha )=f(\alpha \vee {\rm Ker}(f))$ for all $\alpha \in {\rm Con}(A)$, is well defined.

\begin{remark} By Lemma \ref{fsurjk}, (\ref{fsurjk1}), $\varphi _f({\cal K}(A))={\cal K}(B)$.

For any algebra $M$ from ${\cal C}$ and any $X\subseteq M^2$, let us denote $V_M(X)=V_M(Cg_M(X))$. Then, by the proof of \cite[Proposition $2.1$]{agl} and Lemma \ref{fsurjcongr}, (\ref{fsurjcongr0}), for all $\alpha \in {\rm Con}(A)$, $\{f(\phi )\ |\ \phi \in V_A(\alpha )\}=f(V_A(\alpha ))=V_B(f(\alpha ))=V_B(Cg_B(f(\alpha )))=V_B(f(\alpha \vee {\rm Ker}(f)))=V_B(\varphi _f(\alpha ))$.\label{phif}\end{remark}\vspace*{-28pt}

\begin{center}\begin{picture}(180,90)(0,0)
\put(6,65){${\rm Con}(A)$}
\put(22,51){$\bigcup \! |$}

\put(142,51){$\bigcup \! |$}
\put(16,35){${\cal K}(A)$}
\put(16,5){${\cal L}(A)$}
\put(136,65){${\rm Con}(B)$}
\put(136,35){${\cal K}(B)$}
\put(136,5){${\cal L}(B)$}
\put(17,23){$\lambda _A$}
\put(29,33){\vector(0,-1){19}}
\put(145,23){$\lambda _B$}
\put(144,33){\vector(0,-1){19}}

\put(82,73){$\varphi _f$}
\put(82,43){$\varphi _f$}
\put(77,12){${\cal L}(f)$}
\put(39,69){\vector(1,0){96}}
\put(39,39){\vector(1,0){96}}
\put(39,9){\vector(1,0){96}}\end{picture}\end{center}\vspace*{-12pt}

Let us define ${\cal L}(f):{\cal L}(A)\rightarrow {\cal L}(B)$, for all $\alpha \in {\cal K}(A)$, ${\cal L}(f)(\widehat{\alpha })=\widehat{\varphi _f(\alpha )}$, that is ${\cal L}(f)(\lambda _A(\alpha ))=\lambda _B(f(\alpha \vee {\rm Ker}(f)))$.

\begin{proposition} ${\cal L}(f)$ is well defined and it is a surjective lattice morphism.\label{fctorl}\end{proposition}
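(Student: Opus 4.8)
The plan is to unwind the definition ${\cal L}(f)(\widehat{\alpha})=\widehat{\varphi _f(\alpha )}$ and verify, in turn: that the value lies in ${\cal L}(B)$; that it is independent of the chosen representative of $\widehat{\alpha }$; that ${\cal L}(f)$ preserves $\vee $, $\wedge $, ${\bf 0}$, ${\bf 1}$; and finally that it is surjective. All four points reduce to the transport properties of $\varphi _f$ already recorded in the excerpt, so no genuinely new computation is needed.

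First I would dispose of the membership and well--definedness issues together. Since $\varphi _f({\cal K}(A))={\cal K}(B)$ by Remark \ref{phif} (equivalently Lemma \ref{fsurjk}, (\ref{fsurjk1})), we have $\varphi _f(\alpha )\in {\cal K}(B)$ for every $\alpha \in {\cal K}(A)$, hence $\widehat{\varphi _f(\alpha )}\in {\cal L}(B)$, so the formula makes sense as a map into ${\cal L}(B)$. For independence of the representative, suppose $\alpha ,\beta \in {\cal K}(A)$ with $\widehat{\alpha }=\widehat{\beta }$, i.e.\ $\rho _A(\alpha )=\rho _A(\beta )$; by Proposition \ref{esential}, (\ref{esential2}), this is equivalent to $V_A(\alpha )=V_A(\beta )$. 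Applying the (set--theoretic) image under $f$ and using the identity $V_B(\varphi _f(\gamma ))=f(V_A(\gamma ))$ from Remark \ref{phif}, we get $V_B(\varphi _f(\alpha ))=V_B(\varphi _f(\beta ))$, which by Proposition \ref{esential}, (\ref{esential2}), again gives $\rho _B(\varphi _f(\alpha ))=\rho _B(\varphi _f(\beta ))$, that is $\widehat{\varphi _f(\alpha )}=\widehat{\varphi _f(\beta )}$. Thus ${\cal L}(f)$ is well defined. I expect this step, resting on the fact that $\equiv $--equivalence is exactly detected by the closed sets of the prime spectrum and that $\varphi _f$ carries those closed sets to the right ones, to be the conceptual heart of the argument; everything else is formal.

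Next I would check that ${\cal L}(f)$ is a bounded lattice morphism. For joins, recall $\widehat{\alpha }\vee \widehat{\beta }=\widehat{\alpha \vee \beta }$ in ${\cal L}(A)$, while $\varphi _f(\alpha \vee \beta )=f((\alpha \vee {\rm Ker}(f))\vee (\beta \vee {\rm Ker}(f)))=f(\alpha \vee {\rm Ker}(f))\vee f(\beta \vee {\rm Ker}(f))=\varphi _f(\alpha )\vee \varphi _f(\beta )$ by Lemma \ref{fsurjcongr}, (\ref{fsurjcongr0}); hence ${\cal L}(f)(\widehat{\alpha }\vee \widehat{\beta })=\widehat{\varphi _f(\alpha )\vee \varphi _f(\beta )}=\widehat{\varphi _f(\alpha )}\vee \widehat{\varphi _f(\beta )}$. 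For meets, $\widehat{\alpha }\wedge \widehat{\beta }=\widehat{[\alpha ,\beta ]_A}$, and Remark \ref{fsurjcomut} gives $\varphi _f([\alpha ,\beta ]_A)=[\varphi _f(\alpha ),\varphi _f(\beta )]_B$, so ${\cal L}(f)(\widehat{\alpha }\wedge \widehat{\beta })=\widehat{[\varphi _f(\alpha ),\varphi _f(\beta )]_B}=\widehat{\varphi _f(\alpha )}\wedge \widehat{\varphi _f(\beta )}$; this is the one place where commutator arithmetic, rather than pure lattice bookkeeping, is used. Finally, since $f$ is surjective, $\varphi _f(\Delta _A)=f({\rm Ker}(f))=\Delta _B$ and $\varphi _f(\nabla _A)=f(\nabla _A)=\nabla _B$, so ${\cal L}(f)({\bf 0})={\bf 0}$ and ${\cal L}(f)({\bf 1})={\bf 1}$. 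Surjectivity is immediate: given $y\in {\cal L}(B)$, write $y=\widehat{\mu }$ with $\mu \in {\cal K}(B)$; since $\varphi _f$ maps ${\cal K}(A)$ onto ${\cal K}(B)$, choose $\alpha \in {\cal K}(A)$ with $\varphi _f(\alpha )=\mu $, and then ${\cal L}(f)(\widehat{\alpha })=\widehat{\mu }=y$. This completes the proof.
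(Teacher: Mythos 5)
Your proposal is correct and follows essentially the same route as the paper's proof: well--definedness via Proposition \ref{esential}, (\ref{esential2}), and the identity $V_B(\varphi _f(\alpha ))=f(V_A(\alpha ))$ of Remark \ref{phif}; preservation of meets via Remark \ref{fsurjcomut} and of joins via Lemma \ref{fsurjcongr}; and surjectivity from $\varphi _f({\cal K}(A))={\cal K}(B)$. The only cosmetic difference is that you additionally verify preservation of ${\bf 0}$ and ${\bf 1}$ explicitly, which is harmless and in fact slightly strengthens the statement to a bounded lattice morphism.
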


\begin{proof} By Remark \ref{phif}, the restriction $\varphi _f\mid _{{\cal K}(A)}:{\cal K}(A)\rightarrow {\cal K}(B)$ is well defined and surjective. Let $\alpha ,\beta \in {\cal K}(A)$ such that $\lambda _A(\alpha )=\lambda _A(\beta )$, so that $\rho _A(\alpha )=\rho _A(\beta )$, thus $V_A(\alpha )=V_A(\beta )$, hence $V_B(\varphi _f(\alpha ))=f(V_A(\alpha ))=f(V_A(\beta ))=V_B(\varphi _f(\beta ))$, thus $\rho _B(\varphi _f(\alpha ))=\rho _B(\varphi _f(\beta ))$, so $\lambda _B(\varphi _f(\alpha ))=\lambda _B(\varphi _f(\beta ))$, that is ${\cal L}(f)(\lambda _A(\alpha ))={\cal L}(f)(\lambda _A(\beta ))$; we have used Proposition \ref{esential}, (\ref{esential2}), and Remark \ref{phif}. Hence ${\cal L}(f)$ is well defined. $\lambda _B:{\cal K}(B)\rightarrow {\cal L}(B)$ $\varphi _f\mid _{{\cal K}(A)}:{\cal K}(A)\rightarrow {\cal K}(B)$ are surjective, thus so is their composition, and, since ${\cal L}(f)\circ \lambda _A=\lambda _B\circ \varphi _f$, it follows that ${\cal L}(f)$ is surjective.

By Remark \ref{fsurjcomut}, Lemma \ref{fsurjcongr}, (\ref{fsurjcongr1}), and Proposition \ref{onrho}, (\ref{onrho2}) and (\ref{onrho4}), for all $\alpha ,\beta \in {\cal K}(A)$, the following hold: ${\cal L}(f)(\widehat{\alpha }\wedge \widehat{\beta })={\cal L}(f)(\lambda _A(\alpha )\wedge \lambda _A(\beta ))={\cal L}(f)(\lambda _A([\alpha ,\beta ]_A))=\lambda _B(\varphi _f([\alpha ,\beta ]_A))=\lambda _B(f([\alpha ,\beta ]_A\vee {\rm Ker}(f)))=\lambda _B([f(\alpha \vee {\rm Ker}(f)),f(\beta \vee {\rm Ker}(f))]_B))=\lambda _B(f(\alpha \vee {\rm Ker}(f)))\wedge \lambda _B(f(\beta \vee {\rm Ker}(f)))=\lambda _B(\varphi _f(\alpha ))\wedge \lambda _B(\varphi _f(\beta ))={\cal L}(f)(\lambda _A(\alpha ))\wedge {\cal L}(f)(\lambda _A(\beta ))={\cal L}(f)(\widehat{\alpha })\wedge {\cal L}(f)(\widehat{\beta })$ and ${\cal L}(f)(\widehat{\alpha }\vee \widehat{\beta })={\cal L}(f)(\lambda _A(\alpha )\vee \lambda _A(\beta ))={\cal L}(f)(\lambda _A(\alpha 
\vee \beta ))=\lambda _B(\varphi _f(\alpha 
\vee \beta ))=\lambda _B(f(\alpha \vee \beta \vee {\rm Ker}(f)))=\lambda _B(f(\alpha \vee {\rm Ker}(f)\vee \beta \vee {\rm Ker}(f)))=\lambda _B(f(\alpha \vee {\rm Ker}(f)))\vee \lambda _B(f(\beta \vee {\rm Ker}(f)))=\lambda _B(\varphi _f(\alpha ))\vee \lambda _B(\varphi _f(\beta ))={\cal L}(f)(\lambda _A(\alpha ))\vee {\cal L}(f)(\lambda _A(\beta ))={\cal L}(f)(\widehat{\alpha })\vee {\cal L}(f)(\widehat{\beta })$. Therefore ${\cal L}(f)$ is a lattice morphism.\end{proof}

\begin{remark} Clearly, if $C$ is an algebra from ${\cal C}$ and $g:B\rightarrow C$ is a surjective morphism in ${\cal C}$, then ${\cal L}(g\circ f)={\cal L}(g)\circ {\cal L}(f)$. Hence we have defined a covariant functor ${\cal L}$ from the partial category of ${\cal C}$ whose morphisms are exactly the surjective morphisms from ${\cal C}$ to the partial category of the category ${\cal D}{\bf 01}$ of bounded distributive lattices whose morphisms are exactly the surjective morphisms from ${\cal D}{\bf 01}$.\end{remark}

\begin{openproblem} Extend the definition of ${\cal L}$ to the whole category ${\cal C}$, with the image in ${\cal D}{\bf 01}$, of course.\end{openproblem}

\begin{remark} By Proposition \ref{reticdistrib}, if ${\cal C}$ is congruence--distributive, then we may take ${\cal L}(f)=\varphi _f\mid _{{\cal K}(A)}:{\cal K}(A)\rightarrow {\cal K}(B)$, with ${\cal K}(A)$ and ${\cal K}(B)$ bounded sublattices of ${\rm Con}(A)$ and ${\rm Con}(B)$, respectively.\end{remark}

For any bounded lattice morphism $h:L\rightarrow M$, let us denote by ${\rm Ker}_{\rm Id}(h)=h^{-1}(\{0\})=\{x\in L\ |\ h(x)=0\}\in {\rm Id}(L)$, so that $L/{\rm Ker}_{\rm Id}(h)\cong h(L)$ by the Main Isomorphism Theorem (for lattices and lattice ideals).

\begin{proposition}[the reticulation preserves quotients] For any $\theta \in {\rm Con}(A)$, the lattices ${\cal L}(A/\theta )$ and ${\cal L}(A)/\theta ^*$ are isomorphic.\label{reticquo}\end{proposition}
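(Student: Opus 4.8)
The plan is to realise the quotient map on reticulations as the value of the reticulation functor on the canonical surjection $p_{\theta }:A\rightarrow A/\theta $. Since $A/\theta $ is a member of ${\cal C}$ and $p_{\theta }$ is a surjective morphism in ${\cal C}$, Proposition \ref{fctorl} provides a surjective lattice morphism ${\cal L}(p_{\theta }):{\cal L}(A)\rightarrow {\cal L}(A/\theta )$, given on $\alpha \in {\cal K}(A)$ by ${\cal L}(p_{\theta })(\widehat{\alpha })=\lambda _{A/\theta }(\varphi _{p_{\theta }}(\alpha ))=\lambda _{A/\theta }((\alpha \vee \theta )/\theta )$, because ${\rm Ker}(p_{\theta })=\theta $. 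By the Main Isomorphism Theorem recalled just before the statement, ${\cal L}(A)/{\rm Ker}_{\rm Id}({\cal L}(p_{\theta }))\cong {\cal L}(p_{\theta })({\cal L}(A))={\cal L}(A/\theta )$, so it suffices to prove that ${\rm Ker}_{\rm Id}({\cal L}(p_{\theta }))=\theta ^*$. (Note that ${\cal L}(A/\theta )$ is defined, since $A/\theta $ inherits all the standing hypotheses of this section.)

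To compute this kernel ideal, fix $\alpha \in {\cal K}(A)$. Then ${\cal L}(p_{\theta })(\widehat{\alpha })=\lambda _{A/\theta }((\alpha \vee \theta )/\theta )$ equals ${\bf 0}=\lambda _{A/\theta }(\Delta _{A/\theta })$ exactly when $\rho _{A/\theta }((\alpha \vee \theta )/\theta )=\rho _{A/\theta }(\Delta _{A/\theta })$, by the definition of $\equiv _{A/\theta }$. By Proposition \ref{esential}, parts (\ref{esential5}) and (\ref{esential4}), the two sides of this equality are $\rho _A(\alpha \vee \theta )/\theta $ and $\rho _A(\theta )/\theta $ respectively; since $\rho _A(\alpha \vee \theta )$ and $\rho _A(\theta )$ both contain $\theta $ and $\mu \mapsto \mu /\theta $ is injective on $[\theta )$ (Remark \ref{bijlatcongr}), the condition is equivalent to $\rho _A(\alpha \vee \theta )=\rho _A(\theta )$. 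By Proposition \ref{esential}, (\ref{esential2}), and Proposition \ref{stonetop}, (\ref{stonetop3}), this amounts to $V_A(\alpha )\cap V_A(\theta )=V_A(\theta )$, i.e. $V_A(\theta )\subseteq V_A(\alpha )$, which by Proposition \ref{esential}, (\ref{esential1}), holds iff $\alpha \subseteq \rho _A(\theta )$. Hence ${\rm Ker}_{\rm Id}({\cal L}(p_{\theta }))=\{\widehat{\alpha }\ |\ \alpha \in {\cal K}(A),\alpha \subseteq \rho _A(\theta )\}=\rho _A(\theta )^*$.

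It remains to identify $\rho _A(\theta )^*$ with $\theta ^*$, which is exactly Corollary \ref{cor13}, (\ref{cor13(1)}). Therefore ${\rm Ker}_{\rm Id}({\cal L}(p_{\theta }))=\theta ^*$, and the Main Isomorphism Theorem yields ${\cal L}(A)/\theta ^*\cong {\cal L}(A/\theta )$. There is no serious obstacle here beyond assembling the earlier results in the right order; the only point that needs care is the identification of the kernel ideal of ${\cal L}(p_{\theta })$, which is where Proposition \ref{esential} (transfer of radicals to quotients) and Corollary \ref{cor13} (invariance of $(\cdot )^*$ under radicals) do the work. One could alternatively bypass the functor and define the candidate isomorphism ${\cal L}(A)/\theta ^*\rightarrow {\cal L}(A/\theta )$ directly by $\widehat{\alpha }/\theta ^*\mapsto \lambda _{A/\theta }((\alpha \vee \theta )/\theta )$, but invoking Proposition \ref{fctorl} makes surjectivity and the lattice--morphism property immediate, leaving only the kernel computation above.
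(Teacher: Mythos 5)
Your proposal is correct and follows essentially the same route as the paper: apply the reticulation functor to $p_{\theta }$, invoke the Main Isomorphism Theorem, and compute ${\rm Ker}_{\rm Id}({\cal L}(p_{\theta }))=\rho _A(\theta )^*=\theta ^*$ via Proposition \ref{esential} and Corollary \ref{cor13}, (\ref{cor13(1)}). The only difference is cosmetic: you pass from $\rho _A(\alpha \vee \theta )=\rho _A(\theta )$ to $\alpha \subseteq \rho _A(\theta )$ through the sets $V_A(\cdot )$, whereas the paper does it directly via Remark \ref{clara}, (\ref{clara2}), and Proposition \ref{esential}, (\ref{esential1}).
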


\begin{proof} Recall that $\theta ^*=\lambda _A({\cal K}(A)\cap (\theta ])=\{\widehat{\alpha }\ |\ \alpha \in {\cal K}(A),\alpha \subseteq \theta \}\in {\rm Id}({\cal L}(A))$. $p_{\theta }:A\rightarrow A/\theta $ is a surjective morphism in ${\cal C}$, so we can apply the construction above:\vspace*{-20pt}

\begin{center}\begin{picture}(180,90)(0,0)
\put(6,65){${\rm Con}(A)$}
\put(22,51){$\bigcup \! |$}
\put(142,51){$\bigcup \! |$}
\put(16,35){${\cal K}(A)$}
\put(16,5){${\cal L}(A)$}
\put(136,65){${\rm Con}(A/\theta )$}
\put(136,35){${\cal K}(A/\theta )$}
\put(136,5){${\cal L}(A/\theta )$}
\put(17,23){$\lambda _A$}
\put(29,33){\vector(0,-1){19}}
\put(145,23){$\lambda _{A/\theta }$}
\put(144,33){\vector(0,-1){19}}

\put(82,73){$\varphi _{p_{\theta }}$}
\put(82,43){$\varphi _{p_{\theta }}$}
\put(77,12){${\cal L}(p_{\theta })$}
\put(39,69){\vector(1,0){96}}
\put(39,39){\vector(1,0){96}}
\put(39,9){\vector(1,0){96}}\end{picture}\end{center}\vspace*{-7pt}

For all $\alpha \in {\rm Con}(A)$, $\varphi _{p_{\theta }}(\alpha )=p_{\theta }(\alpha \vee {\rm Ker}(p_{\theta }))=(\alpha \vee \theta )/\theta $, so, for all $\alpha \in {\cal K}(A)$, ${\cal L}(p_{\theta })(\widehat{\alpha })=\widehat{(\alpha \vee \theta )/\theta }\in {\cal L}(A/\theta )$. Thus, for any $\alpha \in {\cal K}(A)$: $\widehat{\alpha }\in {\rm Ker}_{\rm Id}({\cal L}(p_{\theta }))$ iff ${\cal L}(p_{\theta })(\widehat{\alpha })=\widehat{\Delta _{A/\theta }}$ iff $\widehat{(\alpha \vee \theta )/\theta }=\widehat{\theta /\theta }$, that is $\lambda _{A/\theta }((\alpha \vee \theta )/\theta )=\lambda _{A/\theta }(\theta /\theta )$, iff $\rho _{A/\theta }((\alpha \vee \theta )/\theta )=\rho _{A/\theta }(\theta /\theta )$ iff $\rho _A(\alpha \vee \theta )/\theta =\rho _A(\theta )/\theta $ iff $\rho _A(\alpha \vee \theta )=\rho _A(\theta )$ iff $\rho _A(\alpha \vee \theta )\subseteq \rho _A(\theta )$ iff $\alpha \vee \theta \subseteq \rho _A(\theta )$ iff $\alpha \subseteq \rho _A(\theta )$ iff $\widehat{\alpha }\in (\rho _A(\theta ))^*=\theta ^*$, hence ${\rm Ker}_{\rm Id}({\cal L}(p_{\theta }))=\theta ^*$; we have applied Proposition \ref{esential}, (\ref{esential3}), Remark \ref{clara}, (\ref{clara2}), Proposition \ref{esential}, (\ref{esential1}), and Corollary \ref{cor13}, (\ref{cor13(1)}). Proposition \ref{fctorl} ensures us that the lattice morphism ${\cal L}(p_{\theta })$ is surjective, so, from the Main Isomorphism Theorem, we obtain: ${\cal L}(A/\theta )\cong {\cal L}(A)/\theta ^*$.\end{proof}

\begin{proposition} The lattices ${\cal L}(A)$ and ${\cal L}(A/\rho _A(\Delta _A))$ are isomorphic.\label{reticsemiprime}\end{proposition}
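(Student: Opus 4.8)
The plan is to obtain this as a one-line corollary of Proposition \ref{reticquo}. First I would apply that proposition with $\theta =\rho _A(\Delta _A)\in {\rm Con}(A)$, which yields ${\cal L}(A/\rho _A(\Delta _A))\cong {\cal L}(A)/(\rho _A(\Delta _A))^*$. Hence everything reduces to identifying the ideal $(\rho _A(\Delta _A))^*$ of the bounded distributive lattice ${\cal L}(A)$.

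Next I would compute that ideal. By Corollary \ref{cor13}, (\ref{cor13(1)}), applied with $\theta =\Delta _A$, we have $(\rho _A(\Delta _A))^*=(\Delta _A)^*$. Since $\Delta _A$ is the least element of ${\rm Con}(A)$ and $\Delta _A\in {\rm PCon}(A)\subseteq {\cal K}(A)$ (Remark \ref{ka}), the definition of $(\cdot )^*$ gives $(\Delta _A)^*=\{\widehat{\alpha }\ |\ \alpha \in {\cal K}(A),\,\alpha \subseteq \Delta _A\}=\{\widehat{\Delta _A}\}=\{{\bf 0}\}$, i.e.\ the trivial ideal of ${\cal L}(A)$. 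I would then invoke the elementary fact that the quotient of a bounded (distributive) lattice $L$ by its trivial ideal $\{0\}$ is isomorphic to $L$ itself, since the associated lattice congruence identifies $x$ with $y$ only when $x\vee 0=y\vee 0$, hence is the identity. Combining these, ${\cal L}(A)/(\rho _A(\Delta _A))^*={\cal L}(A)/\{{\bf 0}\}\cong {\cal L}(A)$, and therefore ${\cal L}(A/\rho _A(\Delta _A))\cong {\cal L}(A)$.

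There is essentially no obstacle here: the argument is pure bookkeeping built on Proposition \ref{reticquo} and Corollary \ref{cor13}, the only point requiring a moment of care being the correct reading of the quotient ${\cal L}(A)/\theta ^*$ when $\theta ^*$ is the trivial ideal. One may equivalently run the argument inside the proof of Proposition \ref{reticquo}: the computation there, together with Corollary \ref{cor13}, (\ref{cor13(1)}), shows that ${\rm Ker}_{\rm Id}({\cal L}(p_{\rho _A(\Delta _A)}))=(\rho _A(\Delta _A))^*=\{{\bf 0}\}$, while ${\cal L}(p_{\rho _A(\Delta _A)})$ is surjective by Proposition \ref{fctorl}, so the Main Isomorphism Theorem quoted there delivers ${\cal L}(A/\rho _A(\Delta _A))\cong {\cal L}(A)/\{{\bf 0}\}\cong {\cal L}(A)$ directly.
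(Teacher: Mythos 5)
Your proof is correct and follows essentially the same route as the paper: both rest on Proposition \ref{reticquo} combined with Corollary \ref{cor13}, (\ref{cor13(1)}). The only (harmless) difference is the last step, where the paper disposes of ${\cal L}(A)/\Delta _A^*$ by applying Proposition \ref{reticquo} a second time with $\theta =\Delta _A$ and using $A/\Delta _A\cong A$, whereas you observe directly that $\Delta _A^*=\{{\bf 0}\}$ and that quotienting by the trivial ideal changes nothing.
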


\begin{proof} By Corollary \ref{cor13}, (\ref{cor13(1)}), and Proposition \ref{reticquo}, $\rho _A(\Delta _A)^*=\Delta _A^*$, hence the lattice ${\cal L}(A/\rho _A(\Delta _A))$ is isomorphic to ${\cal L}(A)/\rho _A(\Delta _A))^*={\cal L}(A)/\Delta _A^*$, which, in turn, is isomorphic to ${\cal L}(A/\Delta _A)$, and thus to ${\cal L}(A)$, since the algebras $A/\Delta _A$ and $A$ are isomorphic.\end{proof}

\begin{remark} Propositions \ref{rhosemiprime} and \ref{reticsemiprime} show that the reticulation of any algebra $M$ from a semi--degenerate congruence--modular variety, such that ${\cal K}(M)$ is closed with respect to the commutator of $M$ and $\nabla _M\in {\cal K}(M)$, is isomorphic to the reticulation of a semiprime algebra from the same variety.\end{remark}

\begin{corollary} ${\cal B}({\cal L}(A))$ and ${\cal B}({\rm Con}(A/\rho _A(\Delta _A)))$ are isomorphic Boolean algebras.\label{bretic}\end{corollary}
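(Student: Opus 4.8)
The plan is to combine three results already proved here and in the preceding section. Set $B=A/\rho_A(\Delta_A)$. Since ${\cal C}$ is an equational class and $B$ is a quotient of $A\in{\cal C}$, we have $B\in{\cal C}$, so $B$ inherits every standing hypothesis of this section: ${\cal C}$ is congruence--modular and semi--degenerate, ${\cal K}(B)$ is closed with respect to the commutator of $B$, and, by Lemma \ref{distribsemid}, $[\theta,\nabla_B]_B=\theta$ for all $\theta\in{\rm Con}(B)$. Thus all the hypotheses under which Proposition \ref{prop6.3} was established are available for $B$.

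First I would invoke Proposition \ref{rhosemiprime} to note that $B$ is semiprime. Then Proposition \ref{prop6.3}, (\ref{prop6.3(3)}), applied with $B$ in place of $A$, yields that $\lambda_B\mid_{{\cal B}({\rm Con}(B))}:{\cal B}({\rm Con}(B))\rightarrow{\cal B}({\cal L}(B))$ is a Boolean isomorphism; in other words ${\cal B}({\rm Con}(A/\rho_A(\Delta_A)))$ and ${\cal B}({\cal L}(A/\rho_A(\Delta_A)))$ are isomorphic Boolean algebras.

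Next I would use Proposition \ref{reticsemiprime}, which provides a lattice isomorphism ${\cal L}(A)\cong{\cal L}(A/\rho_A(\Delta_A))$ of bounded distributive lattices. Any isomorphism of bounded lattices maps complemented elements to complemented elements, hence restricts to a bijection between the respective Boolean centers; and a bounded lattice isomorphism between Boolean algebras is automatically a Boolean isomorphism. This gives ${\cal B}({\cal L}(A))\cong{\cal B}({\cal L}(A/\rho_A(\Delta_A)))$. Composing with the isomorphism of the previous step produces ${\cal B}({\cal L}(A))\cong{\cal B}({\rm Con}(A/\rho_A(\Delta_A)))$, which is the assertion.

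There is no real obstacle here: the corollary is a formal consequence of Propositions \ref{rhosemiprime}, \ref{prop6.3} and \ref{reticsemiprime}. The only point deserving an explicit word is that the reticulation isomorphism of Proposition \ref{reticsemiprime} descends to the Boolean centers, which is immediate from the definition of ${\cal B}(\cdot)$ as the set of complemented elements.
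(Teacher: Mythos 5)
Your proposal is correct and follows essentially the same route as the paper, which likewise combines Propositions \ref{rhosemiprime}, \ref{prop6.3} and \ref{reticsemiprime}: semiprimeness of $A/\rho_A(\Delta_A)$ gives ${\cal B}({\rm Con}(A/\rho_A(\Delta_A)))\cong {\cal B}({\cal L}(A/\rho_A(\Delta_A)))$, and the isomorphism ${\cal L}(A)\cong {\cal L}(A/\rho_A(\Delta_A))$ restricts to Boolean centers. Your extra verification that the quotient inherits the standing hypotheses (via semi--degeneracy and Lemma \ref{distribsemid}) is a sound, if implicit in the paper, point.
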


\begin{proof} By Propositions \ref{rhosemiprime}, \ref{prop6.3} and \ref{reticsemiprime}, $A/\rho _A(\Delta _A)$ is semiprime, thus the Boolean algebra  ${\cal B}({\rm Con}(A/\rho _A(\Delta _A)))$ is isomorphic to ${\cal B}({\cal L}(A/\rho _A(\Delta _A)))$, which in turn is isomorphic to ${\cal B}({\cal L}(A))$.\end{proof}

Recall the well--known {\bf Nachbin`s Theorem}, which states that, given a bounded distributive lattice $L$, we have: $L$ is a Boolean algebra iff ${\rm Max}_{\rm Id}(L)={\rm Spec}_{\rm Id}(L)$ iff ${\rm Max}_{\rm Filt}(L)={\rm Spec}_{\rm Filt}(L)$.

\begin{proposition} The following are equivalent:\begin{enumerate}
\item\label{hypbool1} $A$ is hyperarchimedean; 
\item\label{hypbool2} $A/\rho _A(\Delta _A)$ is hyperarchimedean; 
\item\label{hypbool3} ${\rm Max}(A)={\rm Spec}(A)$;
\item\label{hypbool4} ${\cal L}(A)$ is a Boolean lattice;
\item\label{hypbool5} the lattice ${\cal L}(A)$ is isomorphic to ${\cal B}({\rm Con}(A))$;
\item\label{hypbool6} the lattice ${\cal L}(A)$ is isomorphic to ${\cal B}({\rm Con}(A/\rho _A(\Delta _A)))$.\end{enumerate}\label{hypbool}\end{proposition}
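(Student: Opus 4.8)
The plan is to prove the cycle of implications $(\ref{hypbool1})\Rightarrow(\ref{hypbool4})\Rightarrow(\ref{hypbool3})\Rightarrow(\ref{hypbool1})$, then fold in the remaining items using results already in hand. First I would note that $(\ref{hypbool1})\Rightarrow(\ref{hypbool4})$ is exactly Lemma \ref{hypretic}, and $(\ref{hypbool4})\Rightarrow(\ref{hypbool3})$ follows from the homeomorphisms $u$ and $v$ of Proposition \ref{prop11} together with Corollary \ref{homeomaxspec}: since $u,v$ restrict to homeomorphisms between ${\rm Spec}(A)$ and ${\rm Spec}_{\rm Id}({\cal L}(A))$ and between ${\rm Max}(A)$ and ${\rm Max}_{\rm Id}({\cal L}(A))$, and since by Nachbin's Theorem (quoted just above the statement) a bounded distributive lattice $L$ is Boolean iff ${\rm Max}_{\rm Id}(L)={\rm Spec}_{\rm Id}(L)$, we get ${\rm Max}_{\rm Id}({\cal L}(A))={\rm Spec}_{\rm Id}({\cal L}(A))$, hence ${\rm Max}(A)={\rm Spec}(A)$ via $v$.

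The substantive implication is $(\ref{hypbool3})\Rightarrow(\ref{hypbool1})$. Here I would argue by contraposition: suppose $A$ is not hyperarchimedean, so there is $\alpha=Cg_A(a,b)\in{\rm PCon}(A)$ with $[\alpha,\alpha]_A^n\notin{\cal B}({\rm Con}(A))$ for every $n\in\N^*$. The goal is to produce a prime congruence that is not maximal. Using the descending chain $[\alpha,\alpha]_A\supseteq[\alpha,\alpha]_A^2\supseteq\cdots$ (Lemma \ref{aritmcomut}, (\ref{aritmcomut2})) and the residuation machinery $\theta\rightarrow\zeta$, $\theta^\perp$ of Section \ref{further}, one localizes: the quotient $A/\rho_A(\Delta_A)$ is semiprime (Proposition \ref{rhosemiprime}), has the same reticulation (Proposition \ref{reticsemiprime}) and the same spectra up to homeomorphism (Remark \ref{specquo}), so WLOG $A$ is semiprime; then the $[\alpha,\alpha]_A^n$ form a strictly descending chain (by Lemma \ref{deltasemiprime} the chain cannot stabilize at a Boolean element, in particular not at $\Delta_A$, and stabilization at a non-$\Delta$ element would force that element to be idempotent under the commutator, hence—via Proposition \ref{prop4.1}—complemented). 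Pass to a prime congruence $\phi$ containing $\rho_A(\bigcap_n[\alpha,\alpha]_A^n)$ but not $\alpha$ (such $\phi$ exists by Lemma \ref{folclor} and the fact that $\alpha\not\subseteq\rho_A(\bigcap_n[\alpha,\alpha]_A^n)$, which is the non-hyperarchimedean hypothesis repackaged). A Zorn's-lemma argument then refines $\phi$ to show it is contained in two distinct primes, or directly produces a prime $\phi$ with $\phi\subsetneq\psi\subsetneq\nabla_A$ for some congruence $\psi$ that is forced to be prime by semiprimeness—contradicting ${\rm Max}(A)={\rm Spec}(A)$. I expect this construction of a non-maximal prime from a non-Boolean commutator power to be the main obstacle: it requires carefully combining the arithmetic of iterated commutators (Lemma \ref{aritmcomut}), the annihilator/perp calculus (Lemmas \ref{lema5.16}, \ref{lema5.17}, Propositions \ref{prop5.18}, \ref{prop5.19}), and the prime-avoidance form of Lemma \ref{folclor}.

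For the remaining equivalences: $(\ref{hypbool1})\Leftrightarrow(\ref{hypbool2})$ follows from Lemma \ref{quohyp} in one direction, and in the other from Proposition \ref{reticsemiprime} plus the already-established $(\ref{hypbool1})\Leftrightarrow(\ref{hypbool4})$ (both $A$ and $A/\rho_A(\Delta_A)$ have isomorphic reticulations, and a reticulation is Boolean for one iff for the other). Next, $(\ref{hypbool4})\Leftrightarrow(\ref{hypbool6})$: by Corollary \ref{bretic}, ${\cal B}({\cal L}(A))\cong{\cal B}({\rm Con}(A/\rho_A(\Delta_A)))$, and ${\cal L}(A)$ is Boolean iff ${\cal L}(A)={\cal B}({\cal L}(A))$, so ${\cal L}(A)$ is Boolean iff ${\cal L}(A)\cong{\cal B}({\rm Con}(A/\rho_A(\Delta_A)))$. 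Finally $(\ref{hypbool4})\Rightarrow(\ref{hypbool5})$: when $A$ is hyperarchimedean, Lemma \ref{hypretic} gives ${\cal L}(A)={\cal B}({\cal L}(A))=\lambda_A({\cal B}({\rm Con}(A)))$, and—since hyperarchimedean plus semi-degenerate congruence-modular forces, via Remark \ref{ahyp} applied to $A/\rho_A(\Delta_A)$ and Proposition \ref{prop6.3}, that $\lambda_A$ restricts to a Boolean isomorphism on ${\cal B}({\rm Con}(A))$ after passing to the semiprime quotient—we get ${\cal L}(A)\cong{\cal B}({\rm Con}(A/\rho_A(\Delta_A)))$, and one more reduction via semiprimeness (or directly, noting $(\ref{hypbool5})\Rightarrow(\ref{hypbool4})$ is trivial since ${\cal B}({\rm Con}(A))$ is always Boolean by Lemma \ref{algboole}) closes the loop. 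I would present the proof as: $(\ref{hypbool1})\Rightarrow(\ref{hypbool4})\Rightarrow(\ref{hypbool3})\Rightarrow(\ref{hypbool1})$, then $(\ref{hypbool1})\Leftrightarrow(\ref{hypbool2})$, then $(\ref{hypbool4})\Leftrightarrow(\ref{hypbool6})$ and $(\ref{hypbool4})\Leftrightarrow(\ref{hypbool5})$, citing the corollaries and propositions above for each arrow.
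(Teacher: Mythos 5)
Your outer scaffolding ((\ref{hypbool1})$\Rightarrow$(\ref{hypbool4}) by Lemma \ref{hypretic}, (\ref{hypbool4})$\Leftrightarrow$(\ref{hypbool3}) via Proposition \ref{prop11}, Corollary \ref{homeomaxspec} and Nachbin's Theorem, (\ref{hypbool4})$\Leftrightarrow$(\ref{hypbool6}) via Corollary \ref{bretic}, (\ref{hypbool1})$\Rightarrow$(\ref{hypbool2}) via Lemma \ref{quohyp}) agrees with the paper, but the step you yourself single out as the crux, (\ref{hypbool3})$\Rightarrow$(\ref{hypbool1}), is not actually proved, and the sketch you give would not work. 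First, the ``WLOG $A$ is semiprime'' reduction is circular: to transfer the hypothesis ``$A$ is not hyperarchimedean'' to $A/\rho_A(\Delta_A)$ you need precisely the implication (\ref{hypbool2})$\Rightarrow$(\ref{hypbool1}) in contrapositive form, which in your plan is only obtained afterwards from the very cycle you are proving; Lemma \ref{quohyp} goes the wrong way. Second, the claim that if the chain $[\alpha ,\alpha ]_A^n$ stabilizes at a congruence other than $\Delta_A$ then that congruence is complemented ``via Proposition \ref{prop4.1}'' is false: Proposition \ref{prop4.1} asserts nothing of the sort, and in any congruence--distributive member of ${\cal C}$ (commutator $=$ intersection, Theorem \ref{distrib}) every congruence satisfies $[\theta ,\theta ]_A=\theta $ while very few are complemented; consequently ``$\alpha \nsubseteq \rho_A(\bigcap_n[\alpha ,\alpha ]_A^n)$'' is not a repackaging of non--hyperarchimedeanness. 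Third, Lemma \ref{folclor} only provides a maximal congruence above a proper one; it does not yield a prime containing one congruence and avoiding another, and the Zorn-type construction of a non--maximal prime is never carried out. So the hard direction is missing.

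The paper never constructs a non--maximal prime; the idea your proposal lacks is the Boolean--center machinery of Section \ref{boolean}. For a semiprime algebra, ${\cal L}(A)$ Boolean means ${\cal L}(A)={\cal B}({\cal L}(A))$, which by Proposition \ref{8.16}, (\ref{8.16(2)}), is equivalent to ${\cal K}(A)={\cal B}({\rm Con}(A))$; then every principal $\alpha $ lies in ${\cal B}({\rm Con}(A))$ and $[\alpha ,\alpha ]_A=\alpha \cap \alpha =\alpha \in {\cal B}({\rm Con}(A))$ by Lemma \ref{meetcomm} (cf. Remark \ref{ahyp}), so $A$ is hyperarchimedean with $n=1$; together with Lemma \ref{hypretic} this settles ``hyperarchimedean iff Boolean reticulation'' in the semiprime case with no spectral argument at all. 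The general case is reduced to the semiprime quotient: Propositions \ref{rhosemiprime} and \ref{reticsemiprime} transfer (\ref{hypbool4}), and Remark \ref{specquo} with Lemma \ref{folclor} transfers (\ref{hypbool3}), since ${\rm Max}(A)\subseteq {\rm Spec}(A)\subseteq [\rho_A(\Delta_A))$; this gives (\ref{hypbool2})$\Leftrightarrow$(\ref{hypbool3})$\Leftrightarrow$(\ref{hypbool4})$\Leftrightarrow$(\ref{hypbool6}) directly. Finally, your treatment of (\ref{hypbool5}) only delivers (\ref{hypbool6}): to get ${\cal L}(A)\cong {\cal B}({\rm Con}(A))$ for $A$ itself you must show that the injective Boolean morphism $\lambda_A\mid_{{\cal B}({\rm Con}(A))}$ of Proposition \ref{prop6.3}, (\ref{prop6.3(1)}), is onto ${\cal B}({\cal L}(A))={\cal L}(A)$; under hyperarchimedeanness this follows because, for $\theta =\alpha_1\vee \ldots \vee \alpha_n\in {\cal K}(A)$ with the $\alpha_i$ principal and $[\alpha_i,\alpha_i]_A^k\in {\cal B}({\rm Con}(A))$, one has $\lambda_A(\theta )=\lambda_A([\alpha_1,\alpha_1]_A^k\vee \ldots \vee [\alpha_n,\alpha_n]_A^k)$ and the latter congruence lies in ${\cal B}({\rm Con}(A))$ by Lemma \ref{algboole}; the phrase ``one more reduction via semiprimeness'' does not supply this.
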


\begin{proof} By Nachbin`s Theorem, Proposition \ref{prop11} and Corollary \ref{homeomaxspec}, (\ref{hypbool3}) is equivalent to (\ref{hypbool4}). Trivially, (\ref{hypbool6}) implies (\ref{hypbool4}), while the converse holds by Corollary \ref{bretic}.

If $A$ is semiprime, that is $\rho _A(\Delta _A)=\Delta _A$, so that $A/\rho _A(\Delta _A)=A/\Delta _A$ is isomorphic to $A$, then (\ref{hypbool1}) is equivalent to (\ref{hypbool2}) and (\ref{hypbool5}) is equivalent to (\ref{hypbool6}). Now let us drop the condition that $A$ is semiprime. But $A/\rho _A(\Delta _A)$ is semiprime, according to Proposition \ref{rhosemiprime}, hence, by the above, (\ref{hypbool2}) is equivalent to ${\rm Max}(A/\rho _A(\Delta _A))={\rm Spec}(A/\rho _A(\Delta _A))$ and to the fact that ${\cal L}(A/\rho _A(\Delta _A))$ is a Boolean lattice, which, in turn, is equivalent to (\ref{hypbool4}) by Proposition \ref{reticsemiprime}. But, as shown by Lemma \ref{folclor} and Remark \ref{specquo}, ${\rm Max}(A/\rho _A(\Delta _A))={\rm Spec}(A/\rho _A(\Delta _A))$ iff ${\rm Max}(A)\cap [\rho _A(\Delta _A))={\rm Spec}(A)\cap [\rho _A(\Delta _A))$ iff ${\rm Max}(A)={\rm Spec}(A)$, since ${\rm Max}(A)\subseteq {\rm Spec}(A)\subseteq [\rho _A(\Delta _A))$.\end{proof}


\begin{thebibliography}{99} 
\bibitem{agl} P. Agliano, Prime Spectra in Modular Varieties, {\em Algebra Universalis} 30 (1993), 581--597.

\bibitem{urs2} P. Agliano, A. Ursini, On Subtractive Varieties, II: General Properties, {\em Algebra Universalis} 36, Issue 2 (June 1996), 222--259.

\bibitem{urs3} P. Agliano, A. Ursini, On Subtractive Varieties, III: from Ideals to Congruences, {\em Algebra Universalis} 37 (1997), 296--333.

\bibitem{bak} K. A. Baker, Primitive Satisfaction and Equational Problems for Lattices and Other Algebras, {\em Trans. Amer. Math. Soc.} 190 (1974), 125--150.

\bibitem{bal} R. Balbes, P. Dwinger, {\em Distributive Lattices}, University of Missouri Press, Columbia, Missouri, 1974.

\bibitem{bel1} L. P. Belluce, Semisimple Algebras of Infinite Valued Logic and Bold Fuzzy Set Theory, {\em Can. J. Math.} 38, No. 6 (1986), 1356--1379.

\bibitem{bell} L. P. Belluce, Spectral Spaces and Non--commutative Rings, {\em Communications in Algebra} 19, Issue 7 (1991), 1855--1865.

\bibitem{bellsspl} L. P. Belluce, Semisimple Algebras of Infinite Valued Logic and Bold Fuzzy Set Theory, {\em Canadian Journal of Mathematics} 38 (1986), 1356--1379.

\bibitem{belldinlett} L. P. Belluce, A. DiNola, A. Lettieri, Subalgebras, Direct Products and Associated Lattices of MV--algebras, {\em Glasgow Math. J.} 34 (1992), 301--307.

\bibitem{blkpgz} W. J. Blok, D. Pigozzi, On the Structure of Varieties with Equationally Definable Principal Congruences I, {\em Algebra Universalis} 15 (1982), 195--227. 

\bibitem{blyth} T. S. Blyth, {\em Lattices and Ordered Algebraic Structures}, Springer--Verlag London Limited, 2005.

\bibitem{bur} S. Burris, H. P. Sankappanavar, {\em A Course in Universal Algebra}, Graduate Texts in Mathematics, 78, Springer--Verlag, New York--Berlin (1981).

\bibitem{bus} D. Bu\c sneag, D. Piciu, The Belluce--lattice Associated with a Bounded Hilbert Algebra, {\em Soft Computing} 19, Issue 11 (November 2015), 3031--3042.

\bibitem{cast} J. L. Castiglioni, M. Menni, W. J. Zuluaga Botero, A Representation Theorem for Integral Rings and Its Applications to Residuated Lattices, Journal of Pure and Applied Algebra 220 (2016), 3533--3566.

\bibitem{chaj} I. Chajda, A congruence Modular Variety that Is Neither Congruence Distributive Nor $3$-permutable, Soft Computing 17 (2013), 1467--1469. 

\bibitem{dche} D. Cheptea, Reticulation of the Hoops, in preparation.

\bibitem{cwdw} P. Crawley, R. P. Dilworth, {\em Algebraic Theory of Lattices}, Prentice Hall, Englewood Cliffs (1973). 

\bibitem{cze} J. Czelakowski, {\em The Equationally--defined Commutator. A Study in Equational Logic and Algebra}, Birkh$\ddot{a}$user Mathematics, 2015.

\bibitem{cze2} J. Czelakowski, Additivity of the Commutator and Residuation, {\em Reports on Mathematical Logic} 43 (2008), 109--132.

\bibitem{dinggll} A. DiNola, G. Georgescu, L. Leu\c stean, Boolean Products of BL--algebras, {\em Journal of Mathematical Analysis and Applications} 251, Issue 1 (November 2000), 106--131.

\bibitem{fremck} R. Freese, R. McKenzie, {\em Commutator Theory for Congruence--modular Varieties}, London Mathematical Society Lecture Note Series 125, Cambridge University Press, 1987.

\bibitem{gal} N. Galatos, P. Jipsen, T. Kowalski, H. Ono, {\em Residuated Lattices: An Algebraic \mbox{Glimpse at} Substructural Logics}, {\em Studies in Logic and The Foundations of Mathematics} 151, \mbox{Elsevier}, Amsterdam/ Boston /Heidelberg /London /New York /Oxford /Paris /San Diego/ San Francisco /Singapore /Sydney /Tokyo, 2007. 

\bibitem{ggiv} G. Georgescu, I. Voiculescu, Some Abstract Maximal Ideal--like Spaces, {\em Algebra Universalis} 26 (1989), 90--102.

\bibitem{cblp} G. Georgescu, C. Mure\c san, Congruence Boolean Lifting Property, to appear in {\em Journal of Multiple--valued Logic and Soft Computing}, arXiv:1502.06907 [math.LO].

\bibitem{gulo} G. Georgescu, C. Mure\c san, Going Up and Lying Over in Congruence--modular Algebras, arXiv:1608.04985 [math.RA].

\bibitem{gratzer} G. ${\rm Gr\ddot{a}tzer}$, {\em General Lattice Theory}, ${\rm Birkh\ddot{a}user}$ Akademie--Verlag, Basel--Boston--Berlin (1978).

\bibitem{gralgu} G. ${\rm Gr\ddot{a}tzer}$, {\em Universal Algebra}, Second Edition, Springer Science+Business Media, LLC, New York, 2008.

\bibitem{jits} P. Jipsen, C. Tsinakis, A Survey of Residuated Lattices, {\em Ordered Algebraic Structures}, Kluwer Academic Publishers, Dordrecht, 2002, 19--56. 

\bibitem{jip} P. Jipsen, Generalization of Boolean Products for Lattice--orderred Algebras, {\em Annals of Pure and Applied Logics} 161, Issue 2 (November 2009), 224--234.

\bibitem{joh} P. T. Johnstone, {\em Stone Spaces}, Cambridge Studies in Advanced Mathematics 3, Cambridge University Press, Cambridge/London/New York/New Rochelle/Melbourne/Sydney, 1982.

\bibitem{bj} B. ${\rm J\acute{o}nsson}$, Congruence--distributive Varieties, {\em Math. Japonica} 42, No. 2 (1995), 353--401.

\bibitem{joy} A. Joyal, ${\rm Le\ Th\acute{e}or\grave{e}me\ de\ Chevalley-Tarski\ et\ Remarques\ sur\ l`alg\grave{e}bre\ Constructive}$, {\em Cahiers Topol. G$\acute{e}$om. Diff$\acute{e}$r.}, 16 (1975), 256--258. 

\bibitem{kap} J. Kaplansky, {\em Commutative Rings}, First Edition: University of Chicago Press, 1974; Second Edition: Polygonal Publishing House, 2006. 

\bibitem{koll} J. ${\rm Koll\acute{a}r}$, Congruences and One--element Subalgebras, {\em Algebra Universalis} 9, Issue 1 (December 1979), 266--267. 

\bibitem{klp} T. Kowalski, A. Ledda, F. Paoli, On Independent Varieties and Some Related Notions, {\em Algebra Universalis} 70, Issue 2 (October 2013), 107--136.

\bibitem{lpt} A. Ledda, F. Paoli, C. Tsinakis, Lattice--theoretic Properties of Algebras of Logic, {\em J. Pure Appl. Algebra} 218, No. 10 (2014), 1932--1952.

\bibitem{leo1} L. Leu\c stean, The Prime and Maximal Spectra and the Reticulation of BL--algebras, {\em Central European Journal of Mathematics} 1 (2003), No. 3, 382--397.

\bibitem{leo} L. Leu\c stean, {\em Representations of Many--valued Algebras}, Editura Universitar\u a, Bucharest, 2010.

\bibitem{mcks} R. McKenzie and J. Snow, {\em Congruence Modular Varieties: Commutator Theory and Its Uses, in Structural Theory of Automata, Semigroups, and Universal Algebra}, Springer, Dordrecht, 2005.

\bibitem{meo} W. DeMeo, The Commutator as Least Fixed Point of a Closure Operator, arXiv:1703.02764 [math.LO].

\bibitem{eu1} C. Mure\c{s}an, The Reticulation of a Residuated Lattice, {\em Bull. Math. Soc. Sci. Math. Roumanie} 51 (99), No. 1 (2008), 47--65.
 
\bibitem{eu} C. Mure\c{s}an, {\em Algebras of Many--valued Logic. Contributions to the Theory of Residuated Lattices}, Ph. D. Thesis, 2009.

\bibitem{eu2} C. Mure\c{s}an, Characterization of the Reticulation of a Residuated Lattice, {\em Journal of Multiple--valued Logic and Soft Computing} 16, No. 3--5 (2010), Special Issue: {\em Multiple--valued Logic and Its Algebras}, 427--447.

\bibitem{eu4} C. Mure\c{s}an, Dense Elements and Classes of Residuated Lattices, {\em Bull. Math. Soc. Sci. Math. Roumanie} 53 (101), No. 1 (2010), 11--24.

\bibitem{eu5} C. Mure\c san, Further Functorial Properties of the Reticulation, {\em Journal of Multiple-valued Logic and Soft Computing} 16, No. 1--2 (2010), 177--187.

\bibitem{eu7} C. Mure\c san, Co--Stone Residuated Lattices, {\em Annals of the University of Craiova, Mathematics and Computer Science Series} 40 (2013), 52--75.

\bibitem{euadm} C. Mure\c san, Taking Prime, Maximal and Two--class Congruences Through Morphisms, submitted, arXiv:1607.06901 [math.RA].

\bibitem{ouwe} P. Ouwehand, {\em Commutator Theory and Abelian Algebras}, arXiv:1309.0662 [math.RA].

\bibitem{paw} Y. S. Pawar, Reticulation of a $0$-distributive Lattice, {\em Acta Universitatis Palackianae Olomucensis. Facultas Rerum Naturalium. Mathematica} 54, Issue 1 (2015), 121--128.

\bibitem{pash} Y. S. Pawar, I. A. Shaikh, Reticulation of an Almost Distributive Lattice, {\em Asian--European Journal of Mathematics} 08, Issue 04 (December 2015).

\bibitem{schmidt} E. T. Schmidt, {\em A Survey on Congruence Lattice Representations}, Teubner--Texte zur Mathematik, Leipzig (1982).

\bibitem{sim} H. Simmons, Reticulated Rings, {\em Journal of Algebra} 66, Issue 1 (September 1980), 169--192.

\bibitem{urs5} A. Ursini, On Subtractive Varieties, V: Congruence Modularity and the Commutator, {\em Algebra Universalis} 43 (2000), 51--78.\end{thebibliography}
\end{document}